\newtheorem{thm}{Theorem}[section]
\newtheorem*{nonum}{Theorem}
\newtheorem{claim}[thm]{Claim}
\newtheorem{corollary}[thm]{Corollary}
\newtheorem*{thmMainTrue}{Theorem \ref{thm.main}}
\newtheorem*{thmMainLag}{Theorem \ref{t.Lag.Perm}}
\newtheorem*{corClassify}{Corollary \ref{cor.classify}}
\theoremstyle{definition}
\newtheorem{exam}[thm]{Example}
\newtheorem{defn}[thm]{Definition}
\newtheorem{rem}[thm]{Remark}
\theoremstyle{plain}
\newtheorem{lem}[thm]{Lemma}
\newtheorem{prop}[thm]{Proposition}
\newcommand{\term}[1]{\textbf{#1}}
\newcommand{\mtrx}[1]{\begin{matrix} #1 \end{matrix}}
\newcommand{\pmtrx}[1]{\begin{pmatrix} #1 \end{pmatrix}}
\newcommand{\cmtrx}[1]{\left\{\begin{matrix}#1\end{matrix}\right\}}
\newcommand{\arry}[2]{\begin{array}{#1} #2 \end{array}}
\newcommand{\tbl}[2]{\begin{tabular}{#1} #2 \end{tabular}}
\newcommand{\RHScase}[1]{\left\{\begin{array}{ll} #1 \end{array}\right.}
\newcommand{\CC}{\mathbb{C}}
\newcommand{\NN}{\mathbb{N}}
\newcommand{\RR}{\mathbb{R}}
\newcommand{\ZZ}{\mathbb{Z}}
\newcommand{\AAA}{\mathcal{A}}
\newcommand{\BBB}{\mathcal{B}}
\newcommand{\CCC}{\mathcal{C}}
\newcommand{\EEE}{\mathcal{E}}
\newcommand{\HHH}{\mathcal{H}}
\newcommand{\LLL}{\mathcal{L}}
\newcommand{\MMM}{\mathcal{M}}
\newcommand{\OOO}{\mathcal{O}}
\newcommand{\PPP}{\mathcal{P}}
\newcommand{\RRR}{\mathcal{R}}
\newcommand{\TTT}{\mathcal{T}}
\newcommand{\VVV}{\mathcal{V}}
\newcommand{\ZZZ}{\mathcal{Z}}
\newcommand{\eps}{\varepsilon}
\newcommand{\Del}{\Delta}
\newcommand{\dset}[1]{\{1,\dots, #1\}}
\newcommand{\inv}[1]{#1^{-1}}
\renewcommand{\AA}{\mathbb{A}}
\newcommand{\sS}{\mathbb{S}}
\newcommand{\perm}{\mathfrak{S}}
\newcommand{\irr}{\perm^0}
\newcommand{\RClass}{\RRR}
\newcommand{\Heven}{\HHH^{even}}
\newcommand{\Hodd}{\HHH^{odd}}
\newcommand{\Hhyp}{\HHH^{hyp}}
\newcommand{\Hnonhyp}{\HHH^{nonhyp}}
\newcommand{\mA}{\mathbf{A}}
\newcommand{\mB}{\mathbf{B}}
\newcommand{\mC}{\mathbf{C}}
\newcommand{\mD}{\mathbf{D}}
\newcommand{\mI}{\mathbf{I}}
\newcommand{\mS}{\mathbf{S}}
\newcommand{\mU}{\mathbf{U}}
\newcommand{\mV}{\mathbf{V}}
\newcommand{\mW}{\mathbf{W}}
\newcommand{\mO}{\mathbf{0}}
\newcommand{\ma}{\mathbf{a}}
\newcommand{\mb}{\mathbf{b}}
\newcommand{\mc}{\mathbf{c}}
\newcommand{\md}{\mathbf{d}}
\newcommand{\me}{\mathbf{e}}
\newcommand{\mf}{\mathbf{f}}
\newcommand{\mg}{\mathbf{g}}
\newcommand{\mmu}{\mathbf{u}}
\newcommand{\mv}{\mathbf{v}}
\newcommand{\mx}{\mathbf{x}}
\newcommand{\my}{\mathbf{y}}
\newcommand{\ee}{\mathbf{e}}
\newcommand{\vv}{\mathbf{v}}
\newcommand{\ww}{\mathbf{w}}
\newcommand{\mone}{\mathbf{1}}
\newcommand{\bigO}{\vec{\mathbf{o}}}
\newcommand{\bigo}{\mathbf{o}}
\newcommand{\zeros}{ 0 & \dots & 0 \\ \vdots & \ddots & \vdots \\ 0 & \dots & 0 }
\newcommand{\msp}{\mbox{ }}
\newcommand{\IET}{IET}
\newcommand{\rtt}{1}
\newcommand{\rtb}{0}
\newcommand{\rt}{\rtt}
\newcommand{\rb}{\rtb}
\newcommand{\reps}{\eps}
\newcommand{\ropp}{\tilde{\eps}}
\newcommand{\rsub}[1]{#1}
\newcommand{\sig}{\sigma}
\newcommand{\inx}{\cdot}
\newcommand{\EVEN}{\mU}
\newcommand{\ODD}{\mV}
\newcommand{\PAIR}{\mW}
\newcommand{\SPACE}{\mS}
\newcommand{\BLAH}{\mD}
\newcommand{\lA}{A}
\newcommand{\lB}{Z}
\newcommand{\LL}[2]{\mtrx{#1 \\ #2}}
\newcommand{\AB}{\LL{\lA}{\lB}}
\newcommand{\BA}{\LL{\lB}{\lA}}
\newcommand{\stdperm}[1]{\cmtrx{\AB & #1 & \BA}}
\newcommand{\spaN}{\mathrm{span}}
\newcommand{\dsum}[1]{\sum_{#1}}
\newcommand{\dusum}[2]{\sum_{#1}^{#2}}
\newcommand{\duprod}[2]{\prod_{#1}^{#2}}
\newcommand{\duS}[2]{\mathcal{S}_{#1}^{#2}}
\numberwithin{equation}{section}
\begin{document}


\title[Self-Inverses]{Self-Inverses, Lagrangian Permutations and Minimal Interval Exchange Transformations with Many Ergodic Measures}

\author[J. Fickenscher]{Jonathan Fickenscher}

\address{Department of Mathematics, Princeton University, Princeton, NJ~08542, USA}

\email{jonfick@princeton.edu}


\date{\today}

\begin{abstract}
\noindent Thanks to works by M. Kontsevich and A. Zorich followed by C. Boissy, we have a classification of all Rauzy Classes of any given genus. It follows from these works that Rauzy Classes are closed under the operation of inverting the permutation. In this paper, we shall prove the existence of self-inverse permutations in every Rauzy Class by giving an explicit construction of such an element satisfying the sufficient conditions. We will also show that self-inverse permutations are Lagrangian, meaning any suspension has its vertical cycles span a Lagrangian subspace in homology. This will simplify the proof of a lemma in a work by G. Forni. W. A. Veech proved a bound on the number of distinct ergodic probability measures for a given minimal interval exchange transformation. We verify that this bound is sharp by construcing examples in each Rauzy Class.
\end{abstract}

\maketitle

\pagestyle{headings}

\setcounter{tocdepth}{2}
\tableofcontents

\section{Introduction}
\label{chap.intro}
Interval exchange transformations (\IET s) are encoded by a permutation $\pi$ and length vector $\lambda$. In \cite{c.R79}, Rauzy introduces Rauzy induction, a first return map of an \IET\ on a specific subinterval. This induction takes one of two forms on the space of \IET s and therefore descends to two different maps on the set of permutations. Therefore permutations are divided into \term{Rauzy Classes}, minimal sets closed under the two types of induction maps. We dedicate Sections \ref{sec.IET} and \ref{sec.RC} to providing some basic background and well known results concerning both \IET s and Rauzy Classes.

From another direction, we consider the \term{moduli space of Abelian differentials}. By the zippered rectangle construction in \cite{c.Ve82}, Veech shows that a generic \IET\ is uniquely ergodic (a result independently proved by Masur in \cite{c.Mas82}). This construction establishes a relationship between an \IET\ and flat surfaces with oriented measured foliations. We present an equivalent construction, called a \term{suspension}, in Section \ref{sec.surface}. Using suspensions, we assign properties to a permutation $\pi$: its \term{signature} (see Definition \ref{def.signature}), which is related to the singularities of these suspensions, and its \term{type} (see Section \ref{sec.classify}), which represents any other necessary data from its suspensions. The crucial result in this section is the following:
\begin{corClassify}
    Every Rauzy class is uniquely determined by signature and type. So given Rauzy Class $\RClass$, if $\pi\in\irr$ has the same signature and type as $\RClass$, then necessarily $\pi\in\RClass$.
\end{corClassify}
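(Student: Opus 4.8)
The plan is to read the corollary as a translation of the Kontsevich--Zorich--Boissy classification into the language of signature and type, using the suspension construction of Section \ref{sec.surface} to pass between Rauzy classes and connected components of strata of Abelian differentials.

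First I would check that signature and type are constant on each Rauzy class, so that the assignment $\RClass \mapsto (\text{signature}, \text{type})$ is well defined. The two Rauzy induction moves correspond to replacing a suspension by a first-return subsurface, and this operation alters neither the orders of the zeros nor any of the global discrete invariants of the flat surface. Hence both the signature of Definition \ref{def.signature} and the type of Section \ref{sec.classify} are invariant under induction and depend only on the class $\RClass$. I expect this step to be essentially formal.

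Next I would match the pair $(\text{signature}, \text{type})$ with the invariants appearing in the Kontsevich--Zorich list. Through suspensions, each $\pi \in \irr$ determines a point in a stratum $\HHH(m_1,\dots,m_n)$; by construction the signature records precisely the stratum, i.e.\ the multiset of zero orders $m_i$, while the type records the residual data needed to pin down the connected component inside that stratum --- whether the component is hyperelliptic and, when every $m_i$ is even, its spin parity (so as to separate $\Heven$ from $\Hodd$, and either of these from $\Hhyp$). By Kontsevich and Zorich these data determine the connected component completely, and by Boissy's theorem the induced map from Rauzy classes to connected components is a bijection. It follows that two Rauzy classes with the same signature and type map to the same component and must therefore coincide, which is the uniqueness assertion.

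The explicit consequence is then immediate: every irreducible permutation lies in a unique Rauzy class, so if $\pi \in \irr$ shares the signature and type of $\RClass$, its own class $\RClass'$ has signature and type equal to those of $\RClass$, and uniqueness forces $\RClass' = \RClass$, whence $\pi \in \RClass$. The main obstacle I anticipate is not the invariance or the abstract deduction but the careful handling of the low-genus exceptions in the Kontsevich--Zorich classification (the hyperelliptic strata in genus $2$ and $3$, and the strata whose component count degenerates), together with verifying that the type as defined in Section \ref{sec.classify} distinguishes exactly the right components in each of those special cases.
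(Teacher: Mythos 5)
Your overall strategy---invariance of the pair (signature, type) under induction, followed by an appeal to Kontsevich--Zorich and Boissy---is the same as the paper's, which gives the corollary no separate proof and treats it as an immediate restatement of those theorems. However, there is a genuine error at the decisive step. You assert that ``the signature records precisely the stratum, i.e.\ the multiset of zero orders,'' and that ``by Boissy's theorem the induced map from Rauzy classes to connected components is a bijection.'' Both claims are false, and together they would prove a false statement. Boissy's theorem \cite{c.B09} says that $\pi_1,\pi_2$ lie in the same Rauzy class if and only if they lie in the same connected component \emph{and} their marked singularities have the same degree; consequently a single connected component generally contains several distinct Rauzy classes, one for each possible degree of the marked singularity, and the map from Rauzy classes to components is not injective. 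The paper makes exactly this point in Section \ref{sec.classify} with two permutations $\pi,\pi'$ in the connected stratum $\HHH(1,2,3)$: both have the same multiset of zero orders $\{1,2,3\}$ and the same type, yet $\RClass(\pi)\neq\RClass(\pi')$ because the marked singularity has degree $3$ in one case and degree $1$ in the other. Under your reading of ``signature,'' your argument would force $\RClass(\pi)=\RClass(\pi')$, a contradiction; the paper even states explicitly that stratum plus type determines only the Extended Rauzy Class, not the Rauzy Class.

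The repair is precisely the content of Definition \ref{def.signature}: the signature is not a multiset but a tuple $(\ell_1,\dots,\ell_m)$ whose first entry $\ell_1$ is the degree of the \emph{marked} singularity, the leftmost singularity, which is fixed by both induction moves and hence constant on the class (this is the invariance observed after Proposition \ref{prop.genus_is_inv}). With that reading, the pair (signature, type) determines the triple (stratum, connected component, degree of marked singularity), and the correct form of Boissy's theorem---Rauzy classes biject with pairs (connected component, marked degree)---yields the corollary exactly as in the paper. Your invariance step and the final formal deduction are fine once this bookkeeping is restored; the low-genus exceptional components you flag are handled by the Kontsevich--Zorich list itself and are a secondary concern compared to the marked-singularity issue.
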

\noindent This immediately follows from \cite{c.KZ} and \cite{c.B09}. In Sections \ref{sec.hyperelliptic} and \ref{sec.spin}, we discuss \term{hyperelliptic} surfaces and the \term{parity} of a surface's \term{spin structure}. These discussions give us the necessary information to determine a permutation's type.

In Equations (2.2) from \cite{c.Ve84_I}, Veech shows a definition of Rauzy Induction on permutations. It is clear from this definition that the map $\pi\mapsto\inv\pi$ conjugates one type of induction with the other. This relationship conjures two natural questions:
\begin{enumerate}
    \item Are Rauzy Classes closed under taking inverses?
    \item Do all Rauzy classes contain self-inverse permutations?
\end{enumerate}
\noindent The work leading up to Corollary \ref{cor.classify} in Section \ref{sec.classify} provides an affirmative to the first question: any suspensions of $\pi$ and $\inv\pi$ have the same signature and type and therefore $\pi$ and $\inv\pi$ belong to the same class. However, proving a positive result for the second question would naturally imply one for the first also. This work answers the second question.
\begin{thmMainTrue}
	Every (true) Rauzy Class contains a permutation $\pi$ such that $\pi=\inv{\pi}$.
\end{thmMainTrue}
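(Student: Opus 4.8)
The plan is to \emph{reduce the theorem to a single construction problem} by invoking the classification Corollary~\ref{cor.classify}. That corollary says a true Rauzy Class $\RClass$ is completely pinned down by its signature and type. Consequently, to prove that $\RClass$ contains a self-inverse element it suffices to exhibit \emph{one} irreducible permutation $\pi \in \irr$ that (i) satisfies $\pi = \inv\pi$ and (ii) has the same signature and type as $\RClass$; the corollary then forces $\pi \in \RClass$. Thus the whole problem collapses to the following: for every signature/type pair actually realized by some Rauzy Class, build an explicit self-inverse irreducible permutation realizing it.

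For the construction I would work with the two-row diagram of a permutation, using the fact (from Veech's Equations (2.2), as recalled in the introduction) that $\pi \mapsto \inv\pi$ is the interchange of the top and bottom rows. Hence a permutation is self-inverse precisely when its diagram is invariant under this row-swap, up to the relabeling inherent in the convention, which one arranges by assembling the diagram from left--right symmetric blocks --- schematically the pieces $\AB$ and $\BA$ flanking a palindromic middle, as in a normal form $\stdperm{\LLcdots}$. The cleanest instance is the symmetric permutation $i \mapsto n+1-i$: it is manifestly an involution and is the standard hyperelliptic permutation, so it disposes of every hyperelliptic type at once. The non-hyperelliptic types are then handled by a family of symmetric diagrams whose involution property is automatic from the symmetry, while the internal blocks are chosen to steer both the signature and the spin parity to the prescribed values.

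The verification then splits into four checks, three of which I expect to be routine. \emph{Self-inverseness} holds by the symmetry built into the diagram; \emph{irreducibility} (that $\pi \in \irr$) is a direct inspection that no proper initial block is invariant; and the \emph{signature} is read off combinatorially from the singularity data of the diagram governing how sides are glued, so one matches the target stratum $\HHH(\cdots)$ by a cycle/zero count. The genuinely delicate step is matching the \emph{type}: outside the hyperelliptic case the type records the parity of the spin structure, the Arf invariant, whenever the stratum splits into an even and an odd component, and this $\ZZ/2$ invariant must be computed for each constructed involution.

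Accordingly, I expect the \emph{main obstacle} to be realizing both spin parities \emph{while preserving the self-inverse symmetry}. Producing a self-inverse permutation of prescribed signature is comparatively easy, but forcing its Arf invariant to take a prescribed value without destroying the row-swap symmetry requires a local modification of the diagram that flips the parity yet remains symmetric. The crux of the argument is therefore to isolate such a parity-flipping symmetric gadget, compute its effect on the Arf invariant explicitly, and check that splicing it into the base construction alters neither irreducibility nor signature. Once this is in hand, every admissible signature/type pair is realized by a self-inverse permutation, and Corollary~\ref{cor.classify} completes the proof.
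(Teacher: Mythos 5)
Your proposal is correct and follows essentially the same route as the paper: reduce to realizing each signature/type pair via Corollary \ref{cor.classify}, dispose of the hyperelliptic classes with $(d,d-1,\dots,2,1)$, and assemble self-inverse standard permutations from symmetric blocks chosen to control both the singularity data and the spin parity. The ``parity-flipping symmetric gadget'' you anticipate as the crux is exactly what the paper supplies with its $\ODD_{2n}$ and $\ODD_{2,2}$ blocks, whose contribution $\phi=1$ (versus $\phi(\EVEN_{2n})=0$) flips the Arf invariant in Equation \eqref{eq.spinf} without disturbing self-inverseness, irreducibility, or the signature.
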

\noindent In Section $\ref{sec.blocks}$, we form patterns of letters, or \term{blocks}, that we may use to construct a self-inverse $\pi$ such that $\pi\in\RClass$ by Corollary \ref{cor.classify}. This method follows in the spirit of \cite{c.Z08}. In that paper, Zorich constructs permutations with desired properties. He then shows that these permutations belong to the desired Rauzy Class in a fashion similar to Corollary \ref{cor.classify}.

We consider the topic of Lagrangian subspaces of suspensions in Section \ref{chapLag}. We call a permutation $\pi$ \term{Lagrangian} if the vertical trajectories of any suspension of an \IET\ $T=(\pi,\mone)$, where $\mone=(1,\dots,1)$, span a $g$-dimensional subspace in homology, where $g$ is the genus of $\pi$. We prove the following:
\begin{thmMainLag}
        Suppose $\pi\in\irr_\AAA$ is self-inverse. Then $\pi$ is Lagrangian.
\end{thmMainLag}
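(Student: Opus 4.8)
The plan is to realize the vertical cycles as the core curves of a cylinder decomposition and to exploit the symmetry coming from $\pi=\inv{\pi}$ in the form of an anti-symplectic involution of the suspension. First I would observe that since every interval of $T=(\pi,\mone)$ has length $1$, the map $T$ merely permutes unit intervals and is therefore periodic, so every vertical trajectory of a suspension $M$ is closed. Thus the vertical foliation is completely periodic and decomposes $M$ into cylinders, one for each cycle of $\pi$; the vertical cycles are the core curves $c_1,\dots,c_k$ of these cylinders. Core curves of distinct cylinders are disjoint simple closed curves, so $\langle[c_i],[c_j]\rangle=0$ for all $i,j$; hence $V:=\spaN\{[c_i]\}$ is isotropic in $H_1(M;\RR)$ and $\dim V\le g$. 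The whole content is therefore the reverse inequality $\dim V\ge g$.

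To produce the symmetry I would choose the symmetric suspension data $\tau_\alpha=-\tau_{\pi(\alpha)}$ (which forces $\tau_\alpha=0$ on the fixed letters) and consider the reflection $\iota\colon(x,y)\mapsto(x,-y)$ of the polygon. Because $\pi=\inv{\pi}$, this reflection respects the edge identifications and descends to an automorphism of $M$ sending the saddle connection $\zeta_\alpha$ to $\zeta_{\pi(\alpha)}$; on relative homology $H_1(M,\Sigma;\RR)$ it acts by the permutation matrix of $\pi$. Since $\iota$ reverses orientation, $\iota_*$ is anti-symplectic on $H_1(M;\RR)$, so its $\pm1$-eigenspaces $E_+,E_-$ are each Lagrangian of dimension $g$. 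As $\iota$ fixes the horizontal coordinate it preserves every vertical cylinder while reversing the direction of its core, whence $\iota_*[c_i]=-[c_i]$ and $V\subseteq E_-$. This recovers $\dim V\le g$ and reduces the theorem to the single statement $\dim V= g$.

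The main obstacle is this last step. My approach would be to identify $E_-$ explicitly. Assuming $\iota$ fixes each singularity individually—which I would verify directly from the symmetric polygon, and which makes $\iota_*$ act trivially on $\tilde H_0(\Sigma)$—every difference $[\zeta_b]-[\zeta_c]$ for a transposition $\{b,c\}$ of $\pi$ is a genuine absolute cycle lying in $E_-$, and these $t$ classes have pairwise disjoint support in the $\zeta$-basis, hence are independent and span $\{x:\iota_*x=-x\}=E_-$; in particular $g=t$, the number of transpositions of the involution $\pi$. It then remains to see that the core curves $[c_i]$ actually realize $g$ independent classes. For this I would pair the $t$ transposition-core-curves against transversal curves, one crossing each transposition cylinder exactly once: this pairing is computed by an invertible (essentially permutation) matrix with the dual classes lying in $E_+$, forcing those $t=g$ core curves to be linearly independent and giving $\dim V\ge g$.

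The two delicate points—verifying that $\iota$ fixes each singularity, so that the anti-invariant relative differences are honest absolute cycles, and checking that the transversal pairing is nondegenerate—are exactly where the irreducibility hypothesis $\pi\in\irr_\AAA$ and the self-inverse structure must really be used, and I expect the first of these to be the more subtle to pin down in full generality.
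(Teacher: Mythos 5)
Your first half is sound and structurally parallel to the paper: isotropy of the span $V$ of the vertical cycles (the paper's Theorem \ref{t.Isotropic}), plus a second isotropic subspace and a dimension count (the paper's Lemma \ref{l.splitting}; your anti-symplectic involution gives the same count, since the $\pm1$-eigenspaces of an anti-symplectic involution are complementary isotropic, hence Lagrangian, subspaces). But the step you rely on to get $\dim V\geq g$ contains a genuine error: the claim that the $t$ transposition differences $[\zeta_b]-[\zeta_c]$ ``have pairwise disjoint support in the $\zeta$-basis, hence are independent'' and span $E_-$, so that $g=t$. Disjoint support gives independence only in \emph{relative} homology $H_1(M,\Sigma)$, where the $[\zeta_\alpha]$ form a basis; the map to absolute homology $H_1(M)$ has a kernel of dimension $d-2g$, and it can kill exactly this independence. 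Concretely, take $\pi=(3,4,1,2)\in\irr_4$: it is irreducible and self-inverse with two transpositions $\{1,3\}$ and $\{2,4\}$, but
$$ \Omega_\pi=\pmtrx{0&0&1&1\\0&0&1&1\\-1&-1&0&0\\-1&-1&0&0} $$
has rank $2$, so $g(\pi)=1$ while $t=2$. Thus $g=t$ is false in general (the paper's corollary to Theorem \ref{t.Lag.Perm} only gives $t\geq g$), your two anti-invariant difference classes must be dependent in $H_1(M)$, and your final transversal-pairing step also fails here: the two core-curve classes coincide, $\Omega_\pi(\ee_1+\ee_3)=\Omega_\pi(\ee_2+\ee_4)=(1,1,-1,-1)^t$, so no pairing can certify $t$ independent core curves.

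The paper avoids any exact count by choosing the complementary isotropic subspace among \emph{vertical} data rather than horizontal saddle connections: it sets $W^\pi=\spaN\{\ee_\alpha-\ee_\beta:\{\alpha,\beta\}\mbox{ a transposition}\}$, so that $V^\pi\oplus W^\pi=\RR^\AAA$ and hence $\Omega_\pi V^\pi+\Omega_\pi W^\pi$ is automatically all of $H^\pi=\Omega_\pi\RR^\AAA$ --- no independence or spanning claim about the individual difference classes is needed; a two-line computation with $\Omega_\pi$ shows $\Omega_\pi W^\pi$ is isotropic, and Lemma \ref{l.splitting} finishes. Your involution framework can be repaired the same way: under $\iota_*$ one has $\iota_*c_\alpha=-c_{\pi_\AAA(\alpha)}$, so the differences of \emph{vertical} cycles $c_\alpha-c_\beta$ over transpositions lie in $E_+$ (not $E_-$), while $V\subseteq E_-$; since these two families together span $H_1(M)$ as above, $\dim V\geq \dim H_1(M)-\dim E_+=g$. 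As written, however, your route through $E_-$, the asserted independence of the horizontal differences, and the conclusion $g=t$ is a gap that the example $\pi=(3,4,1,2)$ shows cannot be closed.
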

\noindent This theorem provides an alternative proof of Forni's Lemma 4.4 in \cite{c.For02}. We present this proof as Corollary \ref{cor.Lag.For}. In this lemma, Forni shows that the set of $q\in\HHH_g$ (the moduli space of Abelian differentials of genus $g$) such that
\begin{enumerate}
    \item The vertical trajectories of $q$ are (almost all) periodic,
    \item These trajectories span a $g$-dimensional subspace in homology,
\end{enumerate}
is a dense set in $\HHH_g$. Corollary \ref{cor.Lag.For} uses Theorem \ref{t.Lag.Perm} and the fact that the Teichm\"uller geodesic flow is generically dense in each connected component of $\HHH_g$. We further show that the permutations we construct in Section \ref{chapTrue} need only consider the transposition pairs (letters interchanged by the permutation) to form such a basis.

While we know that almost every \IET\ is uniquely ergodic (see again \cite{c.Ve82} and \cite{c.Mas82}), there do exist minimal \IET\ $T$ that admit more than one distinct ergodic probability measure. Before the result of unique ergodicity, Keane gave such an example in \cite{c.Kea77}. We discuss the necessary tools in Sections \ref{sec.theta} and \ref{sec.cone_of_measures} to produce results similar to Keane's example. In the latter section, we also give the upper bound on the number of such ergodic measures (a well known bound derived in \cite{c.Ve78}). In Section \ref{chap.measures}, construct \IET\ 's in every Rauzy Class that must have the maximum number of such probability measures, giving an explicit proof that this bound is indeed sharp. We use the self-inverse permutations constructed in Section \ref{chapTrue} to create our examples.

\subsection{Interval Exchange Transformations}\label{sec.IET}

    Let $\perm_d$ be the set of permutations on $\dset{d}$. $\pi\in\perm_d$ is \term{irreducible} if $\pi(\dset{k})=\dset{k}$ only when $k=d$. The set of all irreducible permutations on $\dset{d}$ is $\irr_d$. $\pi\in\perm_d$ is \term{standard} if $\pi(d)=1$ and $\pi(1)=d$. Note that a standard permutation is necessarily irreducible. If $\AAA$ is an alphabet of $d$ letters, then $(\pi_0,\pi_1)\in\perm_\AAA$ is a pair of bijections, $\pi_\eps:\AAA\to\dset{d}$. We say that $\pi = (\pi_0,\pi_1)$ if $\pi = \pi_1\circ \pi_0^{-1}$. A pair $(\pi_0,\pi_1)$ uniquely determines a $\pi\in\perm_d$, but it follows that for any alphabet $\AAA'$ of $d$ letters and bijection $\tau:\AAA'\to\AAA$,
	$$ \pi = (\pi_0,\pi_1)\in\perm_\AAA \iff \pi = (\pi_0\circ\tau,\pi_1\circ\tau) \in\perm_{\AAA'}.$$
    We will use this as a natural equivalence between pairs, and in the case above we say freely that $\pi=(\pi_0,\pi_1) = (\pi_0\circ\tau,\pi_1\circ\tau)$. Let $\irr_\AAA$ be the set of irreducible permutations on $\AAA$, or $(\pi_0,\pi_1)\in\perm_\AAA$ such that $\pi=(\pi_0,\pi_1)$ is irreducible.
	
    When we refer to a (sub)interval of $\RR$, we mean open on the right and closed on the left (i.e. of the form $[a,b)$, for some $a<b$). Let $\RR_+^\AAA$ be the cone of positive length vectors in $\RR^\AAA$. For $\lambda\in\RR_+^\AAA$, let $|\lambda|:=\sum_{\alpha\in\AAA}\lambda_\alpha$, $I:=[0,|\lambda|)$, and define subintervals $I_\alpha^\eps \subseteq I$, $\alpha\in\AAA$ and $\eps\in\{0,1\}$ as
	$$I_\alpha^\eps:= \left[\sum_{\{\beta\in\AAA: \pi_\eps(\beta)<\pi_\eps(\alpha)\}}\lambda_\beta,\sum_{\{\beta\in\AAA: \pi_\eps(\beta)\leq\pi_\eps(\alpha)\}}\lambda_\beta\right).$$
    \begin{defn}
	An Interval Exchange Transformation (\IET) $T=(\pi,\lambda)$, $\pi=(\pi_0,\pi_1)\in\irr_\AAA$ and $\lambda\in\RR_+^\AAA$, is the unique map $T: I \to I$ such that for each $\alpha\in\AAA$,
	    \begin{itemize}
		\item $T$ restricted to $I_\alpha^0$ is a translation.
		\item $T(I^0_\alpha)=I^1_\alpha$.
	    \end{itemize}
    \end{defn}
    \begin{rem}
	The use of the pair $(\pi_0,\pi_1)$ on an alphbet rather than simply $\pi\in\irr_d$ was developed in many papers, including for instance \cite{c.Ker85}, \cite{c.MMY05} and \cite{c.Bu06}.
    \end{rem}

	By convention, we label $\pi\in\perm_d$ as $\pi=(\pi^{-1}(1), \dots, \pi^{-1}(d))$ to indicate the ordering of the original subintervals after the \IET. We shall likewise denote $\pi=(\pi_0,\pi_1)$ by
        $$\pi=(\pi_0,\pi_1)=\cmtrx{\pi_0^{-1}(1)&\dots & \pi_0^{-1}(d)\\ \pi_1^{-1}(1)&\dots & \pi_1^{-1}(d)}$$
    indicating the orders of the subintervals before and after the application of $T$. Figure \ref{fig.IETyoccoz} shows an example of an \IET\ with permutation
		$$\pi = \cmtrx{a&b&c&d\\d&a&c&b}.$$
    We may associate a translation vector $\omega\in\RR^\AAA$ to $T=(\pi,\lambda)$ by
		$$ T(x)=x+\omega_\alpha,~~x\in I^0_\alpha.$$
    In this case $\omega$ can be described by a matrix $\Omega_{\pi}$ by $\omega=\Omega_{\pi}\lambda,$ where
    \begin{equation}\label{eq.omega_pi2}
        (\Omega_{\pi})_{\alpha,\beta}=\left\{\begin{array}{rl} 1,& \mbox{if }\pi_0(\alpha)<\pi_0(\beta)~\&~\pi_1(\alpha)>\pi_1(\beta),\\
								-1,& \mbox{if }\pi_0(\alpha)>\pi_0(\beta)~\&~\pi_1(\alpha)<\pi_1(\beta),\\
								0,& \mathrm{otherwise.}\end{array}\right.
    \end{equation}
    \begin{rem}
        The matrix $\Omega_\pi$, $\pi=(\pi_0,\pi_1)$ is the same as the matrix $L^\pi$ seen in \cite{c.Ve78} and $M$ in \cite{c.R79}. For an example of the notation in Equation \eqref{eq.omega_pi2}, see \cite{c.Vi06}.
    \end{rem}
    \begin{figure}[h]
        \begin{center}
           \setlength{\unitlength}{250pt}
            \begin{picture}(1,.36)
                \put(0,0){\includegraphics[width=\unitlength]{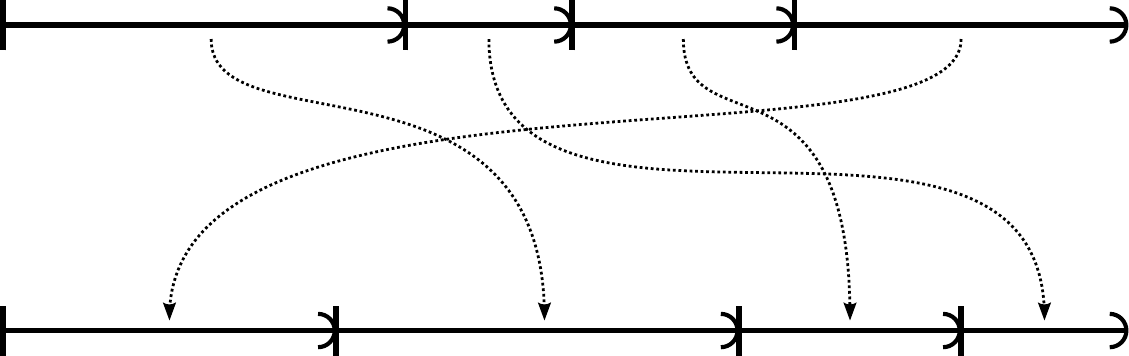}}

                \put(0.18,.33){$I^0_a$}
                \put(0.41,.33){$I^0_b$}
                \put(0.59,.33){$I^0_c$}
                \put(0.83,.33){$I^0_d$}

                \put(.13,-.04){$I^1_d$}
                \put(.46,-.04){$I^1_a$}
                \put(.73,-.04){$I^1_c$}
                \put(.90,-.04){$I^1_b$}
            \end{picture}
       \end{center}
       \caption{An \IET\ on $\AAA=\{a,b,c,d\}$.}\label{fig.IETyoccoz}
    \end{figure}

\subsection{Rauzy Classes}\label{sec.RC}

    In this section, we define a family of maps on irreducible permutations, known as Rauzy induction. Introduced in \cite{c.R79}, this is realized as a first return map of an \IET\ on appropriate subintervals. These moves partition each set $\irr_\AAA$ into equivalence classes under induction. We then state a relationship between induction and the map $\pi\mapsto\inv\pi$ in Claim \ref{cor.2}, observed by Veech.

    \begin{defn}\label{def.RV}
         Given an \IET\ $T=(\pi,\lambda)$, $\pi=(\pi_0,\pi_1)\in\perm_\AAA$, let $\alpha_\eps=\pi_\eps^{-1}(d)$, $\eps\in\{0,1\}$, denote the last letter on each row of $\pi$. Define  $I':=[0,|\lambda|-\min\{\lambda_{\alpha_0},\lambda_{\alpha_1}\})$. Then the first return map $T'$ of $T$ on $I'$ is an \IET\ with $T=(\pi',\lambda')$, $\pi'=(\pi_0',\pi_1')$, defined by the following rules:
        \begin{itemize}
            \item Assume $\lambda_{\alpha_0}>\lambda_{\alpha_1}$. We call this \term{Rauzy induction of type $\rtb$}. Then $\pi'=(\pi_0',\pi_1')$ is defined by the following rules:
                $$ \pi_0'=\pi_0,\mbox{ and }\pi_1'(\alpha)=\RHScase{
							\pi_1(\alpha), & \mbox{if }\pi_1(\alpha)\leq\pi_1(\pi_0^{-1}(d)),\\
							\pi_1(\alpha)+1, & \mbox{if }\pi_1(\pi_0^{-1}(d))<\pi_1(\alpha)<d,\\
							\pi_1(\pi_0^{-1}(d))+1, &\mbox{if }\pi_1(\alpha)=d,}$$
                or by the following diagram
                    $$ \xymatrix{{\pi=\cmtrx{\dots& & & \dots& \alpha_0\\
    			     \dots& \alpha_0& \beta&\dots&\alpha_1}}
    		          \ar[r]^\rtb &
    		      {\cmtrx{\dots& & & \dots& \alpha_0\\
    			             \dots& \alpha_0&\alpha_1& \beta&\dots}=\pi',}}$$
    	       and $\lambda$ is related to $\lambda'$ by
        	       $$ \lambda_\alpha'=\RHScase{
    					\lambda_{\alpha_0}-\lambda_{\alpha_1},&\mbox{if }\alpha=\alpha_0,\\
    					\lambda_{\alpha}, & \mathrm{otherwise.}}$$
            \item Now assume $\lambda_{\alpha_0}<\lambda_{\alpha_1}$. This is \term{Rauzy induction of type $\rtt$}. Then $\pi'=(\pi_0',\pi_1')$ is defined by the following rules:
            	$$ \pi_1'=\pi_1,\mbox{ and }\pi_0'(\alpha)=\RHScase{
							\pi_0(\alpha), & \mbox{if }\pi_0(\alpha)\leq\pi_0(\alpha_1),\\
							\pi_0(\alpha)+1, & \mbox{if }\pi_0(\alpha_1)<\pi_0(\alpha)<d,\\
							\pi_0(\alpha_1)+1, &\mbox{if }\alpha=\alpha_0,}$$
            	or by the following diagram
            	   $$ \xymatrix{{\pi=\cmtrx{\dots& \alpha_1& \beta&\dots&\alpha_0\\
            			\dots& & & \dots& \alpha_1}} \ar[r]^\rtt &
            	       {\cmtrx{\dots& \alpha_1&\alpha_0& \beta&\dots\\
            			\dots& & & \dots& \alpha_1}=\pi',}}$$
            	and $\lambda$ is related to $\lambda'$ by
            	   $$ \lambda_\alpha'=\RHScase{
			     		\lambda_{\alpha_1}-\lambda_{\alpha_0},&\mbox{if }\alpha=\alpha_1,\\
    					\lambda_{\alpha}, & \mathrm{otherwise.}}$$
        \end{itemize}
        We shall denote $\pi'$ as $0\pi$ or $1\pi$ if the induction was of type $0$ or $1$, respectively.
    \end{defn}

    \begin{rem}
        The case $\lambda_{\alpha_0}=\lambda_{\alpha_1}$ does not have a valid definition, as the resulting induced transformation is over $(d-1)$ symbols. However, such $\lambda$'s form a codimension one (therefore Lebesgue measure zero) set in $\RR_+^\AAA$.
    \end{rem}

    \begin{defn}\label{def.Keane}
        Assume $\pi\in\irr_\AAA$. Let $T=(\pi,\lambda)$ and $\partial I_\alpha$ denote the left endpoint of subinterval $I_\alpha^0$ for $\alpha\in\AAA$. Then $T$ satisfies the \term{Keane Condition} if
            \begin{equation}\label{eq.connexion}
                \underbrace{T\circ\cdots\circ T}_{m}(\partial I_\alpha) = T^m(\partial I_\alpha)\neq \partial I_\beta
            \end{equation}
        for all $m\geq 1$ and $\alpha,\beta\in\AAA$ such that $\pi_0(\beta)>1$.
    \end{defn}

    \begin{rem}
        Each violation of the Keane Condition satisfies an equality of Equation \eqref{eq.connexion} for a certain triple $(\alpha,\beta,m)$. However, each of these conditions is a codimension one hyperplane in $\RR_+^\AAA$. So given $\pi\in\irr_\AAA$, we see that the Keane property is satisfied for Lebesgue almost every $\lambda\in\RR_+^\AAA$.
    \end{rem}

    \begin{prop}\label{prop.Keane_is_inducible}
        Let $T^{(n)}$ denote the $n^{th}$ iteration of induction on \IET\ $T$. Then the following are equivalent:
        \begin{itemize}
            \item $T$ satisfies the Keane condition.
            \item $T^{(n)}$ is defined for all $n\geq 0$.
        \end{itemize}
    \end{prop}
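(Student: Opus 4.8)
The plan is to prove both implications through a single dictionary between the degenerate case of Definition~\ref{def.RV} and the violations of the Keane condition recorded in Equation~\eqref{eq.connexion}. By the Remark following Definition~\ref{def.RV}, the only way $T^{(n)}$ can fail to be defined is that some earlier induced transformation $T^{(k)}$ has its two rightmost subintervals of equal length, i.e. $\lambda^{(k)}_{\alpha_0}=\lambda^{(k)}_{\alpha_1}$ (Rauzy induction otherwise stays on $d$ letters and preserves irreducibility). Thus it suffices to relate this equal-length event to a connection.

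The backbone is a standard observation, which I would record as a lemma and prove by induction on $n$ directly from the rules of Definition~\ref{def.RV}: each $T^{(n)}$ is the first-return map of $T$ to the subinterval $[0,|\lambda^{(n)}|)$, and every discontinuity $\partial I^{(n)}_\gamma$ of $T^{(n)}$, together with its image $T^{(n)}(\partial I^{(n)}_\gamma)$, lies in the forward $T$-orbit $\bigcup_{\delta\in\AAA}\{T^j(\partial I_\delta):j\geq 0\}$ of the original singularities. Indeed, a single Rauzy step only trims the interval at its right end, so it introduces no new left endpoints beyond one that is the $T^{(n)}$-image of an existing endpoint. Consequently a connection $(T^{(n)})^m(\partial I^{(n)}_\alpha)=\partial I^{(n)}_\beta$ for the induced map (with $\pi^{(n)}_0(\beta)>1$) translates, via the return-map identity, into a connection $T^{m'}(\partial I_{\alpha'})=\partial I_{\beta'}$ for $T$; that is, the Keane condition is inherited by induced maps.

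For the forward implication I would argue by induction that if $T$ satisfies Keane then every $T^{(n)}$ is defined and again satisfies Keane. Suppose $T^{(n)}$ is defined and Keane. If its two rightmost intervals had equal length, then their common left endpoint would be at once a discontinuity $\partial I^{(n)}_{\alpha_0}$ of $T^{(n)}$ (with $\pi^{(n)}_0(\alpha_0)=d>1$) and the image $T^{(n)}(\partial I^{(n)}_{\alpha_1})$ of another left endpoint, yielding the connection $T^{(n)}(\partial I^{(n)}_{\alpha_1})=\partial I^{(n)}_{\alpha_0}$ and contradicting Keane for $T^{(n)}$. Hence $\lambda^{(n)}_{\alpha_0}\neq\lambda^{(n)}_{\alpha_1}$, so $T^{(n+1)}$ is defined, and by the lemma it again satisfies Keane. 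This gives that $T^{(n)}$ is defined for all $n\geq 0$.

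For the converse I would prove the contrapositive, namely that a connection forces the equal-length degeneracy after finitely many steps. Among all connections $(\alpha,\beta,m)$ for $T$ choose one with $m$ minimal; by minimality the intermediate points $T^j(\partial I_\alpha)$, $0<j<m$, are not singularities. Using the lemma, a single induction step compresses the realizing orbit segment to a connection for $T^{(1)}$ with either a strictly smaller step count or an immediate equal-length coincidence of the two last intervals; iterating, since the step count cannot decrease forever and only finitely many singularities are involved, some $T^{(n)}$ must satisfy $\lambda^{(n)}_{\alpha_0}=\lambda^{(n)}_{\alpha_1}$, so $T^{(n+1)}$ is undefined. The main obstacle is precisely this termination argument: the delicate point is to control how one Rauzy step acts on the realizing orbit segment so that a suitable monovariant (the minimal step count $m$, or equivalently the number of orbit points of $\partial I_\alpha$ lying in the current interval before $\partial I_\beta$) strictly decreases, and it is the right-trimming description from the lemma that makes this bookkeeping possible. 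By contrast, the dictionary lemma and the forward direction are routine consequences of Definition~\ref{def.RV}.
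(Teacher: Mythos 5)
The paper itself states Proposition~\ref{prop.Keane_is_inducible} without proof (it is the classical result of Keane and Rauzy), so your proposal can only be measured against the standard argument, whose overall blueprint you reproduce correctly: the equal-length degeneracy at stage $n$ is itself a connection for $T^{(n)}$ (your computation $T^{(n)}(\partial I^{(n)}_{\alpha_1})=\partial I^{(n)}_{\alpha_0}$ with $\pi^{(n)}_0(\alpha_0)=d>1$ is right), and everything hinges on a dictionary between connections of $T^{(n)}$ and of $T$. But your dictionary lemma is false as stated. It is not true that every discontinuity of $T^{(n)}$ lies in the \emph{forward} $T$-orbit of the original singularities: a type-$1$ move creates a new top endpoint $x$ characterized by $T(x)=\partial I^{0}_{\alpha_0}$, so $x$ sits in the \emph{backward} orbit of an old singularity. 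Already for $d=2$, $\pi=(2,1)$, $\lambda_1>\lambda_2$, the new endpoint is $\lambda_1-\lambda_2=T^{-1}(\lambda_1)$, which for $\lambda_1/\lambda_2$ irrational belongs to no forward orbit of $\{0,\lambda_1\}$; likewise your justifying sentence is wrong, since this point is a $T^{(n)}$-\emph{preimage} of an existing endpoint, not an image. The correct dictionary is two-sided: top singularities of $T^{(n)}$ flow forward in finitely many $T$-steps \emph{into} original top singularities, while bottom singularities of $T^{(n)}$ lie in forward $T$-orbits of the points $T(\partial I_\gamma)$. With this repair (and a little care with exponents and the exclusion $\pi_0(\beta)>1$), your forward implication goes through; as written, its key inheritance step rests on a false statement.

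The converse is where the genuine gap lies, and the mechanism you propose is not just unproven but incorrect. A single Rauzy step does \emph{not} produce ``either a strictly smaller step count or an immediate equal-length coincidence'': generically the removed right piece $[\,|\lambda^{(n+1)}|,|\lambda^{(n)}|)$ contains none of the finitely many orbit points $T^{j}(\partial I_\alpha)$, $0\le j\le m$, and the step is defined, so your monovariant merely fails to increase. Since a priori $|\lambda^{(n)}|$ could decrease to a positive limit, nothing in your sketch rules out the count stabilizing at a positive value while the induction runs forever. The missing ingredients in the classical proof are: first, that if all iterates are defined then every letter wins \emph{and loses} infinitely often --- an argument that uses only definedness and irreducibility, and which the paper in fact carries out inside the proof of Proposition~\ref{prop.theta_and_measures} --- whence every $\lambda^{(n)}_\gamma$, and hence $|\lambda^{(n)}|$, tends to $0$, so the points of the realizing orbit segment must eventually be expelled; and second, a transfer analysis at the expulsion moments (when the in-point of the induced connection is the last top singularity, injectivity of $T^{(n)}$ identifies the preimage $(T^{(n)})^{m_n-1}$ of it with the newly created endpoint, dropping the count by one, and when the count is zero at such a moment the in-point is simultaneously the last top and last bottom singularity, forcing $\lambda^{(n)}_{\alpha_0}=\lambda^{(n)}_{\alpha_1}$). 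You explicitly flag this termination argument as the main obstacle but do not supply it, and the per-step dichotomy you offer in its place is false; so the converse direction, as proposed, does not close.
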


	\begin{claim}
        Let $\eps\in\{0,1\}$. Then
            $$\pi\in\irr_\AAA \iff \eps\pi\in\irr_\AAA.$$
	\end{claim}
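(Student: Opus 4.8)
The plan is to reduce irreducibility to a statement about shared prefixes and then track how a single induction step moves letters. Recall that for $\pi=(\pi_0,\pi_1)$ the associated permutation is $\pi_1\circ\inv{\pi_0}$, so $\pi$ fails to be irreducible exactly when there is a \emph{splitting}: an index $k\in\dset{d-1}$ with $\inv{\pi_0}(\dset{k})=\inv{\pi_1}(\dset{k})$, i.e.\ the first $k$ letters of the top row form the same set as the first $k$ letters of the bottom row. Thus it suffices to show that $\pi$ and $\eps\pi$ admit splittings at exactly the same indices $k$.

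I would treat type $0$ in detail and obtain type $1$ by the row-swap symmetry $(\pi_0,\pi_1)\mapsto(\pi_1,\pi_0)$, which exchanges the two induction types and preserves irreducibility (it corresponds to $\pi\mapsto\inv\pi$, under which $\dset{k}$ is invariant iff its image is). So fix type $0$, write $\alpha_0=\inv{\pi_0}(d)$ and $\alpha_1=\inv{\pi_1}(d)$, and set $j:=\pi_1(\alpha_0)$; note $j<d$ precisely when $\alpha_0\neq\alpha_1$, which is exactly when $0\pi$ is defined (if $\alpha_0=\alpha_1$ then $\pi$ already splits at $k=d-1$ and the move is undefined). From Definition \ref{def.RV}, type $0$ leaves the top row fixed and, in the bottom row, removes $\alpha_1$ from the last position and reinserts it immediately after $\alpha_0$; equivalently, only the bottom-row positions strictly greater than $j$ are disturbed.

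The key observation is that every splitting must occur at an index $k<j$, for both $\pi$ and $0\pi$. Indeed $\pi_0(\alpha_0)=d>k$, so $\alpha_0$ lies outside the top prefix; a splitting then forces $\alpha_0$ outside the bottom prefix as well, giving $\pi_1(\alpha_0)=j>k$ for $\pi$, and likewise $\pi_1'(\alpha_0)=j>k$ for $0\pi$ since the bottom position of $\alpha_0$ is unchanged. For such $k<j$ the first $k$ letters of the bottom row are identical for $\pi$ and $0\pi$ (only positions $>j$ were moved), and the top rows agree outright. Hence for each $k<j$ the prefix-equality defining a splitting holds for $\pi$ if and only if it holds for $0\pi$; and since no splitting can occur at any $k\geq j$ for either permutation, their splitting indices coincide. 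Therefore $\pi$ is irreducible iff $0\pi$ is, and the type $1$ case follows by symmetry.

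I expect the main obstacle to be purely bookkeeping: pinning down that the letter $\alpha_0$ occupies the same bottom-row position $j$ before and after the move and that only positions beyond $j$ are permuted, so that the low prefixes are preserved verbatim. Once the move is localized to the suffix past position $j$, the equivalence of splittings is immediate; the one point needing care is the boundary case $\alpha_0=\alpha_1$, where the move is undefined and $\pi$ is automatically reducible, so that no genuine counterexample to the equivalence can arise there.
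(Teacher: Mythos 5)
Your proof is correct, and while its forward mechanism matches the paper's, its overall route differs in two respects. The core localization is the same: you show the type-$0$ move disturbs only bottom-row positions strictly beyond $j=\pi_1(\alpha_0)$ while every splitting must occur at some $k<j$ (since $\pi_0(\alpha_0)=d$ expels $\alpha_0$ from both prefixes); the paper phrases this same fact set-theoretically, saying a reducing set $\AAA'$ with $\pi_i(\AAA')=\dset{k}$ cannot contain the moved letters, so $\eps\pi_i(\AAA')=\dset{k}$ still. The genuine divergence is in the converse. The paper never argues the reverse implication at the level of a single step: it propagates reducibility forward and then invokes the finite order of the type-$\eps$ map on permutations, writing $\pi=\eps^m(\eps\pi)$ for some $m\geq 0$ (the same cycling fact underlying its proof of Claim \ref{cor.1}), so reducibility of $\eps\pi$ pushes back to $\pi$ after $m$ forward steps. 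You instead prove both directions in one pass by showing the splitting indices of $\pi$ and $0\pi$ coincide exactly: all splittings of either lie below $j$ (using $\pi_1'(\alpha_0)=\pi_1(\alpha_0)=j$ and $\pi_0'=\pi_0$), and the length-$k$ prefixes agree verbatim there. This buys a self-contained single-step argument and a slightly stronger conclusion (the reduction indices themselves are invariant), at the cost of the positional bookkeeping you flag; the paper's converse is shorter given a structural fact it needs elsewhere anyway. Your second difference is handling type $1$ by the row swap $(\pi_0,\pi_1)\mapsto(\pi_1,\pi_0)$ — which is precisely the paper's later Claim \ref{cor.2}, $\reps\inv{\pi}=\inv{(\ropp\pi)}$, combined with the observation that $\pi\mapsto\inv{\pi}$ preserves irreducibility — whereas the paper runs its argument uniformly in $\eps$. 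Finally, your explicit treatment of the degenerate case $\alpha_0=\alpha_1$ (move undefined, $\pi$ automatically splits at $k=d-1$) is a boundary point the paper leaves implicit.
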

	
	\begin{proof}
        Suppose $\pi=(\pi_0,\pi_1)\in\perm_\AAA\setminus\irr_\AAA$, and fix type $\eps$. Then there is a proper subset $\AAA'\subset\AAA$, $\#\AAA=k<d$, such that $\pi_i(\AAA')=\{1,\dots,k\}$, $i\in\{0,1\}$. Most importantly, $\alpha_i=\pi_i^{-1}(d)\notin\AAA'$. So our induction must only move elements of $\AAA\setminus\AAA'$ as every element of $\AAA'$ appears before every element of $\AAA\setminus\AAA'$ in both rows. Namely $\eps\pi_i(\AAA')=\{1,\dots,k\}$ as well, or $\eps\pi\in\perm_\AAA\setminus\irr_\AAA$. The argument above applies if we first assume $\eps\pi\in\perm_\AAA\setminus\irr_\AAA$ and evaluate $\pi$, as $\eps^{m+1}\pi = \eps^{m}(\eps\pi)$ for some $m\geq 0$.
	\end{proof}
	So Rauzy induction is a closed operation in the set $\irr_\AAA$.
	\begin{defn}\label{def.rauzyclass}
        Given $\pi\in\irr_\AAA$, the \term{Rauzy Class} of $\pi$, $\RClass(\pi)\subseteq\irr_\AAA$, is the orbit of type $0$ and $1$ moves on $\pi$. The \term{Rauzy Graph} of $\pi$ is the graph with vertices in $\RClass(\pi)$ and directed edges corresponding to the inductive moves.
	\end{defn}
	
	\begin{exam}
        Consider permutation
	    $$\pi=\cmtrx{1&2&3\\3&2&1}.$$
	We have the following two other elements
            $$ 0\pi=\cmtrx{1 & 2 & 3 \\ 3 & 1 & 2}\mbox{, } 1\pi=\cmtrx{1 & 3 & 2 \\ 3 & 2 & 1}$$
	   The Rauzy Graph for $\RClass(\pi)$ is listed in Figure \ref{fig.RC321}.
    \end{exam}
    \begin{figure}[b]
	$$ \xymatrix{ {\cmtrx{1~3~2 \\ 3~2~1}} \ar@(dr,dl)^\rtb \ar@<1ex>[r]^\rtt& {\cmtrx{1~2~3 \\ 3~2~1}} \ar@<1ex>[l]^\rtt \ar@<1ex>[r]^\rtb & {\cmtrx{1~2~3 \\ 3~1~2}}
		\ar@(dr,dl)^\rtt \ar@<1ex>[l]^\rtb}$$
    \caption{The Rauzy Graph on $3$ symbols.}\label{fig.RC321}
    \end{figure}
	The given definition of a Rauzy Class is dependent on the choice of $\pi$, but the next claim shows that being in the
	same Rauzy Class is an equivalence condition and not dependent on our choice of representative.
	\begin{claim}\label{cor.1}
		For any $\pi^{(1)},\pi^{(2)}\in\RClass(\pi)$, there exists a directed path from $\pi^{(1)}$ to $\pi^{(2)}$ in
		the Rauzy Graph.
	\end{claim}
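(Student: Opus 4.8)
The plan is to reduce the strong connectivity asserted by the claim to a single structural fact: each induction move is a \emph{bijection} of the finite set $\irr_\AAA$, after which finiteness does all the remaining work.

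First I would establish the key lemma: for each type $\eps\in\{0,1\}$ the map $R_\eps\colon\irr_\AAA\to\irr_\AAA$, $R_\eps(\pi)=\eps\pi$, is a bijection. Both $0\pi$ and $1\pi$ are defined for every $\pi\in\irr_\AAA$ and land in $\irr_\AAA$ by the preceding claim, so $R_\eps$ is a well-defined self-map. The content is invertibility, and this is obtained by reading the diagrams of Definition \ref{def.RV} backwards. For type $0$ the top row is preserved, so from $\pi'$ we recover $\alpha_0=(\pi_0')^{-1}(d)$; the letter immediately following $\alpha_0$ in the bottom row of $\pi'$ is forced to be $\alpha_1$, and deleting $\alpha_1$ from that position and appending it to the end of the bottom row reconstructs $\pi$ uniquely. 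This backward operation is defined on all of $\irr_\AAA$: if $\alpha_0$ were the final letter of the bottom row of $\pi'$, then $\pi'$ would share its last letter on both rows, and $\AAA\setminus\{\alpha_0\}$ would witness reducibility (for $d\geq 2$, which we may assume). Thus the backward map is a two-sided inverse of $R_0$; the type-$1$ case is symmetric with the roles of the rows exchanged. Hence each $R_\eps$ is a bijection of $\irr_\AAA$.

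Next I would convert inverse moves into genuine forward directed paths using finiteness. Since $\RClass(\pi)$ is by definition the forward orbit of $\pi$, it is invariant under $R_0$ and $R_1$; being injective restrictions to the finite set $\RClass(\pi)$, these maps are permutations of $\RClass(\pi)$. Consequently each $R_\eps$ has a finite order $N_\eps$ on $\RClass(\pi)$, so for any $\sigma\in\RClass(\pi)$ the predecessor $R_\eps^{-1}\sigma=R_\eps^{N_\eps-1}\sigma$ is reached from $\sigma$ by a directed path consisting of $N_\eps-1$ forward type-$\eps$ edges, staying inside $\RClass(\pi)$ throughout. In other words, every inverse move is realized by an honest directed path in the Rauzy Graph.

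Finally I would assemble the path. Given $\pi^{(1)},\pi^{(2)}\in\RClass(\pi)$, both lie in the forward orbit of $\pi$, giving directed paths $\pi\to\pi^{(1)}$ and $\pi\to\pi^{(2)}$. Writing the former as $\pi^{(1)}=R_{\eps_k}\cdots R_{\eps_1}\pi$ and inverting yields $\pi=R_{\eps_1}^{-1}\cdots R_{\eps_k}^{-1}\pi^{(1)}$; replacing each $R_\eps^{-1}$ by the positive power supplied above produces a directed path $\pi^{(1)}\to\pi$, and concatenation with $\pi\to\pi^{(2)}$ gives the desired $\pi^{(1)}\to\pi^{(2)}$. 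I expect the invertibility lemma to be the only real obstacle — one must check carefully that the backward reconstruction places the winner and loser letters in exactly the positions the diagrams demand and that irreducibility forces $\alpha_0\neq\alpha_1$ — while the remainder is soft finiteness. One could instead phrase the last two paragraphs group-theoretically, letting $G=\langle R_0,R_1\rangle$ act on $\irr_\AAA$, observing that $\RClass(\pi)$ is a single $G$-orbit and that every element of this finite group is a positive word in $R_0,R_1$; the concatenation argument above simply makes this explicit without naming the group.
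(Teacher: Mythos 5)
Your proof is correct, and its overall skeleton is the same as the paper's: reduce strong connectivity to reversing a single edge, realize each inverse move as finitely many forward moves of the same type, and concatenate the reversed path $\pi^{(1)}\to\pi$ with the forward path $\pi\to\pi^{(2)}$. Where you genuinely differ is in the key lemma that makes the reversal work. The paper reads periodicity directly off Definition \ref{def.RV}: iterating a type-$\eps$ move cyclically shifts the letters of the opposite row lying beyond the winner (whose position is unchanged by these moves), so $\reps^n\tilde\pi=\tilde\pi$ for an explicit $n$ determined by the winner's position in the opposite row, and no injectivity statement is ever needed. You instead prove that each type-$\eps$ move is a bijection of $\irr_\AAA$ by an explicit backward reconstruction --- correctly using irreducibility to rule out the degenerate case where both rows end in the same letter --- and then obtain periodicity abstractly from injectivity plus finiteness of the invariant set $\RClass(\pi)$. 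Your route is heavier than the claim requires, but it buys a worthwhile standalone fact: your backward map is exactly the permutation-level inverse $\pi\mapsto\pi^\eps$ that the paper only records later, in Remark \ref{rem.RV_is_2to1}, when discussing how Rauzy induction is $2$-to-$1$ on \IET s; your lemma makes that construction explicit and verifies it is two-sided. The trade-off is that the paper's argument is more economical and yields concrete cycle lengths, while yours is more self-contained (it never asks the reader to see the cyclic structure of the moves) and generalizes to any injective edge map on a finite forward-closed orbit. Both are complete proofs.
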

	\begin{proof}
        It suffices to show that for a permutation $\tilde{\pi}\in\RClass(\pi)$, there exists a path from $\reps\tilde{\pi}$ to $\tilde{\pi}$ for $\eps\in\{0,1\}$. By Definition \ref{def.RV}, there exists $n>0$ such that $\reps^n\tilde{\pi}=\tilde{\pi}$. So $n-1$ moves of type $\eps$ form a path from $\reps\tilde{\pi}$ to $\tilde{\pi}$.
	\end{proof}
    So if $\tilde{\pi}\in\RClass(\pi)$, then $\RClass(\pi)=\RClass(\tilde{\pi})$. The next result is used in Sections \ref{sec.hyperelliptic} and \ref{sec.hyperelliptic}.
    \begin{claim}\label{cor.std_in_RC}
        Every Rauzy Class $\RClass\subset\irr_\AAA$ contains a standard permutation (i.e. $\pi$ such that $\pi_0(\alpha) = \pi_1(\beta) =d$ and $\pi_0(\beta)=\pi_1(\alpha)=1$ for some $\alpha,\beta\in\AAA$).
    \end{claim}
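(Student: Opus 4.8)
The plan is to reformulate ``standard'' so as to expose what Rauzy induction can and cannot change, and then to reach such a permutation by a monotone pushing argument. Write $a=\pi_0^{-1}(d)$, $b=\pi_1^{-1}(d)$ for the last letters of the two rows and $f_0=\pi_0^{-1}(1)$, $f_1=\pi_1^{-1}(1)$ for the first letters. Inspecting the two rules in Definition~\ref{def.RV}, a type $0$ move fixes $\pi_0$ entirely and a type $1$ move fixes $\pi_1$ entirely, and in each case the letter occupying the first slot of the altered row is never displaced; hence $f_0$ and $f_1$ are invariants of the whole class, and irreducibility forces $f_0\neq f_1$. The defining conditions of a standard permutation say precisely that $\alpha=a=f_1$ and $\beta=b=f_0$, so the goal becomes: within $\RClass$, reach a permutation whose last top letter is $f_1$ and whose last bottom letter is $f_0$.

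First I would record two easy ``completion'' moves. Reading the type $0$ diagram, repeated type $0$ moves fix the top row and cyclically rotate the block of the bottom row lying strictly to the right of $a$; so if already $a=f_1$, then since $f_1=\pi_1^{-1}(1)$ sits at the front of the bottom row this block is all of $\AAA\setminus\{f_1\}$, and rotating brings $f_0$ into the last bottom slot. Thus $a=f_1$ can always be upgraded to a standard permutation using type $0$ moves alone, and symmetrically $b=f_0$ can be upgraded using type $1$ moves alone. It therefore suffices to reach \emph{either} $a=f_1$ \emph{or} $b=f_0$.

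To do this I would track the pair of statistics $q=\pi_0(f_1)$ and $p=\pi_1(f_0)$, aiming to drive one of them to $d$. The position rules show that, while $q<d$, a type $1$ move leaves $q$ unchanged if $f_1$ is weakly left of $b$ in the top row and raises $q$ by one otherwise, while a type $0$ move never changes $q$; the symmetric statements hold for $p$. Hence $q$ and $p$ are both non-decreasing along any sequence of moves (until one reaches $d$), and each is strictly increased by the appropriate move whenever $f_1$ lies strictly right of $b$, respectively $f_0$ lies strictly right of $a$. When neither condition holds the permutation is ``stuck''; here one performs moves of the complementary type, which preserve the relevant statistic but alter the letters $a,b$, in order to unstick and resume increasing $q$ or $p$.

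The main obstacle is to show this procedure always terminates, equivalently that no Rauzy class is permanently stuck, and I would argue by contradiction using strong connectivity (Claim~\ref{cor.1}). If neither $q$ nor $p$ ever reaches $d$, then both are non-decreasing everywhere, so a closed walk in the strongly connected Rauzy graph forces $q$ and $p$ to be \emph{constant} on $\RClass$, whence every element is stuck in both directions (a single unstuck move would strictly raise $q$ or $p$). Stuckness then pins the first $q$ entries of every top row and the first $p$ entries of every bottom row to fixed tuples, since neither move type can disturb positions $1,\dots,q$ of the top or $1,\dots,p$ of the bottom: the frozen top block runs $f_0,\dots,f_1$ and the frozen bottom block runs $f_1,\dots,f_0$. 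The crux, and the step I expect to require the most careful bookkeeping, is to deduce from the constraints that every letter trailing $a$ in the bottom row lies in the top block (and dually for $b$) that these two frozen blocks consist of the \emph{same} set of letters; this exhibits a proper initial segment of $\dset{d}$ mapped to itself, contradicting $\pi\in\irr_\AAA$. An alternative to this rigidity argument would be to replace the termination claim by an explicit lexicographic monovariant certifying that any stuck configuration is escaped in boundedly many moves.
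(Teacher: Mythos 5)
Your reduction is sound and, up to relabeling, coincides with the endgame of the paper's own proof: the paper tracks $n=\min\{\pi_0(\pi_1^{-1}(d)),\pi_1(\pi_0^{-1}(d))\}$, and its case $n=1$ is exactly your condition ``$a=f_1$ or $b=f_0$,'' which it finishes with the same burst of $m$ single-type moves that you describe. Your preliminary observations are also correct: $f_0,f_1$ are invariants of the class (insertions in Definition~\ref{def.RV} happen at positions $\geq 2$), irreducibility gives $f_0\neq f_1$, the statistics $q=\pi_0(f_1)$ and $p=\pi_1(f_0)$ are non-decreasing while below $d$, and strong connectivity (Claim~\ref{cor.1}) correctly forces $q,p$ to be constant on a class in which neither ever reaches $d$, so that every element would be stuck and the stated initial segments frozen.

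The gap is the termination argument, which is where irreducibility must actually enter, and your proposal leaves precisely that step unproven --- you flag it yourself as ``the step I expect to require the most careful bookkeeping.'' Moreover, the one concrete lemma you offer toward it is stated backwards. In a globally stuck class, cycling type~$0$ moves brings each letter trailing $a$ in the bottom row into the last bottom slot while fixing the top row and fixing $\pi_1(a)$; stuckness of every intermediate permutation then forces each such letter to have top position $>q$, i.e.\ these letters lie strictly \emph{outside} the frozen top block, not inside it. With the corrected statement and its dual, one only obtains that the frozen bottom block sits in top positions $\leq\pi_0(b)$ and the frozen top block in bottom positions $\leq\pi_1(a)$; since stuckness gives merely $q\leq\pi_0(b)$ and $p\leq\pi_1(a)$, letters with top position in the interval $(q,\pi_0(b)]$ and bottom position in $(p,\pi_1(a)]$ are uncontrolled, so the desired conclusion that the two frozen blocks coincide as sets --- and with it the reducibility contradiction --- does not follow from what you wrote without substantial further interleaved-move analysis. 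The paper sidesteps all of this with a strict monovariant: while $n>1$, irreducibility directly furnishes a letter $\gamma$ with $\pi_\eps(\gamma)<n<\pi_{1-\eps}(\gamma)$ (else $\pi_0^{-1}(\{n,\dots,d\})=\pi_1^{-1}(\{n,\dots,d\})$, contradicting irreducibility), and $d-\pi_{1-\eps}(\gamma)$ moves of type $\eps$ make $\gamma$ the last letter of row $1-\eps$, strictly decreasing $n$. This is exactly the ``explicit lexicographic monovariant'' you gesture at in your final sentence; supplying it, rather than the global rigidity argument, is the cleanest repair and turns your sketch into the paper's proof.
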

    \begin{proof}
        Consider any $\pi\in\RClass$. Denote by $\alpha_\eps=\inv\pi_\eps(d)$ and $\beta_\eps=\inv\pi_\eps(1)$, for $\eps\in\{0,1\}$, or
        $$ \pi = \cmtrx{\LL{\beta_0}{\beta_1}~\LL{\dots}{\dots}~\LL{\alpha_0}{\alpha_1}}.$$ Let $n = \min\{\pi_0(\alpha_1), \pi_1(\alpha_0)\}$. Suppose $n=1$ and choose $\eps$ such that $\pi_{1-\eps}(\alpha_\eps)=1$. In this case, $\alpha_\eps = \beta_{1-\eps}$, and if we perform $m$ inductive moves of type $\eps$, the resulting permutation is standard, where $m=d-\pi_{1-\eps}(\beta_\eps)$. Consider the following diagram for $\eps=\rtb$:
        $$ \xymatrix{{\cmtrx{\LL{\beta_0}{\alpha_0} ~\LL{\dots}{\dots}~\LL{\dots}{\beta_0~\delta} ~\LL{\dots}{\dots}~\LL{\alpha_0}{\alpha_1}}} \ar^{\rtb^m}[r] &
            {\cmtrx{\LL{\beta_0}{\alpha_0} ~\LL{\dots}{\delta\dots}~\LL{\dots}{\alpha_1} ~\LL{\dots}{\dots}~\LL{\alpha_0}{\beta_0}}}}.$$
        If $n>1$ then we may fix $\eps\in\{0,1\}$ and find $\gamma\in\AAA$ such that $\pi_\eps(\gamma) < n < \pi_{1-\eps}(\gamma)$. If no such $\gamma$ exists, then $\inv{\pi_0}(\{n,\dots,d\})=\inv{\pi_1}(\{n,\dots,d\})$ and $\pi$ is not irreducible. So let $m=d-\pi_{1-\eps}(\gamma)$, and perform $m$ iterations of type $\eps$. Call this new permutation $\pi'$ and note that $\alpha'_{1-\eps}=\inv{\pi'}_{1-\eps}(d) = \gamma$ and $\alpha'_\eps = \inv{\pi'}_\eps(d)=\alpha_\eps$. Therefore $n' = \min\{\pi'_0(\alpha'_1), \pi'_1(\alpha'_0)\}<n$. Consider the following diagram for $\eps=\rtt$:
        $$ \xymatrix{\cmtrx{\LL{\dots}{\dots\gamma} ~\LL{\alpha_1\dots}{\dots\alpha_0} ~\LL{\gamma\dots}{\dots} ~\LL{\alpha_0}{\alpha_1}} \ar^{\rtt^m}[r] &
            \cmtrx{\LL{\dots}{\dots\gamma}~\LL{\alpha_1\dots\alpha_0}{\dots\alpha_0}~\LL{\dots}{\dots}~\LL{\gamma}{\alpha_1}}}.$$
        Repeat the above argument for $\pi'$ until $n'=1$ and we may derive a standard permutation.
    \end{proof}
    Consider one more observation that is used in Corollary \ref{cor.main}. This result is evident from Equations (2.2) in \cite{c.Ve84_I}.
	\begin{claim}\label{cor.2}
		For $\{\eps,\tilde\eps\}=\{0,1\}$ and $\pi\in\irr_\AAA$, $\reps\inv{\pi}=\inv{(\ropp\pi)}$.
	\end{claim}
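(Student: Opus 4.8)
The plan is to prove Claim \ref{cor.2} directly from the definition of Rauzy induction (Definition \ref{def.RV}) by tracking what happens to the pair $(\pi_0,\pi_1)$ under inversion. The key observation is that taking the inverse permutation $\inv\pi$ should swap the roles of the two rows: if $\pi = (\pi_0,\pi_1)$ with $\pi = \pi_1\circ\pi_0^{-1}$, then $\inv\pi = \pi_0\circ\pi_1^{-1}$, which corresponds to the pair $(\pi_1,\pi_0)$ under the equivalence of pairs established in Section \ref{sec.IET}. So the first step is to record the identity $\inv{(\pi_0,\pi_1)} = (\pi_1,\pi_0)$ as pairs; this is the conceptual heart of the claim, since inversion is exactly the operation that interchanges the top and bottom rows of the matrix notation.

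Given this, I would argue that the two types of Rauzy induction in Definition \ref{def.RV} are themselves interchanged by the row-swap. Concretely, type $0$ induction (the case $\lambda_{\alpha_0}>\lambda_{\alpha_1}$) fixes the top row $\pi_0' = \pi_0$ and modifies the bottom row by inserting $\alpha_1$ immediately after the position of $\alpha_0 = \pi_0^{-1}(d)$; type $1$ induction (the case $\lambda_{\alpha_0}<\lambda_{\alpha_1}$) fixes the bottom row and performs the mirror-image insertion on the top row. These two rules are literally the same combinatorial operation on a permutation-matrix, one acting on the top row and the other on the bottom, so conjugating by the swap $(\pi_0,\pi_1)\mapsto(\pi_1,\pi_0)$ turns the type-$\tilde\eps$ rule into the type-$\eps$ rule. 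This is exactly the content of Veech's Equations (2.2) in \cite{c.Ve84_I}.

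The cleanest way to carry this out is to take $\{\eps,\tilde\eps\}=\{0,1\}$ and compute both sides of $\reps\inv\pi = \inv{(\ropp\pi)}$ as pairs. On the right, apply induction of type $\tilde\eps$ to $\pi=(\pi_0,\pi_1)$ to obtain $\ropp\pi$, then invert; on the left, first invert to get $\inv\pi=(\pi_1,\pi_0)$ and then apply induction of type $\eps$. Using the explicit formulas for $\pi_\eps'$, verify that the top and bottom rows of the two resulting pairs agree. Because of the defined equivalence of pairs, it suffices to check that the two combinatorial insertion operations match after the row-swap, which follows by comparing the piecewise formulas for $\pi_0'$ and $\pi_1'$ termwise.

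The main obstacle is purely bookkeeping: one must be careful that the \emph{type} of induction is determined by the lengths $\lambda_{\alpha_0}$ versus $\lambda_{\alpha_1}$, and that inverting a permutation does not come equipped with an obvious rule for transforming $\lambda$. Since Claim \ref{cor.2} is a statement purely about permutations and not about \IET s, the resolution is to observe that the claim ranges over \emph{all} $\pi\in\irr_\AAA$ and that $\reps$ denotes the combinatorial operation on permutations (which is well-defined regardless of lengths, as guaranteed by Claim stating $\pi\in\irr_\AAA \iff \eps\pi\in\irr_\AAA$). Thus I would phrase the verification entirely at the level of the permutation rules, sidestepping the length vector altogether, and the identity reduces to matching the two insertion formulas under the row-swap.
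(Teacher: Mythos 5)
Your proposal is correct and follows essentially the same route as the paper: both identify $\inv{(\pi_0,\pi_1)}=(\pi_1,\pi_0)$ as the key fact and then verify, via the explicit piecewise formulas of Definition \ref{def.RV}, that the row-swap conjugates type-$0$ induction into type-$1$ induction (the paper does this by writing out $\rb\inv\pi=(\pi_0^\bullet,\pi_1^\bullet)$ and recognizing $(\pi_1^\bullet,\pi_0^\bullet)$ as $\rt\pi$). Your remark that the claim lives purely at the combinatorial level of permutations, with the length vector playing no role, is also implicit in the paper's argument, so there is no gap.
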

	
	\begin{proof}
        We will show that $\inv{(\rb\inv\pi)}=\rt\pi$ as it will prove the claim for all cases. Let $\pi =(\pi_0,\pi_1)$.
        Then $\inv\pi = (\pi_1,\pi_0)$. Now let $\rb\inv\pi=(\pi_0^{\bullet},\pi_1^{\bullet})$. By Definition \ref{def.RV},
        $$ \pi_0^{\bullet} = \pi_1 \mbox{ and } \pi_1^{\bullet}(\alpha) =
            \RHScase{\pi_0(\alpha), & \mbox{if }\pi_0(\alpha)\leq\pi_0(\inv\pi_1(d)),\\
                \pi_0(\alpha)+1, & \mbox{if }\pi_0(\inv\pi_1(d)) < \pi_0(\alpha) < d, \\
                \pi_0(\inv\pi_1(d))+1, & \mbox{if }\pi_0(\alpha)=d.}$$
        Then $\inv{(\rb\inv\pi)} = (\pi_1^\bullet,\pi_0^\bullet)$. By checking Definition \ref{def.RV}, we conclude that $\rt\pi = (\pi_1^\bullet,\pi_0^\bullet) = \inv{(\rb\inv\pi)}$.
	\end{proof}

	So the action of taking the inverse permutation conjugates with the Ruazy moves on $\pi$ by sending them to
	the opposite move on $\pi^{-1}$.

    \begin{rem}\label{rem.RV_is_2to1}
         Let $T=(\pi,\lambda)$ for $\pi=(\pi_0,\pi_1)\in\irr_\AAA$ and $\lambda\in\RR_+^\AAA$. We also let $\alpha_\eps=\inv\pi_\eps(d)$ define the last letter of the rows of $\pi$. From the proof of Claim \ref{cor.1}, we can define $\pi^\eps$, for $\eps\in\{0,1\}$, such that $\reps\pi^\eps=\pi$. Also for $\eps\in\{0,1\}$, let $\lambda^\eps\in \RR_+^\AAA$ be defined by
             $$ \lambda^\eps_\alpha = \RHScase{ \lambda_{\alpha_0}+\lambda_{\alpha_1}& \mbox{if }\alpha=\alpha_\eps,\\ \lambda_\alpha& \mathrm{otherwise.}}$$
         It follows that if $T_\eps=(\pi^\eps,\lambda^\eps)$, then $T_\eps' = T$ and Rauzy Induction on $T_\eps$ is type $\eps$. So almost everywhere on the set $\irr_d\times\RR_+^\AAA$ (the set of all \IET's on $\AAA$), Rauzy induction is a $2$ to $1$ map.
    \end{rem}

\subsection{Suspended Surfaces for Interval Exchanges}\label{sec.surface}

    In \cite{c.Ve82}, Veech introduced the zippered rectangle construction, which allows us to associate to an \IET\ a flat surface with an Abelian differential. We present an equivalent construction, presented for example by Viana in \cite{c.Vi06}, of suspended surfaces over an \IET. We discuss Rauzy-Veech induction on these surfaces and introduce the moduli space of Abelian differentials.
	
	Fix $\pi=(\pi_0,\pi_1)\in\irr_\AAA$, $\pi_\eps:\AAA\rightarrow\dset{d}$, and $\lambda\in\RR_+^\AAA$. Let
    \begin{equation}\label{eq.Tpi}
        \TTT_\pi:=\left\{\tau\in\RR^\AAA: \sum_{\pi_0(\alpha)\leq k}\tau_\alpha>0,
            \sum_{\pi_1(\alpha)\leq k}\tau_\alpha<0\mbox{, for all }1\leq k < d\right\}.
    \end{equation}
    Define vectors $\vec{\zeta}_\alpha:=(\lambda_\alpha,\tau_\alpha)$ and segments $\zeta_\alpha^\eps$, $\eps\in\{0,1\}$, as the segment starting at $\sum_{\pi_\eps(\beta) <\pi_\eps(\alpha)}\vec{\zeta}_\beta$ and ending at $\sum_{\pi_\eps(\beta) \leq\pi_\eps(\alpha)}\vec{\zeta}_\beta$, noting that $\zeta_\alpha^0$ and $\zeta_\alpha^1$ are parallel (they are just translations of vector $\vec{\zeta}_\alpha$). Let $S:=S(\pi,\lambda,\tau)$ be the surface bounded by all $\zeta_\alpha^\eps$ with each $\zeta_\alpha^0$ and $\zeta_\alpha^1$ identified by translation. For example, if $\pi=(4,1,3,2)$, one suspension is given by Figure \ref{fig.suspend4132}.
    \begin{figure}[t]
        \begin{center}
           \setlength{\unitlength}{200pt}
            \begin{picture}(1,.57)
                \put(0,0){\includegraphics[width=\unitlength]{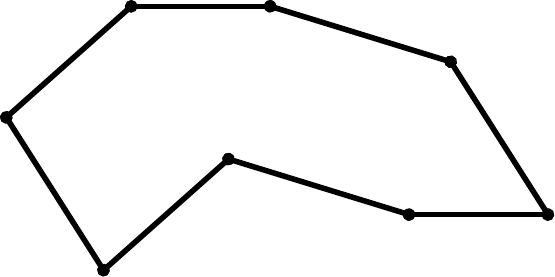}}

                \put(0.09,.41){$1$}
                \put(0.35,.51){$2$}
                \put(0.66,.46){$3$}
                \put(0.92,.26){$4$}

                \put(.06,.09){$4$}
                \put(.32,.07){$1$}
                \put(.55,.10){$3$}
                \put(.85,.05){$2$}
            \end{picture}
       \end{center}
        \caption{A suspension surface for $(4,1,3,2)$.}\label{fig.suspend4132}
    \end{figure}
    To avoid cumbersome notation, we denote the segments $\zeta_\alpha^\eps$ simply by $\alpha$ in a suspension. By definition, the leftmost endpoint is $(0,0)$. Define $I_S:=[0,|\lambda|)\times\{0\}$. With the exception of the points of discontinuity, the \IET\ $T=(\pi,\lambda)$ is realized by the first return of the positive vertical direction of $S$ on $I_S$, as is illustrated Figure \ref{fig.firstreturnisIET}.
    \begin{figure}[b]
        \begin{center}
           \setlength{\unitlength}{200pt}
            \begin{picture}(1,.80)
                \put(0,0){\includegraphics[width=\unitlength]{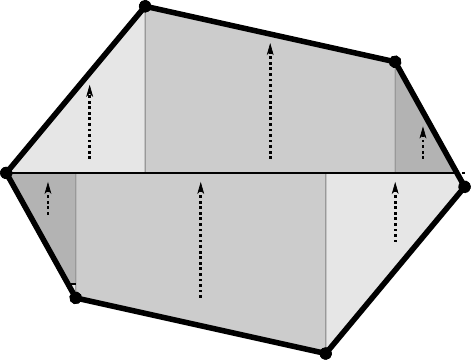}}

                \put(.13,.62){$1$}
                \put(.61,.73){$2$}
                \put(.94,.52){$3$}

                \put(.04,.21){$3$}
                \put(.38,.00){$2$}
                \put(.86,.14){$1$}
            \end{picture}
       \end{center}
        \caption{The first return of $I_S$ in the suspension is the original \IET.}\label{fig.firstreturnisIET}
    \end{figure}
			
    Each of the identifications on these surfaces is a translation. Therefore the standard form $dz$ in the polygon descends to a holomorphic $1$-form on the surface with zeroes, if any, at the vertex equivalence classes. Each vertex class is called a singularity of degree $k$, where $k$ is the degree of the corresponding zero of the differential and the total angle around the singularity is $2\pi(k+1)$.

    In order to give an explicit way to determine the degree of the singularities in a surface $S$, let us label the endpoints of our segments by $(\alpha,\eps,\imath)$, where $\alpha\in\AAA$, $\eps\in\{0,1\}$ and $\imath\in\{L,R\}$, to denote the left or right endpoint of segment $\zeta_\alpha^\eps$. We have the natural identification rules:
    \begin{enumerate}
        \item For $1\leq i < d$ and $\eps\in\{0,1\}$, $(\inv\pi_\eps(i),\eps,R)\sim(\inv\pi_\eps(i+1),\eps,L)$.
        \item $(\inv\pi_0(1),0,L)\sim(\inv\pi_1(1),1,L)$ and $(\inv\pi_0(d),0,R)\sim(\inv\pi_1(d),1,R)$.
        \item For $\alpha\in\AAA$ and $\imath\in\{L,R\}$, $(\alpha,0,\imath)\sim(\alpha,1,\imath)$.
    \end{enumerate}
    The equivalence sets determine the identified singularities in our surface $S$. The first rule lets us consider only the vertices of the form $(\alpha,\eps,L)$. With the exception of $(\inv\pi_0(1),0,L)$, every other vertex of this form has a downward direction in $S$. If a singularity is of degree $k$, it must have $k+1$ different vertices in its equivalence class, to ensure the total angle of $2\pi(k+1)$. Therefore, if we have $n$ vertices identified in our surface on the top (or bottom) row, it is a singularity of degree $n-1$.

    \begin{exam}
         A suspension of $(4,3,2,1)$ has one singularity in Figure \ref{fig.sing4321}, which has $3$ copies on the top row. So it is a singularity of degree $2$.
    \end{exam}
	\begin{figure}[h]
        \begin{center}
           \setlength{\unitlength}{200pt}
            \begin{picture}(1,.58)
                \put(0,0){\includegraphics[width=\unitlength]{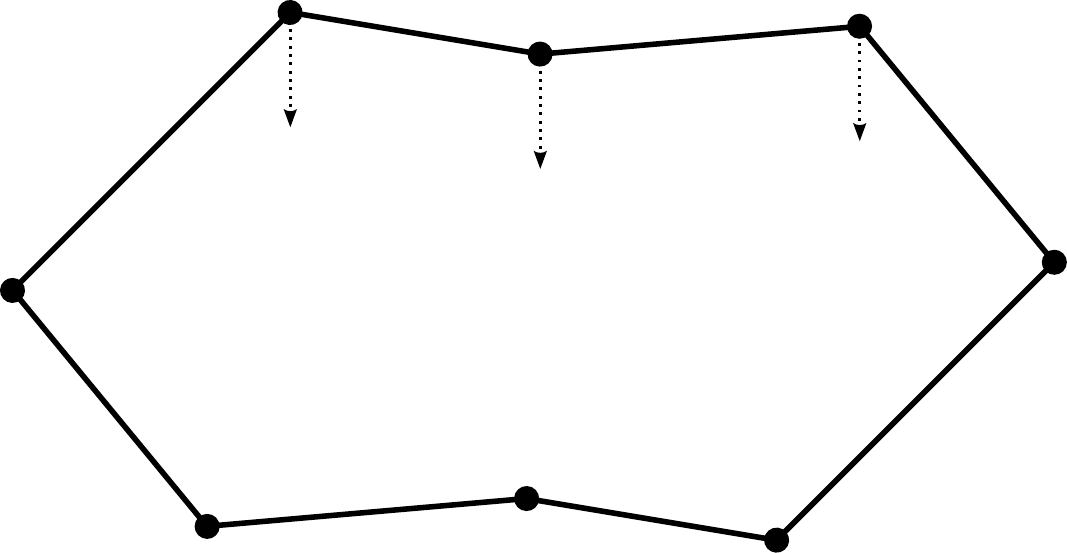}}

                \put(.12,.41){$1$}
                \put(.38,.52){$2$}
                \put(.64,.51){$3$}
                \put(.91,.41){$4$}

                \put(.07,.08){$4$}
                \put(.35,-.02){$3$}
                \put(.60,-.03){$2$}
                \put(.87,.09){$1$}
            \end{picture}
       \end{center}
    \caption{A suspension for $(4,3,2,1)$ has one singularity of degree $2$.}\label{fig.sing4321}
    \end{figure}
    Suppose our surface has $m$ singularities of degrees $\ell_1,\dots,\ell_m$. If $s=\dusum{i=1}{m} \ell_i$, the number of edges in our surface is $d=m+s+1$. So the Euler characteristic is $\chi(S)=m-(m+s+1)+1=-s$. The genus of the surface is then $g(S) = \frac{2-\chi(S)}{2} = 1+\frac{s}{2}$. The number and degrees of singularities do not depend on our choice of $\lambda$ or $\tau$, only on $\pi$. Therefore the genus of $\pi$, $g(\pi)$, is well defined.
    
    Rauzy induction may be extended to these surfaces as well and is called \term{Rauzy-Veech (R-V) induction}. Let $\eps\in\{0,1\}$ be such that $\lambda_{\alpha_\eps}>\lambda_{\alpha_{1-\eps}}$. We can define a new surface by $S'=S(\pi',\lambda',\tau')$ where $(\pi',\lambda')$ is defined as in Section \ref{sec.RC} and $\tau'$ is defined as
	\begin{equation}\label{eq.Tpi_RV}
         \tau_\alpha':=\RHScase{
				\tau_\alpha, & \mbox{if }\alpha\neq\alpha_\eps,\\
				\tau_\alpha-\tau_{\alpha_{1-\eps}}, & \mbox{if }\alpha=\alpha_\eps.}
    \end{equation}
    This procedure is a ``cut and paste" by translation from $S=S(\pi,\lambda,\tau)$ to $S':=S(\pi',\lambda',\tau')$, as shown for $\pi=(3,2,1)$ and induction type $\rtt$ in Figure \ref{fig.suspensionRV}.
    \begin{figure}
        \begin{center}
           \setlength{\unitlength}{250pt}
            \begin{picture}(1,.57)
                \put(0,0){\includegraphics[width=\unitlength]{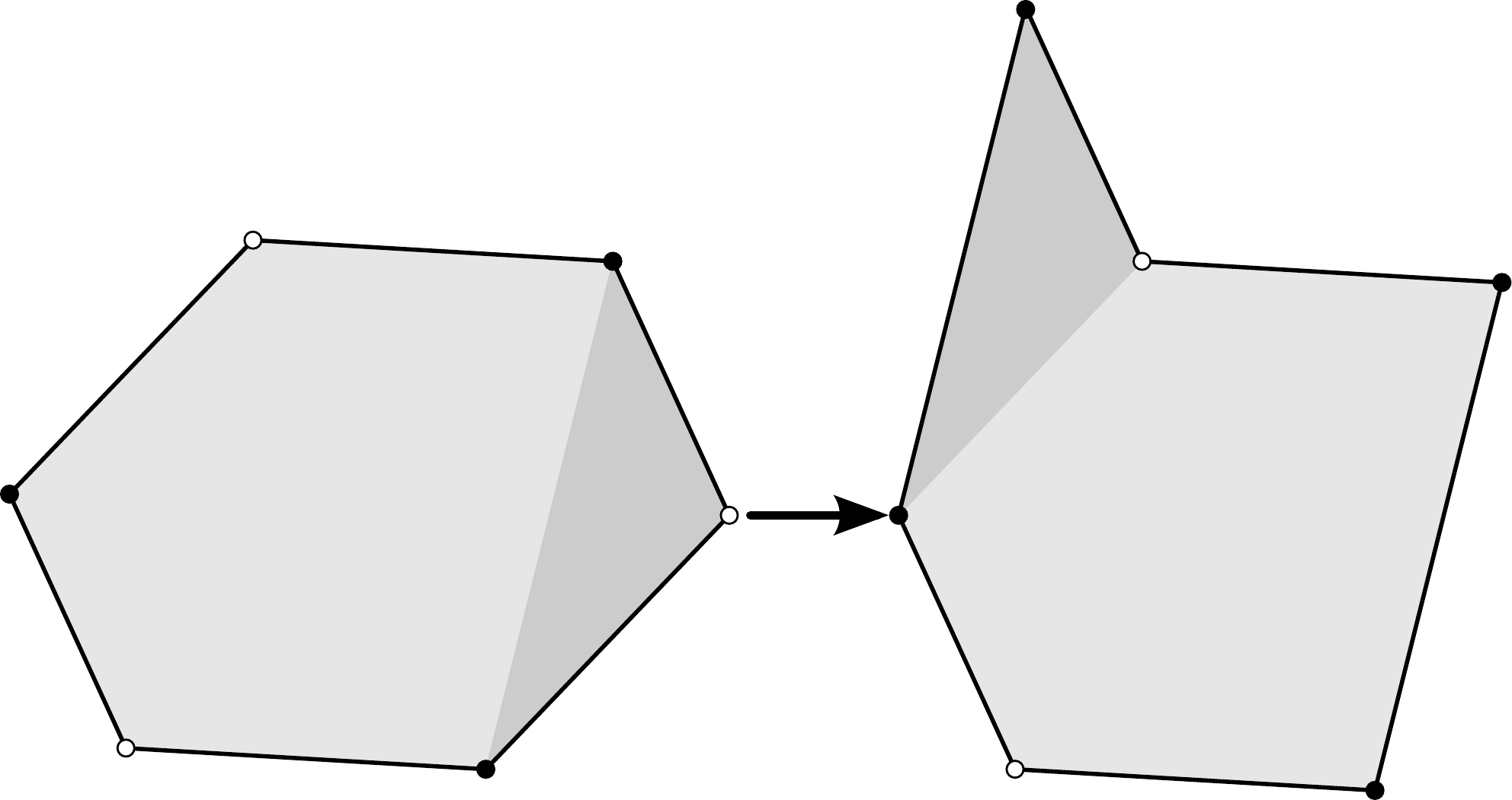}}

                \put(.22,.18){$S$}

                \put(.78,.18){$S'$}

                \put(.06,.30){$a$}
                \put(.27,.38){$b$}
                \put(.46,.28){$c$}

                \put(.01,.10){$c$}
                \put(.19,-.02){$b$}
                \put(.41,.08){$a$}

                \put(.60,.09){$c$}
                \put(.78,-.03){$b$}
                \put(.97,.16){$a$}

                \put(.60,.36){$a$}
                \put(.72,.46){$c$}
                \put(.86,.37){$b$}

                \put(.52,.22){$\rtt$}
            \end{picture}
       \end{center}
    \caption{A move of Rauzy-Veech induction on suspension $S$ for $\pi=(3,2,1)$.}\label{fig.suspensionRV}
    \end{figure}
    Note that, as opposed to the case of $(4,1,3,2)$, $\pi=(3,2,1)$ has two singularities of degree zero. The induced permutation, $\rt\pi$ (see Definition \ref{def.RV}), has the same number and degrees of singularities as $\pi$. This is a general fact.	
	\begin{prop}\label{prop.genus_is_inv}
		The number and degrees of singularities, and consequently the genus, are constant over a Rauzy Class.
	\end{prop}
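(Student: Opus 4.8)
The plan is to reduce the statement to the invariance of the singularity data under a \emph{single} step of Rauzy--Veech induction, and then to observe that such a step does not alter the underlying translation surface at all. First I would invoke Claim \ref{cor.1}: since the Rauzy graph is strongly connected, any two permutations in $\RClass$ are joined by a directed path of type-$\rtb$ and type-$\rtt$ moves. Hence it suffices to show that if $\pi' = \eps\pi$ for a single $\eps\in\{0,1\}$, then the suspensions $S=S(\pi,\lambda,\tau)$ and $S'=S(\pi',\lambda',\tau')$ have the same number of singularities with the same degrees. Moreover, by Claim \ref{cor.2} the two move types are interchanged by the involution $\pi\mapsto\inv\pi$, which corresponds to reflecting a suspension across the horizontal axis (swapping the two rows); since this reflection carries singularities to singularities of equal degree, it is enough to treat one type, say $\eps=\rtt$.

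The heart of the argument is geometric. As recorded in Equation \eqref{eq.Tpi_RV} and illustrated in Figure \ref{fig.suspensionRV}, passing from $S$ to $S'$ is a ``cut and paste by translation'': one excises the sub-polygon lying over the overhang of the longer side and reattaches it along the identification $\zeta^0_{\alpha_\eps}\sim\zeta^1_{\alpha_\eps}$, the reattachment being a pure translation. Because every gluing involved---both the original side identifications and the one used in the cut-and-paste---is a translation, $S$ and $S'$ are translation-equivalent: they carry the same flat metric and the same Abelian differential $dz$. The singularities are exactly the zeroes of this differential, equivalently the cone points of total angle $2\pi(k+1)$, and these are intrinsic to the surface and independent of which polygon we use to present it. Consequently $S$ and $S'$ have identical singularity data.

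If one prefers a presentation-level verification rather than appealing to the invariance of the flat structure, I would instead track the vertex equivalence classes directly through identification rules (1)--(3) of Section \ref{sec.surface}. The move changes the local configuration only at the vertices neighbouring the transported segment $\alpha_\eps$ and the segment $\beta$ displaced past it; a short check of the three rules shows that the induced correspondence on vertices is a bijection preserving each equivalence class, hence preserving the count $n$ of top-row vertices in each class and therefore its degree $n-1$. This bookkeeping is the only place where any real work occurs, and the delicate point to watch is the case in which the cut runs adjacent to a singularity, so that one must confirm no class is split, merged, created, or destroyed; the cut-and-paste viewpoint of the previous paragraph is precisely what guarantees this cannot happen, so I expect that to be the main obstacle to a fully self-contained combinatorial proof.

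Finally I would conclude for the genus. With the number $m$ of singularities and their degrees $\ell_1,\dots,\ell_m$ unchanged by a single move, the quantity $s=\dusum{i=1}{m}\ell_i$ is unchanged, and hence so is $g(S)=1+\frac{s}{2}$. Propagating this equality along a directed path between any two vertices of the Rauzy graph shows that the number and degrees of singularities, and the genus, are constant over all of $\RClass$.
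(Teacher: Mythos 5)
Your fallback paragraph --- tracking the vertex equivalence classes through identification rules (1)--(3) across a single move and propagating along the Rauzy graph via Claim \ref{cor.1} --- is in fact the paper's entire proof, which is stated as exactly this counting ``before and after each type of inductive move.'' Your primary geometric route, however, has a genuine gap as written: you identify the singularities with ``the zeroes of this differential,'' intrinsic to the underlying translation surface, but in this paper singularities of degree $0$ are counted as well --- signatures may contain zeros (e.g.\ $(2,0,0,0)$, the genus-one signatures $(0)$ and $(0,0)$, Section \ref{sec.removable} and Theorem \ref{thm.zero}), and the paper explicitly notes that such removable singularities ``are not actually zeroes of the corresponding Abelian differential.'' A marked point of cone angle $2\pi$ is invisible to the flat metric and the differential, so translation-equivalence of $S$ and $S'$ by itself shows invariance only of the positive-degree data, not of the number of degree-$0$ vertex classes, which the proposition (feeding into Definition \ref{def.signature}) also asserts.

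The repair is small and lives inside your own cut-and-paste picture: one must check that the cut segment has both endpoints at vertices of the original polygon and that every vertex of the new polygon corresponds, under the side identifications, to a vertex of the old one. For a type-$\rtt$ move the cut joins the left endpoints of $\zeta^0_{\alpha_0}$ and $\zeta^1_{\alpha_1}$ (the cut segment becoming the new side for $\vec{\zeta}_{\alpha_1}-\vec{\zeta}_{\alpha_0}$), and the new rightmost vertex is the old left endpoint of $\zeta^0_{\alpha_0}$; hence the marked vertex set on the surface is unchanged and the induced correspondence of equivalence classes is a bijection preserving the number $n$ of top-row representatives, so each degree $n-1$ is preserved, including $n=1$. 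This is precisely the bookkeeping you flagged as the real work and relegated to the alternative argument --- it is not optional, but it is short, and it is what the paper does. Your reduction to a single move type via Claim \ref{cor.2} and reflection across the horizontal axis is valid (reflection carries suspensions of $\pi$ to suspensions of $\inv{\pi}$ with $\tau\mapsto-\tau$ and preserves vertex classes) though unnecessary, since the two move types are mirror-symmetric and each is checked in the same few lines. Granting the vertex-set check, your conclusion for the genus via $g(S)=1+\frac{s}{2}$ and propagation along directed paths is correct.
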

	\begin{proof}
        This follows from counting before and after each type of inductive move to verify that the number and degrees of singularities do not change.
	\end{proof}
    While some singularities may be permuted by R-V induction, it is clear that the leftmost singularity remains fixed in the entire class. We shall call this singularity the \term{marked singularity}.
	\begin{defn}\label{def.signature}
        For a Rauzy Class $\RClass$, let $\ell_i$ denote the degrees of the $m$ singularities of $\pi$ with repetition. The $m$-tuple $(\ell_1,\dots,\ell_m)$, where $\ell_1$ is the degree of the marked singularity, is the \term{singularity signature} (or signature) of $\RClass$, denoted as $\sig=\sig(\RClass)$. If $\pi\in\RClass$, then $\sig(\pi)=\sig(\RClass)$.
	\end{defn}
    While the choice of $\ell_1$ is clear in Definition \ref{def.signature}, the other $\ell_i$'s may be in any order we wish. For example, the signature for $\pi=(8,3,2,4,7,6,5,1)$ can be written as $(1,1,2)$ or $(1,2,1)$ (see Figure \ref{fig._83247651}).
    
        \begin{figure}[h]
        \begin{center}
           \setlength{\unitlength}{250pt}
            \begin{picture}(1,.3)
                \put(0,0){\includegraphics[width=\unitlength]{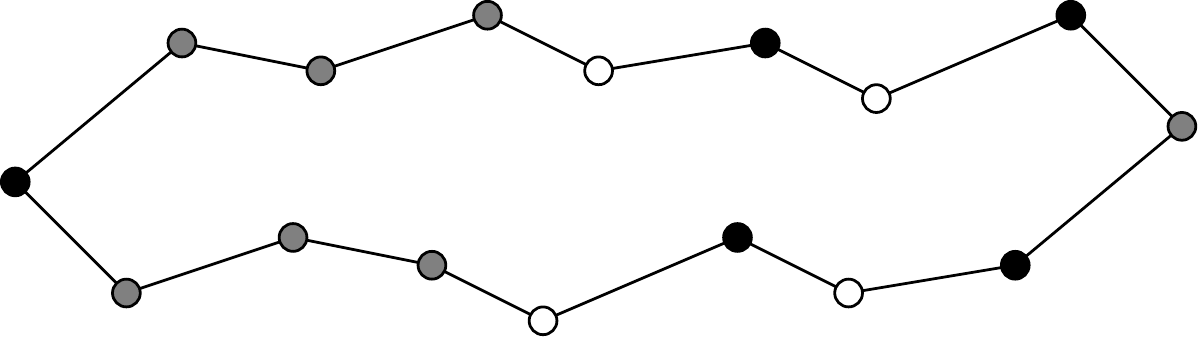}}

                \put(.065,.22){$1$}
                \put(.2,.25){$2$}
                \put(.325,.255){$3$}
                \put(.445,.26){$4$}
                \put(.55,.25){$5$}
                \put(.68,.245){$6$}
                \put(.8,.25){$7$}
                \put(.95,.235){$8$}

                \put(.015,.035){$8$}
                \put(.16,.01){$3$}
                \put(.285,.015){$2$}
                \put(.38,-.005){$4$}
                \put(.525,-.005){$7$}
                \put(.64,0){$6$}
                \put(.765,-.005){$5$}
                \put(.94,.065){$1$}
            \end{picture}
       \end{center}
       \caption{$\pi=(8,3,2,4,7,6,5,1)$ has signature $\sig(\pi)=(1,2,1)=(1,1,2)$.}\label{fig._83247651}
    \end{figure}

    Let $M_\AAA:=\{S(\pi,\lambda,\tau):\pi=(\pi_0,\pi_1)\in\irr_\AAA, \lambda\in\RR_+^\AAA, \tau\in\TTT_\pi, \mathrm{area}(S)=1\}$ minus the zero measure set where R-V induction is not well defined for all forward and backward iterates. Here the natural measure, $\mu$, is the product measure on $\irr_\AAA$, $\RR_+^\AAA$, and $\TTT_\pi$, where the first is a counting measure and the last two inherit Lebesgue measure from $\RR^\AAA$. Let $R$ denote the action of R-V induction on each $S\in M_\AAA$.
    \begin{rem}\label{rem.RV_is_now_1to1}
        Let $S=S(\pi,\lambda,\tau)$ and suppose $|\tau|=\dsum{\alpha\in\AAA}\tau_\alpha > 0$. Recall that $\alpha_\eps = \inv\pi_\eps(d)$. Consider $\pi^\rtt$ and $\lambda^\rtt$ from Remark \ref{rem.RV_is_2to1}. We see that $RS_\rtt=S$ for $S_\rtt=(\pi^\rtt,\lambda^\rtt,\tau^\rtt)$ where
            $$ \tau^\rtt_\alpha = \RHScase{\tau_{\alpha_0}+\tau_{\alpha_1}, & \mbox{if }\alpha = \alpha_\rtt,\\ \tau_\alpha, & \mathrm{otherwise.}}$$
        In this case the induction is type $\rtt$. Now let's attempt to construct $S_\rtb=S(\pi^\rtb,\lambda^\rtb,\tau^\rtb)$ such that $RS_\rtb=S$ by inductive move of type $\rtb$. We have $\pi^\rtb$ and $\lambda^\rtb$ as before. However Equation \eqref{eq.Tpi_RV} would require $\tau^\rtb$ to be defined by
            $$ \tau^\rtb_\alpha = \RHScase{\tau_{\alpha_0}+\tau_{\alpha_1}, & \mbox{if }\alpha = \alpha_\rtb,\\ \tau_\alpha, & \mathrm{otherwise.}}$$
        But then
            $$ \dsum{\alpha:\pi_\rtt(\alpha)\leq d-1}\tau^\rtb_\alpha = \dsum{\alpha:\alpha\neq \alpha_0,\pi_\rtt(\alpha)\leq d-1}\tau_\alpha+(\tau_{\alpha_0}+\tau_{\alpha_1})= |\tau| > 0.$$
        By Equation \eqref{eq.Tpi}, it follows that $\tau^\rtb \notin \TTT_{\pi^\rtb}$ (see Figure \ref{fig.RV_IS_1TO1}). $S_\rtt$ is therefore the unique suspension such that $RS_\rtt=S$. If instead $|\tau|<0$, we can similarly show that $S_\rtb$ exists while $S_\rtt$ does not. So we see that, as opposed to Rauzy induction on \IET's (see Remark \ref{rem.RV_is_2to1}), R-V induction is almost everywhere 1 to 1 on the space of suspensions.
    \end{rem}

    \begin{figure}[h]
        \begin{center}
           \setlength{\unitlength}{300pt}
            \begin{picture}(1,.5)
                \put(0,0){\includegraphics[width=\unitlength]{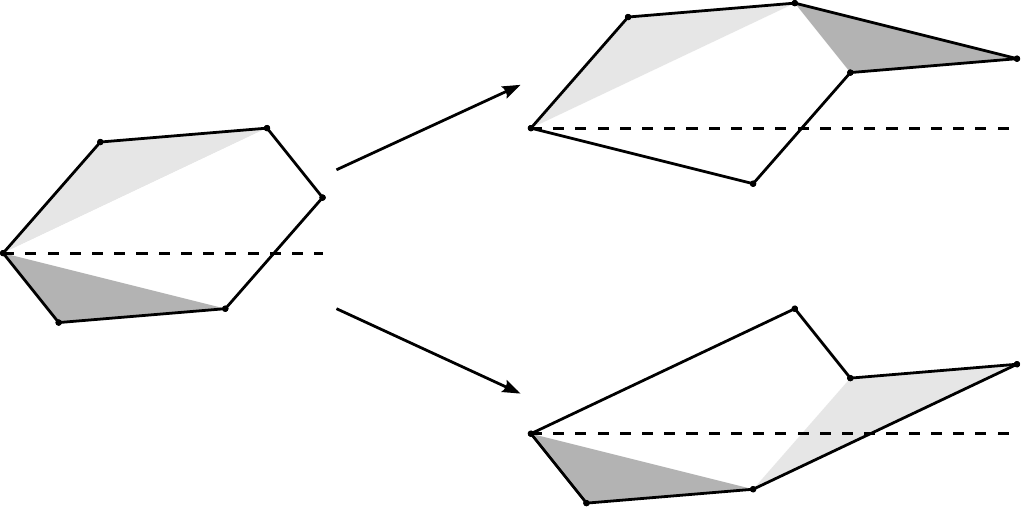}}
                \put(.15,.275){$S$}
                \put(.675,.4055){$S_\rtb$}
                \put(.695,.1){$S_\rtt$}
                \put(.37,.39){``$\rb^{-1}$"}
                \put(.36,.09){``$\rt^{-1}$"}
            \end{picture}
        \end{center}
        \caption{A suspension $S$ with $|\tau|>0$. $S_\rtb$ is not a valid suspension.}\label{fig.RV_IS_1TO1}
    \end{figure}

    Let the map $F_t:M_\AAA\rightarrow M_\AAA$ be the flow defined by
	   $$ F_t(\pi,\lambda,\tau)=(\pi,e^t\lambda,e^{-t}\tau)$$
    Denote by $M_\AAA^0$ the quotient space of $M_AAA$ under the equivalence $S\sim RS$ for $S\in M_\AAA$. Then a fundamental domain for $M_\AAA^0$ is
        $$\{ S(\pi,\lambda,\tau)\in M_\AAA: 1\leq|\lambda|\leq|\lambda'|^{-1}\}.$$
    The flow $F_t$ is well defined on $M_\AAA^0$ as $F_tR=RF_t$. There exists an $F_t$-invariant probability measure on $M_\AAA^0$ which is absolutely continuous with respect to $\mu$ (Veech \cite{c.Ve82}), and $F_t$ is ergodic on this space with respect to this measure. This action $F_t$ on $M_\AAA^0$ is called the \term{Teichm\"uller flow}. So each $M_\AAA^0$ has a natural mapping to the moduli space of Abelian differentials. Denote by $\HHH(\ell_1,\dots, \ell_m)$ the stratum of differentials with $m$ zeros of degrees $\ell_1,\dots,\ell_m$. As opposed to Rauzy classes, the ordering of the $\ell_i$'s is completely arbitrary in terms of the strata of Abelian differentials.

\subsection{The Matrix \texorpdfstring{$\Theta$}{Theta}}\label{sec.theta}

  Let $\AAA$ and $\pi=(\pi_0,\pi_1)\in\irr_\AAA$ be fixed with $d=\#\AAA$. Likewise, let $\RRR=\RRR(\pi)$ denote the Rauzy Class of $\pi$. Then we will consider the space $\RRR\times (\RR_+)^\AAA$ to be the space of all \IET's with permutations in $\RRR$. If we let
    $$ \Del_\AAA:= \{\lambda\in\RR_+^\AAA: |\lambda| = 1\},$$
  then $\RRR\times \Del_\AAA$ is the space of all such \IET's acting on the unit interval. Consider any $T=(\pi,\lambda)$ such that $T'=(\pi',\lambda')$ (its image under Rauzy induction) exists. In this case, let $\alpha$ be the ``winner'' and $\beta$ be the ``loser'' of this move (i.e. $\{\alpha,\beta\} = \{\pi_0^{-1}(d),\pi_1^{-1}(d)\}$ and $\lambda_\alpha>\lambda_\beta$, see Definition \ref{def.RV}). We then define a matrix $\Theta_{\pi,\lambda}$ in $\RR_+^{\AAA\times\AAA}$ by
    \begin{equation}\label{eq.Theta}
      \left(\Theta_{\pi,\lambda}\right)_{\zeta,\eta} = \RHScase{ 1, & \zeta=\eta \\ 1, & \zeta=\alpha,\eta = \beta, \\ 0, & \mbox{otherwise}.}
    \end{equation}
  We may now note the following relationship using Defintion \ref{def.RV},
    \begin{equation}\label{eq.Theta_Lambda}
      \lambda = \Theta_{\pi,\lambda} \lambda'.
    \end{equation}
  \begin{rem}
    The definition of $\Theta_{\pi,\lambda}$ only uses $\pi$ and $\lambda$ to determine $\alpha$ and $\beta$. As a result, we may use the following equivalent expressions for this matrix:
      $$ \Theta_{\pi,\lambda} = \Theta_{\pi,\eps} = \Theta_{\alpha,\beta}$$
    where $\eps$ is the type of inductive move on $T=(\pi,\lambda)$.
  \end{rem}
  It follows that, up to a zero measure set, $\RR_+^\AAA = \Theta_{\pi,0}\RR_+^\AAA \sqcup \Theta_{\pi,1}\RR_+^\AAA$.
  \begin{defn}
    Let $\RRR\subseteq\irr_\AAA$ be a Rauzy Class. A \term{finite Rauzy Path} $\gamma$ of length $N$ (or $|\gamma|=N$) is a sequence
      $$(\pi,\eps_1,\eps_2,\dots,\eps_{N}) \in\RRR\times\{0,1\}^{N}$$
    or equivalently (if $d>2$)
      $$ (\pi,\pi',\dots,\pi^{(N-1)},\pi^{(N)}) \in \RRR^{N+1}$$
    where $\pi^{(i)} = \eps_i\pi^{(i-1)}$ for $i\in\{1,\dots,N\}$. An \term{infinite Rauzy Path} $\gamma$ is similarly an element of $\RRR\times\{0,1\}^{\NN}$ (or equivalently an element of $\RRR^\NN$ when $d>2$). Let $T=(\pi,\lambda)$ satisfy the Keane Condition, so the $n^{th}$ step of induction exists for all $n\geq 0$. Then the \term{Rauzy Path} $\gamma$ of $T$  is the infinite Rauzy Path that begins at $\pi$ and $\eps_i$ is the type of move from $T^{(i-1)}$ to $T^{(i)}$ for each $i\in\NN$.
  \end{defn}
  Let $\gamma$ be a finite path in Rauzy Class $\RRR$ of length $N$, let
  \begin{equation} 
    \Theta_\gamma := \Theta^{(1)}_{\gamma}\Theta^{(2)}_{\gamma}\cdots\Theta^{(N)}_{\gamma},\mbox{ where }\Theta^{(i)}_{\gamma} := \Theta_{\pi^{(i-1)},\eps_i}, i\in\{1,\dots,N\}.
  \end{equation}
  \begin{rem}\label{rem.Path_And_Subset}
    For finite path $\gamma$ starting at $\pi$ of length $N$, the cone $\Theta_\gamma\RR_+^\AAA$ is precisely the set of $\lambda$'s such that
      \begin{itemize}
	\item $T=(\pi,\lambda)$ is inducible at least $N$ times,
	\item The move from $T^{(i-1)}$ to $T^{(i)}$ is type $\eps_i$ for all $i\in\{1,\dots,N\}$.
      \end{itemize}
    This leads us to conclude that, up to a zero measure set, there exists a partition
	$$ \RR_+^\AAA = \bigsqcup_{\gamma:|\gamma|=N} \Theta_{\gamma}\RR_+^\AAA$$
    for any $N$.
  \end{rem}
  \begin{defn}
		Suppose $A$ is a (component-wise) non-negative matrix in $\RR^\AAA$. The projected action $\hat{A}:\Del_\AAA\to \Del_\AAA$ is defined as
			$$ \hat{A}\lambda = \frac{A\lambda}{|A\lambda|}$$
		for each $\lambda\in\Del_\AAA$.
  \end{defn}
  \begin{prop}\label{prop.theta_and_measures}
      Let $\gamma$ be an infinite Rauzy Path beginning at $\pi\in\irr_\AAA$ and $\gamma_N$ finite path representing the first $N$ steps in $\gamma$ for $N\in\NN$. Let
      $$ \Lambda(\gamma) : = \bigcap_{N>0} \Theta_{\gamma_N}\RR_+^\AAA\mbox{, and }\Del(\gamma) : = \bigcap_{N>0} \hat\Theta_{\gamma_N}\Del_\AAA.$$
      Then the following are equivalent:
      \begin{enumerate}
	  \item $\Del(\gamma) \neq \emptyset$.
	  \item $\Lambda(\gamma) \neq \emptyset$.
	  \item There exists $T=(\pi,\lambda)$ satisfying the Keane Condition such that $\gamma$ is its Rauzy Path.
	  \item Let $\{\alpha_i\}_{i\in\NN}$ and $\{\beta_i\}_{i\in\NN}$, be the sequence of winners and losers respectively of the path $\gamma$. Then every letter $\eta\in\AAA$ appears each sequence infinitely often.
	  \item For every $k>0$, there exists $j=j(k)>k$ such that
	    $$ \Theta_{\gamma}^{(k)}\cdots\Theta_{\gamma}^{(j)}$$
	  is a positive matrix (a matrix with all positive entries).
      \end{enumerate}
  \end{prop}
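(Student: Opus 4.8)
The plan is to prove all five conditions equivalent by establishing the cycle $(5)\Rightarrow(2)\Leftrightarrow(3)\Rightarrow(4)\Rightarrow(5)$, together with the routine equivalence $(1)\Leftrightarrow(2)$, so that every condition lies in a single loop. The two ``analytic'' equivalences are essentially bookkeeping. For $(1)\Leftrightarrow(2)$, each $\Theta_{\gamma_N}$ maps $\RR_+^\AAA$ into itself and every cone $\Theta_{\gamma_N}\RR_+^\AAA$ is invariant under positive scaling, so the projectivization $\lambda\mapsto\lambda/|\lambda|$ carries $\Theta_{\gamma_N}\RR_+^\AAA$ onto $\hat\Theta_{\gamma_N}\Del_\AAA$ and commutes with the nested intersections; hence $\Lambda(\gamma)$ is nonempty exactly when $\Del(\gamma)$ is. For $(2)\Leftrightarrow(3)$ I would invoke Remark \ref{rem.Path_And_Subset}: $\Theta_{\gamma_N}\RR_+^\AAA$ is precisely the set of $\lambda$ for which $T=(\pi,\lambda)$ is inducible at least $N$ times with the moves prescribed by $\gamma$. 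Intersecting over $N$, $\Lambda(\gamma)$ is exactly the set of $\lambda$ whose Rauzy path equals $\gamma$; by Proposition \ref{prop.Keane_is_inducible} such a $\lambda$ exists iff some $T=(\pi,\lambda)$ satisfies the Keane Condition and has Rauzy path $\gamma$, which is precisely $(3)$.

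For $(5)\Rightarrow(2)$, condition $(5)$ says that for every $k$ there is $j>k$ with $\Theta^{(k)}_\gamma\cdots\Theta^{(j)}_\gamma$ strictly positive. A strictly positive matrix sends the closed simplex $\overline{\Del_\AAA}$ into the open simplex and contracts the Hilbert metric, so choosing for each $N$ an index $N'>N$ with $\Theta^{(N+1)}_\gamma\cdots\Theta^{(N')}_\gamma$ positive gives $\hat\Theta_{\gamma_{N'}}\overline{\Del_\AAA}\subseteq\hat\Theta_{\gamma_N}\Del_\AAA$, with diameters tending to $0$. By the finite intersection property the nested compact sets $\hat\Theta_{\gamma_N}\overline{\Del_\AAA}$ meet in a single point, which by the inclusion above lies in every $\hat\Theta_{\gamma_N}\Del_\AAA$, hence in $\Del(\gamma)$; this gives $(1)$ and therefore $(2)$.

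For $(3)\Rightarrow(4)$, let $T=(\pi,\lambda)$ satisfy the Keane Condition with path $\gamma$, and write $\lambda^{(n)}$ and $|\lambda^{(n)}|$ for the renormalized length vectors and the lengths of the nested intervals. At each step only the winner's length changes, decreasing by the loser's length, so $\sum_n\lambda^{(n)}_{\beta_n}=|\lambda|-\lim_n|\lambda^{(n)}|<\infty$; since infinite renormalization forces $|\lambda^{(n)}|\to 0$, a letter that wins only finitely often has eventually constant length $c>0$, contradicting $c\leq|\lambda^{(n)}|\to 0$. Thus every letter wins infinitely often. If some $\eta$ lost only finitely often, then after its last loss, once it next wins it would remain the last letter of its row forever without ever leaving that position; this forces all subsequent moves to be of a single type and makes $\eta$ the only later winner, contradicting that every letter wins infinitely often. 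Hence every letter also loses infinitely often, which is $(4)$.

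The crux is $(4)\Rightarrow(5)$. I would reformulate $\Theta_{\gamma_N}$ through column operations: since $\Theta^{(i)}_\gamma=I+E_{\alpha_i\beta_i}$ with $\alpha_i$ the winner and $\beta_i$ the loser at step $i$, right multiplication by $\Theta^{(i)}_\gamma$ adds the winner's column to the loser's column. Tracking the supports $S_\eta\subseteq\AAA$ of the columns (initially $S_\eta=\{\eta\}$, updated by $S_{\beta_i}\leftarrow S_{\beta_i}\cup S_{\alpha_i}$), positivity of a window is exactly the statement that every $S_\eta$ grows to all of $\AAA$. Fixing a source $\zeta$ and setting $R=\{\eta:\zeta\in S_\eta\}$, the set $R$ only grows, and it grows whenever a move has its winner in $R$ and its loser outside $R$. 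The whole difficulty is to rule out that $R$ stabilizes at a proper subset $R^\ast\subsetneq\AAA$: abstractly condition $(4)$ is not enough to force growth, and the essential extra input is that every $\pi^{(n)}$ along the path is irreducible. The hard part will be showing that a stable $R^\ast$ (after some time no winner in $R^\ast$ ever beats a loser outside it), combined with every letter recurring infinitely often as a last letter of both rows, forces $R^\ast$ to exhibit a nontrivial invariant block in $\pi^{(n)}$ for all large $n$, contradicting irreducibility; this positional ``block-stability'' analysis is the only place where the hypothesis $\pi\in\irr_\AAA$ is genuinely used. With positivity available on windows starting at every index $k$ (supplied by the recurrence in $(4)$), condition $(5)$ follows; the converse $(5)\Rightarrow(4)$ is immediate from the same picture, since an all-positive window forces each letter to win at least once (to spread its column into another) and to lose at least once (to fill its own column), so cofinally many such windows give the infinite recurrence of $(4)$.
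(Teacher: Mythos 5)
Your overall architecture (the cycle $(5)\Rightarrow(1)\Leftrightarrow(2)\Leftrightarrow(3)\Rightarrow(4)\Rightarrow(5)$) matches the paper's, and your treatments of $(1)\Leftrightarrow(2)$ and $(2)\Leftrightarrow(3)$ via Remark \ref{rem.Path_And_Subset} are exactly the paper's. But the crux implication $(4)\Rightarrow(5)$ is left as a plan, not a proof: you explicitly defer ``the hard part'' (the block-stability analysis of a stabilized support set $R^\ast$), and your diagnosis of where the extra input comes from is off. The paper does not use irreducibility of the $\pi^{(n)}$ in this step at all. The structural fact it uses is Rauzy-specific adjacency: at every step the winner and loser are the last letters of the two rows, and the winner remains last in its row after the move, so whenever consecutive winners differ, the loser at step $n+1$ \emph{equals} the winner at step $n$. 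Granting $(4)$, fix $\zeta$ and let $\PPP_\zeta$ be the set of $\omega$ with $\bigl(\Theta^{(k,j)}_\gamma\bigr)_{\zeta,\omega}>0$ for some $j$ (entries are monotone in $j$). If the complement $\ZZZ_\zeta$ were nonempty, then since every letter wins infinitely often there are consecutive steps with winner $\alpha_{j_0}\in\ZZZ_\zeta$ followed by winner $\alpha_{j_0+1}\in\PPP_\zeta$; by the adjacency fact $\beta_{j_0+1}=\alpha_{j_0}$, and the inequality $\bigl(\Theta^{(k,j_0+1)}_\gamma\bigr)_{\zeta,\beta}\geq\bigl(\Theta^{(k,j_0)}_\gamma\bigr)_{\zeta,\alpha}\bigl(\Theta^{(j_0+1)}_\gamma\bigr)_{\alpha,\beta}>0$ puts $\alpha_{j_0}$ in $\PPP_\zeta$, a contradiction. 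Without this adjacency your worry is justified --- for arbitrary winner/loser sequences $(4)$ really is insufficient --- but with it the proof closes in a few lines, and no stabilized-$R^\ast$ analysis is needed. Irreducibility is genuinely needed one step earlier, in $(3)\Rightarrow(4)$, which brings up the second gap: your proof there invokes ``infinite renormalization forces $|\lambda^{(n)}|\to 0$,'' which is unproven and in fact circular --- the standard proof that $|\lambda^{(n)}|\to 0$ \emph{uses} statement $(4)$ (letters with frozen positive length cannot lose infinitely often). The paper instead argues directly: the type cannot be eventually constant (a perpetual winner's length would be decremented by the fixed lengths of cyclically recurring losers and go negative); then a letter winning finitely often also loses finitely often (else it loses to a fixed letter infinitely often, driving that letter's length negative); finally the frozen letters $\BBB$ would satisfy $\pi_0(\BBB)=\pi_1(\BBB)=\{1,\dots,\#\BBB\}$, contradicting irreducibility. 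This is the only place the hypothesis $\pi\in\irr_\AAA$ enters.

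One further error, excisable but worth flagging: in $(5)\Rightarrow(1)$ you claim the positive windows contract the Hilbert metric so that the diameters of the nested images tend to $0$ and the intersection is a single point. This is false in general --- the Birkhoff contraction coefficients of the window products can tend to $1$ when the entries are unbounded, and the paper's own Section \ref{chap.measures} constructs complete paths (so $(5)$ holds) for which $\Del(\gamma)$ is a simplex with $g(\pi)$ vertices, not a point; a single-point conclusion here would contradict the paper's main result on multiple ergodic measures. Fortunately your argument only needs nonemptiness, and your inclusion $\hat\Theta_{\gamma_{N'}}\overline{\Del_\AAA}\subseteq\hat\Theta_{\gamma_N}\Del_\AAA$ plus compactness of the nested closures delivers exactly that --- which is precisely the paper's proof --- so the fix is to delete the diameter claim rather than repair it.
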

  \begin{rem}
	  The proof that statement (3) implies (4) comes from Section 4.3 of \cite{c.Yoc03}. This particular presentation adapts the style found in Section 5 of \cite{c.Vi06}. The proof that statement (4) implies (5) comes from Lemma 3.4 in \cite{c.AvRe09}. The argument is relaxed, as we are proving a weaker result here.
  \end{rem}
  \begin{proof}
      The first two statements are equivalent, as
	$$ \lambda\in\Lambda(\gamma) \iff \frac{\lambda}{|\lambda|}\in\Del(\gamma).$$

      Statements (2) and (3) can be seen to be equivalent by Remark \ref{rem.Path_And_Subset}. Indeed, if $\lambda\in\Lambda(\gamma)$ then $\lambda\in\Theta_{\gamma_N}\RR_+^\AAA$ for any $N$. As a result $T=(\pi,\lambda)$ may be induced $N$ times and these moves follow $\gamma_N$. Let $N$ go to $\infty$. Conversely, suppose $T=(\pi,\lambda)$ satisfies the Keane Condition and has path $\gamma$. Then for all $N>0$, $\lambda\in\Theta_{\gamma_N}\RR_+^\AAA$. Therefore, $\lambda\in\Lambda(\gamma)$.

      Suppose statement (3) holds. We will then verify statement (4). Let $\eps_N,\alpha_N,\beta_N$ be the type, winner and loser respectively of the move from $T^{(N-1)}$ to $T^{(N)}$. We first note that the sequence $\{\eps_N\}_{N\in\NN}$ takes on values $0$ and $1$ infinitely often. If not, then for some $N_0$ and all $N>N_0$,
	$$ \alpha_N = \alpha_{N_0}.$$
      From Definition \ref{def.RV}, we see that
	$$ \lambda^{(N+1)}_{\alpha_{N_0}} = \lambda^{(N)}_{\alpha_{N_0}} - \lambda^{(N)}_{\beta_{N}},$$
      and as $\lambda^{(N)}_{\eta} = \lambda^{(N_0)}_{\eta}$ for all $\eta\neq\alpha_{N_0}$, this would imply that for some $N>N_0$ $\lambda^{(N)}_{\alpha_{N_0}}<0$, a contradiction.
      
      So as the type of induction move changes infinitely often, $\alpha_N=\beta_{N'}$ for some $N'>N$ ($N'$ in this case can be the minimum value greater than $N$ such that $\eps_{N'} \neq \eps_{N}$). So it suffices to prove that each $\eta\in\AAA$ appears in $\{\alpha_N\}$ infinitely often. Let $\BBB\subset \AAA$ be the letters that only appear as winners finitely often and $\CCC=\AAA\setminus\BBB$. We may first assume that by starting far enough into our sequence, each $\eta\in\BBB$ never wins. It follows now that each $\eta\in\BBB$ must only lose finitely many times. If $\eta\in\BBB$ lost infinitely often, it would lose to a particular letter $\zeta\in\CCC$ infinitely often. Because $\lambda^{(N)}_\eta>0$ is fixed for all $N>0$, $\lambda^{(N)}_{\zeta}<0$ for some $N$, a contradiction. Begin our sequence again far enough to assume that no letter in $\BBB$ ever wins or loses. So again by Definition \ref{def.RV},
	$$ \pi_{\eps}^{(N)}(\eta) \leq \pi_\eps^{(N+1)}(\eta),\mbox{ for }\eps\in\{0,1\}$$
      as the only time a letter would move ``backwards'' is when it loses. We may start our sequence even farther out and assume that
	$$\pi^{(N)}_\eps(\eta) = \pi_\eps(\eta)$$
      for all $\eta\in\BBB,\eps\in\{0,1\},N>0$. We now claim that
      \begin{equation}\label{eq.B_and_C_reducible}
	  \forall \eta\in\BBB,\zeta\in\CCC, \eps\in\{0,1\} ~\pi_\eps(\eta)<\pi_\eps(\zeta).
      \end{equation}
      Indeed suppose that $\pi_{\eps'}(\zeta) < \pi_{\eps'}(\eta)$. As $\zeta\in\CCC$, $\zeta = \alpha_N$ for some $N>0$. This would imply that $\eps_N=1-\eps'$ and therefore $\pi^{(N)}_{\eps'}(\eta) = \pi^{(N-1)}_{\eps'}(\eta) +1 > \pi_{\eps'}(\eta)$, a contradiction. But if Equation \eqref{eq.B_and_C_reducible} holds then
	$$ \pi_0(\BBB) = \pi_1(\BBB) = \{1,\dots, \#\BBB\},$$
      and as $\pi$ is irreducible, $\BBB=\emptyset$ and $\CCC=\AAA$, which proves (4).
     
      Now we will show that (4) implies (5). Fix $k>0$ and for any $j>k$, let
	$$ \Theta^{(k,j)}_\gamma = \Theta^{(k)}_\gamma\cdots\Theta^{(j)}_\gamma.$$
      Because each matrix on the right hand side is (component-wise) greater than or equal to the identity matrix on $\RR^{\AAA\times\AAA}$,
	$$ \left(\Theta^{(k,j)}_\gamma\right)_{\zeta,\eta} \leq \left(\Theta^{(k,j+1)}_\gamma\right)_{\zeta,\eta}$$
      for any $\zeta,\eta\in\AAA$ and $j\geq k$. In other words the $(\zeta,\eta)$-entry of the sequence of matrices $\{\Theta_\gamma^{(k,j)}\}_{j\geq k}$ is a non-decreasing. Fix $\zeta\in\AAA$ and define two sets $\PPP_\zeta\sqcup\ZZZ_\zeta=\AAA$ by
	\begin{align*}
	  \PPP_\zeta = \left\{\omega\in\AAA: \exists j_0>k~\left(\Theta_\gamma^{(k,j_0)}\right)_{\zeta,\omega}>0\right\},\\
	  \ZZZ_\zeta = \left\{\omega\in\AAA: \forall j_0>k~\left(\Theta_\gamma^{(k,j_0)}\right)_{\zeta,\omega}=0\right\}.
	\end{align*}
      We will assume that $\ZZZ_\zeta$ is not empty and arrive at a contradiction. Observe that $\zeta\in\PPP_\zeta$, so $\PPP_\zeta\neq \emptyset$. Because $\PPP_\zeta$ is a finite set, we may fix $k'$ such that the $(\zeta,\omega)$-entry of $\Theta^{(k,j)}_\gamma$ is greater than zero for all $j\geq k'$ and $\omega\in\PPP_\zeta$. As every letter wins infintely often, we must have a $j_0>k'$ such that $\alpha_{j_0}\in \ZZZ_\zeta$ and $\alpha_{j_0+1}\in\PPP_\zeta$ (i.e. the winners at steps $j_0$ and $j_0+1$ belong to the sets $\ZZZ_\zeta$ and $\PPP_\zeta$ respectively). For such a $j_0$, let $\beta:=\beta_{p_0+1} = \alpha_{p_0}$ and $\alpha:=\alpha_{j_0+1}$. So
	  $$ \left(\Theta^{(k,j_0+1)}_\gamma\right)_{\zeta,\beta} = \sum_{\omega\in\AAA}\left(\Theta^{(k,j_0)}_\gamma\right)_{\zeta,\omega}\left(\Theta^{(j_0+1)}_\gamma\right)_{\omega,\beta} \geq \left(\Theta^{(k,j_0)}_\gamma\right)_{\zeta,\alpha}\left(\Theta^{(j_0+1)}_\gamma\right)_{\alpha,\beta} >0$$
      as $\alpha\in\PPP_\zeta$, $j_0>k'$ and $\left(\Theta^{(j_0+1)}_\gamma\right)_{\alpha,\beta}=1$ by Equation \eqref{eq.Theta}. But then $\beta\in\ZZZ_\zeta\cap\PPP_\zeta = \emptyset$, a contradiction. As we may repeat this argument to see that $\PPP_\zeta=\AAA$ for all $\zeta$, statement (5) holds.

      We finally assume (5) and show that (1) is also true. For each $N$,
	$$ \hat\Theta_{\gamma_{N}}\Del_\AAA\subset\hat\Theta_{\gamma_{N+1}}\Del_\AAA.$$
      Choose a subsequence $0<N_1<N_2<N_3<\dots$ of $\NN$ such that $\Theta_\gamma^{(N_i)}\cdots\Theta_\gamma^{(N_{i+1}-1)}$ is a positive matrix for each $i\geq 1$. We then may say that
	$$\overline{\hat\Theta_\gamma^{(N_i)}\cdots\hat\Theta_\gamma^{(N_i-1)}\Del_\AAA} \subsetneq \Del_\AAA\mbox{, and } \hat\Theta_{\gamma_{N_i}}\Del_\AAA \supsetneq \overline{\hat\Theta_{\gamma_{N_{i+1}}}\Del_\AAA}\supsetneq\hat\Theta_{\gamma_{N_{i+1}}}\Del_\AAA$$
      All of the above allow us to conclude that
	$$ \Del(\gamma) = \bigcap_{i\geq 1} \hat\Theta_{\gamma_{N_i}}\Del_\AAA = \bigcap_{i\geq 1} \overline{\hat\Theta_{\gamma_{N_{i+1}}}\Del_\AAA}.$$
      The set on the right is nonempty, as it is an intersection of nested compact subsets of $\RR_+^\AAA$.
  \end{proof}

  \begin{defn}\label{def.complete_path}
      We will call an infinite Rauzy Path $\gamma$ \term{complete} if it satisies the equivalent conditions of Proposition \ref{prop.theta_and_measures}.
  \end{defn}

\subsection{The Cone of Invariant Measures}\label{sec.cone_of_measures}

  \begin{defn}
      For an \IET\ $T:I\to I$, denote by $\MMM(T)$ and $\MMM_1(T)$ the finite $T$-invariant and probability $T$-invariant measures on $I$ respectively. Likewise, let $\EEE(T)\subset \MMM(T)$ and $\EEE_1(T)\subset\MMM_1(T)$ denote the finite ergodic and probability ergodic measures of $T$.
  \end{defn}
  \begin{rem}\label{rem.Keane_minimal_cone} Assume $T$ satisfies the Keane condtion. As a result, $T$ is minimal (see \cite{c.Kea75}). The set $\MMM(T)$ has a structure of a positive cone. In other words $\mu,\nu\in\MMM(T)$ and $c>0$ imply that $\mu+\nu\in\MMM(T)$ and $c\cdot\mu\in\MMM(T)$. The set of extremal rays in $\MMM(T)$ is precisely $\EEE(T)$. Also, $\MMM_1(T)$ is convex ($t\cdot \mu + (1-t)\cdot\nu \in\MMM_1(T)$ for any $t\in[0,1]$ and $\mu,\nu\in\MMM_1(T)$) with $\EEE_1(T)$ its set of extremal points.
  \end{rem}
  In the following proposition, $\Omega_\pi$ is the anti-symmetric matrix given in Equation \ref{eq.omega_pi2}, and $g(\pi)$ is the genus of $\pi$, or the genus of any suspension of $\pi$ as indicated in the discussion before Proposition \ref{prop.genus_is_inv}.
  \begin{thm}[Veech 1978, 1982, 1984]\label{thm.Bound_on_Measures} Let $T=(\pi,\lambda)$ be an \IET\ that satisfies the Keane Condition. Then
      $$\#\EEE_1(T) \leq \frac{1}{2}\mathrm{rank}(\Omega_\pi) = g(\pi).$$
  \end{thm}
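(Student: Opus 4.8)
The plan is to bound the number of ergodic probability measures by relating invariant measures to the cone structure developed in Section \ref{sec.theta} and then to the rank of $\Omega_\pi$. The key observation is that any finite $T$-invariant measure $\mu$ assigns a ``length'' to each subinterval, and since $T$ satisfies the Keane Condition, $T$ is minimal and inducible for all $n\geq 0$. Thus $\mu$ is completely determined by the vector $\mu^{(0)}\in\RR_+^\AAA$ recording the $\mu$-measures of the intervals $I_\alpha^0$, and $T$-invariance forces compatibility with induction. Specifically, if $\gamma$ is the (complete) Rauzy Path of $T$, then the measures of the induced intervals at step $N$ form a vector $\mu^{(N)}$ with $\mu^{(0)} = \Theta_{\gamma_N}\mu^{(N)}$, mirroring Equation \eqref{eq.Theta_Lambda} for the length vector itself. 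So every invariant measure vector lies in the cone $\Lambda(\gamma) = \bigcap_{N} \Theta_{\gamma_N}\RR_+^\AAA$ (together with the zero vector), and conversely each nonzero element of this intersection yields a $T$-invariant measure.

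The first step, then, is to establish this identification $\MMM(T)\cong \Lambda(\gamma)\cup\{0\}$ as positive cones, so that by Remark \ref{rem.Keane_minimal_cone} the extremal rays $\EEE(T)$ correspond to extremal rays of $\Lambda(\gamma)$, and $\#\EEE_1(T)$ equals the number of extremal rays of this intersected cone. The second step is to bound that number. By Proposition \ref{prop.theta_and_measures}(5), along the complete path $\gamma$ we can choose $N_1<N_2<\cdots$ so that each block product $\Theta_\gamma^{(N_i)}\cdots\Theta_\gamma^{(N_{i+1}-1)}$ is a strictly positive matrix. A strictly positive matrix maps $\RR_+^\AAA$ into a cone whose projectivization has diameter bounded away from the full simplex (a Perron--Frobenius / Birkhoff contraction phenomenon); intersecting infinitely many such nested contracting images cuts the dimension of the limiting cone.

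The crucial arithmetic input is that the matrices $\Theta_{\pi,\eps}$ are, up to transpose, exactly the transition matrices governing how the antisymmetric form $\Omega_\pi$ transforms under induction. The plan is to invoke the relation $\Omega_{\pi'} = \Theta_{\pi,\eps}^{\,t}\,\Omega_\pi\,\Theta_{\pi,\eps}$ (or its inverse-transpose variant), which says the bilinear form $\Omega_\pi$ is preserved in an appropriate sense along the path. Because $\Omega_\pi$ is antisymmetric of rank $2g(\pi)$, its kernel has dimension $d - 2g(\pi)$, and this kernel is respected by the cocycle $\Theta_\gamma$. The invariant measures, once projected onto the relevant quotient, must be distinguished by their pairing under $\Omega_\pi$; the dimension of the space in which distinct extremal measures can live is thereby capped by the rank, giving $\#\EEE_1(T)\leq \tfrac12\mathrm{rank}(\Omega_\pi) = g(\pi)$.

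The main obstacle I anticipate is this last dimension-counting step: showing cleanly that distinct ergodic measures, viewed as limit rays in $\Lambda(\gamma)$, cannot all be independent modulo the kernel of $\Omega_\pi$, i.e.\ that the genuine freedom in choosing an invariant measure is controlled by $g(\pi)$ rather than by the ambient dimension $d$. One expects to argue that the difference of two distinct invariant measure vectors is $\Omega_\pi$-isotropic in a way that forces at most $g(\pi)$ of them to be linearly independent, combining the Perron--Frobenius contraction with the symplectic structure. Verifying the covariance of $\Omega_\pi$ under each single Rauzy move and propagating it along the infinite path is routine but is where the $\tfrac12\mathrm{rank}(\Omega_\pi)$ factor is forced, so I would isolate that as a preliminary lemma before assembling the bound.
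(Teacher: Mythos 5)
First, a framing point: the paper does not prove this theorem at all --- it attributes the inequality to Theorem 0.5 of \cite{c.Ve78} and the identity $\tfrac12\mathrm{rank}(\Omega_\pi)=g(\pi)$ to \cite{c.Ve82} and \cite{c.Ve84_I}, so your proposal is being measured against Veech's cited argument, whose architecture you do correctly reproduce: your first step is exactly Theorem \ref{thm.LVector_is_Measure}, and the covariance you hedge on is correct in the form you wrote it, $\Omega_{\pi'}=\Theta_{\pi,\eps}^{t}\,\Omega_\pi\,\Theta_{\pi,\eps}$ (no inverse-transpose variant is needed: since $\lambda=\Theta\lambda'$ and the translation vector transforms as $\omega'=\Theta^t\omega$, the relation follows by comparing $\omega'=\Omega_{\pi'}\lambda'$ with $\Theta^t\Omega_\pi\Theta\lambda'$ on an open cone). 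However, one of your intermediate mechanisms is wrong and your final step is missing. The Perron--Frobenius contraction does not ``cut the dimension of the limiting cone'': each $\Theta_{\gamma_N}$ is invertible, so every $\Theta_{\gamma_N}\RR_+^\AAA$ is a $d$-dimensional simplicial cone, and positivity of block products along a complete path is compatible with $\Lambda(\gamma)$ having anywhere from $1$ to $g(\pi)$ extremal rays. Contraction forces unique ergodicity only when a fixed positive block recurs (compare Remark \ref{rem.UE_is_generic}); it contributes nothing to the bound by $g(\pi)$.

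The decisive gap is the one you yourself flag as ``the main obstacle.'' Granting the covariance, and granting the quantitative fact you omit --- that the induced vectors $\mu^{(N)}=\Theta_{\gamma_N}^{-1}\phi(\mu)$ tend to $0$, which requires return times to blow up along the complete path (via statement (5) of Proposition \ref{prop.theta_and_measures}) --- one gets that $E:=\mathrm{span}\,\Lambda(\gamma)$ is $\Omega_\pi$-isotropic, because $u^t\Omega_\pi v=(u^{(N)})^t\Omega_{\pi^{(N)}}v^{(N)}\to 0$ while being constant in $N$. But isotropy alone only bounds $\dim E\leq g(\pi)+\dim\ker\Omega_\pi=d-g(\pi)$, which is strictly weaker than the theorem whenever $d>2g(\pi)$. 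To get $\#\EEE_1(T)\leq g(\pi)$ you must show the ergodic vectors $\phi(\mu_1),\dots,\phi(\mu_k)$ stay linearly independent modulo $\ker\Omega_\pi$ --- equivalently, that $\Omega_\pi$ is injective on differences of elements of $\Lambda(\gamma)$. Strict positivity of the cone vectors rules out $\Lambda(\gamma)\cap\ker\Omega_\pi\neq\{0\}$, but differences of positive vectors can have mixed signs, and the kernel is carried along by the cocycle ($x\in\ker\Omega_{\pi^{(N)}}$ iff $\Theta_{\gamma_N}x\in\ker\Omega_\pi$), so no soft argument dispatches this; it is the technical heart of Theorem 0.5 in \cite{c.Ve78}, and your proposal reduces the theorem to precisely this unproved step. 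Finally, the equality $\tfrac12\mathrm{rank}(\Omega_\pi)=g(\pi)$ is part of the statement and is never addressed; within this paper it can be extracted from Section \ref{sec.spin}, where the cycles $c_i$ span $H_1(S)$ and $c_i\inx c_j=\Omega_{i,j}$ by Equation \eqref{eq.inx}, so the intersection form on $H_1(S)$, of rank $2g$, is represented by $\Omega_\pi$.
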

  The inequality is Theorem 0.5 in \cite{c.Ve78}. That $\frac{1}{2}\mathrm{rank}(\Omega_\pi) = g(\pi)$ may be deduced from a combination of Proposition 6.4 in \cite{c.Ve82} and Lemma 5.3 \cite{c.Ve84_I}.The following allows us to relate our cone of invariant measures $\MMM(T)$ with the cone $\Lambda(\gamma)\subset\RR_+^\AAA$ given in Proposition \ref{prop.theta_and_measures}. This is Lemma 1.5 in \cite{c.Ve78}.
  \begin{thm}\label{thm.LVector_is_Measure}(Veech 1978) Let $T$ satisfy the Keane Condition with Rauzy Path $\gamma$, then the map $\phi:\MMM(T)\to\RR_+^\AAA$ given by
      $$\phi(\mu)_\alpha = \mu(I_\alpha)$$
      is a bijection on its image $\Lambda(\gamma) = \cap_{N>0}\Theta_{\gamma_N}\RR_+^\AAA$. Namely
      $$ \MMM(T) \cong \Lambda(\gamma)$$
      by the isomorphism $\phi:\MMM(T) \to \Lambda(\gamma)$.
  \end{thm}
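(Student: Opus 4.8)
The plan is to recover an invariant measure $\mu$ from the finite list of numbers $\bigl(\mu(I_\alpha^{(N)})\bigr)_{\alpha}$ attached to each induced system, and to match this data against the nested cones $\Theta_{\gamma_N}\RR_+^\AAA$ that cut out $\Lambda(\gamma)$. First note that $\phi$ is linear, and that since the Keane condition makes $T$ minimal (Remark \ref{rem.Keane_minimal_cone}) every $\mu\in\MMM(T)$ has full support, so $\mu(I_\alpha)>0$ and $\phi(\mu)\in\RR_+^\AAA$. The heart of the ``image is contained in $\Lambda(\gamma)$'' direction is a one-step identity. Let $T'=(\pi',\lambda')$ be the IET obtained from one move of Rauzy induction, let $I'$ be the induction interval, and let $\mu':=\mu|_{I'}$; since $T'$ is the first-return map of $T$ to $I'$, the restriction $\mu'$ is $T'$-invariant. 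Tracking how the domain intervals are cut (the winner's interval loses its rightmost piece, which is the last interval on the opposite row), one checks $\mu(I_\zeta)=\mu'(I'_\zeta)$ for every non-winner $\zeta$, while for the winner $\alpha$ and loser $\beta$ one has $\mu(I_\alpha)=\mu'(I'_\alpha)+\mu'(I'_\beta)$. Comparing with Equation \eqref{eq.Theta} this says exactly $\phi(\mu)=\Theta_{\pi,\lambda}\,\phi'(\mu')$ (the measure-theoretic analogue of Equation \eqref{eq.Theta_Lambda}). Iterating along $\gamma$ gives $\phi(\mu)=\Theta_{\gamma_N}\phi_N(\mu)$, where $\phi_N(\mu)_\alpha=\mu\bigl(I^{(N)}_\alpha\bigr)\in\RR_+^\AAA$; hence $\phi(\mu)\in\Theta_{\gamma_N}\RR_+^\AAA$ for all $N$, i.e. $\phi(\mu)\in\Lambda(\gamma)$.

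To control measures, I would record the Rohlin tower decomposition $\PPP_N$ of $I$ associated to $T^{(N)}$: the intervals $T^j\bigl(I^{(N),0}_\alpha\bigr)$ for $0\le j<h^{(N)}_\alpha$, where $h^{(N)}_\alpha$ is the return time of $I^{(N),0}_\alpha$ to $I^{(N)}$. The partitions $\PPP_N$ are nested and refine as $N$ grows, and each atom is a translate of a base interval, so has length at most $\max_\alpha\lambda^{(N)}_\alpha$. Since $T$ satisfies the Keane condition, $\gamma$ is complete (Proposition \ref{prop.theta_and_measures}), so every letter wins infinitely often; this forces every tower height $h^{(N)}_\alpha\to\infty$, and from the identity $|\lambda|=\sum_\alpha h^{(N)}_\alpha\lambda^{(N)}_\alpha$ one concludes $\max_\alpha\lambda^{(N)}_\alpha\to 0$. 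Consequently $\bigcup_N\PPP_N$ generates the Borel $\sigma$-algebra of $I$.

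Injectivity is then quick. Each factor $\Theta^{(i)}_\gamma$ is unipotent, so $\Theta_{\gamma_N}$ is invertible; if $\phi(\mu)=\phi(\nu)$ then $\phi_N(\mu)=\Theta_{\gamma_N}^{-1}\phi(\mu)=\phi_N(\nu)$ for every $N$, so $\mu$ and $\nu$ agree on all base intervals $I^{(N),0}_\alpha$, and by $T$-invariance (each $T^j$ acts by translation on the atom) they agree on every level of every tower, hence on all of $\PPP_N$. As these partitions generate, $\mu=\nu$. For surjectivity, take $v\in\Lambda(\gamma)$; for each $N$ the invertibility of $\Theta_{\gamma_N}$ produces a unique $v^{(N)}\in\RR_+^\AAA$ with $v=\Theta_{\gamma_N}v^{(N)}$, and these satisfy the recursion $v^{(N)}=\Theta^{(N+1)}_\gamma v^{(N+1)}$. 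Define a set function on $\bigcup_N\PPP_N$ by $\mu\bigl(T^jI^{(N),0}_\alpha\bigr):=v^{(N)}_\alpha$; the recursion for $v^{(N)}$ is precisely the rule by which an atom of $\PPP_N$ splits into atoms of $\PPP_{N+1}$, so $\mu$ is a well-defined finitely additive premeasure. Because the atom diameters tend to $0$ it extends to a Borel measure, which is $T$-invariant since it assigns equal mass $v^{(N)}_\alpha$ to all levels of each tower and $T$ permutes those levels; finally $\phi(\mu)=v^{(0)}=v$.

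I expect the \emph{surjectivity} step to be the main obstacle, and within it two points deserve care: verifying that the tower-refinement rule from $\PPP_N$ to $\PPP_{N+1}$ matches the $\Theta$-recursion $v^{(N)}=\Theta^{(N+1)}_\gamma v^{(N+1)}$ exactly (this is the bookkeeping that mirrors, in reverse, the one-step identity of the first paragraph), and confirming that the finitely additive premeasure is genuinely $\sigma$-additive, which is where completeness of $\gamma$ is essential through $\max_\alpha\lambda^{(N)}_\alpha\to 0$. The remaining directions reduce to the single one-step computation together with the invertibility of $\Theta_{\gamma_N}$.
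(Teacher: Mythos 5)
You should first know that the paper contains no proof of this theorem to compare against: it is stated with the attribution ``Lemma 1.5 in \cite{c.Ve78}'' and deferred entirely to Veech. Your argument is a correct reconstruction of the standard proof of that lemma, and it is essentially Veech's route: the one-step identity $\phi(\mu)=\Theta_{\pi,\lambda}\phi'(\mu')$ (which does hold in both induction types --- in type $1$ the winner's base interval splits in two and one application of $T$-invariance identifies the piece carrying the loser's name, which your ``one checks'' covers), iteration to get $\phi(\mu)\in\Theta_{\gamma_N}\RR_+^\AAA$ for all $N$, and the Rohlin-tower partitions $\PPP_N$ with invertibility of the unipotent matrices $\Theta_{\gamma_N}$ to get injectivity and surjectivity. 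The full-support observation giving $\phi(\mu)\in\RR_+^\AAA$ is also right.

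Three details deserve tightening, none fatal. First, heights do not grow when a letter \emph{wins}: since $h^{(N+1)}=\bigl(\Theta^{(N+1)}_\gamma\bigr)^t h^{(N)}$, the height $h_\alpha$ increases (by the winner's height) exactly when $\alpha$ \emph{loses}; condition (4) of Proposition \ref{prop.theta_and_measures} says every letter appears infinitely often in the sequence of losers as well, so your conclusion $h^{(N)}_\alpha\to\infty$, and hence $\max_\alpha\lambda^{(N)}_\alpha\to 0$, survives with the mechanism corrected. Second, for $\sigma$-additivity of your premeasure, shrinking atom \emph{diameters} alone do not suffice; you also need the atom \emph{masses} $v^{(N)}_\alpha$ to tend to zero, which follows from the exact analogue of your length identity, $|v|=\sum_\alpha h^{(N)}_\alpha v^{(N)}_\alpha$ with $h^{(N)}_\alpha\to\infty$: this rules out a nested sequence of atoms carrying mass bounded below, makes the associated distribution function continuous, and then the Carath\'eodory extension goes through on the semiring of atoms. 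Third, your one-line invariance claim is slightly glib: $T$ does not literally permute the atoms of $\PPP_N$, because the top level of the $\alpha$-tower maps onto $I^{(N),1}_\alpha$, which need not be a union of $\PPP_N$-atoms; invariance is checked by passing to deeper partitions, where the discrepancy is confined to finitely many atoms whose total mass tends to zero. With these repairs your proof is complete and supplies exactly what the paper omits by citation.
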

  \begin{rem}\label{rem.UE_is_generic}
      Consider the space $\irr_\AAA\times\RR_+^\AAA$ with its natural measure $\nu$, which is counting measure times Lebesque. Then for $\nu$-almost every $T=(\pi,\lambda)$
      \begin{itemize}
	  \item $T$ satisfies the Keane Condition, and
	  \item if $\gamma$ is the Rauzy Path of $T$, then there exists a positive matrix $B$ such that
	      $$\Theta_\gamma^{(j_i)}\cdots\Theta_\gamma^{(j_i+n)} = B$$
	      for an infinite increasing sequence $\{j_i\}_{i\in\NN}$ and fixed $n$.
      \end{itemize}
      It may be shown that in these cases, $\Lambda(\gamma)$ is a ray (or $\Del(\gamma)$ is a point). However this means that $T$ is uniquely ergodic by Theorem \ref{thm.LVector_is_Measure}. The result that unique ergodicity is generic was proved independently in \cite{c.Mas82} and \cite{c.Ve82}.
  \end{rem}

\subsection{Classification of Rauzy Classes}\label{sec.classify}

    Each stratum, $\HHH(\ell_1,\dots, \ell_m)$, can generally be divided further into connected components, which correspond to \term{Extended Rauzy Classes} (see \cite{c.Ve90}). The following theorems completely categorize every connected component for all strata. A stratum is \term{hyperelliptic} if a Riemann surface with differential in the stratum is hyperelliptic (see Section \ref{sec.hyperelliptic}). A stratum with all singularities of even degree has a flow invariant $\ZZ_2$-valued property called the \term{parity} of its \term{spin structure}. Details on this and calculations will be presented in Section \ref{sec.spin}.

    If the genus of $\RClass\subseteq\irr_d$ is $1$, we conclude from Sections \ref{sec.hyperelliptic} and \ref{sec.removable} that
		$$\pi=(d,2,\dots,d-1,1)$$
    belongs to $\RClass$. The following theorem categorizes all strata of genus $2$ and $3$.
	
	\begin{nonum}(M. Kontsevich and A. Zorich \cite{c.KZ})
		The moduli space of Abelian differentials on a complex curve of genus $g=2$ contains two strata: $\HHH(1,1)$ and
		$\HHH(2)$. Each of them is connected and hyperelliptic.
		
		Each stratum $\HHH(2,2)$, $\HHH(4)$ of the moduli space of Abelian differentials on a complex curve of genus $g=3$
		has two connected components: the hyperelliptic one, and one having odd spin structure. The other strata are
		connected for genus $g=3$.
	\end{nonum}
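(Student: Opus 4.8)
The plan is to separate the argument into a \emph{lower bound}, producing invariants constant on connected components, and an \emph{upper bound} establishing connectivity of the loci those invariants cut out. Two invariants suffice in these genera. The first is hyperellipticity: the locus $\Hhyp$ of pairs $(X,\omega)$ admitting an involution $\iota$ with $X/\iota\cong\mathbb{P}^1$, with $\iota^*\omega=-\omega$, and with the zeros placed symmetrically with respect to $\iota$, is invariant under the mapping class group and the Teichm\"uller flow and is both open and closed in its stratum, hence a union of connected components. The second, defined exactly when every $\ell_i$ is even (here for $\HHH(2)$, $\HHH(4)$ and $\HHH(2,2)$), is the parity of the spin structure: the flat metric equips $H_1(X;\ZZ/2)$ with a quadratic form $q$ sending a simple closed curve to its winding number $+1\pmod{2}$, and the Arf invariant of $q$ is locally constant, hence a component invariant (see Sections \ref{sec.hyperelliptic} and \ref{sec.spin}). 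For the lower bound one exhibits, on $\HHH(4)$ and on $\HHH(2,2)$, a hyperelliptic differential and a differential of the opposite spin parity, proving each of these strata has at least two components; for all the other strata no further invariant is available, so the lower bound is $1$.

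In genus $2$ the upper bound is nearly automatic: every curve of genus $2$ is hyperelliptic and its hyperelliptic involution acts as $-1$ on $H^0(X,\Omega^1_X)$, so $\iota^*\omega=-\omega$ for every holomorphic $\omega$ and thus $\Hhyp=\HHH$ for both $\HHH(2)$ and $\HHH(1,1)$. Connectivity then follows by realizing $X$ as the double cover of $\mathbb{P}^1$ branched over six points and describing the stratum, through the position of the zero divisor of $\omega$ (forced to a Weierstrass point for $\HHH(2)$ and to a conjugate pair for $\HHH(1,1)$), as fibered over the connected configuration space of marked points of $\mathbb{P}^1$ modulo $\mathrm{PGL}_2$, with the braid monodromy acting transitively on any remaining discrete choices.

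In genus $3$ the upper bound is the crux. One must show that each locus cut out by the invariants is connected, and in particular that for $\HHH(4)$ and $\HHH(2,2)$ the \emph{even}-spin locus coincides with the hyperelliptic one, so that no spurious even-spin non-hyperelliptic component appears; this last point is settled by computing the spin parity of the explicit hyperelliptic flat model. For the connectivity proper I would follow the surgery method of Kontsevich and Zorich, deforming an arbitrary $(X,\omega)$ inside its stratum, by a controlled sequence of flat-geometric moves (colliding two zeros into one, and ``bubbling'' a handle), into a normal form, while tracking the effect of each move on spin parity and hyperellipticity; alternatively, and closer to the present paper, one may invoke Veech's identification of connected components with extended Rauzy classes and perform the finite enumeration of Rauzy classes of each signature in genus $\leq 3$, checking that the counts match. \textbf{The main obstacle}, in either route, is exactly this connectivity: separating components by the Arf invariant and by hyperellipticity is routine, whereas excluding extra components — and correctly bookkeeping how the spin parity behaves under the surgeries, or confirming that the enumeration is exhaustive — is where the real difficulty lies.
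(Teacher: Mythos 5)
First, a point of comparison: the paper contains no proof of this statement at all. It is imported verbatim from Kontsevich--Zorich \cite{c.KZ}, and the paper's Sections \ref{sec.hyperelliptic} and \ref{sec.spin} develop only the machinery for \emph{computing} hyperellipticity and spin parity of a given permutation, taking the classification itself as a black box feeding into Corollary \ref{cor.classify}. So your proposal must be judged against the actual Kontsevich--Zorich argument, which it tracks correctly at the level of strategy: the two invariants you name are the right ones, and your genus-$2$ argument (every genus-$2$ curve is hyperelliptic, $\iota^*$ acts by $-1$ on holomorphic $1$-forms, then fiber over the connected configuration space of six points on $\mathbb{P}^1$ modulo $\mathrm{PGL}_2$ with transitive braid monodromy) is a legitimate self-contained connectivity proof in that case.

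There are, however, two genuine problems. The substantive one you flag yourself: the upper bound in genus $3$ is never proved. ``Follow the surgery method of Kontsevich and Zorich, tracking the effect on parity'' and ``perform the finite enumeration of Rauzy classes'' are pointers to a proof, not a proof; all the real content of the theorem --- connectivity of $\Hodd(4)$, $\Hodd(2,2)$, $\HHH(3,1)$, $\HHH(2,1,1)$, $\HHH(1,1,1,1)$ and of the hyperelliptic loci, together with exhaustiveness of the invariants --- lives exactly there, and neither route is carried out (note also that the Rauzy-class route consumes Veech's theorem identifying components with extended Rauzy classes as an unproved input). As written this is an outline, not a proof. Second, a concrete error in your lower-bound setup: with your definition of $\Hhyp$ (an involution $\iota$ with $\iota^*\omega=-\omega$ and ``zeros placed symmetrically''), the locus is closed but \emph{not} open in $\HHH(2,2)$, so the claim that it is automatically a union of connected components fails in precisely one of the strata at issue. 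In genus $3$, a hyperelliptic curve $y^2=f(x)$ carries differentials $(x-r)(x-s)\,dx/y$ with $r,s$ roots of $f$, having double zeros at two Weierstrass points, each fixed by $\iota$; the associated theta characteristic $W_r+W_s$ satisfies $h^0=1$, so these hyperelliptic flat surfaces have \emph{odd} parity and form a codimension-one locus inside the non-hyperelliptic component $\Hodd(2,2)$. Hence hyperellipticity of the underlying curve is not locally constant there, and your assertion that the even-spin locus ``coincides with the hyperelliptic one'' is false under your stated definition. The correct component invariant requires the involution to \emph{exchange} the two zeros (equivalently, membership in the locus of double covers of quadratic differentials on $\mathbb{P}^1$); without this correction your invariants would misassign parity, or predict spurious components, in $\HHH(2,2)$.
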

	The following theorem categorizes the connected components for each stratum of genera $4$ or greater.
	\begin{nonum}(M. Kontsevich and A. Zorich \cite{c.KZ})
		All connected components of any stratum of Abelian differentials on a complex curve of genus $g\geq 4$ are
		described by the following list:
		\begin{itemize}
            \item The stratum $\HHH(2g-2)$ has three connected components: the hyperelliptic one, $\Hhyp(2g-2)$, and components $\Heven(2g-2)$ and $\Hodd(2g-2)$ corresponding to even and odd spin structures.
            \item The stratum $\HHH(2\ell,2\ell)$, $\ell\geq 2$ has three connected components: $\Hhyp(2\ell,2\ell)$, $\Heven(2\ell,2\ell)$ and $\Hodd(2\ell,2\ell)$.
            \item All the other strata of the form $\HHH(2\ell_1,\dots,2\ell_m)$, where all $\ell_i\geq 1$, have two connected components: $\Heven(2\ell_1,\dots,2\ell_m)$ and $\Hodd(2\ell_1,\dots,2\ell_m)$.
			\item The strata $\HHH(2\ell-1,2\ell-1)$, $\ell\geq 2$, have two connected components; one of them,
				$\Hhyp(2\ell-1,2\ell-1)$, is hyperelliptic; the other one, $\Hnonhyp(2\ell-1,2\ell-1)$, is not.
			\item All other strata of Abelian differentials on complex curves of genera $g\geq 4$ are nonempty and
				connected.
		\end{itemize}
	\end{nonum}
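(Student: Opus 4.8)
The plan is to exhibit a complete set of deformation invariants of connected components and then establish \emph{completeness}, i.e.\ that two differentials sharing all invariants lie in a common component. The two invariants in play are \textbf{hyperellipticity} and the \textbf{parity of the spin structure}, so the theorem splits into two halves. The \emph{lower bound} shows these invariants are locally constant (hence separate components) and that they realize every value listed; this is the softer, more topological half. The \emph{upper bound} shows no further components exist; this is the hard combinatorial/surgical half, and I expect it to be the crux.

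For the lower bound I would first treat the spin parity, which is defined only when every singularity has even degree $2\ell_i$. From the flat structure one builds a $\ZZ_2$-valued quadratic form $q$ on $H_1(S;\ZZ_2)$ out of the winding numbers (Gauss-map degrees) of smooth simple closed curves, satisfying $q(a+b)=q(a)+q(b)+a\cdot b$ with $a\cdot b$ the mod-$2$ intersection pairing; its Arf invariant is the spin parity. I would check that $q$, and hence its Arf invariant, varies continuously — thus is constant — under deformation inside a stratum, forcing the parity to be constant on each connected component. For hyperellipticity, in the strata $\HHH(2g-2)$ and $\HHH(g-1,g-1)$ I would single out the locus of surfaces admitting an involution $\iota$ with $\iota^*\omega=-\omega$ presenting $(S,\omega)$ as a branched double cover of $(\CC\mathrm{P}^1,\text{a quadratic differential})$, show this locus is both open and closed, and conclude it is a union of components, namely $\Hhyp$. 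Finally I would give explicit polygon models realizing each prescribed (hyperelliptic / even / odd) value, establishing that the listed components are all nonempty.

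For the upper bound I would set up local \textbf{surgeries} on the flat structure and run an induction on the genus and on the number of zeros. The basic move is ``breaking up a zero'' of order $\ell_1+\ell_2$ into two zeros of orders $\ell_1,\ell_2$ (and its reverse, collapsing), carried out inside a small flat disk so that it deforms continuously and therefore respects connected components; a companion ``handle-bubbling'' move changes the genus. Tracking how each surgery acts on the spin parity and on the hyperelliptic locus, I would reduce an arbitrary stratum to a base case — the minimal stratum $\HHH(2g-2)$ — where I would classify components directly via explicit separatrix/polygon diagrams, verifying that the only invariants there are spin parity and hyperellipticity and that any two diagrams with matching invariants are joined by an explicit path. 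Propagating this back up the surgeries would then yield completeness for all strata.

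The main obstacle is precisely the upper bound, and within it the interface between the surgeries and the connectivity bookkeeping: one must verify that breaking up or collapsing a zero never merges or splits components in an uncontrolled way, and that the induction's base case (the minimal stratum, together with the two-equal-zeros strata carrying a hyperelliptic component) is analyzed carefully enough that no spurious component slips through. The genuinely delicate step is showing that the standard representatives in the minimal stratum connect to \emph{every} surface of the same invariants, not merely to one another — and it is here, equivalently in the identification of connected components with extended Rauzy classes, that the hard combinatorial analysis does the real work.
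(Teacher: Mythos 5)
There is nothing in the paper to compare your argument against: the statement is quoted verbatim as a theorem of Kontsevich and Zorich from \cite{c.KZ} and is used purely as an input to Corollary \ref{cor.classify}; the paper itself offers no proof. Measured instead against the actual proof in \cite{c.KZ}, your outline is a faithful reconstruction of its architecture. The spin parity there is indeed the Arf invariant of the winding-number quadratic form $q$ satisfying $q(a+b)=q(a)+q(b)+a\cdot b$, and its deformation invariance, together with the open-and-closedness of the hyperelliptic locus in $\HHH(2g-2)$ and $\HHH(g-1,g-1)$, yields your ``lower bound''; the ``upper bound'' is run by exactly the surgeries you name (breaking up a zero, bubbling a handle), inducting down to the minimal stratum, whose components are settled combinatorially through (extended) Rauzy classes of permutations and explicit standard representatives --- the step you correctly flag as the crux. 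Two pieces of bookkeeping your sketch would need to make explicit in a full write-up: first, for $g\geq 4$ one must show the hyperelliptic locus in $\HHH(2g-2)$ and $\HHH(2\ell,2\ell)$ is a component distinct from \emph{both} spin components (in genera $2$ and $3$ it coincides with one of them, which is precisely why the companion low-genus theorem in the paper reads differently and why the $g\geq 4$ hypothesis matters); second, each surgery must come with an explicit rule for how it transforms the spin parity, since that is what permits propagating the base-case classification upward without components merging. As it stands your proposal is a correct roadmap that defers essentially all of the hard combinatorial content to the minimal-stratum base case --- a deferral you acknowledge, and one that matches where the weight falls in \cite{c.KZ} itself.
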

    We are given a full classification of each connected component by the above results. To each connected component, we denote the \term{type} by the information other than the singularities. The type takes one of the following values \{-, even, odd, hyperelliptic, nonhyperelliptic\} as applicable. This however is not enough to calculate what Rauzy Class a permutation $\pi\in\irr$ belongs to, only the Extended Rauzy Class.
    \begin{exam}
        Consider
        $$ \pi= \cmtrx{\LL{a}{z}~\LL{b}{c}~\LL{c}{b}~\LL{d}{d}~\LL{e}{w}~\LL{f}{f}~\LL{w}{e}~\LL{x}{y}~\LL{y}{x}~\LL{z}{a}}$$
        and
        $$ \pi'=\cmtrx{\LL{a}{z}~\LL{b}{c}~\LL{c}{b}~\LL{d}{d}~\LL{e}{f}~\LL{f}{e}~\LL{w}{y}~\LL{x}{x}~\LL{y}{w}~\LL{z}{a}}.$$
        Both $\pi$ and $\pi'$ have three singularities one each of degrees $1$,$2$ and $3$. Therefore both $\pi$ and $\pi'$ belong to the stratum $\HHH(1,2,3)$. However the marked singularity of $\pi$ is of degree $3$, while the marked singularity of $\pi'$ is of degree $1$. Because the degree of the marked singularity is fixed throughout a Rauzy Class, $\RClass(\pi)\neq\RClass(\pi')$.
    \end{exam}
    It becomes clear that in order to distinguish Rauzy Classes, the degree of the marked singularity must be considered. Indeed, the following theorem shows the addition of this final invariant completes the classification of all Rauzy Classes:
	\begin{nonum}(C. Boissy \cite{c.B09})
		$\pi_1,\pi_2\in\irr_d$ belong to the same Rauzy class if and only if they belong to the same connected component
		and their marked singularities are the same degree.
	\end{nonum}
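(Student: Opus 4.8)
The statement is an equivalence, so the plan is to exhibit two invariants of an ordinary Rauzy class---the connected component of the ambient stratum and the degree of the marked singularity---and then to show that together they are \emph{complete}, i.e.\ that they separate ordinary Rauzy classes exactly. The forward implication amounts to checking that both quantities are constant along Rauzy moves, while the reverse implication is the substantive half and is where I expect essentially all of the difficulty to lie.

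For the ``only if'' direction I would argue as follows. That the degree of the marked singularity is a Rauzy-class invariant is immediate from the discussion preceding Definition \ref{def.signature}: Rauzy--Veech induction fixes the leftmost corner $(0,0)$ of every suspension, hence fixes the singularity sitting there, and Proposition \ref{prop.genus_is_inv} guarantees its degree is unchanged. That the connected component of the stratum is invariant follows because a single Rauzy move is realized on suspensions as a cut-and-paste by translation $S(\pi,\lambda,\tau)\mapsto S(\pi',\lambda',\tau')$, which does not alter the underlying translation surface; in particular $\pi$ and $\eps\pi$ have suspensions in one and the same connected component of $\HHH(\ell_1,\dots,\ell_m)$. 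Thus the map sending an ordinary Rauzy class to the pair (component, marked degree) is well defined, which yields ``only if.''

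For the ``if'' direction I would first invoke the Kontsevich--Zorich classification together with Veech's correspondence (\cite{c.KZ}, \cite{c.Ve90}, \cite{c.Ve82}): connected components of strata are in bijection with \emph{extended} Rauzy classes, the orbits under Rauzy moves \emph{together with} the symmetry operation coming from the $180^\circ$ rotation of the suspension polygon. Hence $\pi_1$ and $\pi_2$ lying in the same connected component already lie in one extended Rauzy class, i.e.\ they are joined by a finite alternating sequence of Rauzy moves and symmetry operations. The entire content is then to remove the symmetry operations: Rauzy moves preserve the marked degree, whereas the rotation sends the left corner to the right corner and so in general re-marks the surface with a singularity of possibly different degree. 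The goal is to show that when $\pi_1$ and $\pi_2$ carry the \emph{same} marked degree the symmetry operations can be dispensed with, so that the two permutations are joined by Rauzy moves alone.

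Concretely I would reduce to canonical representatives via Claim \ref{cor.std_in_RC} (every Rauzy class contains a standard permutation) and then, inside each extended class, analyze how the rotation permutes the admissible markings. The natural statement to establish is that an extended Rauzy class decomposes into ordinary Rauzy classes indexed exactly by the distinct degrees that can occupy the marked position, and that for a fixed such degree any two standard representatives are connected by an explicit Rauzy path. This forces separate treatment of the hyperelliptic strata and of the even-degree strata carrying a spin parity, where one must verify that no invariant finer than the marked degree survives. Producing these explicit Rauzy paths joining standard permutations of a common marked degree while respecting the hyperelliptic and spin structure is the main obstacle, and is precisely the combinatorial heart of Boissy's argument in \cite{c.B09}.
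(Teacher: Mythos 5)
The first thing to note is that the paper does not prove this statement at all: it is quoted verbatim as Boissy's theorem and used as a black box in the derivation of Corollary \ref{cor.classify}. So there is no internal proof to compare against. Your ``only if'' direction is correct and matches the observations the paper itself makes: the leftmost vertex of any suspension is fixed by R-V induction (the discussion before Definition \ref{def.signature}), degrees are constant along a class by Proposition \ref{prop.genus_is_inv}, and a Rauzy move acts on suspensions as a cut-and-paste by translation, so the underlying surface, hence the connected component of the stratum, is unchanged.

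The genuine gap is in your ``if'' direction, which is never actually established. The framing via extended Rauzy classes (Veech's correspondence plus the Kontsevich--Zorich classification) is the right setting, but the pivotal assertion --- that an extended Rauzy class decomposes into ordinary Rauzy classes indexed \emph{exactly} by the degree occupying the marked position, with any two standard representatives of equal marked degree joined by Rauzy moves alone --- is not a lemma you derive from anything; it is a restatement of the theorem being proved. Nothing in your outline excludes a finer invariant separating two Rauzy classes that lie in one component and share the marked degree; ruling that out is precisely what requires Boissy's connectedness argument for strata with a marked zero (in effect a labeled refinement of the KZ classification), and your proposal defers this wholesale to \cite{c.B09}. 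Two smaller points: the $180^\circ$ rotation does not merely ``in general re-mark'' the surface --- controlling exactly which re-markings are achievable is the whole content, so this step needs an argument rather than a description; and the statement as quoted applies to all of $\irr_d$, including removable (degree-zero) marked singularities, which your reduction to standard representatives via Claim \ref{cor.std_in_RC} does not address (compare the separate handling in Section \ref{sec.removable}). As a proof the substantive half is missing; as a reading of the statement's role it is consistent with the paper, which likewise cites rather than proves it.
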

	We restate the above information in a different form and make an observation that, while clear from
	everything above, is crucial to our main result.
	\begin{corollary}\label{cor.classify}
		Every Rauzy class is uniquely determined by signature and type. So given Rauzy class $\RClass$,
		if $\pi\in\irr$
		has the same signature and type as $\RClass$, then necessarily $\pi\in\RClass$.
	\end{corollary}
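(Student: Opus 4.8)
The plan is to show that the pair \textbf{(signature, type)} carries exactly the same information as the pair \textbf{(connected component, degree of marked singularity)}, so that the corollary reduces to Boissy's theorem combined with the Kontsevich--Zorich classification. First I would unpack the definitions. By Definition \ref{def.signature}, the signature $\sig(\RClass) = (\ell_1, \ldots, \ell_m)$ records the degrees of all singularities together with the distinguished first entry $\ell_1$, the degree of the marked singularity. Forgetting the marking, the multiset $\{\ell_1, \ldots, \ell_m\}$ determines precisely the stratum $\HHH(\ell_1, \ldots, \ell_m)$ to which any suspension of $\pi$ belongs; and the distinguished entry $\ell_1$ additionally records the degree of the marked singularity, which by Proposition \ref{prop.genus_is_inv} and the accompanying discussion is constant over the class. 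Thus the signature is equivalent to the data (stratum, marked-singularity degree).

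Next I would invoke the Kontsevich--Zorich theorems quoted above. These state that each stratum $\HHH(\ell_1, \ldots, \ell_m)$ decomposes into connected components that are completely distinguished by the \emph{type} data: whether the component is hyperelliptic or nonhyperelliptic (when that distinction applies), and, for strata with all even degrees, the parity of the spin structure (even or odd); when the stratum is connected the type is simply ``$-$''. Consequently, within a fixed stratum, specifying the type pins down the connected component uniquely, and conversely. Therefore the pair (signature, type) determines (connected component, marked-singularity degree), and vice versa.

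Finally I would apply Boissy's theorem, which asserts that two permutations $\pi_1, \pi_2 \in \irr_d$ lie in the same Rauzy class if and only if they lie in the same connected component and their marked singularities have equal degree. Combining this with the translation above: if $\pi \in \irr$ has the same signature and type as $\RClass$, then $\pi$ lies in the same connected component as the elements of $\RClass$ and its marked singularity has the same degree, whence $\pi \in \RClass$. The uniqueness assertion follows by the same equivalence read in reverse.

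The main obstacle is not any substantive argument but the careful verification that the paper's invariants correspond \emph{exactly} to the invariants appearing in the cited theorems. In particular one must check that the type, taking values in $\{-,\text{even},\text{odd},\text{hyperelliptic},\text{nonhyperelliptic}\}$, records neither more nor less than the connected-component data of Kontsevich--Zorich, so that no stratum admits two distinct Rauzy classes sharing the same (signature, type). Since the deep classification work is entirely contained in the cited results, this proof is essentially a bookkeeping translation between two equivalent descriptions of the same invariants, which is why the author can assert that the statement follows immediately from \cite{c.KZ} and \cite{c.B09}.
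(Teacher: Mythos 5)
Your proposal is correct and follows exactly the route the paper intends: the paper offers no separate argument for Corollary \ref{cor.classify}, asserting only that it is a restatement of the quoted Kontsevich--Zorich and Boissy theorems, and your translation of (signature, type) into (connected component, degree of marked singularity) is precisely the bookkeeping that justifies that assertion. The one point you rightly flag---that the type data matches the component data exactly, with the degenerate values (``$-$'', the genus~$\leq 2$ cases where every component is hyperelliptic) causing no collisions---is the only content beyond direct citation, and you handle it correctly.
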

	
\subsection{Hyperelliptic Surfaces}\label{sec.hyperelliptic}

    \begin{defn}\label{def.hyp_perm}
        A surface with quadratic differential $(M,q)$ of genus $g$ is \term{hyperelliptic} if there exists a map $h:M\to M$ such that
        \begin{itemize}
            \item $h=\inv{h}$,
            \item $h_*q = -q$,
            \item $h$ fixes $2g+2$ points,
        \end{itemize}
         and such an $h$ is called a \term{hyperelliptic involution}. A permutation $\pi$ is \term{hyperelliptic} if every suspension of $\pi$ is hyperelliptic.
    \end{defn}
    \begin{rem}
        Because $h$ is a well defined map on the differential, $h$ must take singularities to singularities. Also, $h$ must take geodesics to geodesics. Therefore $h$ maps saddle connections (geodesics with endpoints that are singularities) to saddle connections. In this case, removable singularities are not considered.
    \end{rem}
    \begin{rem}\label{rem.how_to_check_hyp}
        Consider $\pi\in\irr_\AAA$. Any given suspension $S$ is represented by a polygon in $\CC$ whose differential is represented by the standard $dz$ in its interior. In this case, the only possible candidates for a hyperelliptic involution on $S$ are of the local form $z\mapsto -z + c$ for some constant $c\in\CC$. These maps automatically satisfy the first two conditions in Definition \ref{def.hyp_perm}.
    \end{rem}
    \begin{defn}
        For $d\geq 2$, let $\pi_{(d)}$ be the permutation such that $\pi_{(d)}(i)=d-i+1$ for all $i\in\dset{d}$.
    \end{defn}
    \begin{lem}\label{lem.pi_d_is_hyp}
        $\pi_{(d)}$ is hyperelliptic.
    \end{lem}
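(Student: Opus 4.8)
The plan is to treat an arbitrary suspension $S=S(\pi_{(d)},\lambda,\tau)$ and produce on it a single candidate involution, the central symmetry, then verify the three conditions of Definition \ref{def.hyp_perm}: Remark \ref{rem.how_to_check_hyp} will dispatch the first two for free, and Riemann--Hurwitz will handle the third. First I would write $\pi_{(d)}$ as the pair with $\pi_0=\mathrm{id}$ and $\pi_1$ the reversal, so that in the notation of Section \ref{sec.surface} the top boundary of the polygon traverses the vectors $\vec\zeta_1,\vec\zeta_2,\dots,\vec\zeta_d$ in this order while the bottom boundary traverses the \emph{same} vectors in the reverse order $\vec\zeta_d,\dots,\vec\zeta_1$, both paths running from $(0,0)$ to $c:=(|\lambda|,|\tau|)$. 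The crucial structural observation is then that the rotation by $\pi$ about the center $C:=\tfrac12 c$, namely $h(z)=c-z$, carries the top path onto the bottom path: writing $P_k=\sum_{j\le k}\vec\zeta_j$ for the top vertices and $Q_k$ for the bottom vertices, one checks $h(P_k)=P_d-P_k=Q_{d-k}$, whence $h$ sends the top segment $\zeta_\alpha^0$ onto the bottom segment $\zeta_\alpha^1$ (with reversed orientation) for every $\alpha$.

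Next I would check that $h$ descends to a well-defined involution of the closed surface $S$. Since the identification $\zeta_\alpha^0\sim\zeta_\alpha^1$ is the translation by a fixed vector $v_\alpha$ and $h$ is affine of the form $z\mapsto c-z$, a point $p\in\zeta_\alpha^0$ and its glued partner $p+v_\alpha\in\zeta_\alpha^1$ satisfy $h(p)=h(p+v_\alpha)+v_\alpha$, so $h$ maps glued pairs to glued pairs and passes to the quotient. It is visibly an involution and is locally of the form $z\mapsto -z+c$, so by Remark \ref{rem.how_to_check_hyp} it automatically satisfies conditions (1) and (2) of Definition \ref{def.hyp_perm}. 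Only the fixed-point count remains.

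For condition (3) I would \emph{not} compute the singularity data of $\pi_{(d)}$, but instead read off a cheap lower bound and feed it to Riemann--Hurwitz. The midpoint of each identified edge $[\zeta_\alpha]$ is fixed by $h$ (it maps to the midpoint of $\zeta_\alpha^1$, which is its own glued partner), producing at least $d$ distinct fixed points, all regular points of $S$. Applying Riemann--Hurwitz to the holomorphic degree-two quotient map $S\to S/h$ gives $\#\mathrm{Fix}(h)=2g+2-4\bar g$, where $\bar g$ is the genus of $S/h$. From $g=1+s/2$ and $d=m+s+1$ one gets $2g-2=d-m-1<d\le\#\mathrm{Fix}(h)$, so the possibility $\bar g\ge 1$ (which would force $\#\mathrm{Fix}(h)\le 2g-2$) is excluded; hence $\bar g=0$ and $\#\mathrm{Fix}(h)=2g+2$. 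This is exactly condition (3), so $S$ is hyperelliptic, and since $S$ was an arbitrary suspension, $\pi_{(d)}$ is hyperelliptic.

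I expect the main obstacle to be the bookkeeping in the middle steps: extracting the exact edge-correspondence $h(\zeta_\alpha^0)=\zeta_\alpha^1$ from the reversal structure of $\pi_{(d)}$, and confirming that $h$ descends across \emph{every} gluing, including the corner and vertex identifications. Once the symmetry is pinned down, the Riemann--Hurwitz shortcut is what makes the fixed-point count painless: it lets me conclude $\#\mathrm{Fix}(h)=2g+2$ from the lower bound of $d$ edge midpoints alone, so I never have to determine which singularities are fixed and which are swapped.
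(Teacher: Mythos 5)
Your proof is correct, and its first half coincides with the paper's: the same candidate map $h(z) = -z + |\lambda| + \imath|\tau|$, the same vertex computation (the paper checks $h(p^0_k) = p^1_{d-k}$, your $h(P_k)=Q_{d-k}$) showing each top segment is carried to the like-labeled bottom segment, and the same appeal to the local form $z\mapsto -z+c$ (Remark \ref{rem.how_to_check_hyp}) to get the first two conditions of Definition \ref{def.hyp_perm} for free; your explicit verification that $h$ respects the gluings is a point the paper leaves implicit, and it is worth keeping. Where you genuinely diverge is the fixed-point count. The paper enumerates the fixed points exactly, by cases on the parity of $d$: for $d=2m$ the unique singularity (degree $2m-2$) is fixed and, with the $d$ edge midpoints and the center $\frac{1}{2}(|\lambda|+\imath|\tau|)$, gives $d+2=2g+2$ fixed points; for $d=2m+1$ the two singularities of degree $m-1$ are interchanged by $h$, and the $d$ midpoints plus the center give $d+1=2g+2$. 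You instead take only the cheap bound $\#\mathrm{Fix}(h)\geq d$ from the midpoints and feed it to Riemann--Hurwitz for the degree-two quotient, $\#\mathrm{Fix}(h)=2g+2-4\bar g$; since $2g-2=s=d-m-1<d$, any $\bar g\geq 1$ would force $\#\mathrm{Fix}(h)\leq 2g-2<d$, a contradiction, so $\bar g=0$ and the count is exactly $2g+2$. This is valid ($h$ is holomorphic and is not the identity, as it flips each edge about its midpoint, so the quotient is a closed Riemann surface and Riemann--Hurwitz applies with simple ramification at the fixed points), and it buys you a parity-free argument that never uses the singularity structure of $\pi_{(d)}$ beyond the general relations $d=m+s+1$ and $g=1+\frac{s}{2}$ — indeed it shows more generally that any such central symmetry with more than $2g-2$ fixed points is a hyperelliptic involution. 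What the paper's explicit enumeration buys in exchange is a complete description of the fixed-point set (the Weierstrass points are the $d$ midpoints, the center, and, for even $d$, the singularity) together with the fact that for odd $d$ the involution swaps the two singularities — information your Riemann--Hurwitz shortcut deliberately discards.
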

    \begin{proof}
        Consider any suspension $S=S(\pi_{(d)},\lambda,\tau)$. We will construct $h$ and show that it satisfies Definition \ref{def.hyp_perm}. Let $h(z) = -z + |\lambda|+\imath|\tau|$. The first two conditions are satisfied. In order to show that $h$ is the appropriate map, we will define the vertices $p^\eps_k$ for $k\in\{0,\dots,d\}$ and $\eps\in\{0,1\}$ by
        $$ \begin{array}{rcl}
            p^0_k &=&\dusum{j=1}{k}\lambda_j+\imath\dusum{j=1}{k}\tau_j, \mbox{ and}\\
            p^1_k &=&\dusum{j=1}{k}\lambda_{d-j+1}+\imath\dusum{j=1}{k}\tau_{d-j+1}.
            \end{array}$$
        We note that the top segment labeled $k$ in $S$ has endpoints $p^0_{k-1}$ and $p^0_k$ while the bottom segment labeled $k$ in $S$ has endpoints $p^1_{d-k}$ and $p^1_{d-k+1}$. Because $h$ is an isometry, it maps segments to segments. We examine mapping the endpoints $\pi_k^\eps$ under $h$. For $k\in\{0,\dots,d\}$,
        $$ \begin{array}{rcl}
                h(p^0_k) & = & -p^0_k + |\lambda|+\imath|\tau| \\
                    & = & \dusum{j=1}{d}\lambda_j - \dusum{\ell=1}{k}\lambda_\ell + \imath\left(\dusum{j=1}{d}\tau_j - \dusum{\ell=1}{k}\tau_\ell\right)\\
                    & = & \dusum{j=k+1}{d}\lambda_j = \imath\dusum{j=k+1}{d}\tau_j \\
                    & = & \dusum{j'=1}{d-k}\lambda_{d-j'+1} + \imath\dusum{j'=1}{d-k}\tau_{d-j'+1}\\
                    & = & p^1_{d-k}.
            \end{array}$$
         Because $h=\inv h$, we conclude that any segment labeled $k$ is mapped to the other segment labeled $k$. As these segments are identified, $h$ fixes these segments. Now it remains to count the fixed points.

        If $d=2m$ is even, there is one singularity of degree $2m-2$, the genus is $m$ and there should be $2m+2 = d+2$ fixed points. There are $d$ segments each with a fixed midpoint. The point $\frac{1}{2}(|\lambda|+\imath|\tau|)$ is fixed, and the singularity represented by the class of all $p^\eps_k$'s is fixed. Therefore $h$ fixes $2g+2$ points.

        If $d=2m+1$ is odd, there are two singularities each of degree $m-1$, the genus is $m$ and there should be $2m+2=d+1$ fixed points. We note that this time, the two singularities, one represented by all $p_k^\eps$'s with even $k$'s and the other by all odd $k$'s, are interchanged by $h$. However $h$ fixes the $d$ midpoints of the labeled segments and the point $\frac{1}{2}(|\lambda|+\imath|\tau|)$. Therefore $h$ fixes $2g+2$ points.

        So we see that in either case, $h$ is the hyperelliptic involution for $S$.
    \end{proof}
    \begin{prop}\label{prop.hyp}
        Let $\pi\in\irr_d$ be standard with no removable singularities. If $\pi$ is hyperelliptic, then $\pi=\pi_{(d)}$.
    \end{prop}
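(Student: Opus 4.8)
The plan is to fix a suspension $S=S(\pi,\lambda,\tau)$ with the data $(\lambda,\tau)$ in general position and extract strong combinatorial constraints from its hyperelliptic involution $h$. By Remark \ref{rem.how_to_check_hyp}, $h$ has the local form $h(z)=-z+c$ for some $c\in\CC$, so $h$ is the central symmetry about $c/2$ and maps each edge of the polygon to an edge with the opposite displacement vector. Since the two identified edges labeled $\alpha$ both have displacement $\vec{\zeta}_\alpha=(\lambda_\alpha,\tau_\alpha)$ and every $\lambda_\alpha>0$, no displacement vector equals the negative of another; hence, for generic $(\lambda,\tau)$ (where the $\vec{\zeta}_\alpha$ are distinct), $h$ must carry the top $\alpha$-edge to the bottom $\alpha$-edge. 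The two central steps are therefore (i) to pin down $c$, and (ii) to read off from this edge pairing the identity $\pi_1(\alpha)=d+1-\pi_0(\alpha)$.

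For step (i) I would argue that $c=|\lambda|+\imath|\tau|$, the far corner (equivalently, that $h$ is centered at the midpoint of the polygon). The point $0$ is a vertex, so $h(0)=c$ must be a vertex as well, and the two edges meeting at $0$—the leftmost top and bottom segments $\pi_0^{-1}(1)$ and $\pi_1^{-1}(1)$—must map to the two edges meeting at $c$, namely the rightmost segments $\pi_0^{-1}(d)$ and $\pi_1^{-1}(d)$. Comparing displacement vectors and using that $\pi$ is standard, so that $\{\pi_0^{-1}(1),\pi_1^{-1}(1)\}=\{\pi_0^{-1}(d),\pi_1^{-1}(d)\}$, shows this matching is consistent precisely when $c$ is the far corner $|\lambda|+\imath|\tau|$; uniqueness of the hyperelliptic involution then forces this value of $c$. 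The computation in Lemma \ref{lem.pi_d_is_hyp} is exactly the verification that this central symmetry does interchange the two copies of each segment.

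For step (ii), with $c=|\lambda|+\imath|\tau|$ fixed, I write out endpoints explicitly. The top $\alpha$-segment sits at position $k=\pi_0(\alpha)$ with left endpoint $\sum_{\pi_0(\beta)<k}\vec{\zeta}_\beta$, and, since $h$ reverses the orientation of each edge, $h$ sends this to the right endpoint of the image edge, namely $c-\sum_{\pi_0(\beta)<k}\vec{\zeta}_\beta=\sum_{\pi_0(\beta)\ge k}\vec{\zeta}_\beta$. Requiring this to equal the right endpoint of the bottom $\alpha$-segment, at position $j=\pi_1(\alpha)$, gives $\sum_{\pi_0(\beta)\ge k}\vec{\zeta}_\beta=\sum_{\pi_1(\beta)\le j}\vec{\zeta}_\beta$. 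Because this must hold on an open set of parameters $(\lambda,\tau)$ (every suspension of $\pi$ is hyperelliptic), the two index sets coincide, and comparing cardinalities yields $j=d-k+1$, i.e. $\pi_1(\alpha)=d+1-\pi_0(\alpha)$ for every $\alpha$. Hence $\pi(i)=\pi_1(\pi_0^{-1}(i))=d-i+1$, which is exactly $\pi=\pi_{(d)}$.

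I expect step (i) to be the main obstacle: carefully justifying that the involution is centered at the polygon's center rather than at some other point, and marshaling the general-position hypothesis on $(\lambda,\tau)$ so that edges are genuinely distinguished by their displacement vectors. The hypotheses that $\pi$ is standard and has no removable singularities enter precisely here—standardness makes the corner matching at $0$ and $c$ consistent, while the absence of removable singularities guarantees that the $d$ displacement vectors index distinct genuine edges, so that the pairing argument is not corrupted by degenerate identifications.
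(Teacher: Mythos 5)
Your proposal is correct and takes essentially the same route as the paper: the paper likewise fixes a suspension in general position (explicitly $\lambda_i=1+m^i$ with $m\in(0,\tfrac14)$ and $\tau=(1,0,\dots,0,-1)$, so that no two saddle connections share the appropriate length), concludes $h(z)=-z+|\lambda|+\imath|\tau|$ by matching the corner edges labeled $1$ and $d$, and then pairs each top segment with the bottom segment of the same label to read off $\pi(j)=d-j+1$. The only point to tighten is that your general-position hypothesis must exclude coincidences between $\pm\vec{\zeta}_\alpha$ and the holonomy of \emph{every} saddle connection of the suspension, not merely demand that the $\vec{\zeta}_\alpha$ be pairwise distinct---this is exactly what the paper's explicit subset-sum-generic choice of $\lambda$ accomplishes.
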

    \begin{figure}[t]
        \begin{center}
           \setlength{\unitlength}{200pt}
            \begin{picture}(1,.42)
                \put(0,0){\includegraphics[width=\unitlength]{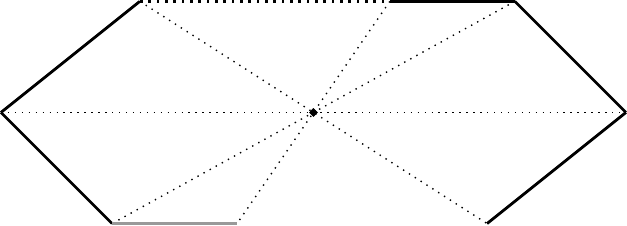}}

                \put(.09,.29){$1$}
                \put(.68,.38){$d-1$}
                \put(.92,.29){$d$}

                \put(.50,.13){$p$}

                \put(.06,.04){$d$}
                \put(.89,.04){$1$}
            \end{picture}
       \end{center}
       \caption{If the hyperelliptic involution must fix the point $p$, then the grey segment must be identified with $d-1$.}\label{fig.std_hyp}
    \end{figure}
    \begin{proof}
        Let $\pi$ be a hyperelliptic standard permutation. We will explicitly construct a suspension for $\pi$ that excludes any possibility but $\pi=\pi_{(d)}.$ Assume $\AAA=\dset{d}$ and $\pi=(\pi_0,\pi_1)$ where $\pi_0(i)=i$. Fix any $m\in(0,\frac{1}{4})$ and let $\lambda\in\RR_+^d$ be defined by $\lambda_i = 1+m^i$ for $i\in\dset{d}$. Also, let $\tau$ be defined by $\tau = (1,0,\dots,0,-1)$. From Equation \eqref{eq.Tpi}, we see that $S=S(\pi,\lambda,\tau)$ is a valid suspension for $\pi$. Now consider the hyperelliptic involution for $S$, denoted as $h$. By construction, the segments labeled by $1$ and $d$ must be interchanged under $h$, as no other saddle connections exist of the appropriate length. We conclude that $h(z) = -z + |\lambda|+\imath|\tau| = |\lambda|-z$ and see that the top segment labeled $j$ must be mapped to the bottom segment labeled $j$, as no other saddle connection would have the appropriate length. We then show iteratively that $\pi(j) = d - j + 1$ as desired (see Figure \ref{fig.std_hyp}). By Lemma \ref{lem.pi_d_is_hyp}, this is hyperelliptic.
    \end{proof}

\subsection{Calculation of Spin Parity}\label{sec.spin}

	The results in this section follow from Appendix C in \cite{c.Z08}. We refer the reader to that paper for
	details.
	
    To each $\pi\in\irr_d$ with all singularities of even degree, we can define the parity of the spin structure of the corresponding suspension surface $S$. To do so, we must find a \term{symplectic basis} of $H_1(S)$. This is a choice of closed cycles $\alpha_1,\beta_1,\dots,\alpha_g,\beta_g\in H_1(S)$, $g=g(\pi)$, with the following conditions: $\alpha_i\inx\alpha_j=\beta_i\inx\beta_j=0$ and $\alpha_i\inx\beta_j=\delta_{ij}$ where $\alpha\inx\beta$ is the algebraic intersection number. For a loop $\gamma$, the Gauss map is the lift of $\gamma$ to the unit tangent bundle, a map from $H_1(S)\rightarrow \sS^1$ (where $\sS^1\subset\RR^2$ is the unit circle), and let $\mathrm{ind}(\gamma)$ be the degree of the Gauss map. The spin parity of the surface can be calculated by:
	\begin{equation}\label{eq.prespin}
	{
		\Phi(S) := \sum_{i=1}^{g}(\mathrm{ind}(\alpha_i)+1)(\mathrm{ind}(\beta_i)+1)~~(mod~2).
	}
	\end{equation}
    Let $\phi(\gamma):=\mathrm{ind}(\gamma)+1$. This value is independent of choice of suspension surface $S$. Therefore we may instead speak of the parity of $\pi$ itself. Using these conventions, the previous equation becomes
	\begin{equation}\label{eq.spin}
	{
		\Phi(\pi) :=\sum_{i=1}^{g}\phi(\alpha_i)\phi(\beta_i)~~(mod~2).
	}
	\end{equation}
    For a surface $S=S(\pi)$, we will define for each $i\in\dset{d}$ a loop $\gamma_i$. Start with any point on the embedded subinterval $I_i$ in $I_S$. The loop will move in the positive vertical direction until it returns to $I_S$. Then close the loop by a horizontal line. Now deform the loop continuously so it becomes smooth and everywhere transverse to the horizontal direction. Call this loop $\gamma_i$. Let $c_i=[\gamma_i]$ be the cycle representative of $\gamma_i$ in $H_1(S)$. See Figure \ref{fig.gamma}.
	\begin{figure}
        \begin{center}
           \setlength{\unitlength}{200pt}
            \begin{picture}(1,.46)
                \put(0,0){\includegraphics[width=\unitlength]{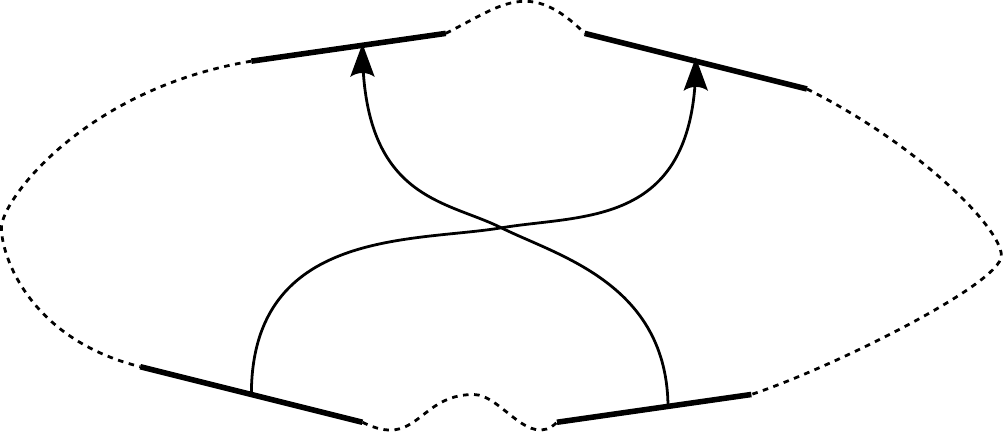}}

                \put(.34,.42){$i$}
                \put(.69,.42){$j$}

                \put(.22,-.03){$j$}
                \put(.66,-.04){$i$}

                \put(.30,.28){$\gamma_i$}
                \put(.70,.26){$\gamma_j$}

            \end{picture}
       \end{center}
        \caption{Loops $\gamma_i$ and $\gamma_j$ intersect in suspension $S$.}\label{fig.gamma}
    \end{figure}
				
    It is clear that $\mathrm{ind}(\gamma_i)=0$ as $\gamma_i$ is always transverse to the horizontal direction. Therefore $\phi(c_i)=1$. From the definition of $\gamma_i$ and $\Omega=\Omega_\pi$ (from Section \ref{sec.IET}),
	\begin{equation}\label{eq.inx}
	{
		c_i\inx c_j = \Omega_{i,j}
	}
	\end{equation}
    and that the span of the $c_i$'s is $H_1(S)$. Because the above calculations \eqref{eq.prespin} and \eqref{eq.spin} are $(mod~2)$, the following calculations are over $\ZZ_2$. Note that now $\Omega$ is a symmetric matrix of zeros and ones. We may still keep the definition $\phi(c_i):H_1(S)\rightarrow\ZZ_2$. It is a well defined quadratic form on the intersection and has the following relationship as a direct result from \cite{c.J80}: for $c,c'\in H_1(S)$,
	\begin{equation}\label{eq.quadform}
		\phi(c+c')=\phi(c)+\phi(c')+c\inx c'.
	\end{equation}
	We recall the following relationship for $a,b,c\in H_1(S)$ on the intersection number:
	\begin{equation}\label{eq.addinx}
		(a+b)\inx c = a\inx c + b\inx c.
	\end{equation}
    We now describe the iterative process to choose our symplectic basis from the $c_i$'s. First let $\alpha_1:=c_1$. Let $\beta_1:=c_j$ for some $j$ such that $\Omega_{1j}=1$. We adjust each $c_i$, $i=2,3,\dots,j-1,j+1,\dots,d$ by the following rule: the remaining vectors must be adjusted so that they have trivial intersection number with $c_1$ and $c_j$. So we consider $c_i':=c_i+\eps_1c_1+\eps_jc_j$. Then $c_i'\inx c_1=0\Rightarrow\eps_j=c_i\inx c_1=\Omega_{1,i}$, and $c_i'\inx c_j=0\Rightarrow\eps_1=c_i\inx c_j=\Omega_{i,j}$. Now we use \eqref{eq.quadform} and \eqref{eq.inx} to calculate
	$$ \arry{rcl}{
			\phi(c_i') &=& \phi(c_i+\Omega_{i,j}c_1+\Omega_{1,i}c_j)\\
				&=& \phi(c_i)+\phi(\Omega_{i,j}c_1+\Omega_{1,i}c_j)\\
				&=& \phi(c_i)+\Omega_{i,j}\phi(c_1) + \Omega_{1,i}\phi(c_j) + \Omega_{i,j}\Omega_{1,i}.}$$
	And using \eqref{eq.addinx}, for $i,k\in\{2,3,\dots,j-1,j+1,\dots,d\}$,
	$$ \arry{rcl}{
			c_i'\inx c_k' &=& (c_i+\Omega_{i,j}c_1+\Omega_{1,i}c_j)\inx(c_k+\Omega_{k,j}c_1+\Omega_{1,k}c_j)\\
				&=& c_i\inx(c_k+\Omega_{k,j}c_1+\Omega_{1,k}c_j)+
					\Omega_{i,j}c_1\inx(c_k+\Omega_{k,j}c_1+\Omega_{1,k}c_j)\\
				& +& \Omega_{1,i}c_j\inx(c_k+\Omega_{k,j}c_1+\Omega_{1,k}c_j)\\
				&=& \Omega_{i,k}+\Omega_{k,j}\Omega_{1,i}+\Omega_{1,k}\Omega_{i,j}
				+ \Omega_{i,j}\Omega_{1,k}\\
				&+& \Omega_{i,j}\Omega_{1,k}
					+\Omega_{1,i}\Omega_{j,k}+\Omega_{1,i}\Omega_{k,j}\\
				&=& \Omega_{i,k}+\Omega_{k,j}\Omega_{1,i}+\Omega_{1,k}\Omega_{i,j}.}$$
	So we restate these results together for reference in later calculations,
    \begin{subequations}\label{eq.iterate}
	   \begin{align}
		  c_i'&:=c_i+\Omega_{i,j}c_1+\Omega_{i,1}c_j,\\
    		\phi(c_i')&:=\phi(c_i)+\Omega_{i,j}\phi(c_1)+\Omega_{i,1}\phi(c_j)+\Omega_{i,1}\Omega_{i,j},\\
    		c_i'\inx c_k'&:=\Omega_{i,k}+\Omega_{i,1}\Omega_{k,j}+\Omega_{i,j}\Omega_{k,1}.
	   \end{align}
	\end{subequations}
    We now have a new set of remaining cycles $c_i'$ with intersection matrix defined by $\Omega_{i,k}'=c_i'\inx c_k'$. We then pick a pair of intersecting cycles and name them $\alpha_2$ and $\beta_2$. We then alter the remaining cycles again by Equations \eqref{eq.iterate}. This process terminates when all pairs $\alpha_1,\beta_1,\dots,\alpha_g,\beta_g$ are chosen. Now we can calculate the parity by \eqref{eq.spin}.
    \begin{exam}
        We will calculate the spin parity of $\pi=(4,3,6,1,5,2)$. This is not hyperelliptic and has one singularity of degree $4$. So we first consider the initial conditions,
        $$ \Omega = \tbl{|r|cccccc|}{
            \hline
            & 1&2&3&4&5&6\\
            \hline
            1& 0&0&1&1&0&1\\
            2& 0&0&1&1&1&1\\
            3& 1&1&0&1&0&0\\
            4& 1&1&1&0&0&0\\
            5& 0&1&0&0&0&1\\
            6& 1&1&0&0&1&0\\
            \hline } \mbox{ and } \phi(c_i)=1.$$
        We may choose initial basis pair $(\alpha_1,\beta_1)=(c_1,c_3)$ and $\phi(\alpha_1)=\phi(\beta_1)=1$. We use the Equations \ref{eq.iterate} to derive
        $$ \Omega' = \tbl{|r|cccc|}{
            \hline
            & 2&4&5&6\\
            \hline
            2& 0&0&1&0\\
            4& 0&0&0&1\\
            5& 1&0&0&1\\
            6& 0&1&1&0\\
            \hline }
            \mbox{ for  }
            \arry{|rcl|rcl|}{
                \hline
                c_2' & = & c_1+c_2      &\phi(c_2') & = & 0\\
                c_4' & = & c_1+c_3+c_4  &\phi(c_4') & = & 0\\
                c_5' & = & c_5          &\phi(c_5') & = & 1\\
                c_6' & = & c_3+c_6      &\phi(c_6') & = & 0\\
                \hline } $$
        From these remaining vectors, we choose $(\alpha_2,\beta_2) =(c_2',c_5')$ where $\phi(\alpha_2)=0$ and $\phi(\beta_2)=1$. We then modify the remaining vectors to derive
        $$  \Omega'' = \tbl{|r|cc|}{
                \hline
                    & 4&6 \\
                \hline
                    4& 0&1 \\
                    6& 1&0 \\
                \hline }
            \mbox{ for }
            \arry{|rcl|rcl|}{
                \hline
                c_4'' & = & c_4'        & \phi (c_4'') &=& 0 \\
                c_6'' & = & c_2'+c_6'   & \phi (c_6'') &=& 0 \\
                \hline }$$
        Our only remaining choice is $(\alpha_3,\beta_3)=(c_4'',c_6'')$ with $\phi(\alpha_3)=\phi(\beta_3)=0$. So by Equation \eqref{eq.spin},
        $$ \Phi(\pi) = \dusum{i=1}{3}\phi(\alpha_i)\phi(\beta_i) = 1.$$
    \end{exam}

\section{Self-Inverses of Rauzy Class}\label{chapTrue}

	\begin{thm}\label{thm.main}
	(Main Result) Every (true) Rauzy Class contains a permutation $\pi$ such that $\pi=\inv{\pi}$.
	\end{thm}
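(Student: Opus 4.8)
The plan is to use Corollary \ref{cor.classify} to turn the theorem into a pure construction problem. Since a Rauzy class $\RClass$ is determined by its signature $\sig = \sig(\RClass)$ together with its type, it suffices to exhibit, for every admissible pair (signature, type), a single permutation $\pi \in \irr$ that is (i) self-inverse, (ii) irreducible with no removable singularities, (iii) of signature exactly $\sig$ (same multiset of degrees, and same marked degree $\ell_1$), and (iv) of the prescribed type. Any such $\pi$ then automatically lies in $\RClass$, and ranging over all classes proves the theorem.

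The first simplification is to fix a convenient normal form for self-inverses. Writing $\pi = (\pi_0,\pi_1)$, we have $\inv{\pi} = (\pi_1,\pi_0)$, so after relabeling we may assume $\pi_0 = \mathrm{id}$; in this form $\pi = \inv{\pi}$ is equivalent to $\pi_1$ being an involution of $\dset{d}$. Thus a self-inverse $\pi$ is recorded by a two-row diagram whose bottom row is a sequence of fixed points and transposition pairs, and every construction below will be presented this way. This also explains the remark in the introduction that only the transposition pairs are needed later for the Lagrangian basis.

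Next I would dispose of the cases whose type is not a spin parity. The hyperelliptic components are handled by the reverse permutation $\pi_{(d)}$: it satisfies $\pi_{(d)} = \inv{\pi_{(d)}}$, it is irreducible and standard, and it is hyperelliptic by Lemma \ref{lem.pi_d_is_hyp}, so by Proposition \ref{prop.hyp} it realizes precisely the hyperelliptic classes as $d$ varies. The genus-one classes are realized by the permutation exchanging $1$ and $d$ (the representative recorded before the classification theorems), which is visibly an involution. For the remaining components (even spin, odd spin, and nonhyperelliptic) I would assemble $\pi$ from a small library of self-inverse \emph{blocks}: local transposition patterns inserted into the two-row array and fitted together into one irreducible involution. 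The degree bookkeeping relies on the rule from Section \ref{sec.surface} that a top-row vertex class of size $n$ is a singularity of degree $n-1$, so each block is designed to contribute a prescribed amount to the degrees and to place the marked singularity correctly.

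The main obstacle is the simultaneous control of the spin parity while keeping the permutation self-inverse and the signature fixed. For this I would compute $\Phi(\pi)$ through the cycles $c_i$ and the iterative symplectic reduction \eqref{eq.iterate}, choosing a symplectic basis adapted to the block decomposition so that almost all products $\phi(\alpha_i)\phi(\beta_i)$ vanish and the parity in \eqref{eq.spin} is read off from a few distinguished blocks. The crucial device is a \emph{parity-toggling} block: a self-inverse modification that changes $\Phi(\pi)$ by $1$ in $\ZZ_2$ while leaving the multiset of singularity degrees and the marked singularity untouched. Constructing such a block and verifying, via \eqref{eq.iterate} and \eqref{eq.spin}, that it flips the parity without destroying irreducibility or creating removable singularities is where the real work lies; once it is available, both parities become reachable for each signature, and Corollary \ref{cor.classify} closes the argument.
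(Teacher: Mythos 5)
Your plan is essentially the paper's proof: reduce via Corollary \ref{cor.classify} to realizing each (signature, type) pair by an explicit self-inverse, take the reverse permutation $\pi_{(d)}$ for the hyperelliptic components (Lemma \ref{lem.pi_d_is_hyp}, Proposition \ref{prop.hyp}), and assemble the remaining representatives as standard permutations from involutive blocks, reading the spin parity off the blocks through the reduction of Section \ref{sec.stdspin} and Equation \eqref{eq.spinf}. Your ``parity-toggling block'' exists in the paper as $\ODD_{2n}$: it contributes the same degree-$2n$ singularity as $\EVEN_{2n}$ but has $\phi(\ODD_{2n})=1$ instead of $0$, which is exactly how Theorems \ref{thm.even.odd} and \ref{thm.even.even} reach both parities for a fixed signature.

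Two concrete gaps remain. First, your conditions (ii) and (iii) are incompatible for classes whose signature contains zeros, and the theorem covers these: a representative of such a class must itself have removable singularities. The paper's Theorem \ref{thm.zero} handles this by padding a self-inverse representative of the underlying class with $k$ copies of the one-letter block $\SPACE$, placed adjacent to the outer pair $\lA$, $\lB$ precisely when the marked singularity is removable ($\ell_1=0$) and away from it otherwise, so the marked degree is preserved. Second, your toggle as described --- flip the parity while keeping the multiset of degrees and the marked degree fixed --- cannot be a single-singularity block when every singularity has degree $2$, since $\ODD_{2n}$ requires $n>1$; for signature $(2,\dots,2)$ with even spin the paper instead uses the two-singularity block $\ODD_{2,2}$, with $\phi(\ODD_{2,2})=1$, in place of $\EVEN_2\SPACE\EVEN_2$ (Theorem \ref{thm.twos.even}). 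A smaller remark: when some degree is odd there is no spin invariant to control, so the paper's Theorem \ref{thm.odds} only needs the $\PAIR_{2m+1,2n+1}$ blocks and a signature check, and the same construction covers $\Hnonhyp(2\ell-1,2\ell-1)$ because the result is visibly non-hyperelliptic. With these amendments your outline matches the paper's case analysis.
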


    We prove this by using Corollary~\ref{cor.classify} to identify each special case of Rauzy Class. The hyperelliptic case is covered as the only standard element of each hyperelliptic class, $(d,d-1,\dots,2,1)$ as shown in Proposition \ref{prop.hyp}, is its own inverse. The remainder of the connected components shall be covered by Theorems~\ref{thm.g3},~\ref{thm.odds}, \ref{thm.even.odd}, \ref{thm.twos.even}, and \ref{thm.even.even}. The special case of component $\Hnonhyp(2\ell-1,2\ell-1)$ mentioned in Section \ref{sec.classify} is covered in Theorem \ref{thm.odds}, as the permutation constructed is not hyperelliptic so can apply to these components. Finally, singularities of degree zero are considered in Theorem \ref{thm.zero}.
	
    While the following fact can be deduced from the works \cite{c.B09} and \cite{c.KZ}, we can now state an alternate proof as a direct result.
	\begin{corollary}\label{cor.main}
		Every Rauzy Class is closed under taking inverses.
	\end{corollary}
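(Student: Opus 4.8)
The plan is to combine Theorem~\ref{thm.main} with the conjugation relation of Claim~\ref{cor.2}. Claim~\ref{cor.2} already controls how inversion interacts with a single Rauzy move; the self-inverse element furnished by Theorem~\ref{thm.main} is then exactly the extra input needed to promote that local control into global closure of the class.

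First I would upgrade Claim~\ref{cor.2} from single moves to entire paths. Suppose $\sigma\in\RClass(\pi)$. By Claim~\ref{cor.1} there is a directed Rauzy path from $\pi$ to $\sigma$, say of types $\eps_1,\dots,\eps_N$. Since every intermediate permutation is irreducible and $\inv\rho\in\irr_\AAA$ whenever $\rho\in\irr_\AAA$ (reducibility is symmetric in the two rows), Claim~\ref{cor.2} applies at each step and gives $\inv{(\eps_i\rho)}=(1-\eps_i)\inv{\rho}$. Inverting the whole path therefore yields a directed Rauzy path from $\inv\pi$ to $\inv\sigma$ of the opposite types $1-\eps_1,\dots,1-\eps_N$. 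Hence $\sigma\in\RClass(\pi)$ forces $\inv\sigma\in\RClass(\inv\pi)$, so $\sigma\mapsto\inv\sigma$ is a bijection $\RClass(\pi)\to\RClass(\inv\pi)$.

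Next I would invoke Theorem~\ref{thm.main}: the given class $\RClass$ contains some $\rho$ with $\rho=\inv\rho$. Writing $\RClass=\RClass(\rho)$, the previous step shows $\RClass(\inv\rho)$ is a Rauzy class, namely the image of $\RClass(\rho)$ under inversion. But $\inv\rho=\rho$, so $\RClass(\inv\rho)=\RClass(\rho)=\RClass$. Consequently, for every $\sigma\in\RClass$ we obtain $\inv\sigma\in\RClass(\inv\rho)=\RClass$, which is precisely closure under inverses.

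The one point requiring care, and the reason this is a consequence of Theorem~\ref{thm.main} rather than of Claim~\ref{cor.2} alone, is that Claim~\ref{cor.2} by itself shows only that inversion \emph{permutes} the collection of Rauzy classes, matching $\RClass(\pi)$ with the a priori distinct class $\RClass(\inv\pi)$; it gives no reason for the two to coincide. The existence of a self-inverse is exactly what forces this permutation of classes to fix $\RClass$. I therefore expect no genuine obstacle here, since all the substance lives in Theorem~\ref{thm.main}; by contrast, the alternative route through Corollary~\ref{cor.classify} would instead require separately verifying that both the signature and the type are invariant under $\pi\mapsto\inv\pi$.
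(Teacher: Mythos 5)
Your proof is correct and is essentially the paper's own argument: both take the self-inverse element supplied by Theorem~\ref{thm.main} as the base point of the class, iterate Claim~\ref{cor.2} along a directed path guaranteed by Claim~\ref{cor.1} (with types flipped $\eps_i\mapsto 1-\eps_i$), and conclude that the inverse of any element lies in the same class. Your repackaging of the path-inversion step as a bijection $\RClass(\pi)\to\RClass(\inv\pi)$ fixed by the self-inverse element is only a cosmetic reorganization of the same reasoning, not a different route.
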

	\begin{proof}
    For any Rauzy Class $\RClass$, we may pick $\pi'\in\RClass$ that is self inverse by Theorem \ref{thm.main}. Now choose any $\pi\in\RClass$. By Claim \ref{cor.1}, we may choose a series $\rsub{\eps_1}\dots \rsub{\eps_k}$, $\eps_i\in\{0,1\}$, such that $\pi=\rsub{\eps_k}\rsub{\eps_{k-1}}\dots \rsub{\eps_2}\rsub{\eps_1}\pi'$. By Claim \ref{cor.2},
		$$ \begin{array}{rcl}
			\inv{\pi}&=&\inv{(\rsub{\eps_k}\rsub{\eps_{k-1}}\dots \rsub{\eps_2}\rsub{\eps_1}\pi')}\\
				&=&\rsub{1-\eps_k}\rsub{1-\eps_{k-1}}\dots \rsub{1-\eps_2}\rsub{1-\eps_1}\inv{\pi'}\\
				&=&\rsub{1-\eps_k}\rsub{1-\eps_{k-1}}\dots \rsub{1-\eps_2}\rsub{1-\eps_1}\pi'.
		\end{array}$$
		 So $\pi^{-1}\in\RClass$ as $\pi'\in\RClass$.
	\end{proof}
    \begin{corollary}\label{cor.main2}
        In every connected component of every stratum $\HHH(\ell_1,\dots,\ell_m)$, there exists a differential that allows an order two orientation reversing linear isometry.
    \end{corollary}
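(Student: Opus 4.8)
The plan is to deduce Corollary \ref{cor.main2} directly from Theorem \ref{thm.main} by exhibiting, on a carefully chosen suspension of a self-inverse permutation, the orientation reversing symmetry given by complex conjugation. First I would recall that each connected component of a stratum corresponds to an extended Rauzy class, and that every extended Rauzy class is a union of ordinary Rauzy classes; hence the given component contains at least one Rauzy class $\RClass$. By Theorem \ref{thm.main}, $\RClass$ contains a permutation $\pi$ with $\pi=\inv\pi$. It therefore suffices to produce a single differential among the suspensions $S(\pi,\lambda,\tau)$ that admits an order two, orientation reversing linear isometry; such a differential automatically lies in the prescribed component.

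To build the symmetry I would normalize $\AAA=\dset d$, $\pi_0=\mathrm{id}$ and $\pi_1=\pi$, so that $\pi$ is an involution of $\dset d$. The crux is to choose the suspension data compatibly with $\pi$: a symmetric length vector with $\lambda_{\pi(i)}=\lambda_i$ (for instance $\lambda=\mone$) and an anti-symmetric height vector with $\tau_{\pi(i)}=-\tau_i$. For such $\tau$, substituting $\pi^2=\mathrm{id}$ shows that the two families of inequalities cutting out $\TTT_\pi$ in Equation \eqref{eq.Tpi} collapse to the single family $\sum_{i\le k}\tau_i>0$ for $1\le k<d$; thus any anti-symmetric $\tau$ with positive partial sums already lies in $\TTT_\pi$. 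To see one exists, I would start from any $\tau_0\in\TTT_\pi$ (nonempty since $\pi$ is irreducible) and pass to its anti-symmetric part $\tau=\tfrac12(\tau_0-\tau_0\circ\pi)$. The top partial sums of $\tau_0$ are positive and the bottom partial sums negative by Equation \eqref{eq.Tpi}, and since $\sum_{i\le k}(\tau_0)_{\pi(i)}=\sum_{\pi(i)\le k}(\tau_0)_i$ is exactly a bottom partial sum, we get $\sum_{i\le k}\tau_i=\tfrac12\bigl(\sum_{i\le k}(\tau_0)_i-\sum_{\pi(i)\le k}(\tau_0)_i\bigr)>0$, so $\tau\in\TTT_\pi$.

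With these choices I would take $\sigma(z)=\bar z$, complex conjugation on the polygon $S=S(\pi,\lambda,\tau)\subset\CC$. A direct computation of the top and bottom vertices $p^0_k=\sum_{i\le k}(\lambda_i+\imath\tau_i)$ and $p^1_k=\sum_{j\le k}(\lambda_{\pi(j)}+\imath\tau_{\pi(j)})$, using the symmetry of $\lambda$ and the anti-symmetry of $\tau$, gives $\overline{p^0_k}=p^1_k$ for every $k$. Hence $\sigma$ carries the top boundary of $S$ onto the bottom boundary and conversely, maps the polygon onto itself, and sends the top segment labeled $\alpha$ to the bottom segment labeled $\pi(\alpha)$; since each top segment $\beta$ is glued to the bottom segment $\beta$, the map $\sigma$ respects the identifications and descends to a well defined self-map of $S$. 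Finally $\sigma$ is an isometry of the flat metric $|dz|$, its linear part is $\mathrm{diag}(1,-1)$ so it is orientation reversing, and $\sigma^2=\mathrm{id}$; this yields the desired order two orientation reversing linear isometry and proves the corollary.

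I expect the main obstacle to be the nonemptiness of the symmetric suspension data, namely producing an anti-symmetric $\tau$ inside $\TTT_\pi$ rather than merely inside the anti-symmetric subspace of $\RR^\AAA$. One initially fears that anti-symmetry forces a vanishing partial sum $\sum_{i\le k}\tau_i=0$ (for example if $\pi$ interchanged $1$ and $2$), but irreducibility of $\pi$ forbids $\pi(\dset k)=\dset k$ and the averaging argument above converts any genuine suspension into a symmetric one. The only remaining bookkeeping is to confirm that $\sigma$ intertwines the gluing translations, which follows at once from the identity $\overline{p^0_k}=p^1_k$.
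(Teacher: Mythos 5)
Your proof is correct and takes essentially the same route as the paper: choose a Rauzy class $\RClass$ inside the given connected component, extract a self-inverse $\pi\in\RClass$ via Theorem \ref{thm.main}, and realize the order two orientation reversing isometry as complex conjugation $h(x,y)=(x,-y)$ on a suspension with $\lambda=\mone$ and an anti-symmetric $\tau$. The only difference is that the paper simply writes down the explicit datum $\tau=(1,0,\dots,0,-1)\in\TTT_\pi$ --- which is anti-symmetric because the constructed $\pi$ is standard with $\pi(1)=d$, $\pi(d)=1$ --- whereas you produce an anti-symmetric $\tau\in\TTT_\pi$ for an arbitrary self-inverse $\pi$ by averaging $\tau=\tfrac12(\tau_0-\tau_0\circ\pi)$, a slightly more general implementation of the same step.
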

    \begin{proof}
        In every connected component $\CCC\subset\HHH(\ell_1,\dots,\ell_m)$, consider a Rauzy Class $\RClass$ contained in $\CCC$. Choose self-inverse $\pi\in\RClass$ by Theorem \ref{thm.main}. Let $\mone=(1,\dots,1)\in\RR_+^\AAA$ and $\tau=(1,0,\dots,0,-1)\in\TTT_\pi$. Let $S=S(\pi,\mone,\tau)$ and note that $h(x,y)=(x,-y)$ satisfies the claim.
    \end{proof}

\subsection{Spin Parity for Standard Permutations}\label{sec.stdspin}

    When $\pi=(\pi_0,\pi_1)\in\irr_d$ is standard, the calculations mentioned in Section \ref{sec.spin} can be further refined. Just as in that section, the following calculations are over $\ZZ_2$. The matrix $\Omega=\Omega_\pi$ has the following form
	\begin{equation}
		\Omega = \pmtrx{	0& 1 & \dots&\dots & 1 & 1\\
					1& \mA_1& \mO& \dots &\mO& 1\\
					1& \mO& \mA_2 & \mO&\vdots & 1\\
					\vdots& \vdots&\ddots&\ddots&\ddots&\vdots\\
					1&\mO&\dots&\mO&\mA_p&1\\
					1&1&\dots&\dots&1&0}
	\end{equation}
    In other words, along the rows $1$ and $d$ and columns $1$ and $d$, the entries are all $1$ except entries $\Omega_{1,1}$ and $\Omega_{d,d}$ which are $0$. The interior $d-2$ by $d-2$ matrix is composed of $p$ square matrices, labeled $\mA_i$, along the diagonal with zeros otherwise. Note that $p=1$ is allowed. Each matrix $\mA_i$ corresponds to sub-alphabet $\AAA_i$ such that, for $\eps\in\{0,1\}$, $\pi_\eps(\AAA_i)=\{n_i,\dots,n_i+m_i-1\}$ where $n_i>1$ and $m_i=\#\AAA_i<d-2$.
    \begin{exam} Let $\pi=(7,3,2,6,5,4,1)$. We have that, for $p=2$
	$$ \Omega_\pi=\pmtrx{	0&1&1&1&1&1&1\\
				1&0&1&0&0&0&1\\
				1&1&0&0&0&0&1\\
				1&0&0&0&1&1&1\\
				1&0&0&1&0&1&1\\
				1&0&0&1&1&0&1\\
				1&1&1&1&1&1&0},~\mA_1=\pmtrx{0&1\\1&0},~\mA_2=\pmtrx{0&1&1\\1&0&1\\1&1&0}$$
    So if we write $\pi=\cmtrx{1&2&3&4&5&6&7\\7&3&2&6&5&4&1}$, we can assign to our $\mA_i$'s their corresponding blocks in $\pi$ as follows:
	$$ \mA_1\sim\cmtrx{2&3\\3&2},~\mA_2\sim\cmtrx{4&5&6\\6&5&4}.$$
    \end{exam}
    Because our definitions are invariant under renaming, we have a unique correspondence between a matrix of the form $\mA_i$ and its block in the permutation $\pi=(\pi_0,\pi_1)$.
    \begin{defn} \label{def.what_is_block}
        We allow $\mA_i$ to refer to the matrix and the block in $\pi$, and we shall denote $\mA_i$ as a \term{block} in either case.
    \end{defn}
    We now show the significance of this definition by showing how it aids in determining the spin parity for a standard permutation.
    Because $\Omega_{1,d}=1$, we may choose $\alpha_1=c_1$ and $\beta_1=c_d$. Recalling that $\phi(c_1)=\phi(c_d)=1$, the Equations \eqref{eq.iterate}, for $i,k=2,\dots,d-1$, are now
	\begin{subequations}\label{eq.iterate2}
		\begin{align}
			c_i'&:=c_i+c_1+c_d,\\
			\phi(c_i')&:=1+1+1+1=0,\\
			c_i'\inx c_k'&:=\Omega_{i,k}+1+1=\Omega_{i,k}.
		\end{align}
	\end{subequations}
	The new matrix $\Omega'$ over the remaining $c_i$'s becomes
	\begin{equation}
		\Omega'= \pmtrx{
					\mA_1& \mO& \dots &\dots&\mO\\
					\mO& \mA_2 & \mO&\dots&\vdots\\
					\vdots&\mO&\ddots&\ddots&\vdots\\
					\vdots& \vdots&\ddots&\mA_{p-1}&\mO\\
					\mO&\dots&\dots&\mO&\mA_p		},
	\end{equation}
	and \eqref{eq.spin} becomes
	\begin{equation}\label{eq.spin2}
		\Phi(\pi)=1+\sum_{i=2}^g\phi(\alpha_i)\phi(\beta_i).
	\end{equation}

    Next we notice that if $c_i$ belongs to the block associated to $\mA_j$, which we shall denote as $c_i\in\mA_j$, and $c_k$ is associated to $\mA_m$, $j\neq m$, then $c_i\inx c_k=0$. So for any $\alpha_i$ we select in a given $\mA_j$, $\beta_i$ must also belong to $\mA_j$ as well. So once a pair $\alpha_i,\beta_i\in\mA_j$ has been chosen, for any $c_k\in\mA_m$, $j\neq m$, then by \eqref{eq.iterate},
	\begin{subequations}\label{eq.iterate3}
		\begin{align}
			c_k'&:=c_k,\\
			\phi(c_k')&:=\phi(c_k)=0.
		\end{align}
	\end{subequations}
	Beginning with the initial data in \eqref{eq.iterate2}, we can calculate the value of
	\begin{equation}\label{eq.spinblock}
		\phi(\mA_i):=\sum_{j=2,~\alpha_j,\beta_j\in\mA_i}^{g}\phi(\alpha_j)\phi(\beta_j)
	\end{equation}
	for each $\mA_i$ independently over each other $\mA_j$. So we are lead to our final equation
	\begin{equation}\label{eq.spinf}
		\Phi(\pi)=1+\sum_{i=1}^p \phi(\mA_i).
	\end{equation}
    This final equation is a crucial part to Theorems \ref{thm.even.odd}, \ref{thm.twos.even} and \ref{thm.even.even}.

\subsection{Blocks}\label{sec.blocks}

	In this section, we define the necessary blocks to construct the permutations in the following sections.
	These blocks allow us to control the degrees of the singularities as well as the parity of spin.
	
	\begin{defn}\label{def.blocks}
		Let
		$$\SPACE:=\cmtrx{\LL{\alpha}{\alpha}}.$$
		For $n\geq 1$, let
            $$\EVEN_{2n}:=\cmtrx{ \LL{\alpha_1}{\beta_1} ~\LL{\beta_1}{\alpha_1} ~\LL{\alpha_2}{\beta_2} ~\LL{\beta_2}{\alpha_2} ~\LL{\dots}{\dots} ~\LL{\alpha_n}{\beta_n} ~\LL{\beta_n}{\alpha_n} }.$$
		For $n>1$, let
            $$ \ODD_{2n}:=\cmtrx{	\LL{\alpha_1}{\beta_1} ~\LL{\beta_1}{\alpha_1} ~\LL{\alpha_2}{\beta_2} ~\LL{\beta_2}{\alpha_2} ~\LL{\dots}{\dots} ~\LL{\alpha_{n-1}}{\beta_n} ~\LL{\beta_{n-1}}{\alpha_n} ~\LL{\alpha_n}{\beta_{n-1}} ~\LL{\beta_n}{\alpha_{n-1}}}.$$
		Let
            $$ \ODD_{2,2}:=\cmtrx{	\LL{\alpha_1}{\alpha_5} ~\LL{\alpha_2}{\alpha_4} ~\LL{\alpha_3}{\alpha_3} ~\LL{\alpha_4}{\alpha_2} ~\LL{\alpha_5}{\alpha_1}}.$$
		For $m,n\geq 0$, let
            $$ \PAIR_{2m+1,2n+1}:= \cmtrx{\LL{\alpha_1}{\beta_1} ~\LL{\beta_1}{\alpha_1} ~\LL{\dots}{\dots} ~\LL{\alpha_m}{\beta_m} ~\LL{\beta_m}{\alpha_m} ~ \LL{\eps}{\eta} ~\LL{\zeta}{\zeta} ~\LL{\gamma_1}{\delta_1} ~\LL{\delta_1}{\gamma_1} ~\LL{\dots}{\dots} ~\LL{\gamma_n}{\delta_n} ~\LL{\delta_n}{\gamma_n} }.$$
	\end{defn}
	
    For the remainder of the paper, when we speak of concatenating the blocks above, we assume that each block is defined over its own unique subalphabet. For example, $\PAIR_{2m+1, 2n+1}=\EVEN_{2m}\PAIR_{1,1}\EVEN_{2n}$ and $\EVEN_{2(m+n)}=\EVEN_{2m}\EVEN_{2n}$. Also note that all of these blocks contribute a self-inverse portion of a permutation. When we say a block appears \term{inside} a standard permutation $\pi$, we mean that it is a block in $\pi$ and does not include the letters on the outside, i.e. the letters $\pi_0^{-1}(1)=\pi_1^{-1}(d)$ and $\pi_1^{-1}(1)=\pi_0^{-1}(d)$.
    \begin{defn}\label{def.blockConstructed}
        A permutation $\pi\in\irr_d$ is \term{block-constructed} if it is standard and every block that appears inside comes from Definition \ref{def.blocks}. In other words, if $\pi$ is block constructed then
        $$ \pi=\stdperm{\mB_1\cdots\mB_k}$$
        where each $\mB_i$ is from Definition \ref{def.blocks}.
    \end{defn}
    We now show the desired properties of the defined blocks.
	\begin{lem}\label{lem.blocks}
        For the blocks in Definition \ref{def.blocks}, assuming they appear inside a standard permutation, the following are true:
		\begin{itemize}
			\item Assuming its leftmost (or rightmost) top and bottom singularities are identified,
				$\EVEN_{2n}$ contributes a singularity of degree $2n$ and $\phi(\EVEN_{2n})=0$.
			\item Assuming its leftmost (or rightmost) top and bottom singularities are identified,
				$\ODD_{2n}$, $n>1$, contributes a singularity of degree $2n$ and $\phi(\ODD_{2n})=1$.
			\item $\ODD_{2,2}$ contributes two singularities of degree $2$ and $\phi(\ODD_{2,2})=1$.
			\item $\PAIR_{2m+1,2n+1}$ contributes two singularities, one of degree $2m+1$ and one of degree $2n+1$.
		\end{itemize}
		Also, any block-constructed permutation is its own inverse.
	\end{lem}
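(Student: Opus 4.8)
The plan is to separate the four bulleted structural assertions (the singularity degrees and the spin contributions $\phi$) from the concluding self-inverse assertion, and to treat the structural assertions one block at a time using the two frameworks already in place: the vertex-identification rules (1)--(3) of Section~\ref{sec.surface} for the degrees, and the $\ZZ_2$ symplectic reduction of Section~\ref{sec.stdspin} (Equations~\eqref{eq.iterate}--\eqref{eq.spinblock}) for the spin contributions. Throughout I work with a block sitting inside a standard block-constructed permutation $\stdperm{\mB_1\cdots\mB_k}$; since the blocks are concatenated column-by-column and each block is self-paired over its own subalphabet, a block's letters occupy a contiguous range of positions in \emph{both} rows, so it realizes one (or a union of several) of the diagonal sub-blocks $\mA_j$ of $\Omega$ from Section~\ref{sec.stdspin}.

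For the self-inverse claim I would use the following criterion, immediate from the renaming equivalence $(\pi_0,\pi_1)=(\pi_0\circ\tau,\pi_1\circ\tau)$: a standard permutation $\cmtrx{t_1\cdots t_d \\ b_1 \cdots b_d}$ equals its own inverse exactly when there is an involution $\tau$ of $\AAA$ with $\tau(t_k)=b_k$ for every column $k$, because applying $\tau$ then swaps the two rows and the swapped matrix is precisely $\inv\pi=(\pi_1,\pi_0)$. It therefore suffices to exhibit such a $\tau$. I take $\tau$ to interchange the outer letters of $\AB$ and $\BA$, and on each block to interchange the paired letters: $\alpha_i\leftrightarrow\beta_i$ inside $\EVEN_{2n}$; $\alpha_i\leftrightarrow\beta_i$ for $i\le n-2$ together with $\alpha_{n-1}\leftrightarrow\beta_n$ and $\beta_{n-1}\leftrightarrow\alpha_n$ inside $\ODD_{2n}$; $\alpha_i\leftrightarrow\alpha_{6-i}$ inside $\ODD_{2,2}$; and the analogous pairing inside $\PAIR$, fixing the singleton $\zeta$. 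A one-line check per block confirms that $\tau$ swaps the two entries of every column, so the whole concatenation is self-inverse at once.

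For the degrees I would trace vertex equivalence classes using rules (1)--(3), recalling that a class meeting $N$ top left-endpoints is a singularity of degree $N-1$. Inside $\EVEN_{2n}$ the self-paired columns $\LL{\alpha_i}{\beta_i}\,\LL{\beta_i}{\alpha_i}$ force the left-endpoints to chain into a single class; the hypothesis that the block's leftmost (or rightmost) top and bottom corners coincide closes this chain so that it collects exactly $2n+1$ top vertices, giving degree $2n$. The block $\ODD_{2n}$ differs from $\EVEN_{2n}$ only in its last four columns, which reroute the chain without changing the total count, again producing one class of degree $2n$. For $\ODD_{2,2}$ (the reversal on five letters) the tracing splits the left-endpoints into two classes, with the boundary corners supplying one extra identification to each, raising the degrees from the naive $(1,1)$ to the claimed $(2,2)$; the decomposition $\PAIR_{2m+1,2n+1}=\EVEN_{2m}\PAIR_{1,1}\EVEN_{2n}$ reduces the $\PAIR$ case to gluing the degree-$2m$ and degree-$2n$ classes of the flanks through the central piece, yielding degrees $2m+1$ and $2n+1$.

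Finally, the spin contributions are the main obstacle, and the place where the $\ZZ_2$ computation must be done carefully. After the initial reduction $(\alpha_1,\beta_1)=(c_1,c_d)$, Equations~\eqref{eq.iterate2} make every interior cycle satisfy $\phi(c_i')=0$ and render $\Omega'$ block-diagonal along the $\mA_j$. Using $c_i\inx c_j=\Omega_{i,j}$ from~\eqref{eq.inx}, the restriction of $\Omega'$ to $\EVEN_{2n}$ is a direct sum of $n$ copies of $\pmtrx{0&1\\1&0}$, each contributing $\phi(\alpha_j)\phi(\beta_j)=0$, so $\phi(\EVEN_{2n})=0$ by~\eqref{eq.spinblock}. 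For $\ODD_{2n}$ the first $n-2$ pairs again split off as decoupled $\pmtrx{0&1\\1&0}$ blocks, while the last four letters form the reversal pattern $\pi_{(4)}$, whose intersection matrix is all-ones off the diagonal; running~\eqref{eq.iterate} on these four cycles produces a surviving pair with $\phi(\alpha)=\phi(\beta)=1$, so $\phi(\ODD_{2n})=1$, and the same computation on the five-letter reversal gives $\phi(\ODD_{2,2})=1$. The delicate point is the quadratic correction term $\Omega_{i,1}\Omega_{i,j}$ in~\eqref{eq.iterate}, which flips a $\phi$-value from $0$ to $1$: verifying that it fires exactly once in the coupled tails and not at all in the even blocks is precisely what pins down the parities.
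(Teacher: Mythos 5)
Your proposal is correct and takes essentially the same route as the paper's proof: the same reductions $\ODD_{2n}=\EVEN_2^{n-2}\ODD_4$ and $\PAIR_{2m+1,2n+1}=\EVEN_{2m}\PAIR_{1,1}\EVEN_{2n}$, the same $\ZZ_2$ computation via Equations~\eqref{eq.iterate2} and~\eqref{eq.spinblock} in which the quadratic correction term fires exactly once in the reversal tails and never in the decoupled $\EVEN_2$ pairs, and the same degree count, which the paper carries out by the suspension pictures of Figure~\ref{fig.blocks} where you trace the vertex identification rules explicitly. Your involution criterion $\tau(t_k)=b_k$ (with $\tau$ also fixing the $\SPACE$ letters) is simply a formalization of the paper's closing remark that each block places its letters in self-inverse positions.
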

	\begin{proof}
		We make note of a few relationships between our defined blocks:
		\begin{equation*}
			\begin{split}
				\EVEN_{2(n+1)}&=\EVEN_2\EVEN_{2n},\\
				\ODD_{2(n+1)}&=\EVEN_2\ODD_{2n},\\
				\PAIR_{2m+1,2n+1}&=\EVEN_{2m}\PAIR_{1,1}\EVEN_{2n}.
			\end{split}
		\end{equation*}
        We first prove the statement for $\EVEN_2$. This block has the structure in the suspended surface for $\pi$ as seen in Figure \ref{fig.blocks}. Because the two singularities are identified by assumption, this is one singularity of degree $2$. To calculate $\phi(\EVEN_2)$, we observe that the matrix associated to $\EVEN_2$ is just $\pmtrx{0&1\\1&0}$. So we choose $\alpha_1=c_1,\beta_1=c_2$ as our \term{canonical basis}. Using Equations \eqref{eq.iterate2} and \eqref{eq.spinblock},
		$$\phi(\EVEN_2)=\phi(\alpha_1)\phi(\beta_1)=0.$$
        \begin{figure}[h]
            \begin{center}
               \setlength{\unitlength}{350pt}
                \begin{picture}(1,.49)
                    \put(0,0){\includegraphics[width=\unitlength]{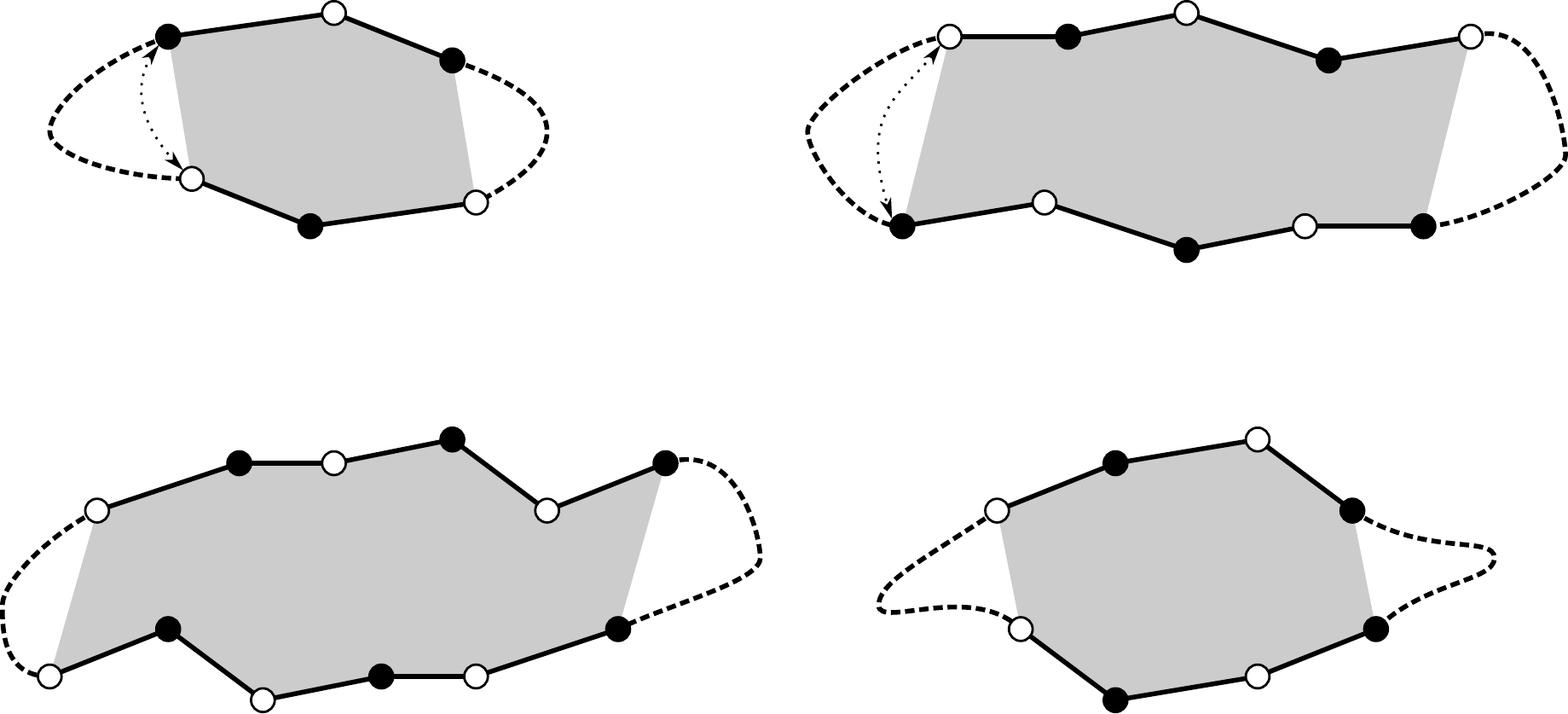}}

                    \put(.14,.46){$\alpha_1$} \put(.24,.46){$\beta_1$}
                    \put(.14,.29){$\beta_1$} \put(.24,.29){$\alpha_1$}
                    \put(.18,.37){$\EVEN_2$}

                    \put(.63,.46){$\alpha_1$} \put(.71,.47){$\beta_1$} \put(.79,.46){$\alpha_2$} \put(.87,.46){$\beta_2$}
                    \put(.61,.28){$\beta_2$} \put(.69,.28){$\alpha_2$} \put(.79,.26){$\beta_1$} \put(.86,.28){$\alpha_1$}
                    \put(.73,.36){$\ODD_{4}$}

                    \put(.08,.16){$\alpha_1$} \put(.16,.18){$\alpha_2$} \put(.235,.19){$\alpha_3$} \put(.32,.17){$\alpha_4$} \put(.38,.17){$\alpha_5$}
                    \put(.05,0){$\alpha_5$} \put(.11,0){$\alpha_4$} \put(.195,-.02){$\alpha_3$} \put(.26,-.01){$\alpha_2$} \put(.34,.01){$\alpha_1$}
                    \put(.19,.08){$\ODD_{2,2}$}

                    \put(.66,.15){$\eps$} \put(.75,.19){$\zeta$} \put(.83,.17){$\eta$}
                    \put(.66,.00){$\eta$} \put(.75,-.02){$\zeta$} \put(.84,.01){$\eps$}

                    \put(.73,.08){$\PAIR_{1,1}$}
                \end{picture}
            \end{center}
            \caption{The blocks $\EVEN_2$, $\ODD_4$, $\ODD_{2,2}$ and $\PAIR_{1,1}$ in suspensions. The two singularities in the top surfaces are identified by assumption in Lemma \ref{lem.blocks}.}\label{fig.blocks}
        \end{figure}
		
        For $\EVEN_{2n}$, see that this is nothing more than $n$-$\EVEN_2$ blocks concatenated, each block contributing a singularity of degree $2$. Because of concatenation, these singularities are all identified to form one of degree $2n$. To calculate $\phi(\EVEN_{2n})$, notice that its matrix is just $n$ $\EVEN_2$ blocks along the diagonal. So by our reasoning in Section \ref{sec.stdspin},
    		$$ \phi(\EVEN_{2n})=\sum_{i=1}^n \phi(\EVEN_2)=0.$$
		
        We now prove the claim for $\ODD_4$. This block has the structure in the surface as indicated by Figure \ref{fig.blocks}. As in the case for $\EVEN_2$, it is clear that this contributes a singularity of degree $4$. The matrix for $\ODD_4$ is
    		$$ \ODD_4=\pmtrx{0&1&1&1\\1&0&1&1\\1&1&0&1\\1&1&1&0}$$
        We now calculate $\phi(\ODD_4)$. Using again Equations \eqref{eq.iterate2} and \eqref{eq.spinblock} with choices $\alpha_1=c_1$, $\beta_1=c_4$, $\alpha_2=c_2'$, and $\beta_2=c_3'$,	
    		$$ \phi(\ODD_4)=\phi(\alpha_1)\phi(\beta_1)+\phi(\alpha_2)\phi(\beta_2)=0+1=1.$$

        For $\ODD_{2n}$, we note that it is $(n-2)$-copies of $\EVEN_2$ followed by $\ODD_4$. As above the degree of the singularity is then $(n-2)\cdot 2+4=2n$. The matrix of $\ODD_{2n}$ is $(n-2)$-$\EVEN_2$ blocks and one $\ODD_4$ block along the diagonal. So
    		$$ \phi(\ODD_{2n})=\sum_{i=1}^{n-2}\phi(\EVEN_2)+\phi(\ODD_4)=0+1=1$$
		
        We now prove the theorem for $\ODD_{2,2}$. This follows from the block's portion in the surface (see Figure \ref{fig.blocks}). The matrix for $\ODD_{2,2}$ is
    		$$ \ODD_{2,2}=\pmtrx{0&1&1&1&1\\1&0&1&1&1\\1&1&0&1&1\\1&1&1&0&1\\1&1&1&1&0}$$
		all of the statements in the theorem for $\ODD_{2,2}$ follow immediately as they have for the previous blocks.
		
        For $\PAIR_{2m+1,2n+1}$, it suffices to prove the statement for $\PAIR_{1,1}$, as $\PAIR_{2m+1,2n+1} = \EVEN_{2m}\PAIR_{1,1}\EVEN_{2n}$, making the singularities degrees $2m+1$ and $2n+1$ as desired. The portion of the surface determined by $\PAIR_{1,1}$ is shown in Figure \ref{fig.blocks}. So again, counting verifies that there are two singularities with degree $1$.
		
		The final statement of the theorem is clear as each block places all of its letters in self-inverse positions and the outside $\lA$ and $\lB$ letters (making the permutation standard) are in self-inverse position as well.
	\end{proof}
	Before proving the main theorem, we remark that the block $\SPACE$ is designed to keep singularities of neighboring blocks separate. To illustrate this point, notice that $\EVEN_{2n}\EVEN_{2m}$ contributes \term{one} singularity of degree $2(m+n)$, while $\EVEN_{2m}\SPACE\EVEN_{2n}$ contributes the desired \term{two} singularities. The block $\SPACE$ also causes any neighboring block's leftmost (or rightmost) top and bottom singularities to be identified, as required in Lemma \ref{lem.blocks}.

\subsection{Self-Inverses for \texorpdfstring{$g\leq 3$}{g<=3}}\label{sec.gleq3}

    \begin{thm}\label{thm.g3}
            Given $\tilde{\pi}\in\irr$ such that $g(\tilde{\pi})\leq 3$, There exists $\pi\in\RClass(\tilde{\pi})$ such that $\pi=\pi^{-1}$.
	\end{thm}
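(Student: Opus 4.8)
The plan is to prove Theorem~\ref{thm.g3} by invoking Corollary~\ref{cor.classify}: since every Rauzy Class is uniquely determined by its signature and type, it suffices to exhibit, for each possible signature and type arising in genus $g\leq 3$, a single self-inverse permutation $\pi$ with that exact signature and type. Then $\pi$ must lie in the given class $\RClass(\tilde\pi)$. Because a genus-$g$ surface has $s=2g\leq 6$ with $\sum\ell_i = s$ and $d = m+s+1$, there are only finitely many strata to handle, and the classification theorems of Kontsevich--Zorich recalled in Section~\ref{sec.classify} tell us exactly which connected components (types) occur. So the proof reduces to a finite case analysis.

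First I would dispose of the hyperelliptic components. As noted in the discussion following Theorem~\ref{thm.main}, the permutation $\pi_{(d)}=(d,d-1,\dots,2,1)$ is the standard representative of each hyperelliptic class (Proposition~\ref{prop.hyp}), and it is manifestly self-inverse since $\pi_{(d)}=\inv{\pi_{(d)}}$. In genus $\leq 3$ the hyperelliptic strata are $\HHH(2)$, $\HHH(1,1)$ in genus $2$ and $\HHH(4)$, $\HHH(2,2)$, $\HHH(3,3)$ (and $\HHH(1,1,1,1)$-type cases) in genus $3$, together with the genus-$1$ case; each such component is covered by an appropriate $\pi_{(d)}$, after accounting for the marked-singularity data. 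Next I would build the non-hyperelliptic components out of the blocks of Definition~\ref{def.blocks} using Lemma~\ref{lem.blocks} and the spin formula \eqref{eq.spinf}, $\Phi(\pi)=1+\sum_i\phi(\mA_i)$. The point of the blocks is exactly that each contributes a prescribed singularity degree and a known $\phi$-value: $\EVEN_{2n}$ gives degree $2n$ with $\phi=0$, $\ODD_{2n}$ gives degree $2n$ with $\phi=1$, $\PAIR_{2m+1,2n+1}$ gives the two odd degrees $2m+1,2n+1$, and $\SPACE$ separates singularities. Concatenating the right blocks between the outer $\AB$ and $\BA$ letters produces a block-constructed (hence standard and self-inverse) permutation realizing any target signature, and adjusting which block carries $\phi=1$ toggles the spin parity to even or odd as needed.

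Concretely, for the even/odd strata of genus $3$ --- $\HHH(4)$ has components $\Hhyp$, $\Heven$, $\Hodd$, and $\HHH(2,2)$ likewise --- I would present $\ODD_{2n}$ to obtain odd parity and an all-$\EVEN$ construction to obtain even parity, using $\ODD_{2,2}$ for the $\HHH(2,2)$ odd component; for $\HHH(3,3)$ the non-hyperelliptic component is realized by $\PAIR_{3,3}=\EVEN_2\PAIR_{1,1}\EVEN_2$, which is non-hyperelliptic by Proposition~\ref{prop.hyp} since it is standard but not equal to $\pi_{(d)}$. In each case one must verify the marked-singularity degree matches: the leftmost singularity of the suspension is the marked one (per Definition~\ref{def.signature}), so I would order the blocks so that the block contributing the desired marked degree sits leftmost, and check this against the diagrams in Figure~\ref{fig.blocks}. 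I would tabulate the finitely many $(\text{signature},\text{type})$ pairs for $g\leq 3$ and exhibit a block-constructed or hyperelliptic witness for each.

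The main obstacle I anticipate is bookkeeping rather than any single hard idea: one must make sure the blocks are assembled so that the \emph{marked} (leftmost) singularity has the correct degree, since signature is an ordered tuple with $\ell_1$ distinguished, and that the spin parity comes out to the prescribed value given by \eqref{eq.spinf}. A subtle point is that adjacent blocks of like type can merge their singularities (e.g.\ $\EVEN_{2m}\EVEN_{2n}$ yields a \emph{single} degree-$2(m+n)$ singularity), so the separator block $\SPACE$ must be inserted precisely where two distinct singularities are required --- and crucially, the remark after Lemma~\ref{lem.blocks} guarantees that $\SPACE$ also forces the neighboring block's outer top/bottom singularities to be identified, which is the hypothesis needed to apply the $\phi$-values of Lemma~\ref{lem.blocks}. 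Verifying these identifications, and confirming that no unintended removable (degree-zero) singularities are introduced, is the delicate part; the genus-zero degenerate singularity cases are deferred to Theorem~\ref{thm.zero} and need not be treated here.
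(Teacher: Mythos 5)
Your proposal is correct and follows essentially the same route as the paper: the paper's proof of Theorem~\ref{thm.g3} consists precisely of a finite table (Figure~\ref{fig.genus_three_below}) giving one explicit self-inverse representative for each (signature, type) pair in genus $\leq 3$, justified by Corollary~\ref{cor.classify}, with $\pi_{(d)}$ covering the hyperelliptic classes, the block constructions of Section~\ref{sec.blocks} covering the rest (including the distinction between marked signatures such as $(3,1)$ and $(1,3)$), and removable singularities deferred to Theorem~\ref{thm.zero}. One correction to your case enumeration: $\HHH(3,3)$ has genus $4$ (since $\sum\ell_i=2g-2$) and so is out of scope here, and in genus $3$ only $\HHH(4)$ and $\HHH(2,2)$ have hyperelliptic components --- the strata $\HHH(3,1)$, $\HHH(2,1,1)$, $\HHH(1,1,2)$ and $\HHH(1,1,1,1)$ are connected and non-hyperelliptic, handled by $\PAIR$-block witnesses rather than $\pi_{(d)}$, exactly as in the paper's table.
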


    We shall prove this result simply by stating such an element for each class, as listed in Figure \ref{fig.genus_three_below}. When possible, we use the construction methods in the theorems for higher genera to make our example. To consider additional removable singularities, refer to Theorem \ref{thm.zero}.

	\begin{figure}[t]
	\begin{center}
        \begin{tabular}{c}
		Genus 1 \\
		\tbl {|c |c| c|}{
			\hline
			Signature & Type & Self-inverse \\
			\hline
			$(0)$ 	& hyperelliptic	& $(2,1)$\\
			\hline
			$(0,0)$	& hyperelliptic	& $(3,2,1)$\\
			\hline }
        \end{tabular}
	\end{center}

	\begin{center}
        \begin{tabular}{c}
		Genus 2 \\
		\tbl {|c |c| c|}{
			\hline
			Signature & Type & Self-inverse \\
			\hline	
			$(2)$ 	& hyperelliptic	& $(4,3,2,1)$\\
			\hline
			$(1,1)$ & hyperelliptic	& $(5,4,3,2,1)$\\
			\hline }
        \end{tabular}
	\end{center}

	\begin{center}
        \begin{tabular}{c}
		Genus 3 \\
		\tbl{|c |c| c|}{
			\hline
			Signature & Type & Self-inverse \\
			\hline
			$(4)$ & hyperelliptic	& $(6,5,4,3,2,1)$\\
			\hline
			$(4)$ & odd	& $(6,3,2,5,4,1)$\\
			\hline
			$(3,1)$ & -	& $(7,4,3,2,6,5,1)$\\
			\hline
			$(1,3)$ & -	& $(7,3,2,6,5,4,1)$\\
			\hline
			$(2,2)$ & hyperelliptic	& $(7,6,5,4,3,2,1)$\\
			\hline
			$(2,2)$ & odd	& $(7,3,2,4,6,5,1)$\\
			\hline
			$(1,1,2)$ & -	& $(8,3,2,4,7,6,5,1)$\\
			\hline
			$(2,1,1)$ & -	& $(8,4,3,2,5,7,6,1)$\\
			\hline
			$(1,1,1,1)$ & -	& $(9,4,3,2,5,8,7,6,1)$\\
			\hline }
        \end{tabular}
	\end{center}
	\caption{Self-Inverse representatives for genus at most $3$.}\label{fig.genus_three_below}
	\end{figure}

\subsection{Self-Inverses for \texorpdfstring{$g\geq 4$}{g>=4}}\label{sec.ggeq4}

	\begin{thm}\label{thm.odds} Let Rauzy Class $\RClass$ have signature $(\ell_1,\dots,\ell_m)$ such that $\ell_i$ is odd for
		some $i$. Then there exists $\pi\in\RClass$ such that $\pi=\inv{\pi}$.
	\end{thm}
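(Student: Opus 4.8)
The plan is to invoke Corollary~\ref{cor.classify}: it suffices to produce a single self-inverse $\pi\in\irr$ whose signature and type agree with those of $\RClass$. The decisive simplification is that, because at least one $\ell_i$ is odd, the singularities are not all of even degree, so the spin parity is simply \emph{not defined}. Consulting the Kontsevich--Zorich classification, a stratum carrying an odd-degree singularity is either connected (type ``$-$'') or is one of the strata $\HHH(2\ell-1,2\ell-1)$, whose only splitting is into $\Hhyp(2\ell-1,2\ell-1)$ and $\Hnonhyp(2\ell-1,2\ell-1)$. Hence the only invariant beyond the signature I could ever be forced to match is hyperellipticity, and in particular I never need to track the quadratic form $\phi$ of Section~\ref{sec.spin}.

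First I would record the parity observation that makes the construction possible: the degrees satisfy $\sum_i\ell_i=2g-2$, which is even, so the number of odd $\ell_i$ is even; as at least one is odd there are at least two, and the odd singularities may be grouped into pairs. (Degree-zero singularities are deferred to Theorem~\ref{thm.zero}, so I assume every $\ell_i\geq 1$.)

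Next I would build $\pi$ as a block-constructed permutation $\pi=\stdperm{\mB_1\cdots\mB_k}$ in the sense of Definition~\ref{def.blockConstructed}; by the final assertion of Lemma~\ref{lem.blocks} any such $\pi$ is automatically its own inverse. I realize each even degree $\ell_i=2n$ by an $\EVEN_{2n}$ block---its spin value $\phi(\EVEN_{2n})=0$ being irrelevant here---and each pair of odd degrees $(2m+1,2n+1)$ by a single $\PAIR_{2m+1,2n+1}$ block, whose two singularities have exactly those degrees. Inserting copies of the spacer $\SPACE$ between consecutive blocks prevents neighbouring singularities from amalgamating, so that by Lemma~\ref{lem.blocks} the multiset of degrees produced is precisely $\{\ell_1,\dots,\ell_m\}$. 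The point requiring care is the \emph{marked} singularity, namely the one containing the global left corner of the suspension: I must order the blocks and orient the block realizing $\ell_1$---pairing $\ell_1$ with some other odd degree when $\ell_1$ is odd, or using an $\EVEN_{\ell_1}$ block when $\ell_1$ is even---so that this leftmost singularity comes out with degree exactly $\ell_1$ rather than with the degree of its partner.

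Finally I would dispose of the hyperelliptic dichotomy for the strata $\HHH(2\ell-1,2\ell-1)$, $\ell\geq 2$. Here the permutation just constructed is visibly not $\pi_{(d)}=(d,d-1,\dots,1)$, so by Proposition~\ref{prop.hyp} it cannot be hyperelliptic; it therefore lands in $\Hnonhyp(2\ell-1,2\ell-1)$, precisely the case needed, while the hyperelliptic component is covered by the self-inverse $\pi_{(d)}$ treated in the proof of Theorem~\ref{thm.main}. I expect the genuine obstacle to be exactly the bookkeeping for the marked singularity: tracking how the boundary singularities of the blocks merge across the seams and wrap around through the outer letters $\lA$ and $\lB$, so as to guarantee that the singularity at the left corner carries the prescribed degree $\ell_1$.
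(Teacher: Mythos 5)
Your proposal is correct and follows essentially the same route as the paper: pair the (evenly many) odd degrees into $\PAIR$ blocks, realize even degrees by $\EVEN$ blocks, separate with $\SPACE$, and order the concatenation so the block carrying $\ell_1$ sits adjacent to the outer letters, making the marked singularity have degree $\ell_1$ --- exactly the paper's two-case construction, with self-inverseness and the degree count supplied by Lemma~\ref{lem.blocks}. Your handling of $\Hnonhyp(2\ell-1,2\ell-1)$ via Proposition~\ref{prop.hyp} likewise matches the paper's remark (in the discussion of Theorem~\ref{thm.main}) that the constructed permutation is not hyperelliptic, so nothing essential differs.
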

	
	\begin{proof}
        We shall give an explicit construction of such a $\pi$. As there are singularities of odd degree, we only need to verify that our constructed permutation has the appropriate signature. We do this considering two cases: $\ell_1$ is even or $\ell_1$ is odd.
		
        First assume that $\ell_1$ is odd. Then we can rearrange our $\ell_i$'s such that $\ell_i$ is odd for $1\leq i\leq k$ and $\ell_i$ is even for $k<i\leq m$. Notice that $k$ must be even as the sum over all $\ell_i$'s is even. So we define $\pi$ by
            $$ \pi=\stdperm{ \EVEN_{\ell_m} \SPACE \EVEN_{\ell_{m-1}} \SPACE \cdots \SPACE \EVEN_{\ell_{k+1}} \SPACE \PAIR_{\ell_k,\ell_{k-1}} \SPACE \cdots \SPACE \PAIR_{\ell_2,\ell_1}}.$$
        By Lemma \ref{lem.blocks}, this permutation has the appropriate singularities. Since the singularity of degree $\ell_1$ is the one immediately to the left of $\lA$ and $\lB$, it is the marked singularity.
		
        The second case is to assume that $\ell_1$ is even. We then make a division such that $\ell_i$ is even for all $1\leq i\leq k$ and is odd for $k< i \leq m$, noticing that this time $m-k$ must be even. Then our desired $\pi$ is
            $$ \pi=\stdperm{\PAIR_{\ell_m,\ell_{m-1}} \SPACE \PAIR_{\ell_{m-2},\ell_{m-3}} \SPACE \cdots \SPACE \PAIR_{\ell_{k+2},\ell_{k+1}} \SPACE \EVEN_{\ell_k} \SPACE \cdots \SPACE \EVEN_{\ell_1}}.$$
		Just as in the previous case, this has the desired signature.
	\end{proof}
	\begin{thm}\label{thm.even.odd}
        Let Rauzy Class $\RClass$ have signature $(2\ell_1,\dots,2\ell_m)$ and odd spin. Then there exists $\pi\in\RClass$ such that $\pi=\inv{\pi}$.
	\end{thm}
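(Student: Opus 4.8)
The plan is to give an explicit self-inverse representative built from the blocks of Definition \ref{def.blocks} and then appeal to Corollary \ref{cor.classify}, in the same spirit as the proof of Theorem \ref{thm.odds}. Because every singularity degree $2\ell_i$ is even, the $\PAIR$ blocks (which always create odd singularities) are unavailable, so I would realize the whole signature using only $\EVEN$ blocks, inserting a copy of $\SPACE$ between consecutive blocks so that their singularities are not merged. Explicitly, I would take
$$ \pi = \stdperm{\EVEN_{2\ell_m}\SPACE\EVEN_{2\ell_{m-1}}\SPACE\cdots\SPACE\EVEN_{2\ell_2}\SPACE\EVEN_{2\ell_1}}, $$
placing $\EVEN_{2\ell_1}$ immediately to the left of the outer letters, so that exactly as in Theorem \ref{thm.odds} the marked singularity has degree $2\ell_1$.

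I would then verify the hypotheses of Corollary \ref{cor.classify} in turn. \emph{Self-inverse and signature:} Lemma \ref{lem.blocks} gives that any block-constructed permutation is its own inverse, that each $\EVEN_{2\ell_i}$ contributes a single singularity of degree $2\ell_i$, and the interposed $\SPACE$ blocks keep these $m$ singularities distinct; hence $\pi$ is self-inverse with signature $(2\ell_1,\dots,2\ell_m)$. \emph{Spin parity:} since $\pi$ is standard I may apply Equation \eqref{eq.spinf}; as $\phi(\EVEN_{2\ell_i})=0$ for every $i$ and the $\SPACE$ blocks contribute nothing to the sum, I get $\Phi(\pi)=1+\sum_{i=1}^m\phi(\EVEN_{2\ell_i})=1$, so $\pi$ has odd spin. \emph{Type:} I must also rule out that $\pi$ lands in a hyperelliptic component; by Proposition \ref{prop.hyp} a standard hyperelliptic permutation without removable singularities equals $\pi_{(d)}$, and the swapped-pair block structure of $\pi$ differs from the full reversal $(d,\dots,1)$ (the genus bound $g\geq 4$ excludes the one degenerate single-block case $\stdperm{\EVEN_2}=\pi_{(4)}$), so $\pi$ is not hyperelliptic. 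With matching signature and type, Corollary \ref{cor.classify} yields $\pi\in\RClass$.

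The step I expect to require the most care is the last one: in the three-component strata $\HHH(2g-2)$ and $\HHH(2\ell,2\ell)$ the hyperelliptic component may itself carry odd spin, so the computation $\Phi(\pi)=1$ alone does not pin down the type, and one genuinely needs Proposition \ref{prop.hyp} to separate $\pi$ from the hyperelliptic representative. By contrast, the spin computation is effortless here: the all-$\EVEN$ assembly contributes $0$ to the sum in \eqref{eq.spinf}, so the baseline ``$1+$'' already produces odd spin and no $\ODD$-type block is needed — this is exactly what makes the odd case lighter than the even-spin constructions of Theorems \ref{thm.twos.even} and \ref{thm.even.even}, which must inject a single $\phi=1$ block.
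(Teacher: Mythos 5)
Your proposal matches the paper's proof essentially verbatim: the same all-$\EVEN$ block construction $\stdperm{\EVEN_{2\ell_m}\SPACE\cdots\SPACE\EVEN_{2\ell_1}}$, the same appeal to Lemma \ref{lem.blocks} for the signature and self-inverse property, and the same computation $\Phi(\pi)=1+\sum_i\phi(\EVEN_{2\ell_i})=1$ via Equation \eqref{eq.spinf}. Your additional check via Proposition \ref{prop.hyp} that $\pi$ is not hyperelliptic (relevant in the three-component strata $\HHH(2g-2)$ and $\HHH(2\ell,2\ell)$, where the hyperelliptic component is a distinct type regardless of its spin parity) is a point the paper leaves implicit, and it is correctly handled, including the observation that the only block-constructed coincidence with $\pi_{(d)}$ is the genus-$2$ case $\stdperm{\EVEN_2}=(4,3,2,1)$, which is excluded here.
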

	\begin{proof}
        As opposed to the proof of Theorem \ref{thm.odds}, we must construct a permutation that not only has the appropriate signature $(2\ell_1,\dots,2\ell_m)$ but also satisfies $\Phi(\pi)=1$. Let $\pi$ be defined as
    		$$ \pi = \stdperm{\EVEN_{2\ell_m} \SPACE \EVEN_{2\ell_{m-1}} \SPACE \cdots \SPACE \EVEN_{2\ell_2} \SPACE \EVEN_{2\ell_1}}.$$
		Again, as in the proof of Theorem \ref{thm.odds}, this has the desired signature. By Equation \eqref{eq.spinf},
		$$ \Phi(\pi) = 1 + \sum_{i=1}^{m}\phi(\EVEN_{2\ell_i})=1$$
		as $\phi(\EVEN_{2\ell_i})=0$ by Lemma \ref{lem.blocks}.
	\end{proof}
	\begin{thm}\label{thm.twos.even}
        Let Rauzy Class $\RClass$ have signature $(2,\dots,2)$ and even spin. Then there exists $\pi\in\RClass$ such that $\pi=\inv{\pi}$.
	\end{thm}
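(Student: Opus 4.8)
The plan is to follow the strategy of Theorem \ref{thm.even.odd} but to perturb the spin parity from odd to even by inserting a single block that carries an odd spin contribution while still respecting the all-$2$ signature. The obstacle is that the natural spin-$1$ blocks $\ODD_{2n}$ with $n>1$ are forbidden here, since each of them produces a singularity of degree $2n\geq 4$. The key observation is that $\ODD_{2,2}$ is exactly the tool I need: by Lemma \ref{lem.blocks} it produces \emph{two} singularities of degree $2$ and satisfies $\phi(\ODD_{2,2})=1$, so it is the unique available block simultaneously compatible with a signature of all $2$'s and odd in spin.

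Writing the signature as $(2,\dots,2)$ with $m$ entries (so the genus is $m+1$; the cases $m\le 2$ are already handled by Theorem \ref{thm.g3}, so I may take $m\geq 3$), I would set
$$ \pi = \stdperm{\EVEN_2 \SPACE \EVEN_2 \SPACE \cdots \SPACE \EVEN_2 \SPACE \ODD_{2,2}}, $$
using $m-2$ copies of $\EVEN_2$ and a single $\ODD_{2,2}$, all separated by the spacer $\SPACE$. I would then verify the three properties needed to invoke Corollary \ref{cor.classify}. Self-inverseness is immediate, since $\pi$ is block-constructed. For the signature, Lemma \ref{lem.blocks} gives one degree-$2$ singularity from each $\EVEN_2$ and two from $\ODD_{2,2}$, while the interposed $\SPACE$ blocks keep all of these distinct (and supply the endpoint identifications required by the $\EVEN_2$ blocks); thus $\pi$ has exactly $m$ singularities, all of degree $2$, and since every singularity has degree $2$ the marked one does as well, so the signature is $(2,\dots,2)$. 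Finally, by Equation \eqref{eq.spinf} together with Lemma \ref{lem.blocks},
$$ \Phi(\pi) = 1 + \sum_{i=1}^{m-2}\phi(\EVEN_2) + \phi(\ODD_{2,2}) = 1 + 0 + 1 = 0 \mbox{ in } \ZZ_2, $$
so $\pi$ has even spin. As $\pi$ now matches both the signature and the type of $\RClass$, Corollary \ref{cor.classify} gives $\pi\in\RClass$.

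I do not anticipate a genuine obstacle beyond isolating the correct block: once $\ODD_{2,2}$ is recognized, the remaining verification is the same routine bookkeeping as in Theorems \ref{thm.odds} and \ref{thm.even.odd}. As a consistency check, note that in the degenerate case $m=2$ the construction collapses to $\pi=\stdperm{\ODD_{2,2}}$, which is the full reversal $(7,6,5,4,3,2,1)=\pi_{(7)}$, precisely the even-spin (hyperelliptic) representative of the genus-$3$ stratum $\HHH(2,2)$ recorded in Theorem \ref{thm.g3}; this confirms that $\ODD_{2,2}$ behaves as claimed. For $m\geq 3$ the stratum $\HHH(2,\dots,2)$ carries no hyperelliptic component, so matching the even spin parity is all that is required to pin down the connected component, and no separate non-hyperellipticity argument is needed.
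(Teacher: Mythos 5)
Your proposal is correct and is essentially the paper's own proof: the paper constructs the same block-constructed permutation (one $\ODD_{2,2}$ as the rightmost block together with $m-2$ copies of $\EVEN_2$, separated by $\SPACE$), reads off the signature from Lemma \ref{lem.blocks}, and computes $\Phi(\pi)=1+\phi(\ODD_{2,2})=0$ via Equation \eqref{eq.spinf} before invoking Corollary \ref{cor.classify}. Your extra remarks (the $m=2$ degeneration to $\pi_{(7)}$ and the absence of a hyperelliptic component for $m\geq 3$) are accurate but not needed in the paper, which simply runs the construction for all $m>1$.
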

	\begin{proof}
        We will construct our desired $\pi$, show that it has the appropriate signature and verify that $\Phi(\pi)=0$. Let $m>1$ be the number singularities of degree $2$. Then we may define $\pi$ as
            $$ \pi = \stdperm{\mB_{m-1} \SPACE \mB_{m-2} \SPACE \cdots \SPACE \mB_1},~\mB_i =\RHScase{\ODD_{2,2},& i=1 \\ \EVEN_2,& \mathrm{otherwise.}}$$
        By Lemma \ref{lem.blocks}, this has the appropriate signature. We also know that $\phi(\EVEN_2)=0$ and $\phi(\ODD_{2,2})=1$. So by \eqref{eq.spinf}
    		$$ \Phi(\pi)=1+\sum_{i=1}^{m-1}\phi(\mB_i)=1+1=0.$$
	\end{proof}
	\begin{thm}\label{thm.even.even}
         Let Rauzy Class $\RClass$ have even spin and signature $(2\ell_1,\dots,2\ell_m)$ such that $\ell_i>1$ for some $i$. Then there exists $\pi\in\RClass$ such that $\pi=\inv{\pi}$.
	\end{thm}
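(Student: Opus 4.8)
The plan is to reuse the block-construction method of Theorems~\ref{thm.even.odd} and~\ref{thm.twos.even}: exhibit a standard, block-constructed permutation $\pi$, which is automatically self-inverse by Lemma~\ref{lem.blocks}, whose singularities are $(2\ell_1,\dots,2\ell_m)$ and whose spin parity is even; then Corollary~\ref{cor.classify} forces $\pi\in\RClass$. Since every singularity is even, matching the type means matching the spin, and by Equation~\eqref{eq.spinf} we have $\Phi(\pi)=1+\sum_i\phi(\mA_i)$. Thus even spin $\Phi(\pi)=0$ is equivalent to using an \emph{odd} number of blocks with $\phi(\mA_i)=1$.

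The decisive point is that the hypothesis ``$\ell_i>1$ for some $i$'' provides a singularity of degree $2\ell_i\geq 4$, exactly the range in which $\ODD_{2\ell_i}$ is defined and, by Lemma~\ref{lem.blocks}, satisfies $\phi(\ODD_{2\ell_i})=1$. I would therefore use \emph{exactly one} $\ODD$ block, attached to a single singularity of degree $\geq 4$, and realize every other singularity of degree $2\ell_j$ with an $\EVEN_{2\ell_j}$ block (which has $\phi=0$), inserting $\SPACE$ between consecutive blocks to keep the singularities separate. Then precisely one summand equals $1$, so $\sum_i\phi(\mA_i)=1$ and $\Phi(\pi)=0$ as required.

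The main subtlety, and the step demanding the most care, is placing the marked singularity of degree $2\ell_1$ in the rightmost block slot (immediately left of the outer $\lA,\lB$ pair), as in Theorem~\ref{thm.odds}. If $\ell_1>1$ I would put the $\ODD$ block there,
$$\pi=\stdperm{\EVEN_{2\ell_m}\SPACE\cdots\SPACE\EVEN_{2\ell_2}\SPACE\ODD_{2\ell_1}};$$
if instead $\ell_1=1$, then the marked singularity has degree $2$ and cannot carry an $\ODD$ block (which needs index $>1$), so I would relabel the non-marked indices so that $\ell_2>1$, place $\ODD_{2\ell_2}$ on that singularity and $\EVEN_2$ in the rightmost slot,
$$\pi=\stdperm{\EVEN_{2\ell_m}\SPACE\cdots\SPACE\ODD_{2\ell_2}\SPACE\EVEN_{2\ell_1}}.$$
In both cases exactly one $\ODD$ block appears, so Lemma~\ref{lem.blocks} gives the signature $(2\ell_1,\dots,2\ell_m)$ with the correct marked degree, and Equation~\eqref{eq.spinf} gives $\Phi(\pi)=1+1=0$, matching the even-spin type of $\RClass$.
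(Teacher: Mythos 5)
Your proposal is correct and is essentially the paper's own proof: the paper likewise sets $\pi=\stdperm{\mB_m\SPACE\cdots\SPACE\mB_1}$ with $\mB_j=\ODD_{2\ell_j}$ for a single chosen $j$ with $\ell_j>1$ and $\mB_i=\EVEN_{2\ell_i}$ otherwise, then invokes Lemma~\ref{lem.blocks} and Equation~\eqref{eq.spinf} to get the signature and $\Phi(\pi)=1+1=0$. Your explicit case split on whether $\ell_1>1$ is subsumed by the paper's uniform choice of $j$ (which may or may not equal $1$), since the rightmost block always has degree $2\ell_1$ either way.
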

	\begin{proof}
        We must again construct a $\pi$ with signature $(2\ell_1,2\ell_2,\dots,2\ell_m)$ and such that $\Phi(\pi)=0$. Let $j$ be chosen such that $\ell_j>1$. Then we define $\pi$ as
            $$ \pi = \stdperm{\mB_m \SPACE \mB_{m-1} \SPACE \cdots \SPACE \mB_1},~\mB_i = \RHScase{\ODD_{2\ell_i}& \mbox{if }i=j, \\ \EVEN_{2\ell_i}& \mathrm{otherwise.}}$$
        By Lemma \ref{lem.blocks}, this has the appropriate signature. We know that $\phi(\EVEN_{2\ell_i})=0$ and $\phi(\ODD_{2\ell_j})=1$. So by Equation \eqref{eq.spinf},
    		$$ \Phi(\pi)=1+\sum_{i=1}^m\phi(\mB_i)=1+1=0.$$	
	\end{proof}

\subsection{Self-inverses with Removable Singularities}\label{sec.removable}

    Singularities are called removable if they have degree $0$. A suspension of a permutation with removable singularities will move by induction to consider such singularities, but they are not actually zeroes of the corresponding Abelian differential. The results for Corollary \ref{cor.classify} extend to Rauzy Classes with removable singularities. Given a permutation $\pi$ with removable singularities, we first must consider which connected component of which stratum $\RClass(\pi)$ belongs to, called $\CCC$. We then choose a Rauzy Class $\RClass'\subseteq \CCC$ in such that:
    \begin{itemize}
        \item If the genus of $\RClass$ is $1$, then we define $\RClass'$ to be the irreducible permutation on $2$ letters.
        \item If the genus of $\RClass$ is greater than $1$, no singularity of $\RClass'$ is removable.
        \item If the marked singularity of $\RClass$ is not removable and of degree $n$, then the marked singularity of $\RClass'$ is of degree $n$ as well. If the marked singularity of $\RClass$ is removable, $\RClass'$ has no restriction on which singularity is marked.
    \end{itemize}
    In another context, consider a suspension with differential, $(S,q)$, on any representative $\pi\in\RClass$. We complete the differential $q$ at any removable singularity and call this new differential $q'$ on the same surface. As long as the marked singularity of $\pi$ wasn't removable, then the same singularity is marked by $\pi'$, the permutation resulting from $(S,q')$. If the marked singularity of $\pi$ was removable, then we may choose a new marked singularity of what remains in $(S,q')$ and define $\pi'$ by this choice.
    \begin{defn}
        We call such $\RClass'$ an \term{underlying Rauzy Class}, or underlying class, of $\RClass$.
    \end{defn}
    \begin{exam}
        For $\pi=(7,4,5,2,6,3,1)$, $\RClass=\RClass(\pi)$ has signature $(4,0)$ and odd spin (non hyperelliptic). So we need to find $\RClass'\subseteq\Hodd(4)$ with no removable singularities. Our only choice is, by theorem \ref{thm.even.odd}, $\RClass'=\RClass(\pi')$ for $\pi'=(6,3,2,5,4,1)$.
    \end{exam}
    \begin{exam}
        For $\pi=(7,6,1,4,3,2,8,5)$, $\RClass=\RClass(\pi)$ has signature $(0,3,1)$ so it belongs to the connected stratum $\HHH(3,1)$. There are two choices of underlying Rauzy classes, $\RClass'=\RClass(7,4,3,2,6,5,1)$ and $\RClass''=\RClass(7,3,2,6,5,4,1)$, based on the choice of the new marked singularity.
    \end{exam}
    \begin{rem}
        If $\RClass$ has a marked singularity that is not removable, the choice of $\RClass'$ is unique. One can also check that if $\RClass$ is a Rauzy class on $d$ letters with $k$ removable singularities, an underlying class $\RClass'$ is a Rauzy class on $d-k$ letters.
    \end{rem}
    \begin{thm}\label{thm.zero}
        Given a Rauzy Class $\RClass$ with at least one removable singularity, there exists $\pi\in\RClass$ such that $\pi=\inv{\pi}$.
    \end{thm}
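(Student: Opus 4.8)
The plan is to reduce to the underlying class and then reinsert the removable singularities by a surgery that manifestly preserves being self-inverse. Given $\RClass$ with at least one removable singularity, I first pass to an underlying class $\RClass'$ as defined above: $\RClass'$ lies in the same connected component $\CCC$, has no removable singularity (or is the $2$-letter class when the genus is $1$), and inherits the degree of the marked singularity of $\RClass$ whenever that marked singularity is not removable. By Theorem \ref{thm.g3} in low genus and Theorems \ref{thm.odds}, \ref{thm.even.odd}, \ref{thm.twos.even}, \ref{thm.even.even} together with the hyperelliptic representative $\pi_{(d')}$ in higher genus, I obtain a self-inverse, block-constructed $\pi'\in\RClass'$ whose marked singularity already agrees with that of $\RClass$ in the non-removable case.

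Next I would realize each removable singularity as an inserted $\SPACE$ block. Since every block of Definition \ref{def.blocks} places its letters in self-inverse positions and the outer pair $\AB,\BA$ is self-inverse, any block-constructed permutation is its own inverse by Lemma \ref{lem.blocks}; thus inserting $\SPACE$ blocks keeps the permutation self-inverse automatically. The content is the singularity bookkeeping: a $\SPACE$ placed against the outer boundary (immediately after $\AB$ or immediately before $\BA$) contributes a fresh degree-$0$ singularity, whereas a $\SPACE$ placed strictly between two singularity-bearing blocks merely separates them, in the splitting role noted after Lemma \ref{lem.blocks}. Compare $\stdperm{\SPACE}=(3,2,1)$, of signature $(0,0)$, which already exhibits two removable points. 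To add the required number $k$ of removable singularities I would insert $k$ boundary $\SPACE$ blocks into $\pi'$, positioning them so as to fix the marked singularity: if the marked singularity of $\RClass$ is not removable I add the $\SPACE$ blocks at the boundary away from the marked block, leaving it undisturbed; if the marked singularity of $\RClass$ is itself removable, I place one $\SPACE$ in the distinguished rightmost interior slot so that its degree-$0$ singularity becomes the leftmost (marked) one, exactly as the rightmost block carries the marked singularity in the proof of Theorem \ref{thm.odds}.

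It then remains to check that $\pi$ has the correct type. Removable singularities have even degree $0$, so all degrees stay even precisely when they were even before, and the spin parity remains defined in those cases. A $\SPACE$ letter $\gamma$ satisfies $\pi_0(\gamma)=\pi_1(\gamma)$ and, because each neighboring block occupies an interval in both rows, has vanishing intersection number with every other interior cycle; it therefore forms a trivial $1\times 1$ block with $\phi(\SPACE)=0$, and Equation \eqref{eq.spinf} gives $\Phi(\pi)=\Phi(\pi')$. In the hyperelliptic case I would instead keep $\pi'=\pi_{(d')}$ and insert the removable points in positions symmetric under the involution $h(z)=|\lambda|+\imath|\tau|-z$ of Lemma \ref{lem.pi_d_is_hyp}, so that $h$ extends to a hyperelliptic involution of the enlarged surface; in genus $1$ the explicit self-inverse forms $(d,2,\dots,d-1,1)$ (note $\stdperm{\SPACE\,\SPACE}=(4,2,3,1)$ is the case $d=4$) already furnish every removable-singularity class directly. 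In every case $\pi$ is self-inverse with the signature and type of $\RClass$, so $\pi\in\RClass$ by Corollary \ref{cor.classify}.

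The main obstacle is exactly this singularity bookkeeping under insertion: I must show that a boundary $\SPACE$ adds one degree-$0$ singularity while leaving the other degrees and the location of the marked singularity untouched, in contrast with the fusing that occurs under bare concatenation (for instance $\stdperm{\EVEN_2\PAIR_{1,1}}=(7,3,2,6,5,4,1)$ has signature $(1,3)$, not $(2,1,1)$, because the missing separator merges a degree-$2$ and a degree-$1$ singularity). Getting the marked singularity right when it is itself removable, so that the inserted degree-$0$ point lands in the leftmost class rather than being absorbed, is the delicate step, and the hyperelliptic subcase carries the extra burden of verifying that the involution genuinely extends across the added marked points.
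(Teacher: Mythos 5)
Your construction coincides with the paper's proof of Theorem \ref{thm.zero}: pass to the underlying class $\RClass'$, take the self-inverse $\tilde\pi=\stdperm{\mD}$ furnished by Theorems \ref{thm.g3}--\ref{thm.even.even}, and append the $k$ blocks $\SPACE$ at the boundary --- to the right of $\mD$ when $\ell_1=0$ (so the rightmost, hence marked, singularity becomes removable) and to the left of $\mD$ when $\ell_1\neq 0$ --- then invoke Corollary \ref{cor.classify}; your two-case placement is exactly the paper's. The one point where you diverge is the hyperelliptic branch, and there you replace a working construction with an unverified one: in the paper's framework the type of a class with removable singularities is read off the completed differential (this is precisely how the underlying class $\RClass'$ was defined in Section \ref{sec.removable}), so appending boundary $\SPACE$ blocks to $\pi_{(d')}$ already lands in the hyperelliptic component, and no extension of the involution across the marked points is required. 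Your symmetric-insertion variant is in fact the riskier route --- realizing an $h$-fixed marked point combinatorially means splitting a segment at its Weierstrass midpoint, which generally destroys self-inverseness (splitting the second segment of $(4,3,2,1)$ yields $(5,4,2,3,1)$, which is not its own inverse) --- so you should drop the ``instead'' and run your main recipe uniformly. Your spin-parity check is correct (a $\SPACE$ letter forms a $1\times 1$ zero block with $\phi=0$, so Equation \eqref{eq.spinf} gives $\Phi(\pi)=\Phi(\tilde\pi)$) but is likewise subsumed by the same observation that signature and type of the completed surface are unchanged.
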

    \begin{proof}
        By the above discussion and Theorems \ref{thm.g3}-\ref{thm.even.even}, we have a $\tilde{\pi}=\inv{\tilde{\pi}}$ that belongs to the underlying class $\RClass'$ on $d-k$ letters. Denote this by
            $$ \tilde{\pi}=\stdperm{\BLAH}$$
        We now only need to confirm two cases, either $\ell_1=0$ or $\ell_1\neq 0$ for $\sig(\RClass)=(\ell_1,\dots,\ell_m)$. In the first case, we have the permutation
            $$ \pi=\stdperm{ \BLAH \underbrace{\SPACE\cdots\SPACE}_k}.$$
        In the second case, the permutation is of the form
            $$ \pi=\stdperm{\underbrace{\SPACE\cdots\SPACE}_k \BLAH}.$$
    \end{proof}
    \begin{rem}\label{rem.reduce_removable}
        Consider any permutation $\tilde \pi$. Move to standard $\pi\in\RClass(\tilde \pi)$ (see Claim \ref{cor.std_in_RC}). If $\pi$ has removable singularities then one of the following must be satisfied:
        \begin{enumerate}
            \item There exists $\gamma,\delta\in\AAA$ such that $\pi_\eps(\gamma)+1=\pi_\eps(\delta)$ for each $\eps\in\{0,1\}$.
                Consider the map
                $$ \pi = \cmtrx{\LL{\alpha}{\beta}~\LL{\dots\gamma\delta}{\dots}~\LL{\dots}{\gamma\delta\dots}~\LL{\beta}{\alpha}}
                    \to
                \cmtrx{\LL{\alpha}{\beta}~\LL{\dots\gamma}{\dots}~\LL{\dots}{\gamma\dots}~\LL{\beta}{\alpha}} = \pi'$$
                that ``forgets" $\delta$. Figure \ref{fig.remove_ex} shows how this map eliminates the removable singularity between $\gamma$ and $\delta$.
            \item There exists $\gamma\in\AAA$ such that $\pi_0(\gamma)=\pi_1(\gamma)=2$. The map
                $$ \pi = \cmtrx{\LL{\alpha}{\beta}~\LL{\gamma}{\gamma}~\LL{\delta\dots}{\eta\dots}~\LL{\beta}{\alpha}}
                    \to
                \cmtrx{\LL{\alpha}{\beta}~\LL{\delta\dots}{\eta\dots}~\LL{\beta}{\alpha}}=\pi'$$
                ``forgets" $\gamma$. This map eliminates the removable singularity to the left of the segments labeled $\gamma$.
            \item There exists $\gamma\in\AAA$ such that $\pi_0(\gamma)=\pi_1(\gamma) = d-1$. Consider the map
                $$ \pi = \cmtrx{\LL{\alpha}{\beta}~\LL{\dots\delta}{\dots\eta}~\LL{\gamma}{\gamma}~\LL{\beta}{\alpha}}
                    \to
                \cmtrx{\LL{\alpha}{\beta}~\LL{\dots\delta}{\dots\eta}~\LL{\beta}{\alpha}}=\pi'$$
                that ``forgets" $\gamma$. This new permutation no longer has a removable marked singularity. The new marked singularity was originally on the left of the segments labeled $\gamma$.
        \end{enumerate}
        By performing these maps, we may explicitly derive a standard representative of $\RClass'$.
    \end{rem}
    \begin{figure}[h]
        \begin{center}
           \setlength{\unitlength}{350pt}
            \begin{picture}(1,.23)
                \put(0,0){\includegraphics[width=\unitlength]{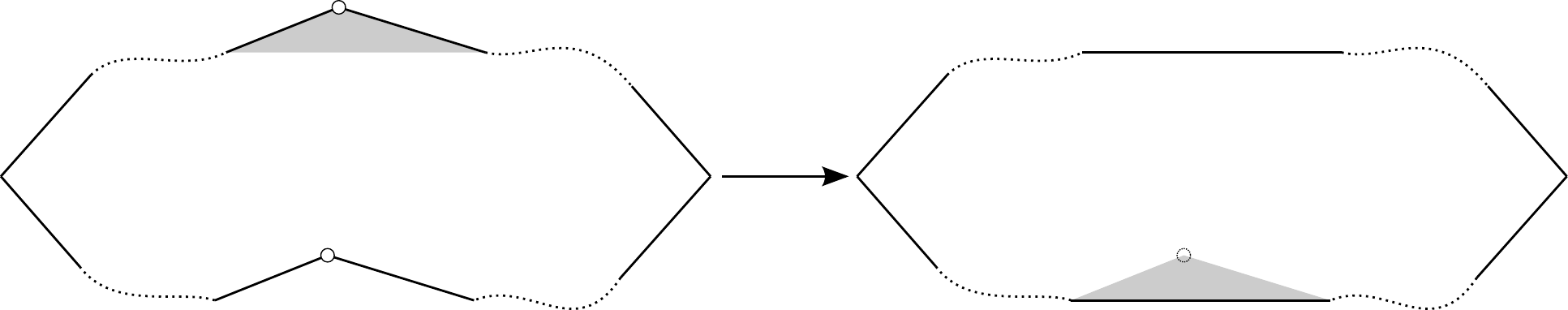}}

                \put(.01,.13){$\alpha$} \put(.16,.19){$\gamma$} \put(.26,.19){$\delta$} \put(.425,.13){$\beta$} \put(.005,.02){$\beta$} \put(.16,-.01){$\gamma$} \put(.25,-.02){$\delta$} \put(.42,.02){$\alpha$}

                \put(.20,.08){$S$} \put(.75,.08){$S'$}

                \put(.56,.13){$\alpha$} \put(.76,.18){$\gamma$} \put(.555,.02){$\beta$} \put(.975,.13){$\beta$} \put(.76,-.02){$\gamma$} \put(.97,.02){$\alpha$}
           \end{picture}
       \end{center}
       \caption{The differential for $S'$ completes the removable singularity in the differential for $S$. All other singularities remain unchanged.}\label{fig.remove_ex}
    \end{figure}
    \begin{exam}
        Begin with $\pi = (7,4,5,1,6,2,3)$, with signature $\sig(\pi)=(2,0,0,0)$. We then consider standard $\pi'=\rb^3\pi = (7,6,2,3,4,5,1)$. By performing the first reduction listed in Remark \ref{rem.reduce_removable}, we get
        $$ \cmtrx{\LL{1}{7}\LL{2}{6}\LL{3}{2}\LL{4}{3}\LL{5}{4}\LL{6}{5}\LL{7}{1}}
            \to \cmtrx{\LL{1}{7}\LL{2}{6}\LL{4}{2}\LL{5}{4}\LL{6}{5}\LL{7}{1}}
            \to \cmtrx{\LL{1}{7}\LL{2}{6}\LL{5}{2}\LL{6}{5}\LL{7}{1}}
            \to \cmtrx{\LL{1}{7}\LL{2}{6}\LL{6}{2}\LL{7}{1}} = (4,3,2,1) = \tilde\pi.$$
        In this case, $\tilde\pi$ is in the hyperelliptic class and is self-inverse with one singularity of degree $2$. So by the proof of Theorem \ref{thm.zero}, we derive,
        $$ \pi'' = \cmtrx{\LL{1}{4}~\LL{a}{a}~\LL{b}{b}~\LL{c}{c}~\LL{2}{3}~\LL{3}{2}~\LL{4}{1}}=(7,2,3,4,6,5,1).$$
        One may verify that $\pi'' = \rt^3\rb^2\rt^2\rb\pi$. Therefore $\pi''\in\RClass(\pi)$ and is self-inverse.
    \end{exam}
\section{Explicit Lagrangian Subspaces in Rauzy Classes}\label{chapLag}

    Let us now consider $\pi\in\irr_d$ and unit suspension $S_1=S(\pi,\mone,\tau)$, where $\mone=(1,\dots,1)$, of genus $g$. We will consider a natural question, when do the closed vertical loops in $S_1$ span a $g$-dimensional subspace in homology $H=H_1(S_1)$? Consider the symplectic space $(H,\omega)$ where $\omega$ is the (algebraic) intersection number. In Theorem \ref{t.Isotropic}, we verify algebraically that the vertical loops do not intersect. We then prove in Theorem \ref{t.Lag.Perm} that if $\pi=\inv{\pi}$, then there are $g$ vertical loops independent in homology. Self-inverses have transpositions, i.e. letters that are interchanged, and fixed letters. Theorem \ref{t.SLag.Perm} shows that the transposition pairs in block-constructed $\pi$ from Section \ref{chapTrue} form the basis of this $g$-dimensional space.

\subsection{Symplectic Space}
    In this section, we give the definition of a symplectic space and list some basic properties of such spaces. Using the well known result of Proposition \ref{prop.iso_dim}, we prove Lemma \ref{l.splitting} which we will use to prove Theorem \ref{t.Lag.Perm}.

    \begin{defn}
        A vector space and bilinear form $(H,\omega)$ is \term{symplectic} if for all $v\in H$
        \begin{itemize}
            \item $\omega(v,v)=0$, or $(H,\omega)$ is \term{isotropic}
            \item$\omega(u,v)=0$ for all $u\in H$ implies $v=0$, or $(H,\omega)$ is \term{non-degenerate}
        \end{itemize}
    \end{defn}

    \begin{defn}
        Given a symplectic space $(H,\omega)$ and subspace $V$, define $V^\omega$ as
        $$ V^\omega := \{u\in H: \omega(u,v)=0\mbox{ for every }v\in V\}.$$
    \end{defn}

    \begin{prop}\label{prop.iso_dim}
        Let $(H,\omega)$ be a symplectic space with subspace $V$. Then
        $$\dim V + \dim V^\omega = \dim H.$$
    \end{prop}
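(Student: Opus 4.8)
The plan is to exploit non-degeneracy of $\omega$ via a duality argument, reducing the identity to the rank--nullity theorem. First I would record the standard consequence of non-degeneracy in finite dimensions: the linear map
$$ \Psi: H \to H^*,\qquad \Psi(u) = \omega(u,\cdot), $$
is an isomorphism. Indeed $\Psi$ is linear, and its kernel consists of those $u$ with $\omega(u,v)=0$ for all $v\in H$, which is $\{0\}$ by non-degeneracy; since $\dim H = \dim H^* < \infty$, injectivity forces $\Psi$ to be an isomorphism.

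Next I would introduce the restriction map $\phi: H \to V^*$ defined by $\phi(u) = \omega(u,\cdot)\big|_V$, i.e. $\phi(u)$ is the functional $v\mapsto\omega(u,v)$ on $V$. By the very definition of $V^\omega$, we have $\ker\phi = V^\omega$, since $u\in\ker\phi$ exactly when $\omega(u,v)=0$ for every $v\in V$.

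The key step is to show $\phi$ is surjective. I would factor $\phi$ as the composite $H \xrightarrow{\Psi} H^* \xrightarrow{r} V^*$, where $r$ is the restriction-of-functionals map $f\mapsto f|_V$. The map $r$ is surjective because any linear functional on the subspace $V$ extends to one on $H$ (choose a complement and extend by zero, or pick a basis of $V$ and extend to a basis of $H$). Since $\Psi$ is an isomorphism and $r$ is surjective, the composite $\phi = r\circ\Psi$ is surjective, so its image is all of $V^*$.

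Finally I would apply the rank--nullity theorem to $\phi$:
$$ \dim H = \dim\ker\phi + \dim\operatorname{im}\phi = \dim V^\omega + \dim V^*, $$
and since $\dim V^* = \dim V$ in finite dimensions, this yields $\dim V + \dim V^\omega = \dim H$, as claimed. I do not expect a genuine obstacle here; the only point requiring care is that surjectivity of $\phi$ rests essentially on non-degeneracy (through $\Psi$ being an isomorphism) rather than on the isotropy axiom $\omega(v,v)=0$, which plays no role in this particular statement.
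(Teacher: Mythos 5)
The paper offers no proof of this proposition at all---it is invoked as ``the well known result of Proposition \ref{prop.iso_dim}''---so there is nothing to diverge from: your duality argument ($\Psi:H\to H^*$ an isomorphism by non-degeneracy, $\phi = r\circ\Psi$ surjective with $\ker\phi = V^\omega$, then rank--nullity) is the standard proof and is correct. The one hypothesis you use silently is $\dim H<\infty$ (needed both for injectivity of $\Psi$ to imply surjectivity and for $\dim V^*=\dim V$), which holds in the paper's setting since $H=H^\pi=\Omega_\pi\RR^\AAA$ is finite dimensional; your closing remark is also sound here, because even though the paper phrases non-degeneracy one-sidedly (vanishing in the first argument forces the second to be zero), in finite dimensions the left and right radicals of a bilinear form vanish together, so the isotropy axiom indeed plays no role.
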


    \begin{defn}
        Given symplectic space $(H,\omega)$, $V\subset H$ is \term{isotropic} if $V\subset V^\omega$. $V$ is \term{Lagrangian} if $V=V^\omega$.
    \end{defn}

    \begin{lem} \label{l.splitting}
        Let $W,V$ be subspaces of symplectic space $(H,\omega)$ such that:
        \begin{itemize}
            \item $V$ is isotropic,
	    \item $W$ is isotropic, and
            \item $H = V + W$.
        \end{itemize}
        Then $V$ is Lagrangian.
    \end{lem}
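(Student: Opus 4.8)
The plan is to argue entirely by dimension count, with Proposition \ref{prop.iso_dim} doing the bookkeeping. Since $V$ is isotropic we already have the inclusion $V \subseteq V^\omega$, so in order to conclude $V = V^\omega$ (i.e. that $V$ is Lagrangian) it suffices to show the two subspaces have the same dimension. By Proposition \ref{prop.iso_dim} we have $\dim V^\omega = \dim H - \dim V$, so equality of dimensions is equivalent to the single identity $\dim V = \tfrac12 \dim H$. Thus the whole lemma reduces to pinning $\dim V$ to exactly half of $\dim H$.

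The key step is then a squeeze on $\dim V$. On one side, isotropy of $V$ gives $V \subseteq V^\omega$, so $\dim V \le \dim V^\omega = \dim H - \dim V$, i.e. $\dim V \le \tfrac12 \dim H$; the identical argument applied to $W$ gives $\dim W \le \tfrac12 \dim H$. On the other side, the hypothesis $H = V + W$ together with the standard identity $\dim(V+W) = \dim V + \dim W - \dim(V \cap W)$ forces $\dim V + \dim W \ge \dim(V+W) = \dim H$. Combining the three inequalities yields $\dim H \le \dim V + \dim W \le \tfrac12 \dim H + \tfrac12 \dim H = \dim H$, so equality holds throughout; in particular $\dim V = \tfrac12 \dim H = \dim V^\omega$. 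Since $V \subseteq V^\omega$ and the two have equal (finite) dimension, $V = V^\omega$, which is the claim.

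I expect the only genuine content to be the observation that the two isotropy inequalities and the spanning inequality combine into an exact squeeze; each individual inequality is routine, and non-degeneracy of $\omega$ enters only implicitly through Proposition \ref{prop.iso_dim}. A more direct alternative, which I would mention as a remark, bypasses the dimension count altogether: given $u \in V^\omega$, write $u = v + w$ with $v \in V$ and $w \in W$; then $w = u - v \in V^\omega$ (as both $u$ and $v$ lie in $V^\omega$) while $w \in W \subseteq W^\omega$ by isotropy of $W$, so $w \in V^\omega \cap W^\omega = (V+W)^\omega = H^\omega = \{0\}$ by non-degeneracy, giving $u = v \in V$ and hence $V^\omega \subseteq V$. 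Either route is short, so I would lead with the dimension squeeze to match the stated reliance on Proposition \ref{prop.iso_dim} and record the direct argument as a remark; this lemma is the technical input that will later let me recognize the vertical-loop span as Lagrangian in the proof of Theorem \ref{t.Lag.Perm}.
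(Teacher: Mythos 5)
Your proposal is correct and is essentially the paper's own proof: both arguments use Proposition \ref{prop.iso_dim} to get $\dim V,\dim W\leq \tfrac{1}{2}\dim H$ from isotropy, combine this with $\dim V+\dim W\geq \dim H$ from $H=V+W$, and conclude $\dim V=\tfrac{1}{2}\dim H$, hence $V=V^{\omega}$. The direct argument you record as a remark is a valid alternative, but your main line coincides with the paper's dimension squeeze.
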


    \begin{proof}
	The first two conditions and Proposition \ref{prop.iso_dim} imply that 
	    $$\dim V\leq g,\dim W \leq g$$
	and the third imples 
	    $$\dim V + \dim W \geq 2g = \dim H.$$
	It follows that $\dim V = \dim W = g$ and therefore $V$ (and $W$) is a Largangian subspace.
    \end{proof}

\subsection{Lagrangian Permutations}

    Let $\pi=(\pi_0,\pi_1)\in\irr_d$ over alphabet $\AAA$, recall the definition of closed loops $\gamma_\alpha$ and their corresponding cycles $c_\alpha = [\gamma_\alpha]$ in homology (see Section \ref{sec.spin}). We recall that $(H^\pi,\omega)$, where $H^\pi = \Omega_\pi\RR^\AAA$ and $\omega(\Omega_\pi u, \Omega_\pi v)= u^t\Omega_\pi v$, form a symplectic space with dimension $2g(\pi)$. Let $\Omega=\Omega_\pi$, and recall that
    \begin{equation}\label{eq.omega_equation}
        \Omega_{\alpha,\beta} = \chi_{\pi_1(\alpha)\leq \pi_1(\beta)}-\chi_{\pi_0(\alpha)\leq \pi_0(\beta)}.
    \end{equation}
    \begin{defn}
        For alphabet $\AAA$ and $\pi=(\pi_0,\pi_1)\in\irr_d$, consider the natural action of $\pi$ on $\AAA$, $\pi_\AAA$, by
        $$ \pi_\AAA:= \inv{\pi_0}\circ \pi \circ \pi_0 = \inv{\pi_0}\circ (\pi_1\circ\inv{\pi_0}) \circ \pi_0 = \inv{\pi_0}\circ \pi_1.$$
        Denote the set of orbits of $\AAA$ of $\pi_\AAA$ by
        $$ \AA := \{\BBB\subseteq \AAA: \BBB= \OOO_{\pi_\AAA}(\alpha)\mbox{ for }\alpha\in\AAA\}.$$
        For each $k\in\dset{d}$, let
        $$ \AA_k:= \{\BBB\in\AA: \#\BBB=k\}\subseteq \AA.$$
    \end{defn}
    \begin{exam}\label{ex.perm_AAA}
        Consider
        $$ \pi=\cmtrx{\LL{a}{f}~\LL{b}{e}~\LL{c}{b}~\LL{d}{d}~\LL{e}{c}~\LL{f}{a}}\in\irr_6.$$
        In this case $\pi_\AAA(a) = f$, $\pi_\AAA(b) = c$, $\pi_\AAA(c)=e$, $\pi_\AAA(d) = d$, $\pi_\AAA(e) = b$ and $\pi_\AAA(f) = a$. Also, $\AA = \{\{a,f\},\{b,c,e\},\{d\}\}$, $\AA_1 = \{\{d\}\}$, $\AA_2 = \{\{a,f\}\}$ and $\AA_3 = \{\{b,c,e\}\}$.
    \end{exam}
    \begin{defn}
        Let $\{\ee_\alpha\}_{\alpha\in\AAA}$ form the standard orthonormal basis of $\RR^\AAA$. For $\BBB\subseteq\AAA$, let
        $$ \ee_\BBB := \sum_{\alpha\in\BBB}\ee_\alpha.$$
        Let the vector space of vertical cycles under $\pi$ be
        $$ V^\pi := \spaN\{\ee_\BBB: \BBB\in\AA\}\subseteq \RR^\AAA$$
        and for $k\in\dset{d}$, let $V^\pi_k := \spaN\{\ee_\BBB: \BBB\in\AA_k\}$.
        Let $W^\pi$ be naturally defined by $\RR^\AAA = V^\pi\oplus W^\pi$.
    \end{defn}
    \begin{defn}
        Let the image of the vertical cycles under homology be $\vv_\BBB:=\Omega\ee_\BBB$, their span be
        $$ H^\pi_V := \Omega V^\pi = \spaN\{\vv_\BBB: \BBB\in\AA\}$$
        and $H^\pi_{V_k} :=\Omega V^\pi_k = \spaN\{\vv_\BBB: \BBB\in\AA_k\}$ for $k\in\dset{d}$. Also, let $H^\pi_W = \Omega W^\pi$.
    \end{defn}
    \begin{exam}
        Consider again $\pi$ from Example \ref{ex.perm_AAA} with
			$$\AA=\{\{a,f\},\{b,c,e\},\{d\}\}.$$
		We have that
			$$ V^\pi = \spaN\{\ee_{a,f}, \ee_{b,c,e}, \ee_{d}\}\mbox{ and } W^\pi = \spaN\{\ee_{a}-\ee_f, \ee_b-\ee_c, \ee_c-\ee_e\}.$$
        Consider the definition of $\Omega_\pi$ from Equation \ref{eq.omega_pi2}. Then
        $$ \begin{array}{l}
                H_V^\pi = \spaN\{\underbrace{(1,0,0,0,0,-1)^t}_{\vv_{a,f}}, \underbrace{(3,1,1,0,-2,-3)^t}_{\vv_{b,c,e}}, \underbrace{(1,0,1,0,-1,-1)^t}_{\vv_d}\}\mbox{ and }\\
                H_W^\pi = \spaN\{\underbrace{(-1,-2,-2,-2,-2,-1)^t}_{\ww_1}, \underbrace{(0,0,0,1,0,0)^t}_{\ww_2}, \underbrace{(0,-1,-1,-2,-1,0)^t}_{\ww_3}\}.
            \end{array}$$
        One may then calculate that $\omega(\vv_\BBB,\vv_\CCC)=\omega(\ww_i,\ww_j)=0$ for each $\BBB,\CCC\in\AA$ and $i,j\in\{1,2,3\}$. The other values are given by the following table
        $$ \omega(\vv_\BBB,\ww_i)=\begin{array}{|r|ccc|}
            \hline
                \BBB ~ \setminus ~ i & 1 & 2 & 3\\
            \hline
                \{a,f\} & -2&0&0\\
                \{b,c,e\}& -6&0&-3\\
                \{d\}& -2&1&-2\\
            \hline
            \end{array}$$
        and that $\omega$ is antisymmetric.
    \end{exam}
    \begin{rem}
        For any $\BBB,\CCC\subseteq\AAA$, we note the following formula
        \begin{equation}\label{eq.cycle_form}
            \omega(\vv_\BBB,\vv_\CCC) = \sum_{\alpha\in\BBB, \beta\in\CCC} \Omega_{\alpha,\beta}.
        \end{equation}
    \end{rem}
    \begin{thm}\label{t.Isotropic}
        For any $\pi\in\irr_d$, $H_V^\pi$ is isotropic.
    \end{thm}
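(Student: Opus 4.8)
The plan is to verify $\omega(\vv_\BBB, \vv_\CCC) = 0$ for every pair of orbits $\BBB, \CCC \in \AA$. Since the cycles $\{\vv_\BBB : \BBB\in\AA\}$ span $H_V^\pi$ and $\omega$ is bilinear, this suffices to conclude $H_V^\pi \subseteq (H_V^\pi)^\omega$, i.e. that $H_V^\pi$ is isotropic. By \eqref{eq.cycle_form} the quantity to control is $\omega(\vv_\BBB, \vv_\CCC) = \sum_{\alpha\in\BBB,\,\beta\in\CCC}\Omega_{\alpha,\beta}$. Expanding each entry by \eqref{eq.omega_equation} splits this as $N_1 - N_0$, where for $\eps\in\{0,1\}$ I set $N_\eps := \sum_{\alpha\in\BBB,\,\beta\in\CCC}\chi_{\pi_\eps(\alpha)\leq\pi_\eps(\beta)}$. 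The entire claim thus reduces to the single equality $N_0 = N_1$.

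The key observation is the identity $\pi_0\circ\pi_\AAA = \pi_1$, which is immediate from $\pi_\AAA = \inv{\pi_0}\circ\pi_1$; equivalently $\pi_1(\alpha) = \pi_0(\pi_\AAA(\alpha))$ for every $\alpha\in\AAA$. Substituting this into $N_1$ turns the bottom-row comparison $\pi_1(\alpha)\leq\pi_1(\beta)$ into the top-row comparison $\pi_0(\pi_\AAA(\alpha))\leq\pi_0(\pi_\AAA(\beta))$. I then reindex the double sum by $\alpha' := \pi_\AAA(\alpha)$ and $\beta' := \pi_\AAA(\beta)$. This is exactly where the orbit hypothesis enters: because $\BBB$ and $\CCC$ are orbits of $\pi_\AAA$, the map $\pi_\AAA$ restricts to a bijection of $\BBB$ onto $\BBB$ and of $\CCC$ onto $\CCC$, so as $(\alpha,\beta)$ ranges over $\BBB\times\CCC$ so does $(\alpha',\beta')$. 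Hence $N_1 = \sum_{\alpha'\in\BBB,\,\beta'\in\CCC}\chi_{\pi_0(\alpha')\leq\pi_0(\beta')} = N_0$, and $\omega(\vv_\BBB,\vv_\CCC)=0$ follows.

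I do not expect a genuine obstacle here: once the reindexing is in place the computation is forced, and there is no analytic or combinatorial content beyond the substitution. The only points requiring care are that the expansion via \eqref{eq.omega_equation} is valid uniformly over all ordered pairs (so the same argument covers the case $\BBB=\CCC$ and the diagonal terms $\alpha=\beta$ without a separate analysis, which is consistent with $\omega$ being alternating), and that orbit-invariance of $\pi_\AAA$ is precisely what licenses the change of variables in the double sum. It is worth recording that the argument uses only that $\BBB$ and $\CCC$ are unions of $\pi_\AAA$-orbits, with no finer information about $\pi$, which is why it holds for every $\pi\in\irr_d$.
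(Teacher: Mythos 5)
Your proof is correct and rests on the same key idea as the paper's: the orbit-invariance of $\pi_\AAA$ forces $\sum_{\alpha\in\BBB,\beta\in\CCC}\chi_{\pi_1(\alpha)\leq\pi_1(\beta)}=\sum_{\alpha\in\BBB,\beta\in\CCC}\chi_{\pi_0(\alpha)\leq\pi_0(\beta)}$, which kills $\omega(\vv_\BBB,\vv_\CCC)$ via \eqref{eq.cycle_form} and \eqref{eq.omega_equation}. The only (cosmetic) difference is that you perform one simultaneous change of variables $(\alpha,\beta)\mapsto(\pi_\AAA(\alpha),\pi_\AAA(\beta))$ using $\pi_1=\pi_0\circ\pi_\AAA$, whereas the paper swaps $\pi_0$ for $\pi_1$ in each variable separately through the counting identities \eqref{eq.orbit_good} and \eqref{eq.orbit_result} --- your reindexing is a slight streamlining of the same argument.
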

    \begin{proof}
        It suffices to show that for every $\BBB,\CCC\in\AA$, $\omega(\vv_\BBB,\vv_\CCC)=0$. Because each $\BBB\in\AA$ is an orbit of $\pi_\AAA$, for $\eps\in\{0,1\}$,
        \begin{equation}\label{eq.orbit_good}
            \forall \alpha\in\BBB, ~\exists \beta\in\BBB \mbox{ s.t. }\pi_\eps(\alpha) = \pi_{1-\eps}(\beta).
        \end{equation}
        So for each $k\in\dset{d}$,
        \begin{equation}\label{eq.orbit_result}
            \#\{\alpha\in\BBB:\pi_0(\alpha)\leq k\} = \#\{\alpha\in\BBB:\pi_1(\alpha)\leq k\}.
        \end{equation}
        The calculation follows:
        $$ \begin{array}{rcll}
                \omega(\vv_\BBB,\vv_\CCC) & = & \underset{\alpha\in\BBB, \beta\in\CCC}\sum \Omega_{\alpha,\beta} & \mbox{by \eqref{eq.cycle_form}}\\
                    & = & \underset{\alpha\in\BBB}\sum\underset{\beta\in\CCC}\sum \chi_{\pi_1(\alpha) \leq \pi_1(\beta)} - \chi_{\pi_0(\alpha) \leq \pi_0(\beta)} & \mbox{by \eqref{eq.omega_equation}}\\
                    & = & \underset{\alpha\in\BBB}\sum \#\{\beta\in\CCC:\pi_1(\alpha) \leq \pi_1(\beta)\}\\
					& & -\underset{\alpha\in\BBB}\sum \#\{\beta\in\CCC:\pi_0(\alpha) \leq \pi_0(\beta)\} & \\
                    & = & \underset{\alpha\in\BBB}\sum \#\{\beta\in\CCC:\pi_1(\alpha) \leq \pi_1(\beta)\}\\
					& & -\underset{\alpha\in\BBB}\sum \#\{\beta\in\CCC:\pi_0(\alpha) \leq \pi_1(\beta)\} & \mbox{by \eqref{eq.orbit_good}}\\
                    & = &  0 &\mbox{by \eqref{eq.orbit_result}}
            \end{array}$$
        Therefore $H^\pi_V$ is isotropic.
    \end{proof}
    \begin{defn}
        $\pi\in\irr_d$ is \term{Lagrangian} if $H_V^\pi$ is Lagrangian.
    \end{defn}
    \begin{exam}
        Let $\pi = (4,1,3,2)$. In this case $H^\pi_V$ is spanned by two vectors, $(1,1,0,-2)^t$ and $(0,1,0,-1)^t$. So $\pi$ is Lagrangian. On the other hand, if $\pi' = (3,1,4,2)$ then $H_V^{\pi'}$ is spanned by only the vector $(1,2,-2,-1)^t$. Therefore $\pi'$ is not Lagrangian.
    \end{exam}
    \begin{rem}
        Naturally
        $$V^\pi = \bigoplus_{k=1}^d V^\pi_k.$$
        When $\pi$ is self-inverse, $V^\pi_k=\{0\}$ for all $k>2$. So
        $$ V^\pi = V^\pi_2 \oplus V^\pi_1,$$
        where $V^\pi_1$ corresponds to fixed $\alpha$ under $\pi_\AAA$, and $V^\pi_2$ corresponds to \term{transpositions}, pairs of letters $\{\alpha,\beta\}$ that are switched under $\pi_\AAA$.
        In this case, let
        $$W^\pi= \spaN\{\ee_{\alpha,-\beta}: \{\alpha,\beta\}\in\AA_2\}$$
        where $\ee_{\alpha,-\beta}=\ee_\alpha-\ee_\beta$ and $\vv_{\alpha,-\beta} = \Omega\ee_{\alpha,-\beta}$. It follows that $\RR^\AAA=V^\pi\oplus W^\pi$ and for
	$$ H_W^\pi = \Omega W^\pi = \spaN\{\vv_{\alpha,-\beta}: \{\alpha,\beta\}\in\AA_2\},$$
	$H^\pi = H^\pi_V+H^\pi_W$.
    \end{rem}
    \begin{thm}\label{t.Lag.Perm}
        Suppose $\pi\in\irr_d$ is self-inverse. Then $\pi$ is Lagrangian.
    \end{thm}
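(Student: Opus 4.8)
The plan is to deduce the theorem from Lemma \ref{l.splitting} applied to the symplectic space $(H^\pi,\omega)$ with the two subspaces $V = H_V^\pi$ and $W = H_W^\pi$. Two of the three hypotheses of that lemma are already in hand. First, Theorem \ref{t.Isotropic} shows that $H_V^\pi$ is isotropic for any $\pi$, so in particular here. Second, since $\pi$ is self-inverse the map $\pi_\AAA = \inv{\pi_0}\pi_1$ is an involution, so every orbit has size one or two and $\RR^\AAA = V^\pi \oplus W^\pi$; applying $\Omega$ to this decomposition gives $H^\pi = H_V^\pi + H_W^\pi$, exactly the spanning hypothesis of the lemma (this is the content of the remark preceding the theorem). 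Thus the whole statement reduces to the single remaining claim that $H_W^\pi$ is isotropic.

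To establish that claim I would show $\omega(\vv_{\alpha,-\beta},\vv_{\gamma,-\delta}) = 0$ for every pair of transpositions $\{\alpha,\beta\},\{\gamma,\delta\}\in\AA_2$. By antisymmetry of $\omega$ the equal-pair case is automatic, and distinct orbits are disjoint, so I may take $\alpha,\beta,\gamma,\delta$ to be four distinct letters. Using $\vv_{\alpha,-\beta} = \Omega\ee_{\alpha,-\beta}$ together with $\omega(\Omega u,\Omega v) = u^t\Omega v$, the pairing expands bilinearly as
$$ \omega(\vv_{\alpha,-\beta},\vv_{\gamma,-\delta}) = \Omega_{\alpha,\gamma} - \Omega_{\alpha,\delta} - \Omega_{\beta,\gamma} + \Omega_{\beta,\delta}. $$
The structural input is the self-inverse symmetry of each transposition pair: $\{\alpha,\beta\}\in\AA_2$ means $\pi_\AAA(\alpha)=\beta$, which unwinds to $\pi_1(\alpha)=\pi_0(\beta)$ and $\pi_1(\beta)=\pi_0(\alpha)$, and likewise $\pi_1(\gamma)=\pi_0(\delta)$, $\pi_1(\delta)=\pi_0(\gamma)$. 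Substituting these identities into Equation \eqref{eq.omega_equation} rewrites every $\Omega$ entry above purely in terms of $\pi_0$-comparisons; because interchanging the two letters of a pair swaps its top and bottom positions, the eight resulting characteristic functions cancel in pairs, giving $0$.

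The only nontrivial step is verifying this cancellation, and I expect it to succeed cleanly: the alternating sign pattern of the four $\Omega$ entries, combined with the fact that the self-inverse condition forces $\pi_0$ and $\pi_1$ to be interchanged on each transposition pair, makes the whole expression vanish identically rather than by accident. Once $H_W^\pi$ is shown isotropic, Lemma \ref{l.splitting} yields $\dim H_V^\pi = \dim H_W^\pi = g$ and $H_V^\pi = (H_V^\pi)^\omega$; that is, $H_V^\pi$ is Lagrangian, which is precisely the assertion that $\pi$ is Lagrangian.
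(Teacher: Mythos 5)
Your proposal is correct and follows essentially the same route as the paper: both reduce the theorem via Lemma \ref{l.splitting} and Theorem \ref{t.Isotropic} to showing $H_W^\pi$ is isotropic, and both verify this by expanding $\omega(\vv_{\alpha,-\beta},\vv_{\gamma,-\delta})$ into four $\Omega$-entries and cancelling them using the transposition identities $\pi_1(\alpha)=\pi_0(\beta)$, $\pi_1(\beta)=\pi_0(\alpha)$, etc. The one step you hedged on does succeed exactly as you predicted, since those identities give $\Omega_{\beta,\delta}=-\Omega_{\alpha,\gamma}$ and $\Omega_{\beta,\gamma}=-\Omega_{\alpha,\delta}$, so the alternating sum vanishes identically.
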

    \begin{proof}
        From the previous remark, $H^\pi = H^\pi_V+H^\pi_W$. Also by Theorem \ref{t.Isotropic}, $H^\pi_V$ is isotropic. It suffices to show that $H^\pi_W$ is also isotropic. Consider two vectors $\vv_{\alpha,-\beta},\vv_{\zeta,-\eta}$ such that $\{\alpha,\beta\},\{\zeta,\eta\}\in\AA_2$. Because $\{\alpha,\beta\}$ and $\{\zeta,\eta\}$ belong to $\AA_2$,
	\begin{equation}\label{eq.tLagParmA}
	  \begin{array}{rl}
	      \pi_0(\alpha) = \pi_1(\beta), & \pi_1(\alpha) = \pi_0(\beta), \mbox{ and} \\
	      \pi_0(\zeta) = \pi_1(\eta), & \pi_1(\zeta) = \pi_0(\eta). 	   
	  \end{array}
	\end{equation}
	Then
	$$ \begin{array}{rcll}
	      \omega(\vv_{\alpha,-\beta},\vv_{\zeta,-\eta}) & =&  \Omega_{\alpha,\zeta}+\Omega_{\beta,\eta}-\Omega_{\alpha,\eta} - \Omega_{\beta,\zeta}& \mbox{by }\eqref{eq.cycle_form}\\
	      & = & \chi_{\pi_1(\alpha)\leq\pi_1(\zeta)}- \chi_{\pi_0(\alpha)\leq\pi_0(\zeta)}& \\
	      &  & +\chi_{\pi_1(\beta)\leq\pi_1(\eta)}- \chi_{\pi_0(\beta)\leq\pi_0(\eta)}& \\
	      &  & +\chi_{\pi_1(\alpha)\leq\pi_1(\zeta)}- \chi_{\pi_0(\alpha)\leq\pi_0(\zeta)}\\
	      &  & +\chi_{\pi_1(\beta)\leq\pi_1(\eta)}- \chi_{\pi_0(\beta)\leq\pi_0(\eta)}& \mbox{by } \eqref{eq.omega_equation}\\
	      & = & 0. & \mbox{by \eqref{eq.tLagParmA}}
	   \end{array}$$
	Therefore as $H_W^\pi$ is generated by $\vv_{\alpha,-\beta}$, $\{\alpha,\beta\}\in\AA_2$, $H_W^\pi$ is isotropic. We conclude that $H_V^\pi$ is Lagrangian by Lemma \ref{l.splitting}.
    \end{proof}
    \begin{corollary}
        If $\pi$ is self-inverse, then $\pi$ has at least $g(\pi)$ transpositions.
    \end{corollary}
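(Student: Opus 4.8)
The plan is to extract the bound directly from the Lagrangian splitting produced in the proof of Theorem~\ref{t.Lag.Perm}, rather than re-deriving anything about cycles from scratch. By the remark preceding that theorem, the transpositions of a self-inverse $\pi$ are exactly the size-two orbits of $\pi_\AAA$, so the number of transpositions of $\pi$ equals $\#\AA_2$; the whole point will be that this integer is bounded below by $\dim H_W^\pi$.

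First I would count generators of $H_W^\pi$. By definition $W^\pi = \spaN\{\ee_{\alpha,-\beta} : \{\alpha,\beta\}\in\AA_2\}$, and these $\#\AA_2$ vectors are linearly independent (distinct transpositions involve disjoint letters), so $\dim W^\pi = \#\AA_2$. Applying $\Omega$ gives $H_W^\pi = \Omega W^\pi = \spaN\{\vv_{\alpha,-\beta} : \{\alpha,\beta\}\in\AA_2\}$, a span of at most $\#\AA_2$ vectors; hence $\dim H_W^\pi \le \#\AA_2$.

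Next I would invoke the dimension equality that is already in hand. The proof of Theorem~\ref{t.Lag.Perm} verifies that $H_V^\pi$ and $H_W^\pi$ are both isotropic and that $H^\pi = H_V^\pi + H_W^\pi$, and then applies Lemma~\ref{l.splitting} with $V = H_V^\pi$ and $W = H_W^\pi$. That lemma forces $\dim H_V^\pi = \dim H_W^\pi = g$, where $g = g(\pi)$ is half the dimension of the symplectic space $(H^\pi,\omega)$. Combining this with the previous paragraph yields $g = \dim H_W^\pi \le \#\AA_2$, which is precisely the assertion that $\pi$ has at least $g(\pi)$ transpositions.

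There is essentially no analytic obstacle here; the only thing to be careful about is the bookkeeping. I must make sure the spanning set of $H_W^\pi$ is genuinely indexed by transposition pairs (one vector $\vv_{\alpha,-\beta}$ per element of $\AA_2$), so that its cardinality is the transposition count and not something larger, and that the equality $\dim H_W^\pi = g$ is legitimately available from Lemma~\ref{l.splitting} as it is applied inside the proof of Theorem~\ref{t.Lag.Perm}. Once those identifications are pinned down, the inequality is immediate.
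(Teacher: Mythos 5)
Your proposal is correct and is essentially the paper's own proof, which runs through the identical chain $\#\AA_2 = \dim W^\pi \geq \dim H_W^\pi = g(\pi)$, with the last equality coming from Lemma \ref{l.splitting} as applied in the proof of Theorem \ref{t.Lag.Perm}. You have merely spelled out the bookkeeping (disjointness of transposition pairs giving $\dim W^\pi = \#\AA_2$, and $\Omega$ not increasing dimension) that the paper leaves implicit.
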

    \begin{proof}
        By construction $\#\AA_2 = \dim V^\pi_2 = \dim W^\pi \geq \dim H_W^\pi = g(\pi)$.
    \end{proof}
    The following provides an alternative proof of Lemma 4.4 in \cite{c.For02}.
    \begin{corollary}\label{cor.Lag.For}
        In every connected component $\CCC$ of every stratum of Abelian differentials, let $\LLL$ be the set of $q\in\CCC$ such that:
        \begin{itemize}
            \item the vertical trajectories defined by $q$ that avoid singularities are periodic,
            \item the span of these vertical trajectories span a Lagrangian subspace in homology.
        \end{itemize}
        Then the set $\LLL$ is dense in $\CCC$.
    \end{corollary}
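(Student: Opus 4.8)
The plan is to produce, in each connected component $\CCC$, an explicit differential lying in $\LLL$, and then to spread these examples throughout $\CCC$ using the Teichm\"uller flow. First I would fix a Rauzy Class $\RClass\subseteq\CCC$, so that by Corollary \ref{cor.classify} and the discussion of $M_\AAA^0$ the suspensions over $\RClass$ sweep out a full-measure subset of $\CCC$, and I would choose by Theorem \ref{thm.main} a self-inverse $\pi=\inv\pi\in\RClass$.

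Second, I would look at the suspensions $S(\pi,\mone,\tau)$ with $\mone=(1,\dots,1)$ and $\tau\in\TTT_\pi$. Since every length equals $1$, the first return map of the vertical flow to $I_S$ is the interval exchange $(\pi,\mone)$, which merely permutes the $d$ unit subintervals; hence every vertical leaf is periodic. A vertical leaf based in the interval labeled $\alpha$ passes successively through the intervals $\alpha,\pi_\AAA(\alpha),\pi_\AAA^2(\alpha),\dots$ and closes up exactly after traversing the orbit $\BBB=\OOO_{\pi_\AAA}(\alpha)\in\AA$, so its homology class is $\sum_{\beta\in\BBB}c_\beta=\vv_\BBB$. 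Thus the periodic vertical leaves span precisely $H_V^\pi$, which is Lagrangian by Theorem \ref{t.Lag.Perm}; each $S(\pi,\mone,\tau)$ therefore lies in $\LLL$.

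Third, for density I would exploit that $\LLL$ is invariant under the Teichm\"uller flow $F_t$: since $F_t(\pi,\lambda,\tau)=(\pi,e^t\lambda,e^{-t}\tau)$ scales the vertical direction without rotating it, it carries periodic vertical leaves to periodic vertical leaves and fixes their homology classes. Writing $U_\pi\subseteq\CCC$ for the open chart swept out by $\{S(\pi,\lambda,\tau)\}$, the set $\bigcup_{t}F_t U_\pi$ is open, $F_t$-invariant and of positive measure, hence of full measure by ergodicity of $F_t$, and therefore dense (the Teichm\"uller measure has full support). As $F_t$ is a homeomorphism preserving $\LLL$, it then suffices to prove $\LLL$ dense in the single chart $U_\pi$: density there transports across the flow-saturation and yields density of $\LLL$ in all of $\CCC$.

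The hard part is precisely this density inside $U_\pi$. The slice $\{\mone\}\times\TTT_\pi$ is too thin, since its flow-saturation fixes the projective class of $\lambda$ and remains lower-dimensional, so I must let $\lambda$ range over a dense set. The natural candidate is all rational $\lambda$: rational lengths again force the return map to be periodic, so $S(\pi,\lambda,\tau)$ has completely periodic vertical foliation, and these $(\lambda,\tau)$ are dense in $U_\pi$. What remains, and what I expect to be the main obstacle, is to verify that for self-inverse $\pi$ and rational $\lambda$ the core curves of the vertical cylinders still span a full $g$-dimensional subspace; their span is automatically isotropic (disjoint cylinders have zero intersection number), so the content is the full-rank statement. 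I would try to derive this from the anti-symplectic involution $h(x,y)=(x,-y)$ provided by Corollary \ref{cor.main2}: $h$ preserves the vertical direction, hence permutes the vertical cylinders and acts on $H_1(S)$ as an anti-symplectic involution whose $(-1)$-eigenspace is Lagrangian, and the one computation I would still need is that the cylinder classes exhaust that eigenspace.
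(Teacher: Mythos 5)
Your first two steps coincide with the paper's proof (self-inverse $\pi\in\RClass$ via Theorem \ref{thm.main}, unit suspensions $S(\pi,\mone,\tau)\in\LLL$ via Theorem \ref{t.Lag.Perm}), and your observation that $F_t$ preserves membership in $\LLL$ is exactly the right tool. But you then deploy it incorrectly, and the resulting plan has a genuine gap at the step you yourself flag as the main obstacle. The claim you would need --- that for a dense set of rational $\lambda$ the core curves of the vertical cylinders of $S(\pi,\lambda,\tau)$ span a $g(\pi)$-dimensional subspace --- is false as a statement about all rational $\lambda$: a completely periodic direction may decompose the surface into fewer than $g$ cylinders (one-cylinder decompositions exist already in $\HHH(2)$, and such surfaces arise as suspensions with rational $\lambda$), in which case the span of the core curves is isotropic of dimension strictly less than $g$. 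Your proposed rescue via the involution of Corollary \ref{cor.main2} does not apply either: the map $h(x,y)=(x,-y)$ is an automorphism of $S(\pi,\lambda,\tau)$ only when the suspension data are symmetric under the transpositions of $\pi_\AAA$ (e.g.\ $\lambda_\alpha=\lambda_\beta$ for each $\{\alpha,\beta\}\in\AA_2$, as holds for $\lambda=\mone$ and the special $\tau$ used there); for a generic rational $\lambda$ no such involution exists, and the symmetric $\lambda$'s form a proper linear subspace of $\RR_+^\AAA$, so they cannot give density in your chart $U_\pi$. Even where $h$ exists, your remaining ``one computation'' (cylinder classes exhaust an eigenspace) is precisely what fails when cylinders merge.

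The conceptual misstep is dismissing the slice $\{\mone\}\times\TTT_\pi$ as ``too thin.'' Density does not require positive measure: a single flow orbit is one-dimensional yet can be dense, and this is exactly what the paper uses. Its proof fixes $\lambda=\mone$ and chooses $\tau$ so that the \emph{backward} Teichm\"uller orbit of the single surface $S_1=S(\pi,\mone,\tau)$ is dense in $\CCC$ --- possible by ergodicity of $F_t$, via an argument as in Proposition 2.11 of Veech's paper \cite{c.Ve84_II}, since backward genericity is essentially a condition on $\tau$ alone and so can be arranged with $\lambda=\mone$ held fixed. Then, exactly as you observed, every point $F_{-t}S_1$ of that orbit still has completely periodic vertical foliation with the same Lagrangian span, so the entire dense orbit lies in $\LLL$ and density in $\CCC$ follows at once. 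In short: you had all the needed pieces, but routing the argument through density of $\LLL$ inside the full chart $U_\pi$ forces you to prove a full-rank statement for general rational $\lambda$ that is both unproven and false, whereas the paper needs only one well-chosen orbit inside the thin slice you discarded.
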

    \begin{proof}
        By Theorem \ref{thm.main}, every Rauzy Class $\RClass$ in $\CCC$ contains a self-inverse permutation $\pi$. Theorem \ref{t.Lag.Perm} shows that any unit suspension $S_1=S(\pi,\mone,\tau)$ satisfies the conditions of the claim. It is known that the Teichm\"uller geodesic flow (see Section \ref{sec.surface}) is ergodic, and therefore, we may choose $\tau$ such that the inverse flow is dense. It follows from an argument similar to Proposition 2.11 in \cite{c.Ve84_II}, for example, that every differential in the inverse flow also satisfies the conditions of the claim.
    \end{proof}

\subsection{Transposition Lagrangian Permutations}

    Theorem \ref{t.Lag.Perm} shows that the vertical cycles of any self-inverse permutation span a Lagrangian subspace in homology. In general, choosing a basis from these cycles still requires calculation. However the block-constructed permutations in Definition \ref{def.blockConstructed} enjoy an additional property: the transpositions cycles form a basis for the Lagrangian subspace. We make this definition explicit, and then prove this result in Theorem \ref{t.SLag.Perm}.

    \begin{defn}
        A self-inverse permutation $\pi$ is \term{transposition Lagrangian} if $\dim H_{V_2}^\pi = \dim V^\pi_2 = g(\pi)$.
    \end{defn}
    \begin{exam}
        The permutation $\pi = (7,5,3,6,2,4,1)$ is self-inverse with $g(\pi)=3$. There are $3$ transposition pairs, $\{1,7\}$,$\{2,5\}$ and $\{4,6\}$, and one fixed letter $\{3\}$. However, we see that $\vv_{2,5}=\vv_{4,6} = (2,1,0,1,-1,-1,-2)^t$, $\vv_{1,7}=(1,0,0,0,0,0,-1)^t$ and $\vv_3=(1,1,0,0,-1,0,-1)^t$. So $\dim H_{V_2}^\pi = 2 < 3 = \dim H_V^\pi$, and therefore $\pi$ is Lagrangian, but not transposition Lagrangian.
    \end{exam}
    \begin{thm}
        If $\pi=(d,d-1,\dots,2,1)\in\irr_d$ then $\pi$ is transposition Lagrangian.
    \end{thm}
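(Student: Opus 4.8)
The plan is to recognize $\pi=(d,d-1,\dots,2,1)$ as the reversal $\pi_{(d)}$, for which we may take $\pi_0$ to be the identity and $\pi_1(i)=d-i+1$, so that the involution $\pi_\AAA=\inv{\pi_0}\circ\pi_1$ is the reversal $i\mapsto d-i+1$. Its orbits are the transposition pairs $\{i,d-i+1\}$ for $1\le i<\tfrac{d+1}{2}$, together with, when $d$ is odd, the single fixed point $\{m\}$ with $m=\tfrac{d+1}{2}$. Hence $\#\AA_2=\lfloor d/2\rfloor$, which by the genus computation in the proof of Lemma \ref{lem.pi_d_is_hyp} is exactly $g(\pi)$. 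Since the vectors $\ee_\BBB$, $\BBB\in\AA_2$, have pairwise disjoint supports, $\dim V^\pi_2=\#\AA_2=g(\pi)$, so the theorem reduces to showing that $\Omega=\Omega_\pi$ is injective on $V^\pi_2$, i.e.\ that $\ker\Omega\cap V^\pi_2=\{0\}$.

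Next I would write $\Omega$ out explicitly. From Equation \eqref{eq.omega_equation} with $\pi_0(\alpha)=\alpha$ and $\pi_1(\alpha)=d-\alpha+1$ one gets $\Omega_{\alpha,\beta}=1$ if $\alpha>\beta$, $\Omega_{\alpha,\beta}=-1$ if $\alpha<\beta$, and $\Omega_{\alpha,\beta}=0$ if $\alpha=\beta$; equivalently $(\Omega v)_\alpha=\sum_{\beta<\alpha}v_\beta-\sum_{\beta>\alpha}v_\beta$. Subtracting the equation $(\Omega v)_\alpha=0$ from $(\Omega v)_{\alpha+1}=0$ gives $v_\alpha+v_{\alpha+1}=0$, so every kernel vector is alternating, $v_\alpha=(-1)^{\alpha-1}v_1$. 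Examining the first coordinate $(\Omega v)_1=-\sum_{\beta>1}v_\beta$ then shows that this alternating vector lies in $\ker\Omega$ precisely when $d$ is odd, while for $d$ even it forces $v=0$. This is consistent with $\mathrm{rank}\,\Omega=2g(\pi)$, which equals $d$ for $d$ even and $d-1$ for $d$ odd.

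Finally I would conclude separately in the two parities. For $d$ even, $\Omega$ has full rank $d$ and is therefore injective on all of $\RR^\AAA$, so $\dim H_{V_2}^\pi=\dim V^\pi_2=g(\pi)$. For $d$ odd, $\ker\Omega$ is the line spanned by the alternating vector $v$, whose middle coordinate is $v_m=(-1)^{m-1}\ne 0$; but every element of $V^\pi_2=\spaN\{\ee_i+\ee_{d-i+1}:1\le i<m\}$ has vanishing $m$-th coordinate. Hence no nonzero multiple of $v$ lies in $V^\pi_2$, so $\ker\Omega\cap V^\pi_2=\{0\}$, $\Omega|_{V^\pi_2}$ is injective, and again $\dim H_{V_2}^\pi=\dim V^\pi_2=g(\pi)$, which is exactly the assertion that $\pi$ is transposition Lagrangian. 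I expect the main obstacle to be the odd case: there $\Omega$ is genuinely degenerate, and one must identify its kernel and verify that it is the fixed-point direction, not any transposition direction, that absorbs the degeneracy. The observation that the kernel vector has a nonzero middle coordinate whereas all of $V^\pi_2$ does not is what makes this step clean.
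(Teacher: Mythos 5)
Your proof is correct, but it takes a genuinely different route from the paper's. The paper never computes the kernel of $\Omega$: it tests the putative vanishing combination $\ww=c_1\vv_{a_1,a_d}+\dots+c_g\vv_{a_g,a_{d+1-g}}=0$ against the single-letter cycles $\vv_{a_k}$, observes that $\omega(\vv_{a_k},\vv_{a_j,a_{d+1-j}})$ equals $0$, $1$, or $2$ according as $k>j$, $k=j$, or $k<j$, and kills the coefficients inductively from $c_g$ down to $c_1$ using this triangular pattern --- the same pairing technique it then redeploys for block-constructed permutations in Theorem \ref{t.SLag.Perm}. You instead diagnose $\Omega$ itself: the telescoping identity $(\Omega v)_{\alpha+1}-(\Omega v)_\alpha=v_\alpha+v_{\alpha+1}$ shows kernel vectors are alternating, the first coordinate then forces $\ker\Omega=\{0\}$ for $d$ even and the alternating line for $d$ odd, and in the odd case the kernel vector has nonzero middle coordinate while every element of $V^\pi_2$ vanishes there. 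Your route buys more than the paper's: a complete description of $\ker\Omega_{\pi_{(d)}}$, a direct recovery of $\mathrm{rank}\,\Omega=2g(\pi)$, and the precise observation that the odd-$d$ degeneracy is absorbed by the fixed-letter direction rather than any transposition direction --- at the cost of being special to this particular $\pi$, whereas the paper's pairing argument is the one that transfers to the general block-constructed case. Two bookkeeping remarks, neither a gap: your count $\#\AA_2=\lfloor d/2\rfloor=g(\pi)$ does match the genus computation in Lemma \ref{lem.pi_d_is_hyp} in both parities; and your sign convention $\Omega_{\alpha,\beta}=1$ for $\alpha>\beta$, taken from \eqref{eq.omega_equation}, is opposite to the $\Omega_{a_i,a_j}=1$ for $i<j$ used in the paper's proof (which follows \eqref{eq.omega_pi2}) --- the paper's two formulas in fact differ by a transpose, but since replacing $\Omega$ by $-\Omega$ changes no kernel, rank, or independence statement, this is immaterial to your argument.
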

    \begin{proof}
        Suppose
        $$ \pi = \cmtrx{a_1 & a_2 & \dots & a_{d-1} & a_d \\ a_d & a_{d-1} & \dots & a_2 & a_1}.$$
        We recall that
        $$\Omega_{a_i,a_j} = \RHScase{1, & \mbox{if }i<j, \\ 0, & \mbox{if }i=j, \\ -1, & \mbox{if }i>j.}$$
        There are exactly $g=g(\pi)$ transpositions, $\{a_i,a_{d+1-i}\}\in\AA_2$ for $i\in\dset{g}$. Because $\#\AA_2 = \dim V^\pi_2 = g$, we must now show that the vectors $\vv_{a_i,a_{d+1-i}}$ are linearly independent. We see that for $j,k\in\dset{g}$,
        $$ \omega(\vv_{a_k}, \vv_{a_j,a_{d+1-j}}) = \Omega_{a_k,a_j} + \Omega_{a_k,a_{d+1-j}}= \RHScase{0, & \mbox{if }k>j,\\ 1, & \mbox{if }k=j, \\ 2, & \mbox{if }k<j.}$$
        So consider any $c_1,\dots,c_g\in\RR$ such that
        $$ \ww = c_1\vv_{a_1,a_{d}}+ \dots + c_g\vv_{a_g,a_{d+1-g}}= 0.$$
        It is clear now that $\omega(\vv_{a_g},\ww) = c_g = 0$. Inductively, if $c_g=\dots = c_k=0$, then $\omega(\vv_{a_{k-1}},\ww) = 2c_g+\dots+2c_k+c_{k-1}=c_{k-1}=0$. So $c_1=\dots=c_g=0$, implying the vectors $\vv_{a_i,a_{d+1-i}}$ are linearly independent. So $\dim H^\pi_{V_2} = \dim V^\pi_2=g$.
    \end{proof}
    \begin{thm} \label{t.SLag.Perm}
        Let $\pi$ be a block-constructed permutation. Then $\pi$ is transposition Lagrangian.
    \end{thm}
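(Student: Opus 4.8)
The plan is to produce $g=g(\pi)$ transposition cycles and prove them linearly independent; since I will simultaneously check $\#\AA_2=g$, this yields $\dim H^\pi_{V_2}=\dim V^\pi_2=g$, which is exactly the assertion. First I would count transpositions. Write $\pi=\stdperm{\mB_1\cdots\mB_k}$. The outer pair $\{\lA,\lB\}$ is a transposition of $\pi_\AAA$, and by Lemma \ref{lem.blocks} (each block places its letters in self-inverse positions over its own alphabet) every other element of $\AA_2$ lies inside a single block $\mB_i$. Comparing Definition \ref{def.blocks} with the singularity data of Lemma \ref{lem.blocks}, each block produces exactly half as many transpositions as the total degree of the singularities it contributes: $\EVEN_{2n}$ and $\ODD_{2n}$ give $n$ transpositions for a degree-$2n$ singularity, $\ODD_{2,2}$ gives $2$ for two degree-$2$ singularities, $\PAIR_{2m+1,2n+1}$ gives $m+n+1$ for degrees $2m+1,2n+1$, and $\SPACE$ gives none. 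Summing over blocks, the interior transpositions number $\tfrac12\sum_i\ell_i=\tfrac12 s=g-1$ by the genus formula $g=1+s/2$; adding $\{\lA,\lB\}$ gives $\#\AA_2=g$, so $\dim V^\pi_2=g$.

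For independence I would test the cycles $\vv_\BBB$ against the complementary cycles $\vv_{\gamma,-\delta}=\Omega\ee_{\gamma,-\delta}$ spanning $H^\pi_W$. Fixing an orientation of each $\CCC=\{\gamma,\delta\}\in\AA_2$, form the $g\times g$ matrix $M$ with $M_{\BBB,\CCC}=\omega(\vv_\BBB,\vv_{\gamma,-\delta})=\sum_{\zeta\in\BBB}(\Omega_{\zeta,\gamma}-\Omega_{\zeta,\delta})$, the second equality obtained from \eqref{eq.cycle_form} by bilinearity. If $M$ is invertible, then any relation $\sum_\BBB c_\BBB\vv_\BBB=0$, paired against each $\vv_{\gamma,-\delta}$, forces $Mc=0$ and hence $c=0$, so the $\vv_\BBB$ are independent. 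Using the transposition relations \eqref{eq.tLagParmA} together with \eqref{eq.omega_equation}, every diagonal entry works out to $M_{\BBB,\BBB}=-2\Omega_{\alpha,\beta}=2$ for $\BBB=\{\alpha,\beta\}$, so the remaining task is to control the off-diagonal entries.

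To evaluate $\det M$ I would exploit the standard form of $\Omega$. Because $\lA$ occupies position $1$ on top and $d$ on the bottom while $\lB$ does the reverse, a direct computation from \eqref{eq.omega_equation} shows that the off-diagonal entries of the row indexed by $\{\lA,\lB\}$ all vanish, so $\det M=2\det M'$ with $M'$ indexed by the interior transpositions. Since the interior of $\Omega$ is block-diagonal along the atomic sub-blocks $\mA_i$ of Section \ref{sec.stdspin}, and each interior transposition lies in a single $\mA_i$, we get $M'_{\BBB,\CCC}=0$ whenever $\BBB,\CCC$ sit in different sub-blocks; hence $M'$ is block-diagonal and $\det M=2\prod_i\det\!\big(M'|_{\mA_i}\big)$. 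The atomic sub-block types carrying transpositions are $\EVEN_2$ and $\PAIR_{1,1}$ (one transposition, factor $2$) and $\ODD_4$ and the reversal block $\ODD_{2,2}$ (two nested transpositions); in each multi-transposition block the pairs are nested, so ordering them from outermost to innermost makes $M'|_{\mA_i}$ triangular with diagonal $2$, the $\ODD_{2,2}$ case being precisely the reversal computation carried out just above for $\pi_{(d)}$. Every factor is therefore nonzero (indeed one finds $\det M=2^{g}$), so $M$ is invertible, the $g$ transposition cycles are independent, and $\dim H^\pi_{V_2}=g=\dim V^\pi_2$. I expect the main obstacle to be making this reduction rigorous rather than the final per-block determinants: one must confirm both that each transposition pair is genuinely confined to a single atomic sub-block $\mA_i$ and that the outer pair decouples, so that $M$ factors as claimed; once that combinatorial bookkeeping is settled, the nested-pair triangularity inside each of the four atomic blocks is a short finite calculation.
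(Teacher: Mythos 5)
Your proposal is correct, and it follows the same two-part skeleton as the paper: first the census of transpositions (one per $\EVEN_2$ and $\PAIR_{1,1}$, two per $\ODD_4$ and $\ODD_{2,2}$, none for $\SPACE$, plus the outer pair $\{\lA,\lB\}$), which together with $g=1+\tfrac12\sum_i\ell_i$ gives $\#\AA_2=\dim V^\pi_2=g$ exactly as in the paper's proof. Where you diverge is the independence step. The paper pairs the putative relation $\ww=\sum_i c_i\vv_{\alpha_i,\beta_i}=0$ against \emph{single-letter} cycles $\vv_a$ and kills coefficients sequentially --- inner pair before outer inside $\ODD_4$ and $\ODD_{2,2}$ (via $\omega(\vv_c,\ww)=\pm c_j$ and then $\omega(\vv_a,\ww)=c_j(\Omega_{a,c}+\Omega_{a,d})+c_i\Omega_{a,b}=\pm c_i$), with the pair $\{\lA,\lB\}$ handled last. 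You instead pair against the $W$-generators $\vv_{\gamma,-\delta}$ and organize everything into a single $g\times g$ pairing matrix $M$, shown invertible by block-triangularity with $\det M=2^g$. Both arguments rest on the identical combinatorial inputs --- confinement of each interior transposition to one sub-block of the standard-form decomposition of $\Omega$, the decoupling $\Omega_{\lA,\gamma}+\Omega_{\lB,\gamma}=0$ (equivalently, constancy of $\Omega_{\lA,\cdot}$ and $\Omega_{\lB,\cdot}$ on interior letters), and a two-line computation inside the nested blocks --- and the checks you flag as remaining bookkeeping are exactly the facts the paper's displayed identities use implicitly, all immediate from Definition \ref{def.blocks} and Equation \eqref{eq.omega_equation}. (The apparent sign discrepancy between \eqref{eq.omega_pi2} and \eqref{eq.omega_equation} is harmless: both are antisymmetric with the same support, so invertibility and vanishing statements are unaffected, as is $|\det M|=2^g$.) Your packaging buys slightly more than the paper's: it exhibits the intersection pairing between $H^\pi_{V_2}$ and $H^\pi_W$ as a perfect pairing with explicit determinant, which in particular re-derives $\dim H^\pi_W=g$; the paper's sequential elimination is lighter and avoids any determinant or ordering of the pairs. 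Either route is a complete proof.
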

    \begin{proof}
        We begin by showing that $\#\AA_2 = \dim V^\pi_2 = g(\pi)$. Recall the formula
        $$ g=g(\pi) = \frac{1}{2}\sum_i \ell_i + 1$$
        where the $\ell_i$'s are the degrees of the singularities of $\pi$. Every block constructed self-inverse is of the form
        $$ \pi = \stdperm{\mB_1\SPACE\cdots\SPACE\mB_k}.$$
        So $\{\lA,\lB\}\in\AA_2$. For each $i\in\dset{k}$, $$\mB_i\in\{\EVEN_{2m}=\EVEN_2^m,\ODD_{2,2},\ODD_{2n}=\ODD_2^n,\PAIR_{2m+1,2n+1}=\EVEN_2^m\PAIR_{1,1}\EVEN_2^n\}.$$
        The desired result then follows as $\EVEN_2$ and $\PAIR_{1,1}$ have exactly one transposition pair, and $\ODD_4$ and $\ODD_{2,2}$ have exactly two transposition pairs respectively.

        So there are $g$ pairs $\{\alpha_1,\beta_1\}\dots\{\alpha_g,\beta_g\}\in\AA_2$. Now we show that the set of $\vv_{\alpha_i,\beta_i}$'s are linearly independent. Suppose that there exists $c_1,\dots,c_g\in\RR$ such that
        $$ \ww = c_1\vv_{\alpha_1,\beta_1} + \dots + c_g\vv_{\alpha_g,\beta_g} = 0.$$
        Suppose $i$ is such that $\{\alpha_i,\beta_i\}=\{a,b\}\subseteq\EVEN_2$ or $\PAIR_{1,1}$ (as in the notation mentioned in \ref{sec.stdspin}), where
			$$\EVEN_2=\cmtrx{\LL{a}{b}~\LL{b}{a}} \mbox{ or } \PAIR_{1,1}=\cmtrx{\LL{a}{b}~\LL{c}{c}~\LL{b}{a}}.$$
        We see that
			$$ \omega(\vv_{a},\ww) = c_i\Omega_{a,b} = \pm c_i = 0,$$
        implying that $c_i=0$. Suppose $i$ is such that $\{\alpha_i,\beta_i\}=\{a,b\}\subseteq\ODD_{2,2}$ or $\ODD_4$ for
			$$\ODD_{2,2}=\cmtrx{\LL{a}{b} ~\LL{c}{d} ~\LL{e}{e} ~\LL{d}{c} ~\LL{b}{a}}\mbox{ or }\ODD_4=\cmtrx{\LL{a}{b} ~\LL{c}{d} ~\LL{d}{c} ~\LL{b}{a}}$$
        with $j$ such that pair $\{\alpha_j,\beta_j\}=\{c,d\}\in\AA_2$. Then
        $$ \begin{array}{rcl}
            \omega(\vv_{c},\ww) & = & c_j\Omega_{c,d}\\
                & = & \pm c_j=0 \Rightarrow c_j=0\\
            \omega(\vv_{a},\ww) & = & c_j(\Omega_{a,c}+\Omega_{a,d})+ c_i\Omega_{a,b}\\
                & = & \pm c_i=0 \Rightarrow c_i=0.
        \end{array}$$
        We now see that for $i$ such that $\{\alpha_i,\beta_i\}=\{\lA,\lB\}\in\AA_2$ (the outside letters of the permutation),
        $$ \ww = c_i\vv_{\lA,\lB} = 0 \Rightarrow c_i = 0.$$
        So $c_1=\dots=c_g=0$, implying that the $\vv_{\alpha_i,\beta_i}$'s are linearly independent.
    \end{proof}
    
\section{Invariant Measures}\label{chap.measures}

    In this section, we prove the bound in Theorem \ref{thm.Bound_on_Measures} is sharp. Influenced by Keane's exmaple (\cite{c.Kea77}) and the examples given by Yoccoz (\cite{c.Yoc07}) for hyperelliptic classes, we will construct complete Rauzy Paths $\gamma$ such that their associated simplex of unit length vectors, $\Del(\gamma)$ has $g(\pi)$ vertices. This construction will be suitable for every non-hyperelliptic Rauzy Class, as examples abound for the hyperelliptic classes.

\subsection{A First Example}\label{sec.meas.exam}

	Let $\pi$ be the permutation
		$$ \pi = \cmtrx{1&2&3&4&5&6 \\ 6&3&2&5&4&1}.$$
	This is in the non-hyperelliptic Rauzy Class with one singularity of degree $4$, with genus $g(\pi)=3$. Given two integers $a,c>0$, we define four Rauzy Paths
		\begin{equation}
			\begin{array}{rcl}
				\gamma_0 &=& 0, \\
				\gamma_{1,a} &=& 10^a10^2,\\
				\gamma_{2,a} &=& 1^30^a10^2,\\
				\gamma_{3,c} &=& 1^{5c}, \mbox{ and}\\
				\gamma^{a,c} &=& \gamma_0\gamma_{1,a}\gamma_{2,a}\gamma_{3,c}.
			\end{array}
		\end{equation}
	\begin{figure}[t]
		\begin{center}
			$$ \xymatrix{ {\cmtrx{1~2~3~4~5~6 \\ 6~3~2~5~4~1}} \ar[r]^{0} \ar@(dr,dl)^{1^{5c}}& {\cmtrx{1~2~3~4~5~6 \\ 6~1~3~2~5~4}}\ar[r]^{0^2} \ar@<1ex>[d]^{1}& {\cmtrx{1~2~3~4~5~6 \\ 6~5~4~1~3~2}} \ar@(u,u)[ll]_{0^2} \ar@<1ex>[d]^{1^3}\\
					& {\cmtrx{1~2~3~4~6~5 \\ 6~1~3~2~5~4}} \ar@<1ex>[u]^{1} \ar@(dr,dl)^{0^{a}}& {\cmtrx{1~2~4~5~6~3 \\ 6~5~4~1~3~2}}\ar@<1ex>[u]^{1}\ar@(dr,dl)^{0^{a}} }$$
		\end{center}
		\caption{The path $\gamma^{a,c}$ acting on $\pi$.}\label{fig.gamma_a_b_c}
	\end{figure}
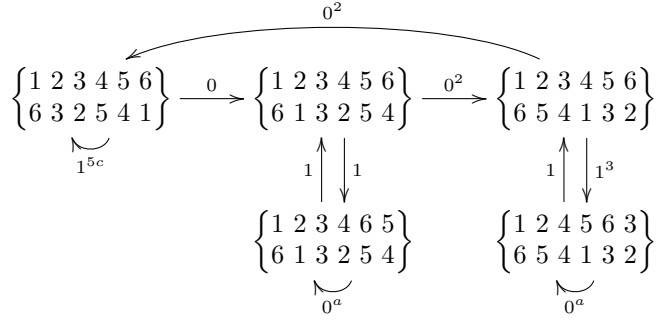
	Every letter in $\AAA=\{1,2,3,4,5,6\}$ wins in the path $\gamma^{a,c}$: the symbol `$1$' wins in $\gamma_{0}$, `$2$' and `$3$' win in path $\gamma_{1,a}$, `$4$' and `$5$' win in path $\gamma_{2,a}$ and `$6$' wins in path $\gamma_{3,c}$. We have the following matrices
	$$ \Theta_{\gamma_0} = \pmtrx{1&0&0&0&0&0 \\ 0&1&0&0&0&0 \\ 0&0&1&0&0&0 \\ 0&0&0&1&0&0 \\ 0&0&0&0&1&0 \\ 1&0&0&0&0&1}
			\mbox{, }
		\Theta_{\gamma_{1,a}} = \pmtrx{1&0&0&0&0&0 \\ 0&1&0&0&0&0 \\ 0&0&1&0&0&0 \\ 0&0&0&2&2&1 \\ 0&0&0&a&a+1&0 \\ 0&0&0&1&1&1}
			\mbox{,}$$
	$$ \Theta_{\gamma_{2,a}} = \pmtrx{1&0&0&0&0&0 \\ 0&2&2&1&1&1 \\ 0&a&a+1&0&0&0 \\ 0&0&0&1&0&0 \\ 0&0&0&0&1&0 \\ 0&1&1&0&0&1}
			\mbox{ and }
		\Theta_{\gamma_{3,c}} = \pmtrx{1&c&c&c&c&c \\ 0&1&0&0&0&0 \\ 0&0&1&0&0&0 \\ 0&0&0&1&0&0 \\ 0&0&0&0&1&0 \\ 0&0&0&0&0&1}\mbox{.}$$
	Therefore
	$$ \Theta_{\gamma^{a,c}} = \Theta_{\gamma_0}\Theta_{\gamma_{1,a}}\Theta_{\gamma_{2,a}}\Theta_{\gamma_{3,c}}
		= \pmtrx{1&c&c&c&c&c \\ 0&2&2&1&1&1 \\ 0&a&a+1&0&0&0 \\ 0&1&1&2&2&1 \\ 0&0&0&a&a+1&0 \\ 1&c+1&c+1&c+1&c+1&c+1}.$$
	Now let $\ma=\{a_i\}_{i>0}$ and $\mc=\{c_i\}_{i>0}$ be sequences of positive integers. We then define an infinite Rauzy Path
		$$ \gamma=\gamma^{\ma,\mc} = \gamma^{a_1,c_1}\gamma^{a_2,c_2}\dots$$
	Because every letter wins in each $\gamma^{a_i,c_i}$, $\gamma$ is a complete path (see Definition \ref{def.complete_path}). Therefore $\Lambda(\gamma)$, the set of length vectors $\lambda\in\RR_+^\AAA$ such that $T=(\pi,\lambda)$ has Rauzy path $\gamma$, is non-empty.

	Now suppose that for each $i$, $a_i\geq 3c_i$ and $c_i\geq 3a_{i-1}$ and let $\lambda^{(j)}$, $j\in\AAA$, be defined as
	$$ \lambda^{(j)} = \lim_{i\to\infty}\hat\Theta_{\gamma^{a_1,c_1}}\cdots \hat\Theta_{\gamma^{a_i,c_i}} \me_j$$
	where $\me_1,\dots,\me_6$ are the standard basis vectors of $\RR^\AAA$. It follows that $\lambda^{(1)}=\lambda^{(6)}$, $\lambda^{(2)}=\lambda^{(3)}$ and $\lambda^{(4)}=\lambda^{(5)}$ and these three points are the vertices of simplex $\Del(\gamma)$. These correspond to three discint invariant ergodic probability measures for any $T=(\pi,\lambda)$, $\lambda\in\Lambda(\gamma)$ (see Theorem \ref{thm.LVector_is_Measure}).
	
\subsection{Main Result}

	In this section, we consider a specific type of permutation. These satisfy Equations \eqref{eq.j_and_n} and \eqref{eq.pi_j_n} and exist in every Rauzy Class. For such a $\pi$, we define a closed Rauzy Path on $\pi$ dependent on (up to) three parameters. We then define an infinite complete Ruazy Path that begins at $\pi$ and is defined by three positive integer sequences. We then state a claim about such paths. We shall verify this claim in Section \ref{sec.geq_exp} for a specific case of such sequences.
	
	Consider $2=j_1<\dots j_m<d$ and $n_1,\dots,n_m$ such that
	\begin{equation}\label{eq.j_and_n}
	\begin{array}{l}
		j_{\alpha+1}=j_\alpha+n_\alpha \mbox{ for } 1\leq \alpha < m,\\
		n_\alpha\in\{1,\dots, 5\} \mbox{ for }1\leq \alpha \leq m \mbox{, and}\\
		j_m+n_m=d.
	\end{array}
	\end{equation}
	We add (for convenience of notation) $j_0=1$ and $j_{m+1}=d$. We will consider only permutations $\pi$ such that
	\begin{equation}\label{eq.pi_j_n}
		\pi(k) = \RHScase{d, & k=1\\ 2j_\alpha + n_\alpha -1-k, & j_\alpha\leq k<j_\alpha+ n_\alpha \\ 1, & k=d.}
	\end{equation}
	for some $\{j_\alpha,n_\alpha\}_{\alpha=1}^m$ satisying Equation \eqref{eq.j_and_n}. In other words, $\pi$ is a standard permutaion such that places blocks of letters in reverse order, and each block is of size at most $5$. For example, the permutation from the previous section,
	  $$ \pi = \cmtrx{1&2&3&4&5&6 \\ 6&3&2&5&4&1},$$
	satisfies \eqref{eq.pi_j_n} for $j_1=2$, $j_2=4$ and $n_1=n_2=2$.
	While this may initally seem a very special case to consider, we have the following as a result from the constructions in Section \ref{chapTrue}.
	\begin{corollary}\label{cor.n_j_perms_happen}
	  Every non-hyperelliptic Rauzy Class $\RRR$ contains an element $\pi$ that satisfies Equation \eqref{eq.pi_j_n} for some $\{j_\alpha,n_\alpha\}_{\alpha=1}^m$ satisying Equation \eqref{eq.j_and_n}.
	\end{corollary}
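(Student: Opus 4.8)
The plan is to read a representative straight out of the constructions of Section~\ref{chapTrue} and recognize it as a permutation of the form \eqref{eq.pi_j_n}. By Theorem~\ref{thm.main}, established through Theorems~\ref{thm.g3}, \ref{thm.odds}, \ref{thm.even.odd}, \ref{thm.twos.even}, \ref{thm.even.even} and \ref{thm.zero}, every non-hyperelliptic class $\RClass$ contains a block-constructed self-inverse $\pi=\stdperm{\mB_1\SPACE\cdots\SPACE\mB_k}$ with each $\mB_i$ drawn from Definition~\ref{def.blocks}. The whole argument is to forget the singularity data entirely and observe that, viewed purely as a map $\dset{d}\to\dset{d}$, such a $\pi$ reverses consecutive runs of positions, each of length at most $5$. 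That is exactly what \eqref{eq.pi_j_n} subject to \eqref{eq.j_and_n} asserts, so the corollary follows once this combinatorial observation is verified.

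First I would record the elementary fact that each \emph{primitive} block acts on its own run of positions as a complete reversal: $\SPACE$ reverses a block of size $1$, $\EVEN_2$ of size $2$, $\PAIR_{1,1}$ (whose three positions carry $\eps,\zeta,\eta$ on top and $\eta,\zeta,\eps$ on the bottom, see Figure~\ref{fig.blocks}) of size $3$, $\ODD_4$ of size $4$, and $\ODD_{2,2}$ of size $5$. Each is immediate from the displayed definitions; for instance $\ODD_{2,2}$ has top row $\alpha_1\cdots\alpha_5$ and bottom row $\alpha_5\cdots\alpha_1$, so on those five positions $\pi(k)=6-k$, matching $\pi(k)=2j+n-1-k$ with $n=5$. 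I would then invoke the decomposition relations already recorded in the proof of Lemma~\ref{lem.blocks}, namely $\EVEN_{2n}=\EVEN_2^{\,n}$, $\ODD_{2n}=\EVEN_2^{\,n-2}\ODD_4$ and $\PAIR_{2m+1,2n+1}=\EVEN_{2m}\PAIR_{1,1}\EVEN_{2n}$, to conclude that every composite block is a concatenation of these primitives. Hence each $\mB_i$, together with each intervening $\SPACE$, contributes consecutive reversed runs of lengths in $\{1,\dots,5\}$; the bound $5$ is precisely the size of the largest primitive block $\ODD_{2,2}$.

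The assembly is then routine. Since $\pi$ is standard, the outer letters force $\pi(1)=d$ and $\pi(d)=1$, while the interior positions $2,\dots,d-1$ are tiled by the reversed runs just described. Reading their lengths left to right as $n_1,\dots,n_m$ and setting $j_1=2$, $j_{\alpha+1}=j_\alpha+n_\alpha$, the requirements $n_\alpha\in\{1,\dots,5\}$ and $j_m+n_m=d$ hold automatically, and the reversal formula on the $\alpha$-th run is exactly $\pi(k)=2j_\alpha+n_\alpha-1-k$. Thus $\pi$ satisfies \eqref{eq.pi_j_n}, so $\RClass$ contains the desired element.

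The genuine care lies in the boundary cases rather than in any real difficulty. For $g\leq 3$ (Theorem~\ref{thm.g3}) the representatives are listed explicitly, so I would simply confirm that each non-hyperelliptic entry is a run of small reversals; e.g.\ the entry $(8,3,2,4,7,6,5,1)$ is $\stdperm{\EVEN_2\SPACE\PAIR_{1,1}}$, blocks of sizes $2,1,3$. The delicate point is the removable-singularity case (Theorem~\ref{thm.zero}): there $\pi$ is built by appending $\SPACE$ blocks to a self-inverse representative $\tilde\pi=\stdperm{\BLAH}$ of an underlying class $\RClass'$, so I must ensure $\RClass'$ is itself non-hyperelliptic, for otherwise $\BLAH$ could be a single reversal of size $d-2>5$. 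This is fine, because a class whose completion is hyperelliptic is hyperelliptic and therefore excluded by hypothesis; for genuinely non-hyperelliptic $\RClass$ the underlying $\BLAH$ comes from the small primitives above, and prepending or appending size-$1$ blocks preserves the bound. This last point is also where excluding the hyperelliptic classes is unavoidable: their unique standard representative $(d,d-1,\dots,1)$ is a single reversal of size $d-2$, which violates \eqref{eq.j_and_n} as soon as $d>7$.
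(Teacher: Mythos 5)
Your proof is correct and is essentially the paper's own argument: the paper proves Corollary \ref{cor.n_j_perms_happen} with the single observation that the block-constructed permutations of Section \ref{chapTrue} satisfy \eqref{eq.pi_j_n} and appear in every non-hyperelliptic Rauzy Class. Your verification that each primitive block ($\SPACE$, $\EVEN_2$, $\PAIR_{1,1}$, $\ODD_4$, $\ODD_{2,2}$) reverses a run of length at most $5$, together with your checks of the $g\leq 3$ table and the removable-singularity construction of Theorem \ref{thm.zero}, just fills in the details the paper leaves implicit.
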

	\begin{proof}
	  The block constructed permutations from Section \ref{chapTrue} satisfy \eqref{eq.pi_j_n} and appear in every non-hyperelliptic Rauzy Class.
	\end{proof}
	We remark that the genus of $\pi$ is given by
	\begin{equation}\label{eq.n_and_genus}
		g(\pi) = 1+\sum_{\alpha=1}^{m} \left\lfloor \frac{n_\alpha}{2} \right\rfloor,
	\end{equation}
	and that $\{j_\alpha,n_\alpha\}_{\alpha=1}^m$ from Equation \eqref{eq.j_and_n} uniquely determines a $\pi$ satifsyting \eqref{eq.pi_j_n} and vice-versa.
	
	Consider a permutation from \eqref{eq.pi_j_n} and choose $\alpha\in\{1,\dots,m\}$. We will construct a path dependent on each $j_\alpha,n_\alpha$. We will consider the permutation $\pi' := 0^*\pi$ such that
	\begin{equation}\label{eq.pi_prime_alpha}
	  \pi'(j_\alpha)=d\mbox{  and }\pi'(j_{\alpha-1}) = d-n_\alpha.
	\end{equation}
	We construct a Rauzy Path based on $n_\alpha$ and up to $2$ positive integer paramters $a$ and $b$.

	\begin{itemize}
		\item Assume $n_\alpha=1$. We consider the path $\gamma_{j_\alpha,1} :=1^{d-j_\alpha}0$.
		The matrix associated to $\gamma_{j_\alpha,1}$ is
		$$ \Theta_{\gamma_{j_\alpha,1}} =
			\pmtrx{\mI_{j_\alpha-1} & \mtrx{0 \\ \vdots \\ 0}& \mtrx{0 & \dots & 0 \\ \vdots & \ddots & \vdots \\ 0 & \dots & 0}\\
					\mtrx{0 & \dots & 0} & \mA_1 & \mtrx{1 &\dots &1}\\
					\mtrx{0 & \dots & 0 \\ \vdots & \ddots & \vdots \\ 0 & \dots & 0\\ 0 & \dots & 0} & \mtrx{0\\ \vdots \\ 0 \\ 1} & \mI_{d-j_\alpha}},\mbox{ for } \mA_1=\pmtrx{2}.$$
					
		\item Assume $n_\alpha=2$. Define $\gamma_{j_\alpha,2,a} := 1^{d-j_\alpha-1}0^a10^2$. Compare this with the paths in Section \ref{sec.meas.exam}. The matrix for $\gamma_{j_\alpha,2,a}$ is
		$$ \Theta_{\gamma_{j_\alpha,2,a}} =
			\pmtrx{ \mI_{j_\alpha-1} & \mtrx{0&0 \\ \vdots&\vdots \\ 0&0}& \mtrx{\zeros}\\
				\mtrx{0&\dots&0 \\ 0&\dots&0} & \mA_{2,a} & \mtrx{1&\dots&1 \\ 0&\dots&0}\\
				\mtrx{\zeros \\ 0 & \dots & 0} & \mtrx{0&0\\ \vdots&\vdots \\ 0&0 \\ 1&1} & \mI_{d-j_\alpha-1}}, \mbox{ for }\mA_{2,a} = \pmtrx{2 & 2 \\ a & a+1}.$$
					
		\item Assume $n_\alpha=3$. Define $\gamma_{j_\alpha,3,a}:=1^{d-j_\alpha-2}01^a01^20^3$. The matrix in this case is
		$$ \begin{array}{rcl}
			\Theta_{\gamma_{j_\alpha,3,a}} &=&
				\pmtrx{ \mI_{j_\alpha-1} & \mtrx{0&0&0\\ \vdots&\vdots&\vdots \\ 0&0&0} & \mtrx{\zeros}\\
				\mtrx{0&\dots&0 \\ 0&\dots&0 \\ 0&\dots&0} & \mA_{3,a} & \mtrx{1&\dots&1 \\ 0&\dots&0 \\ 0&\dots&0}\\
				\mtrx{\zeros \\ 0&\dots&0} & \mtrx{0&0&0 \\ \vdots&\vdots&\vdots \\ 0&0&0 \\ 1&1&1} & \mI_{d-j_\alpha-2}},\\
			\mbox{where} &\mA_{3,a}& =~\pmtrx{2 & 2 & 2 \\ 0 & a+1 & a \\ 1 & 2& 2}.
			\end{array}$$
					
		\item Assume $n_\alpha=4$. Then let $\gamma_{j_\alpha,4,a,b}:=1^{d-j_\alpha-3}0^21^b01^20^a10^4$ with matrix
		$$ \begin{array}{rcl}
			\Theta_{\gamma_{j_\alpha,4,a,b}} &=&
			\pmtrx{ \mI_{j_\alpha-1} & \mtrx{\zeros}& \mtrx{\zeros}\\
					\mtrx{0 & \dots & 0 \\ \zeros} & \mA_{4,a,b} & \mtrx{1 &\dots &1\\ \zeros}\\
					\mtrx{\zeros \\ 0 & \dots & 0} & \mtrx{\zeros \\ 1&\dots &1} & \mI_{d-j_\alpha-3}	},\\
				\mbox{where} &\mA_{4,a,b}& =~\pmtrx{2 & 2 & 2 & 2 \\
				a & a+1 & 0 & 0 \\
				0 & 0 & b+1 & b \\
				a+1 & a+2 & 2 & 2}.
			\end{array}$$

		\item Assume $n_\alpha=5$. Let $\gamma_{j_\alpha,5,a,b}:=1^{d-j_i-4}0^{2}1^{2b}0101^30^{a}10^5$. Note that its associated matrix is
		$$ \Theta_{\gamma_{j_i,5,a,b}} =
			\pmtrx{ \mI_{j_\alpha-1} & \mtrx{\zeros}& \mtrx{\zeros}\\
					\mtrx{0 & \dots & 0 \\ \zeros} & \mA_{5,a,b} & \mtrx{1 &\dots &1\\ \zeros}\\
					\mtrx{\zeros \\ 0 & \dots & 0} & \mtrx{\zeros \\ 1&\dots &1} & \mI_{d-j_\alpha-4}	},$$
		$$ \mbox{where }\mA_{5,a,b} = \pmtrx{2 & 2 & 2 & 2 & 2 \\
				a & a+1 & 0 & 0 & 0 \\
				0 & 0 & b+1 & 3b & 2b \\
				0 & 0 & 0 & 2 & 1 \\
				a+1 & a+2 & 2 & 2 & 2}.$$
	\end{itemize}
	In any case, the resulting permutation $\pi''=\gamma_{j_\alpha,n_\alpha,(a,b)}\pi'$ is such that $\pi'' = \pi$ if $\alpha = 1$ or $\pi''$ satisfies \eqref{eq.pi_prime_alpha} for $\alpha-1$. So if we initally consider $\pi'=0\pi$, $\alpha=m$ and each defined path will decrease $\alpha$ by $1$ until we return to $\pi$. This therefore forms a closed Rauzy Path on $\pi$.
	
	So consider $\pi\in\irr_d$ and $\{j_\alpha,n_\alpha\}_{\alpha=1}^{k}$ satisfying \eqref{eq.j_and_n} and \eqref{eq.pi_j_n} and integers $a,b,c>0$. Define path
	\begin{equation}\label{eq.gamma}
		\gamma^{\pi,a,b,c} := 0 \gamma_{j_m,n_m,a,b} \gamma_{j_{m-1},n_{m-1},a,b} \cdots \gamma_{j_2,n_2,a,b} \gamma_{j_1,n_1,a,b} 1^{c(d-1)}.
	\end{equation}
	It then follows that
	\begin{equation}
	\Theta_{\gamma^{\pi,a,b,c}} =
		\pmtrx{1 & \mtrx{c & \dots & \msp} & \msp & \msp & \mtrx{\msp & \msp & \dots} & c\\
		\mtrx{0\\ \vdots \\ \msp \\ \msp} & \mA_{n_1,a,b} & \mtrx{1 &\dots & \msp\\ 0 & \dots &\msp \\ \vdots & \msp & \msp\\ 0 & \dots& \msp} & \msp & \mtrx{\msp & \msp & \dots \\ \msp & \msp & \dots \\ \msp & \msp & \msp \\ \msp & \msp & \dots} & \mtrx{1 \\ 0 \\ \vdots \\ 0} \\ 
		\msp &  \mtrx{1 &\dots & 1\\ 0 & \dots &0 \\ \vdots & \msp & \vdots\\ 0 & \dots& 0} & \mA_{n_2,a,b} & \mtrx{1 &\dots & \msp\\ 0 & \dots &\msp \\ \vdots & \msp & \msp\\ 0 & \dots& \msp}  & \mtrx{\msp & \msp & \dots \\ \msp & \msp & \dots \\ \msp & \msp & \msp \\ \msp & \msp & \dots} & \mtrx{1 \\ 0 \\ \vdots \\ 0} \\ 
		\msp & \mtrx{\msp \\ \vdots \\ \msp} & \msp & \ddots & \msp & \vdots \\
		\mtrx{\msp \\ \msp \\ \vdots \\ 0}& \mtrx{1 &\dots & \msp\\ 0 & \dots &\msp \\ \vdots & \msp & \msp\\ 0 & \dots& \msp}& \msp & \mtrx{\msp &\dots & 1\\ \msp & \dots &0 \\ \msp & \msp & \vdots\\ \msp & \dots& 0}& \mA_{n_k,a,b}& \mtrx{1 \\ 0 \\ \vdots \\ 0}\\
		1 & \mtrx{ c+1 & \dots & \msp} & \msp& \msp& \mtrx{\msp & \msp & \dots}& c+1}.
	\end{equation}

	\begin{defn}
	    If we consider sequences of positive inetegers $\ma = \{a_i\}_{i>0}$, $\mb = \{b_i\}_{i>0}$ and $\mc = \{c_i\}_{i>0}$ and permutation $\pi$ satisfying Equations \eqref{eq.j_and_n} and \eqref{eq.pi_j_n}, let the infinite path $\gamma^{\pi,\ma,\mb,\mc}$ be the concatenation of the paths $\gamma^{\pi,a_i,b_i,c_i}$ or
	      $$ \gamma^{\pi,\ma,\mb,\mc} = \gamma^{\pi,a_1,b_1,c_1}\gamma^{\pi,a_2,b_2,c_2}\gamma^{\pi,a_3,b_3,c_3}\dots$$
	\end{defn}

	\begin{rem}
	    Such sequences $\gamma=\gamma^{\pi,\ma,\mb,\mc}$ are complete (Definition \ref{def.complete_path}). This follows as for each $i>0$ and $\beta\in\AAA$, $\beta$ wins at least once in the subpath $\gamma^{\pi,a_i,b_i,c_i}$. Therefore, there exists at least one $\lambda\in\Del_\AAA$ such that $\gamma$ is the Rauzy Path associated to $T=(\pi,\lambda)$.
	\end{rem}

	\begin{rem}
	    It is an immediate consequence that if $\ma$, $\mb$ and $\mc$ are universally bounded sequences, then $T=(\pi,\lambda)$ is uniquely ergodic.
	\end{rem}

	We make the following claim without general proof. However, in the following section, we shall prove the claim for a specific type of sequences.

	\begin{claim}
	    Suppose $\pi$ satisfies Equations \eqref{eq.j_and_n} and \eqref{eq.pi_j_n} and the sequences $\ma$, $\mb$, $\mc$ satisfy
		$$ a_i \gg b_i \gg c_i \gg a_{i-1}$$
	    for all large $i$ and a suitable definition of ``$\gg$.'' Then for $\gamma = \gamma^{\pi,\ma,\mb,\mc}$ and every $\lambda\in\Del(\gamma)$, the \IET\ $T=(\pi,\lambda)$ is minimal and admits $g(\pi)$ distinct ergodic probabilty measures.
	\end{claim}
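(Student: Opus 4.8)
The plan is to reduce the entire statement to a count of the extremal rays of the cone $\Lambda(\gamma)$, and then to obtain that count by a scale-separation analysis of the product matrices. First I would dispose of minimality and of the upper bound. Since every letter wins in each subpath $\gamma^{\pi,a_i,b_i,c_i}$, the path $\gamma=\gamma^{\pi,\ma,\mb,\mc}$ is complete (Definition \ref{def.complete_path}), so by Proposition \ref{prop.theta_and_measures} every $\lambda\in\Del(\gamma)$ gives an \IET\ $T=(\pi,\lambda)$ satisfying the Keane Condition with Rauzy Path $\gamma$; hence $T$ is minimal (Remark \ref{rem.Keane_minimal_cone}). By Theorem \ref{thm.LVector_is_Measure} the cone $\MMM(T)$ of invariant measures is isomorphic to $\Lambda(\gamma)=\bigcap_N\Theta_{\gamma_N}\RR_+^\AAA$, so the distinct ergodic probability measures of $T$ are in bijection with the extremal rays of $\Lambda(\gamma)$, equivalently with the vertices of the simplex $\Del(\gamma)$. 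Theorem \ref{thm.Bound_on_Measures} then immediately supplies the upper bound: there are at most $g(\pi)$ such vertices.

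The remaining, and principal, task is the lower bound: $\Del(\gamma)$ has at least $g(\pi)$ vertices. Writing $M_i:=\Theta_{\gamma^{\pi,a_1,b_1,c_1}}\cdots\Theta_{\gamma^{\pi,a_i,b_i,c_i}}$, the set $\Del(\gamma)=\bigcap_i\hat M_i\Del_\AAA$ is a decreasing intersection of simplices whose vertices are the normalized columns of $M_i$. I would introduce the candidate vertices
$$ \lambda^{(j)} := \lim_{i\to\infty}\hat M_i\,\me_j,\qquad j\in\AAA, $$
show these limits exist (projective contraction along the complete path), and identify them as the vertices of $\Del(\gamma)$. It then suffices to prove that, under the hypothesis $a_i\gg b_i\gg c_i\gg a_{i-1}$, the family $\{\lambda^{(j)}\}_{j\in\AAA}$ contains exactly $g(\pi)$ linearly independent vectors, all remaining entries being repetitions of these; linear independence in $\RR^\AAA$ then guarantees that each survivor is an extremal ray of $\Lambda(\gamma)$.

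The count is organized by the block structure \eqref{eq.pi_j_n}, mirroring the computation of Section \ref{sec.meas.exam}. I would partition $\AAA$ into $g(\pi)=1+\dusum{\alpha=1}{m}\lfloor n_\alpha/2\rfloor$ clusters (as in \eqref{eq.n_and_genus}): one ``outer'' cluster, consisting of the first and last letters together with every fixed letter arising from an odd-sized block, and, for each block $\alpha$ and each of its $\lfloor n_\alpha/2\rfloor$ transposition pairs, one further cluster. For each cluster I would show that the associated columns of $M_i$, once normalized, converge to a common direction, while columns from different clusters converge to directions that stay uniformly separated in projective distance. Concretely, inside a single $\gamma^{\pi,a,b,c}$ the block matrices $\mA_{n_\alpha,a,b}$ force the two columns of each transposition pair to agree in the limit, since the dominant $a$- and $b$-entries swamp the bounded differences after normalization, whereas each fixed-letter column of an odd block stays finite and is instead bound, through the first and last rows of $\Theta_{\gamma^{\pi,a,b,c}}$ and the trailing factor $1^{c(d-1)}$, to the outer cluster; the separation $a_i\gg b_i\gg c_i$ makes these mergers occur at distinct, non-interfering rates, and $c_i\gg a_{i-1}$ ensures the clustering achieved at stage $i-1$ survives multiplication by the stage-$i$ factor.

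The hard part will be the simultaneous quantitative control of both halves of the clustering: proving that the off-diagonal $1\cdots1$ blocks coupling adjacent $\mA_{n_\alpha,a,b}$'s, together with the global coupling from the first and last rows and from the $1^{c(d-1)}$ tail, never collapse two a priori distinct clusters in the limit. This demands explicit lower bounds keeping the inter-cluster projective gap bounded away from zero along the whole sequence, which is exactly where a carefully quantified ``$\gg$'' and the nestedness of the simplices $\hat M_i\Del_\AAA$ must be combined — the verification deferred to Section \ref{sec.geq_exp} for an explicit choice of $\ma,\mb,\mc$. Once the $g(\pi)$ surviving limits are shown linearly independent, they are extremal rays of $\Lambda(\gamma)$, so $T$ admits at least $g(\pi)$ ergodic probability measures; together with the upper bound from Theorem \ref{thm.Bound_on_Measures} this forces equality, completing the proof.
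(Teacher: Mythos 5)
Your scaffolding is exactly the paper's: completeness of $\gamma$ (every letter wins in each $\gamma^{\pi,a_i,b_i,c_i}$) gives the Keane condition and minimality via Proposition \ref{prop.theta_and_measures} and Remark \ref{rem.Keane_minimal_cone}; Theorem \ref{thm.LVector_is_Measure} converts the measure count into a count of vertices of $\Del(\gamma)$; Theorem \ref{thm.Bound_on_Measures} caps that count at $g(\pi)$; and the lower bound is to come from a scale-separation analysis of columns of the matrix products, generalizing Section \ref{sec.meas.exam}. But the proposal stops precisely where the proof has to begin: the entire content of the claim is the quantitative statement that the $g(\pi)$ clusters do not collapse in the limit, and you explicitly defer that verification to Section \ref{sec.geq_exp}. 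Note that the paper itself states this claim \emph{without general proof} and establishes it only for ``at least exponential'' sequences, i.e.\ under $a_i/b_i,\,b_i/c_i,\,c_i/a_{i-1}\geq\rho$ for a fixed real $\rho>2$; so a proof of the claim just \emph{is} the deferred step, and your attempt does not contain one.

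What the paper supplies and you lack is Lemma \ref{l.linear_ops}: the normalized matrices $\Theta_i'$ act triangularly on the distinguished vectors $\vv_0,\vv_\alpha,\ww_\alpha$ (see e.g.\ Equation \eqref{eq.n_5_v_and_w}), and the lemma gives a closed-form expansion of the products $F_i\circ\cdots\circ F_{i+k}$ whose cross-terms are the sums $\duS{i}{j}(\cdots)$; under the ratio hypothesis these are dominated by geometric series, yielding the explicit bounds of Corollary \ref{cor.meas.limits} --- coefficients such as $\frac{1}{\rho^2-1}$, $\frac{2}{\rho-2}$, $\frac{4\rho}{(\rho-2)(\rho^2-2)}$, errors $\bigO(K/a_i)$, and the uniform bound $M'<2$ --- from which the $g(\pi)$ limiting directions are visibly linearly independent, and Corollary \ref{cor.meas.meas} follows. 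This is not a routine compactness remark: the leakage $\ww_\alpha\mapsto\frac{b_i}{a_i}\ww_\alpha+\frac{c_i}{a_i}\vv_0$ and $\vv_\alpha\mapsto\frac{1}{2}\vv_\alpha+2\frac{b_i}{a_i}\ww_\alpha+2\frac{c_i}{a_i}\vv_0+\cdots$ recurs at every stage, and one must prove the accumulated coefficients converge --- this is exactly where $\rho>2$, rather than mere divergence of the ratios, is used --- since positivity of the products (completeness) alone cannot rule out two clusters merging in the limit. Two smaller defects: your appeal to ``projective contraction'' for the existence of the column limits $\lambda^{(j)}$ is unjustified --- completeness yields nested simplices with nonempty intersection, not convergence of individual normalized columns; the paper sidesteps this by evaluating $\Psi_i$ only on $\vv_0,\vv_\alpha,\ww_\alpha$ through the explicit expansions. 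And your combinatorial assignment of letters to clusters is not what the computation yields: for $n_\alpha=3$ the two columns that merge are the fixed letter together with one member of the transposition pair (the limit direction is $\vv_\alpha=\ee_{j_\alpha+1}$, concentrated on the fixed letter), while the other pair member joins the outer cluster --- harmless for the count $1+\sum_\alpha\lfloor n_\alpha/2\rfloor=g(\pi)$, but the clusters are not literally the transposition pairs, so an independence argument organized around transposition pairs would have to be rerun on the actual limit directions.
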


	Before we move to the next section, note that the statement of minimality trivially follows from the completeness of $\gamma$ (see Proposition \ref{prop.theta_and_measures} and Remark \ref{rem.Keane_minimal_cone}). Also, note that for any integers $a \gg b \gg c \gg 1$, the column vectors of $\Theta = \Theta_{\gamma^{\pi,a,b,c}}$ point in specific directions. More precisely, the first and last column will both point in one direction (concentrated in the first and last coordinates), while the columns associated with the block $\alpha$ will point in $\lfloor \frac{n_\alpha}{2}\rfloor$ other directions. So if $a,b,c$ are large, the columns of $\Theta$ will point in precisely $g(\pi)$ direction by Equation \eqref{eq.n_and_genus}. It is not clear however that the products of such matrices will exhibit the same phenomena (two of these directions may collapse into one in the limit). Proving this will be the aim of Corollary \ref{cor.meas.limits} that follows.

\subsection{\texorpdfstring{``At Least"}{"At Least"} Exponential Sequences}\label{sec.geq_exp}

	Let $\pi$ and $\{j_\alpha,n_\alpha\}_{\alpha=1}^{m}$ satisfy Equations \eqref{eq.j_and_n} and \eqref{eq.pi_j_n}. We will show that if ``$\gg$" is taken to mean
		$$ a \gg b \iff a \geq \rho b$$
	for a fixed real $\rho > 2$, then the path $\gamma$ defined by sequences $\ma, \mb, \mc$ (under this working definition of $\gg$) and $\pi$ admits \IET 's with $g(\pi)$ distinct ergodic invariant probability measures (see Corollary \ref{cor.meas.meas}).

	\begin{defn}
		For $r>0$, let $\bigO(r)$ denote a vector in $\left[\RR_{\geq 0}\right]^d$ such that $|\bigO(r)| \leq r$ and $\bigo(r)$ a real number such that $\bigo(r)\in[0,r]$.
	\end{defn}
	
	\noindent Assuming such sequences $\ma,\mb,\mc$, we define subpaths as $\gamma_{j_\alpha,n_\alpha}^{(i)}$ by the following:
			$$ \gamma_{j_\alpha,n_\alpha}^{(i)} = \RHScase{\gamma_{j_\alpha,1}, & n_\alpha = 1 \\
				\gamma_{j_\alpha,n_\alpha,a_i}, & n_\alpha=2,3 \\
				\gamma_{j_\alpha,n_\alpha,\lceil \frac{a_i}{2}\rceil,b_i}, & n_\alpha=4,5}$$
	\noindent Now we consider the path $\gamma^{(i)} := 0\gamma_{j_m,n_m}^{(i)}\dots \gamma_{j_1,n_1}^{(i)} 1^{c_i(d-1)}$. For convenience of notation, let
			$$ \Theta_i := \Theta_{\gamma^{(i)}}.$$
	\noindent For purposes of normalization, we will perform calculations on  $\Theta_i' := \frac{1}{a_i}\Theta_i$. This will be equivalent to examining $\Theta_i$ as for any $\lambda\in \Delta_{d-1}$, $ \hat{\Theta}'_i\lambda = \hat{\Theta}_i\lambda$ and therefore
	    \begin{equation}\label{eq.Psi}
		\Psi_i \lambda := \lim_{k\to\infty}\hat{\Theta}_i\cdots \hat{\Theta}_{i+k}\lambda = \lim_{k\to\infty}\hat{\Theta}'_i\cdots \hat{\Theta}'_{i+k}\lambda
	    \end{equation}
	assuming such a limit exists. The only $\lambda$ we will consider will be the endpoints, and such limits are well defined in this case. We finally define
			$$ M_i := \sup_{j\geq i} \sup_{\lambda \in \Delta_{d-1}} |\Theta_j' \lambda| = \sup_{j\geq i}\max_{1\leq j \leq d} |\Theta_j' \me_j|$$
	where we recall that $|\lambda| = \lambda_1+\dots+\lambda_d$ for any $\lambda\in\RR_+^\AAA$.
			
	We consider the limiting vectors from the blocks related to $\{j_\alpha,n_\alpha\}_{\alpha=1}^{m}$ and the outside columns.	
	Let $\vv_0 = \ee_1+\ee_d$. We note that
        \begin{equation}
			\begin{array}{rcl}
                \Theta_i' \vv_0 &= &\frac{c_i}{a_i}\vv_0 + \bigO\left(\frac{m+3}{a_i}\right).
			\end{array}
        \end{equation}
	
	Now for any $1\leq \alpha \leq m$, assume $n_\alpha = 2$. In this case, let $\vv_\alpha := \ee_{j_\alpha + 1}$. We conclude that
		\begin{equation}
			\begin{array}{rcl}
				\Theta_i' \vv_\alpha & = & \vv_\alpha + \frac{c_i}{a_i} \vv_0 + \bigO\left(\frac{m+3}{a_i}\right).
			\end{array}
		\end{equation}
		
	\noindent Now assume $n_\alpha =3$. Then let $\vv_\alpha = \ee_{j_\alpha+1}$. We see that
		\begin{equation}
			\begin{array}{rcl}
				\Theta_i' \vv_\alpha & = & \vv_\alpha + \frac{c_i}{a_i} \vv_0 + \bigO\left( \frac{m+5}{a_i}\right).
			\end{array}
		\end{equation}
		
	\noindent If $n_\alpha = 4$, let $\vv_\alpha = \ee_{j_\alpha+1} + \ee_{j_\alpha+3}$ and $\ww_\alpha=\ee_{j_\alpha+2}$. It follows that
		\begin{equation}
			\begin{array}{rcl}
				\Theta_i' \vv_\alpha & = & \frac{1}{2} \vv_\alpha + \frac{b_i}{a_i} \ww_\alpha + 2\frac{c_i}{a_i} \vv_0 + \underset{\msp}{\bigO\left(2\frac{m+6}{a_i}\right)} \\
				\Theta_i' \ww_\alpha & = & \frac{b_i}{a_i} \ww_\alpha + \frac{c_i}{a_i} \vv_0 + \bigO\left(\frac{m+5}{a_i}\right).
			\end{array}
		\end{equation}
		
	\noindent If $n_\alpha = 5$, let $\vv_\alpha = \ee_{j_\alpha+1} + \ee_{j_\alpha+4}$ and $\ww_\alpha=\ee_{j_\alpha+2}$. We verify the following,
		\begin{equation}\label{eq.n_5_v_and_w}
			\begin{array}{rcl}
				\Theta_i' \vv_\alpha & = & \frac{1}{2} \vv_\alpha + 2\frac{b_i}{a_i} \ww_\alpha + 2\frac{c_i}{a_i} \vv_0 + \underset{\msp}{\bigO\left(2\frac{m+6}{a_i}\right)} \\
				\Theta_i' \ww_\alpha & = & \frac{b_i}{a_i} \ww_\alpha + \frac{c_i}{a_i} \vv_0 + \bigO\left(\frac{m+5}{a_i}\right).
			\end{array}
		\end{equation}
		
		\begin{defn}
			For real sequences $\ma,\mb,\mc$ and integers $i,j$, let
				$$ \duS{i}{j}(\ma,\mb,\mc) := \dusum{\ell=i}{j}\left[\left(\duprod{r=i}{\ell-1} a_r\right) b_\ell \left( \duprod{r=\ell+1}{j} c_r\right)\right].$$
			If $\md,\me,\mf,\mg$ are also real sequences, then let
				$$ \begin{array}{rcl}
					\duS{i}{j}(\ma,\mb,\mc,\md,\me) &:=& \underset{\msp}{\dusum{\ell=i}{j-1}} \left[\left(\duprod{r=i}{\ell-1}a_r\right) b_\ell \duS{\ell+1}{j}(\mc,\md,\me) \right]\mbox{ and}\\
					\duS{i}{j}(\ma,\mb,\mc,\md,\me,\mf,\mg) &:=& {\dusum{\ell=i}{j-2}} \left[\left(\duprod{r=i}{\ell-1}a_r\right) b_\ell \duS{\ell+1}{j}(\mc,\md,\me,\mf,\mg) \right].
					\end{array}$$
		\end{defn}
		
		\begin{lem}\label{l.linear_ops}
			Let $\VVV$ be a vector space, $\mA,\mB,\mC,\mD\in\VVV$ and $\ma,\mb,\mc,\md,\me,\mf,\mmu,\mv,\mx,\my$ be real sequences. Suppose $\{F_i\}_{i>0}$ is a sequence of linear operators such that
			$$ \begin{array}{rcl}
				F_i \mA &=& a_i\mA,\\ 
				F_i \mB &=& b_i\mB + c_i\mA, \\
				F_i \mC &=& d_i\mC + e_i\mB + f_i\mA,\\
				F_i \mD &=& u_i\mD + v_i\mC + x_i\mB + y_i\mA.
				\end{array}$$
			
			Then for each $k\geq 0$
			$$ \begin{array}{rl}
				F_i\circ\cdots\circ F_{i+k}\mA & = \underset{\msp}{\left(\duprod{\ell=i}{i+k}a_\ell\right)}\mA, \\
				
				F_i\circ\cdots\circ F_{i+k}\mB & = \underset{\msp}{\left(\duprod{\ell=i}{i+k}b_\ell\right)}\mB + \duS{i}{i+k}(\ma,\mc,\mb)\mA, \\
				
				F_i\circ\cdots\circ F_{i+k}\mC &= \underset{\msp}{\left(\duprod{\ell=i}{i+k}d_\ell\right)}\mC + \duS{i}{i+k}(\mb,\me,\md)\mB \\
				
					&+  \underset{\msp}{\left[ \duS{i}{i+k}(\ma,\mf,\md) + \duS{i}{i+k}(\ma,\mc,\mb,\me,\md)\right]}\mA, \\
					
				F_i\circ\cdots\circ F_{i+k}\mD &= \underset{\msp}{\left(\duprod{\ell=i}{i+k}u_\ell\right)}\mD + \duS{i}{i+k}(\md,\mv,\mmu)\mC  \\
				
					&+ \underset{\msp}{\left[ \duS{i}{i+k}(\mb,\mx,\mmu) + \duS{i}{i+k}(\mb,\me,\md,\mv,\mmu)\right]}\mB \\
					
					& + \underset{\msp}{\left[ \duS{i}{i+k}(\ma,\my,\mmu) + \duS{i}{i+k}(\ma,\mf,\md,\mv,\mmu)\right.} \\
					
					& + \underset{\msp}{\left.\duS{i}{i+k}(\ma,\mc,\mb,\mx,\mmu) + \duS{i}{i+k}(\ma,\mc,\mb,\me,\md,\mv,\mmu)\right]}\mA . \end{array}$$
		\end{lem}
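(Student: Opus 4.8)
The plan is to recognize the four distinguished vectors as spanning a flag $\spaN\{\mA\}\subset\spaN\{\mA,\mB\}\subset\spaN\{\mA,\mB,\mC\}\subset\spaN\{\mA,\mB,\mC,\mD\}$ that each $F_i$ preserves, acting on the successive graded quotients by the scalars $a_i,b_i,d_i,u_i$. In this ordered basis each $F_i$ is ``upper triangular,'' so a composition $F_i\circ\cdots\circ F_{i+k}$ is again of this form, and its off-diagonal coefficients are precisely the weighted path sums that the symbols $\duS{i}{i+k}(\cdots)$ encode. Concretely, the coefficient of $\mA$ in $F_i\circ\cdots\circ F_{i+k}\mD$ should be the sum over the four descending routes $\mD\to\mA$, $\mD\to\mC\to\mA$, $\mD\to\mB\to\mA$ and $\mD\to\mC\to\mB\to\mA$, which is exactly the four-term expression in the statement; similarly the $\mB$- and $\mC$-lines record the routes ending at $\mB$ and $\mC$. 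I will make this precise by induction on $k$ rather than by a direct combinatorial identification.

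The one computational tool I need is a ``peel off the first operator'' recursion for each flavor of $\duS$, obtained by isolating the $\ell=i$ summand in its definition. Writing the sequences generically, these read $\duS{i}{j}(\ma,\mb,\mc)=b_i\duprod{r=i+1}{j}c_r+a_i\duS{i+1}{j}(\ma,\mb,\mc)$, then $\duS{i}{j}(\ma,\mb,\mc,\md,\me)=b_i\duS{i+1}{j}(\mc,\md,\me)+a_i\duS{i+1}{j}(\ma,\mb,\mc,\md,\me)$, and finally $\duS{i}{j}(\ma,\mb,\mc,\md,\me,\mf,\mg)=b_i\duS{i+1}{j}(\mc,\md,\me,\mf,\mg)+a_i\duS{i+1}{j}(\ma,\mb,\mc,\md,\me,\mf,\mg)$. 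Each is immediate: the term $\ell=i$ has empty leading product $\duprod{r=i}{i-1}a_r=1$ and factors out the stated first piece, while the remaining terms $\ell\geq i+1$ all carry a common factor $a_i$ and reassemble into the shifted sum $\duS{i+1}{j}$.

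With these recursions in hand the induction is routine. The base case $k=0$ is exactly the defining action of $F_i$ on $\mA,\mB,\mC,\mD$ (one checks, e.g., $\duS{i}{i}(\ma,\mc,\mb)=c_i$, matching $F_i\mB=b_i\mB+c_i\mA$). For the inductive step I write $F_i\circ\cdots\circ F_{i+k}=F_i\circ(F_{i+1}\circ\cdots\circ F_{i+k})$, apply the induction hypothesis to the inner composition on the range $[i+1,i+k]$, and then apply $F_i$ using linearity together with its action on the four basis vectors. Collecting the coefficients of $\mA,\mB,\mC,\mD$ and substituting the peel-off recursions (with the appropriate sequences playing the roles of $\ma,\mb,\dots$) turns each collected coefficient into the corresponding $\duS{i}{i+k}$ term; the diagonal coefficients collapse, e.g. $d_i\duprod{\ell=i+1}{i+k}d_\ell=\duprod{\ell=i}{i+k}d_\ell$.

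The only real obstacle is bookkeeping in the $\mD$ line: applying $F_i$ mixes the $\mC$-, $\mB$- and $\mA$-coefficients coming from the hypothesis, so the new $\mA$-coefficient is a sum of four pieces ($f_i$ and $c_i$ times inner coefficients, plus $a_i$ times the two inner $\mA$-terms). Matching it against $\duS{i}{i+k}(\ma,\my,\mmu)+\duS{i}{i+k}(\ma,\mf,\md,\mv,\mmu)+\duS{i}{i+k}(\ma,\mc,\mb,\mx,\mmu)+\duS{i}{i+k}(\ma,\mc,\mb,\me,\md,\mv,\mmu)$ requires invoking all three recursions simultaneously and keeping track of which transition ($y_i$, $v_i$, $x_i$, or $e_i$) is being peeled off. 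This is purely mechanical once the four routes are tracked, so I expect no conceptual difficulty beyond careful indexing.
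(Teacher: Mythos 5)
Your argument is correct, and it is worth noting that the paper itself offers no proof of Lemma \ref{l.linear_ops} at all --- the lemma is stated bare and then invoked in the proof of Corollary \ref{cor.meas.limits} --- so your write-up supplies a verification the author left implicit. Your two ingredients are exactly what the omitted proof needs: the ``peel off the first operator'' recursions are right for all three flavors of $\duS{i}{j}$ (isolating the $\ell=i$ summand, whose leading product is empty, and factoring $a_i$ out of the rest), and the induction on $k$ via $F_i\circ(F_{i+1}\circ\cdots\circ F_{i+k})$ closes cleanly: for instance, on the $\mC$-line the collected $\mA$-coefficient $f_i\duprod{\ell=i+1}{i+k}d_\ell + c_i\,\duS{i+1}{i+k}(\mb,\me,\md) + a_i\bigl[\duS{i+1}{i+k}(\ma,\mf,\md)+\duS{i+1}{i+k}(\ma,\mc,\mb,\me,\md)\bigr]$ reassembles, by your recursions, into $\duS{i}{i+k}(\ma,\mf,\md)+\duS{i}{i+k}(\ma,\mc,\mb,\me,\md)$, and the analogous four-route bookkeeping on the $\mD$-line checks out term by term. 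One small point you gesture at but should state explicitly in the base case: the longer sums vanish at $k=0$ because their upper limits are $j-1$ and $j-2$ (empty sums), which is what makes $F_i\mC=d_i\mC+e_i\mB+f_i\mA$ and $F_i\mD$ match the formulas at $j=i$. The flag/upper-triangular picture framing the coefficients as weighted descending paths $\mD\to\mC\to\mB\to\mA$ is a nice conceptual gloss that also explains \emph{why} the statement has exactly one $\duS$-term per route; it is not needed for the induction but makes the lemma's shape transparent.
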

		
		\begin{corollary}\label{cor.meas.limits}
			Let $\pi$ and $\{j_\alpha,n_\alpha\}_{\alpha=1}^{m}$ satisfy \eqref{eq.j_and_n} and \eqref{eq.pi_j_n}. Suppose positive integer sequences $\ma,\mb,\mc$ satisfy
			$$ \frac{c_i}{a_{i-1}},\frac{a_i}{b_i},\frac{b_i}{c_i} \geq \rho$$
			for real $\rho > 2$ and all $i \geq i_0$. Also, assume that
			$$ M' := \max\left(1+\frac{2}{\rho^2}+\frac{m+7}{a_{i_0}},\frac{3}{\rho}+\frac{2}{\rho^2} +\frac{m+6}{a_{i_0}}\right)<2.$$
			If $\{\Theta_i\}_{i>0}$ are defined by $\gamma_{\pi,\ma,\mb,\mc}$, then for $i \geq i_0$,
			$$ \begin{array} {rclr}
				M_i & \leq & \underset{\msp}{M'},& \\
				\Psi_i \vv_0 &\sim & \vv_0 + \underset{\msp}{\bigO\left(\frac{K_1}{a_{i-1}}\right)}, & \\
				\Psi_i \vv_\alpha & \sim  & \vv_\alpha + \underset{\msp}{\bigo\left(\frac{1}{\rho^2-1}\right)}\vv_0 + \bigO\left(\frac{K_2}{a_i}\right), & \mbox{for } n_\alpha = 2 \\
				\Psi_i \vv_\alpha & \sim & \vv_\alpha + \underset{\msp}{\bigo\left(\frac{1}{\rho^2-1}\right)}\vv_0 + \bigO\left(\frac{K_3}{a_i}\right), & \mbox{for } n_\alpha=3\\
				\Psi_i \vv_\alpha & \sim & \vv_\alpha + \underset{\msp}{\bigo\left(\frac{2}{\rho-2}\right)}\ww_\alpha + \bigo\left(\frac{4(\rho-1)}{(\rho-2)(\rho^2-2)}\right)\vv_0+ \bigO\left(\frac{K_4}{a_i}\right), & \mbox{for } n_\alpha=4 \\
				\Psi_i \vv_\alpha & \sim & \vv_\alpha + \underset{\msp}{\bigo\left(\frac{4}{\rho-2}\right)}\ww_\alpha + \bigo\left(\frac{4\rho}{(\rho-2)(\rho^2-2)}\right)\vv_0+ \bigO\left(\frac{K_5}{a_i}\right), & \mbox{for } n_\alpha=5 \\
				\Psi_i \ww_\alpha & \sim & \ww_\alpha + \underset{\msp}{\bigo\left(\frac{1}{\rho-1}\right)}\vv_0 + \bigO\left(\frac{K_6}{a_{i-1}}\right), & \mbox{for } n_\alpha\geq 4
				\end{array}$$
		where $K_1,\dots,K_6$ are constants dependent only on $\rho$, $M_{i_0}'$, and $m$. Here $\ww\sim\vv$ is defined as $\vv = c\ww$ for some $c>0$.
		\end{corollary}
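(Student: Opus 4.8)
The plan is to reduce the entire statement to the abstract iteration formula of Lemma~\ref{l.linear_ops} together with a family of geometric-series estimates that are made possible by the hypothesis $\rho>2$. First I would observe that, after normalizing by $a_i$, the single-step equations preceding the corollary (through \eqref{eq.n_5_v_and_w}) express $\Theta_i'$ on each cluster $\{\vv_0,\vv_\alpha,\ww_\alpha\}$ in exactly the triangular form demanded by Lemma~\ref{l.linear_ops}: $\vv_0$ plays the role of the lowest vector $\mA$ (self-coefficient $c_i/a_i$), $\ww_\alpha$ the role of $\mB$ (self-coefficient $b_i/a_i$, feeding $\vv_0$ with $c_i/a_i$), and $\vv_\alpha$ the role of $\mC$ (self-coefficient $\tfrac12$, feeding $\ww_\alpha$ with $b_i/a_i$ or $2b_i/a_i$ and $\vv_0$ with $2c_i/a_i$), each relation carrying an additive error vector of $L^1$-norm $\bigO((m+7)/a_i)$. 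I would therefore split $\Theta_i'=F_i+E_i$, with $F_i$ the exact operator on the span of $\vv_0,\vv_\alpha,\ww_\alpha$ and $E_i$ the error, apply Lemma~\ref{l.linear_ops} to the exact products $F_i\circ\cdots\circ F_{i+k}$, and treat $E_i$ as a perturbation.

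The second step is to evaluate the closed forms produced by the lemma. Dividing each sum $\duS{i}{i+k}$ by the dominant product (which is $\prod_\ell\tfrac12$ for a $\vv_\alpha$ with $n_\alpha\geq4$, $\prod_\ell(b_\ell/a_\ell)$ for $\ww_\alpha$, and so on) telescopes every factor into ratios of consecutive sequence terms; the hypotheses $c_i/a_{i-1},\,a_i/b_i,\,b_i/c_i\geq\rho$ turn each resulting series into a geometric series of ratio $2/\rho$, $1/\rho$, or $1/\rho^2$, all strictly less than $1$ precisely because $\rho>2$. Summing them reproduces the stated coefficients: $\tfrac{2}{\rho-2}$ and $\tfrac{4}{\rho-2}$ for the $\ww_\alpha$-component of $\Psi_i\vv_\alpha$ when $n_\alpha=4,5$, $\tfrac{1}{\rho-1}$ for the $\vv_0$-component of $\Psi_i\ww_\alpha$, while the five- and seven-argument sums $\duS{i}{i+k}$ collapse to the $\vv_0$-coefficients $\tfrac{4(\rho-1)}{(\rho-2)(\rho^2-2)}$ and $\tfrac{4\rho}{(\rho-2)(\rho^2-2)}$. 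Since the conclusion is asserted only up to the positive scaling $\sim$, only these ratios of coefficients, and not the global magnitudes $\prod a_\ell$, need to be tracked.

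Third, I would establish $M_i\leq M'$ directly from the displayed shape of $\Theta_{\gamma^{\pi,a,b,c}}$. Each column $L^1$-norm of $\Theta_j$, divided by $a_j$, is a column sum of some block $\mA_{n_\alpha,a,b}$ together with the surrounding row of $c$'s and row of $(c+1)$'s; the bounds $c_j/a_j\leq\rho^{-2}$ and $b_j/a_j\leq\rho^{-1}$ (valid for $j\geq i\geq i_0$) bound these by $1+2/\rho^2$ for blocks of type $n=2,3$ and by $3/\rho+2/\rho^2$ for blocks of type $n=4,5$, the remaining constant entries contributing at most $(m+7)/a_{i_0}$. This is exactly the definition of $M'$, and the standing requirement $M'<2$ is what lets the perturbation argument close.

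The main obstacle, and the heart of the proof, is the last step: controlling the accumulation of the perturbations $E_i$ through the infinite product as $k\to\infty$ in \eqref{eq.Psi}. An error created at step $\ell$ is transported by $\Theta_i\cdots\Theta_{\ell-1}$, whose norm grows by at most $a_r M'$ per factor, whereas the principal $\vv_\alpha$-coefficient grows only by $a_r/2$ per factor, so the naive comparison carries a dangerous factor $(2M')^{\ell-i}>1$. The resolution is that the super-exponential growth $a_r\geq\rho^3 a_{r-1}$ forced by the three hypotheses dominates it: the denominator $\prod_{r=\ell}^{i+k}a_r$ beats $2^{k+1}M'^{\ell-i}$ because $2M'/\rho^3<1$, so each contribution, and hence the whole sum, is $\bigO(1/a_i)$ for the order-one clusters $\vv_\alpha$; for $\vv_0$ and $\ww_\alpha$ the normalization by the small self-coefficient $c_i/a_i$ (resp. $b_i/a_i$) turns the $1/a_i$ error into a $1/c_i$ (resp. $1/b_i$) error, which is $\bigO(1/a_{i-1})$ since $c_i,b_i\geq\rho a_{i-1}$. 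Absorbing the constants into $K_1,\dots,K_6$, which depend only on $\rho$, $M_{i_0}'$ and $m$, and combining these error bounds with the principal limits of the second step, the passage $k\to\infty$ yields the six asymptotic identities and completes the proof.
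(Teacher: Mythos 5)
Your proposal is correct and follows essentially the same route as the paper, whose own proof is only a three-line sketch: establish $M_i\leq M'$ by direct column inspection of the $\Theta_i'$, use $\Theta_i'\bigO(1)=M'\bigO(1)$ to propagate the error vectors, and apply Lemma \ref{l.linear_ops} to the normalized one-step relations \eqref{eq.Psi}--\eqref{eq.n_5_v_and_w}, with every resulting series geometric of ratio $2/\rho$, $1/\rho$ or $1/\rho^2$ because $\rho>2$. Your write-up supplies the details the paper omits, and your constants check out (e.g. $\frac{4}{\rho^2-2}+\frac{4}{(\rho-2)(\rho^2-2)}=\frac{4(\rho-1)}{(\rho-2)(\rho^2-2)}$ for $n_\alpha=4$); the only nits are that, since $\vv_\alpha$ sits at level $\mC$ rather than $\mD$ of the lemma, the relevant sums are the three- and five-argument $\duS{i}{j}$'s rather than ``five- and seven-argument,'' and that for $n_\alpha=4,5$ the $a$-columns (where the path parameter is $\lceil a_i/2\rceil$, which is what makes their normalized sums $\approx 1$ rather than $2$) are covered by the first branch $1+2/\rho^2+\frac{m+7}{a_{i_0}}$ of the max defining $M'$, not by $3/\rho+2/\rho^2$.
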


		\begin{proof}
		    By direct consideration of each column of our matrices $\Theta_i'$, it follows that $M_i<M'$ for each $i>i_0$. As a result,
			$$ \Theta'_i \bigO(1) = M'\bigO(1).$$
		    We then use Lemma \ref{l.linear_ops} on Equations \eqref{eq.Psi}-\eqref{eq.n_5_v_and_w}.
		\end{proof}
		
		\begin{corollary}\label{cor.meas.meas}
			If $\pi,\{j_\alpha,n_\alpha\}_{\alpha=1}^{m}$ and sequences $\ma, \mb, \mc$ are as in Corollary \ref{cor.meas.limits}, then
			for any $\lambda\in\Del(\gamma)$, the \IET\ $T\sim(\pi,\lambda)$ admits $g(\pi)$ ergodic invariant probability measures.
		\end{corollary}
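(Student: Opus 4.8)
The plan is to translate the statement about ergodic measures into one about the vertices of the simplex $\Del(\gamma)$ and then to count those vertices using the asymptotics recorded in Corollary \ref{cor.meas.limits}. Since $\gamma=\gamma^{\pi,\ma,\mb,\mc}$ is complete, any $\lambda\in\Del(\gamma)$ yields an \IET\ $T=(\pi,\lambda)$ satisfying the Keane condition and having Rauzy path $\gamma$ (Proposition \ref{prop.theta_and_measures}). Theorem \ref{thm.LVector_is_Measure} then gives $\MMM(T)\cong\Lambda(\gamma)$, and under this isomorphism the ergodic probability measures $\EEE_1(T)$ correspond exactly to the extremal points, i.e.\ the vertices, of $\Del(\gamma)$ (Remark \ref{rem.Keane_minimal_cone}). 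Thus it suffices to show that $\Del(\gamma)$ has exactly $g(\pi)$ vertices. By Veech's bound (Theorem \ref{thm.Bound_on_Measures}), $\#\EEE_1(T)\leq g(\pi)$, so $\Del(\gamma)$ has at most $g(\pi)$ vertices; all the work lies in exhibiting $g(\pi)$ distinct ones.

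The candidate extremal directions are furnished by the vectors introduced before Corollary \ref{cor.meas.limits}. I first strip off the fixed initial product $\hat\Theta_1\cdots\hat\Theta_{i_0-1}$: each single-step $\Theta$ is unipotent, hence the prefix matrix is invertible and its projectivization is a bijection on $\Del_\AAA$, carrying distinct vertices to distinct vertices. This reduces the count to the tail simplex, whose vertices are the limits of the normalized columns $\Psi_{i_0}\me_j$. As in the worked example of Section \ref{sec.meas.exam}, these columns collapse in groups read off from the explicit matrices $\Theta_i$ onto the $g(\pi)$ limiting directions $\Psi_{i_0}\vv_0$, the $\Psi_{i_0}\vv_\alpha$ for $1\leq\alpha\leq m$, and the $\Psi_{i_0}\ww_\alpha$ for those $\alpha$ with $n_\alpha\geq 4$. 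There is one vector $\vv_0$, one $\vv_\alpha$ per block with $n_\alpha\geq 2$, and one extra $\ww_\alpha$ per block with $n_\alpha\geq 4$; by Equation \eqref{eq.n_and_genus} this is precisely $1+\sum_\alpha\lfloor n_\alpha/2\rfloor=g(\pi)$ vectors. Corollary \ref{cor.meas.limits} supplies each image explicitly: $\Psi_{i_0}\vv_0\sim\vv_0+\bigO(\cdot)$, each $\Psi_{i_0}\ww_\alpha\sim\ww_\alpha+\bigo(\cdot)\vv_0+\bigO(\cdot)$, and each $\Psi_{i_0}\vv_\alpha\sim\vv_\alpha+\bigo(\cdot)\ww_\alpha+\bigo(\cdot)\vv_0+\bigO(\cdot)$, where the $\bigO(\cdot)$ error vectors have size $O(1/a_{i_0-1})$ or $O(1/a_{i_0})$.

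The heart of the argument is the distinctness of these $g(\pi)$ points, equivalently the linear independence of the corresponding vectors. I would order the defining vectors as $\vv_0\prec\{\ww_\alpha\}\prec\{\vv_\alpha\}$ and observe that, suppressing the vanishing $\bigO(\cdot)$ terms, Corollary \ref{cor.meas.limits} writes the image of each defining vector as itself plus a combination of strictly earlier ones. The change-of-basis matrix from $\{\vv_0,\ww_\alpha,\vv_\alpha\}$ to $\{\Psi_{i_0}\vv_0,\Psi_{i_0}\ww_\alpha,\Psi_{i_0}\vv_\alpha\}$ is therefore lower unitriangular, hence invertible. Since the defining vectors have pairwise disjoint supports in the standard basis of $\RR^\AAA$ (the blocks occupy the disjoint index ranges $\{j_\alpha,\dots,j_\alpha+n_\alpha-1\}$, while $\vv_0$ lives in coordinates $1$ and $d$, which lie outside every block because $j_1=2$ and $j_m+n_m=d$), they are linearly independent, and so are their images. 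The residual $\bigO(\cdot)$ perturbations, being of order $1/a_{i_0-1}$ and $1/a_{i_0}$, cannot destroy invertibility once $a_{i_0}$ is large enough, which is exactly what the hypothesis $M'<2$ together with the growth conditions on $\ma,\mb,\mc$ guarantees. Hence the $g(\pi)$ images are linearly independent, giving $g(\pi)$ distinct vertices.

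Combining the two bounds, $\Del(\gamma)$ has exactly $g(\pi)$ vertices, and via the isomorphism $\phi$ of Theorem \ref{thm.LVector_is_Measure} the \IET\ $T=(\pi,\lambda)$ admits exactly $g(\pi)$ ergodic invariant probability measures for every $\lambda\in\Del(\gamma)$. I expect the main obstacle to be precisely this non-collapsing of the limiting directions: the cross-terms $\bigo(\cdot)$ in Corollary \ref{cor.meas.limits} are genuine $O(1)$ constants rather than small quantities, so one cannot argue by a naive perturbation estimate and must instead exploit the strict nested (lower-triangular) structure of the corrections, which is exactly what the bookkeeping of Lemma \ref{l.linear_ops} and Corollary \ref{cor.meas.limits} was engineered to produce.
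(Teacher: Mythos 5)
Your proposal is correct and follows essentially the same route as the paper: reduce to counting the vertices of $\Del(\gamma)$ via Theorem \ref{thm.LVector_is_Measure} and Remark \ref{rem.Keane_minimal_cone}, strip off the prefix $\hat\Theta_1\cdots\hat\Theta_{i_0-1}$, and read the $g(\pi)$ limiting directions off Corollary \ref{cor.meas.limits}. The paper's proof is terser---it simply asserts that Corollary \ref{cor.meas.limits} yields $g(\pi)$ vertices for the tail path---so your explicit non-collapsing argument (disjoint supports plus the unitriangular structure of the corrections, with the $\bigO$ errors controlled by taking $i_0$ large) is a faithful elaboration of the step the paper leaves implicit rather than a different method.
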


		\begin{proof}
		    Let $\gamma$ be the path defined by $\pi$ and $\ma,\mb,\mc$. Because $\rho>2$, we may assume $M'<2$ if we choose sufficiently large $i_0$. By the previous corollary, $\Del(\gamma')$ has $g(\pi)$ vertices where $\gamma'$ is the infinite Rauzy Path beginning at step $i_0$ rather than $1$. Because
		      $$ \Del(\gamma) = \hat\Theta_1\cdots\hat\Theta_{i_0-1} \Del(\gamma'),$$
		    the simplex $\Del(\gamma)$ must also have $g(\pi)$ vertices. By Theorem \ref{thm.LVector_is_Measure}, the vertices relate to $g(\pi)$ distinct ergodic probability measures for any $T=(\pi,\lambda)$, $\lambda\in\Del(\gamma)$ (or $\lambda\in\Lambda(\gamma)$ even).
		\end{proof}

\bibliographystyle{abbrv}
\bibliography{../../bibfile}

\end{document}